\definecolor{darkgreen}{cmyk}{1.,0.,1.,0.2}
\definecolor{rahmen}{RGB}{0,73,114}
\definecolor{darkrahmen}{RGB}{10,73,134}
\definecolor{shadecolor}{RGB}{100,100,100}
\titleformat{\chapter}[display]
{ \normalsize \huge \bfseries \itshape \color{darkrahmen}}
{\flushright \normalsize \color{darkrahmen} { \fontsize{60}{60}\selectfont \color{darkrahmen} \mdseries \thechapter }  \vspace{2em}} {10 pt}{\fontsize{30}{30} }
\titleformat*{\section}{\Large\bfseries\sffamily\color{rahmen}}
\titleformat*{\subsection}{\large\bfseries\sffamily\color{rahmen}}
\titleformat*{\subsubsection}{\large\bfseries\sffamily\color{rahmen}}
\titleformat*{\paragraph}{\bfseries\sffamily\color{rahmen} \itshape}
\newcommand{\vectornorm}[1]{\left|\left|#1\right|\right|}
\newcommand{\rr}[0]{\mathbb{R}}
\newcommand{\zz}[0]{\mathbb{Z}}
\newcommand{\nn}[0]{\mathbb{N}}
\newcommand{\const}[0]{\text{const}}
\newcommand{\cover}[1]{\xRightarrow{\protect\mathmakebox[1.7em]{#1}}}
\newcommand{\longcover}[1]{\xRightarrow{\protect\mathmakebox[2.6em]{#1}}}
\newcommand{\longlongcover}[1]{\xRightarrow{\protect\mathmakebox[3.9em]{#1}}}
\newcommand{\longlongbackcover}[1]{\xRightarrow{\protect\mathmakebox[3.9em]{#1}}}
\newcommand{\longlonggencover}[1]{\xLeftrightarrow{\mathmakebox[3.9em]{#1}}}
\newcommand{\backcover}[1]{\xLeftarrow{\mathmakebox[1.7em]{#1}}}
\newcommand{\longbackcover}[1]{\xLeftarrow{\protect\mathmakebox[2.6em]{#1}}}
\newcommand{\gencover}[1]{\xLeftrightarrow{\mathmakebox[1.7em]{#1}}}
\newcommand{\longgencover}[1]{\xLeftrightarrow{\mathmakebox[2.3em]{#1}}}
\DeclareMathOperator{\sgn}{sgn}
\DeclareMathOperator{\inter}{int}
\DeclareMathOperator{\id}{id}
\DeclareMathOperator{\diam}{diam}
\DeclareMathOperator{\conv}{conv}
\newtheoremstyle{theorC}{\topsep}{\topsep}{\itshape \color{black}}{}{\color{rahmen}\bfseries}{.}{.5em}{}
\theoremstyle{theorC}
\newtheorem{thm}{Theorem}[section]
\newtheorem{cor}[thm]{Corollary}
\newtheorem{lem}[thm]{Lemma}
\newtheorem{prop}[thm]{Proposition}
\newtheorem{thmdefn}[thm]{Definition/Theorem}
\newtheoremstyle{defnC}{\topsep}{\topsep}{\color{black}}{}{\color{rahmen}\bfseries}{.}{.5em}{}
\theoremstyle{defnC}
\newtheorem{rem}[thm]{Remark}
\theoremstyle{defnC}
\newtheorem{defn}[thm]{Definition}
\theoremstyle{remark}
\date{}
\begin{document}
\begin{titlepage}
  \Large
  \centering
   {\scshape \large Faculty of Mathematics and Computer Science \\ of the Jagiellonian University \vspace{47mm}} \\                                                %42-45mm?
  %{\scshape \large Wydzia\l \ Matematyki i Informatyki \\ Uniwersytet Jagiello\'nski \vspace{47mm}} \\                                                %42-45mm?
  {\scshape \Large Aleksander Czechowski}\vspace{9mm} \\
  {\bfseries \sffamily \LARGE \color{darkrahmen} \LARGE Rigorous numerics for a singular perturbation problem \normalfont}\\
         \vspace{6mm}                                                                                                                                 %15mm      
        {\Large \fontsize{13.5}{16}\selectfont A dissertation prepared under supervision of\\  prof. dr hab. Piotr Zgliczy\'nski} \vspace{35mm} \\ %28mm?
        {\includegraphics[width=7em,clip=true,trim =0mm 00mm 00mm 00mm]{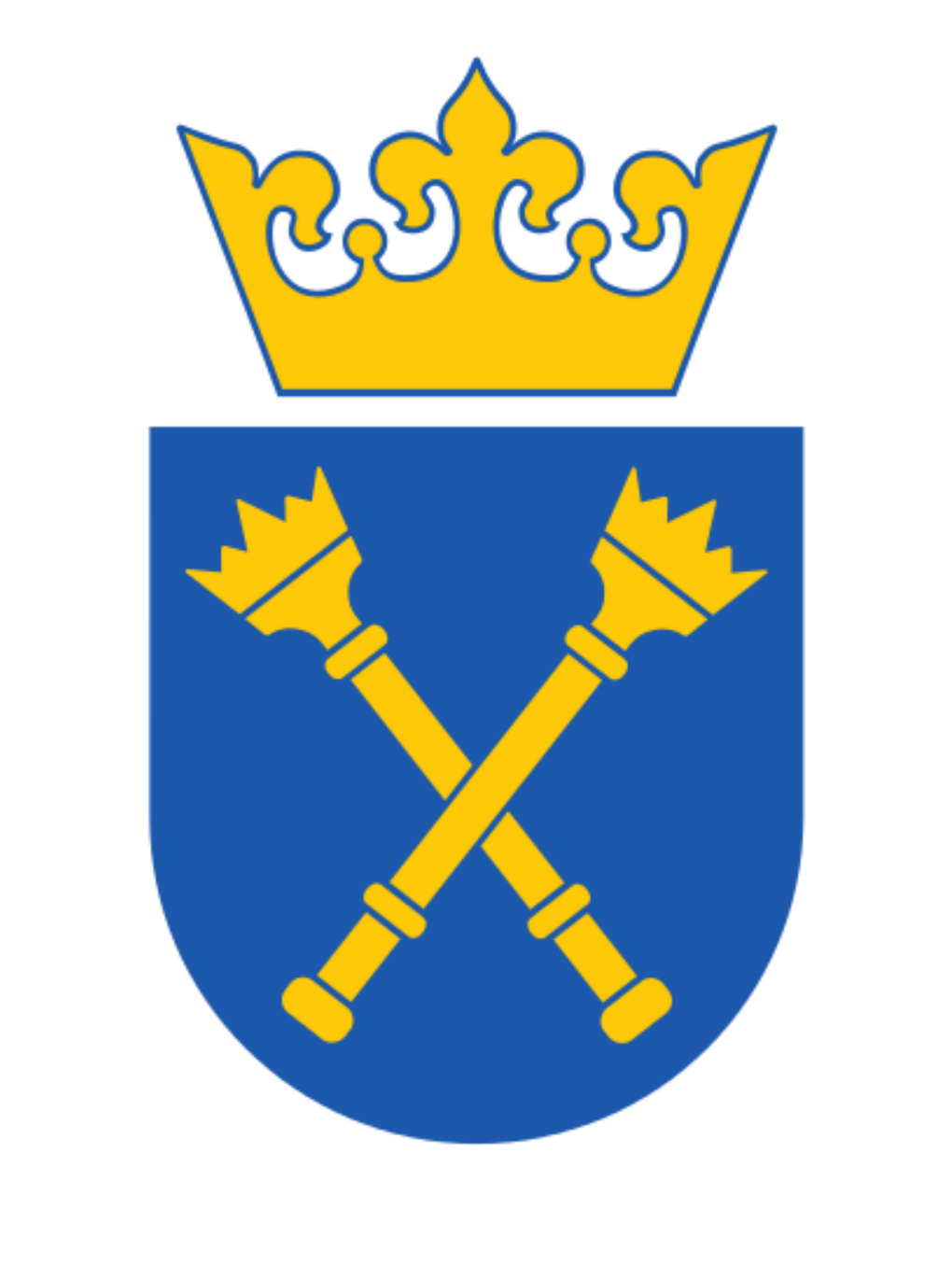}}\vfill
        {\large \scshape Krak\'ow 2015}
      \end{titlepage}

\clearpage \thispagestyle{empty} 

\par\vspace*{\fill}

\noindent During the major part of the research, which led to preparation of this thesis, the author 
was supported by the Foundation for Polish Science under the MPD Programme \emph{Geometry
and Topology in Physical Models}, co-financed by the EU European Regional Development Fund,
Operational Program Innovative Economy 2007-2013.

\vspace{1cm}

\noindent
Contents of this thesis have been previously partially published as a preprint

\noindent A. Czechowski and P. Zgliczyński. Existence of periodic solutions of the FitzHugh-Nagumo 
equations for an explicit range of the small parameter. arXiv:1502.02451, 2015.

\vspace{1cm}

\noindent \copyright \ Aleksander Czechowski 2015

\noindent All rights reserved
\vspace{3cm}

\chapter*{Abstract}
{
  Many mathematical models are governed by fast-slow systems. 
Such systems are difficult to analyze using standard numerical methods,
due to their stiffness, which is inversely proportional to the value of the so-called small parameter $\epsilon$.
At $\epsilon=0$ the problem decomposes into two independent lower-dimensional differential equations known
as the slow subsystem and the fast subsystem.
A famous technique, commonly referred to as geometric singular perturbation theory, can then be applied 
in an attempt to describe the dynamics in the full system,
based on the properties of these two subsystems.
In particular, such methods can be used to prove existence of certain homoclinic and periodic trajectories in a given system 
for $\epsilon \in (0, \epsilon_0]$, $\epsilon_0$ ``small enough''.

In this thesis we propose a framework, which allows to replace the words ``small enough''
with an explicit value of $\epsilon_0$. 
Our approach is based on a combination of two topological methods.
The first one is the method of covering relations, which is used to describe how sets are mapped across each other under Poincar\'e maps.
This method has proved itself to be very effective in several previous studies,
though it requires rigorous integration and cannot be applied in the stiff region driven by the dynamics of the slow subsystem.
There, we apply the second tool: the method of isolating segments. 
It allows to retrieve topological information about Poincar\'e maps based solely on the topology of the vector field,
without numerical integration. 
This approach is often more cumbersome to apply than covering relations. 
However, in our scenario we can employ it easily, by exploiting the high hyperbolicity generated by the slow dynamics.   

We state a definition of isolating segments convenient to apply in autonomous systems 
and in conjunction with covering relations.
Then, we prove several theorems on how suitable chains of isolating segments and covering relations
imply existence of periodic and connecting orbits in a given (not necessarily fast-slow) system.
Finally, we apply our theorems and conduct a computer assisted proof
showing the existence of traveling waves in the FitzHugh-Nagumo model in an explicit range of $\epsilon$.
For the case of the traveling pulse, the proof involves a local estimate on the unstable and stable manifold of the stationary point
in the traveling wave equation, which is performed via an $\epsilon$-dependent cone field.

Additionally, we extend the parameter range of existence of the periodic wave train by a rigorous continuation
procedure based on covering relations, with a varying number of transversal sections. 
We achieve $\epsilon_0$ large enough, so that a standard proof based on the interval Newton-Moore method 
applied to a sequence of Poincar\'e maps succeeds at that parameter value.
 }
\clearpage \thispagestyle{empty} 
\par\vspace*{\fill}

{ \large \tableofcontents }

{
\titlespacing*{\chapter}{0pt}{2em}{11em}
\chapter{Introduction}\label{chap:intro}

\section{Background}

The work reported in this thesis lies on the intersection of several areas of research
in the qualitative theory of dynamical systems. 
This section serves as a very brief introduction to these subjects.

Let us first recall the definitions of basic invariant sets in ordinary differential equation (ODEs).
Consider an ODE 
\begin{equation}\label{ODEintro}
  \dot{x}=F(x), \quad x \in \rr^N,
\end{equation}
with a smooth right-hand side. A \emph{stationary point/solution} (or an \emph{equilibrium}) of~\eqref{ODEintro} is 
a solution satisfying $\frac{dx(t)}{dt}=0$ for all $t \in \rr$;
a \emph{periodic orbit} is a solution satisfying $x(t+T) = x(t)$ for all $t \in \rr$ and some $T >0$
and a \emph{connecting orbit} (between equilibria) is a nonstationary solution for which $\lim_{t \to \infty} x(t)$ and $\lim_{t \to -\infty} x(t)$ exist and are finite.
In such case the points $\lim_{t \to \pm \infty} x(t)$ are equilibria of~\eqref{ODEintro}; 
if they are equal, we say that $x$ is a \emph{homoclinic orbit}, elsewise we call it a \emph{heteroclinic orbit}.

Throughout this section we, somewhat unfortunately, refer to several theorems 
stated in subsequent parts of the thesis, and in particular statements of the main
results are postponed to Section~\ref{sec:results}.
Our decision for such structure was motivated as follows.
Statements of our main theorems (especially Theorems~\ref{thm:main1} and~\ref{thm:main4})
are merely examples of application of a machinery developed through the whole thesis,
and require a proper background motivation. On the other hand,
as we introduce the background theory, we would like to comment on how our main results fit into it.
To avoid unnecessary repetitions we refer ahead and rely on goodwill of the reader
to look up the referenced material, wherever it is necessary.

\subsection{Fast-slow systems}

\emph{Fast-slow systems} are systems of ODEs of the form 
\begin{equation}\label{slowfastIntro}
  \begin{aligned}
    \dot{x}&= f(x,y,\epsilon),\\
    \dot{y}&=\epsilon g(x,y,\epsilon),
\end{aligned}
\end{equation}
with $f: \rr^n \times \rr^m \times \rr \to \rr^n$, $g: \rr^n \times \rr^m \times \rr \to \rr^m$ smooth, and $\epsilon$ such that $0<\epsilon \ll 1$.
The parameter $\epsilon$ is usually referred to as the \emph{small parameter}.
Consequently, the velocities of $y$'s are most often much lower than these of $x$'s, 
hence the former are called the \emph{slow variables} and the latter the \emph{fast variables}.
Such equations are difficult to study with numerical methods,
as small time steps are needed to accurately capture dynamics in $y$.
This phenomenon, known as \emph{stiffness}, inevitably leads to
long integration times and significant propagation of numerical errors.  
In particular, sensitivity to initial conditions encountered in simulation of chaotic or unstable trajectories of saddle type
becomes even a bigger problem than usual. 

What is an obstacle to numerical methods, turns out to be advantageous 
to a certain pen-and-paper analysis, which we will now outline. The first step is to separate the time scales. 
By setting $\epsilon$ to 0 
one obtains a lower dimensional system 
\begin{equation}
    \dot{x}= f(x,y,0),
\end{equation}
parameterized by $y$, called the \emph{fast subsystem}. The fast subsystem
can approximate the dynamics of the full system with some degree of accuracy
in regions away from the set of its stationary solutions
\begin{equation}
  C_0 := \{ (x,y) \in \rr^{n+m}:\ f(x,y,0)=0 \}
\end{equation}
referred to as the \emph{slow manifold}\footnote{Sometimes in literature this manifold is referred to as the critical manifold, and the term
slow manifold refers to normally hyperbolic invariant manifolds in its close proximity.}.
At the slow manifold one considers the \emph{slow subsystem} given by the following differential-algebraic equation:
\begin{equation}
  \begin{aligned}
    0 &= f(x,y,0),\\
    \dot{y} &= g(x,y,0).
  \end{aligned}
\end{equation}

A set of methods developed in 70s and 80s, known as \emph{geometric singular perturbation theory} (GSPT) was designed to give qualitative information on the
dynamics of the full system~\eqref{slowfastIntro} by studying its slow and fast subsystems.
The theory was founded on a set of results of Fenichel~\cite{Fenichel} and
its good overview is presented in~\cite{JonesBook} and~\cite{KuehnBook}, Chapter 3, see also~\cite{Arnold}.
Typical assertions in applications of GSPT to~\eqref{slowfastIntro} are valid for $\epsilon \in (0,\epsilon_0]$, where $\epsilon_0$ is ``small enough'' 
and unspecified. 

We note that the results on existence of bounded orbits in systems of the form~\eqref{slowfastIntro}, for $\epsilon \in (0,\epsilon_0]$, $\epsilon_0$ small enough 
can be proved using alternative, topological techniques, developed even earlier than GSPT~\cite{Conley, Carpenter} (cf. Subsection~\ref{subsec:backsegments}).
One of the primary goals of this thesis is to propose a computer assisted method of this type that allows to reproduce these results with
an \emph{explicit} upper bound $\epsilon_0$.  
Our sample results of this type are Theorems~\ref{thm:main1},~\ref{thm:main4} concerning the existence of periodic
and homoclinic orbits in the FitzHugh-Nagumo fast-slow system, presented in Section~\ref{sec:results}.

\subsection{Rigorous numerics for differential equations}

Numerical methods are a quick and convenient way to provide insight on dynamics of nonlinear problems.
However, can we trust nonrigorous computations?  
Many ODEs exhibit a sensitive dependence to initial conditions, and  
numerical integration can produce vastly differing results for very slight variations in initial values.
Moreover, it has been well-documented that a cumulation of round-off errors 
may occasionally lead to appearance of artificial, spurious solutions~\cite{humphries, sweby}.

Recent developments in the field of \emph{rigorous numerics} address these issues.
Its primary principles are that the computations are performed on sets rather than points;
and that the result set of each operation has to contain all actual solutions, in other words the algorithms need to form \emph{enclosures} for the operations.
It is particularly convenient to use sets constructed from intervals with representable (e.g. \texttt{double}) endpoints.
By an appropriate adjustment of the processor rounding settings,
the enclosure principle can be realized for the elementary arithmetic operations of addition, subtraction, multiplication and division
on intervals as follows:
\begin{equation}
  \begin{aligned}
    [a,b] + [c,d] &\subset \left[\downarrow(a+c), \uparrow(b+d)\right],\\
    [a,b] - [c,d] &\subset \left[\downarrow(a-d),\uparrow(b-c)\right],\\
    [a,b] \cdot [c,d] &\subset \left[\min(\downarrow ac, \downarrow ad, \downarrow bc, \downarrow bd ), \max(\uparrow ac, \uparrow ad, \uparrow bc, \uparrow bd ) \right],\\
    [a,b] / [c,d] &\subset [a,b] \cdot [\downarrow(1/d), \uparrow(1/c)], \quad \text{iff}\ 0 \notin [c,d]
  \end{aligned}
\end{equation}
where $\uparrow$ and $\downarrow$ are the operations of rounding up and down, respectively, to the nearest representable number.
Such framework is known under the name of \emph{interval arithmetics}, and can be used to provide rigorous enclosures of outputs
for all algorithms formed by these elementary arithmetic operations.

\subsubsection{Rigorous integration}

Interval arithmetics can be employed to give quantitative information on a given dynamical system 
in terms of enclosures of vector fields, maps or their derivatives. 
It can also be used to produce enclosures of solutions of initial value problems for differential equations.
This procedure is known as \emph{rigorous integration} 
and heavily employed throughout the thesis. Therefore we will now outline
a basic rigorous integration algorithm, based on expansion of the solution into Taylor series.

Consider the ODE~\eqref{ODEintro} and assume that $F: \rr^N \to \rr^N$ is analytic and given by elementary functions and elementary arithmetic operations.
Values of the solution operator $\varphi(t,x)$ can be enclosed as follows
\begin{equation}\label{Taylor}
  \varphi(h,[X_0]) \subset [X_0]  + \sum_{k=1}^{r}  \frac{\partial^{k} \varphi(0,[X_0])}{\partial t^k} \cdot \frac{h^k}{k!} 
  + \frac{\partial^{r+1} \varphi(0,\varphi(\theta h,[X_0]))}{\partial t^{r+1}} \cdot \frac{h^{r+1}}{(r+1)!}, \ \theta \in [0,1],
\end{equation}
where $h>0$ and the initial condition $[X_0]$ can be a point as well as a product of intervals.

The following remarks hold:
\begin{itemize}
  \item higher order derivatives of the flow with respect to $t$ can be obtained by a repeated differentiation 
    of the equality $\frac{\partial \varphi(0,x)}{\partial t} = F(x)$. An efficient way to implement this by
    using \emph{automatic differentiation}, see~\cite{RallBook};
  \item an enclosure for $\varphi(\theta h,[X_0])$ can be generated by the following reasoning.
    If 
    \begin{equation}
      [Y]:=[X_0] + [0,h] F([Z]) \subset \inter [Z]
    \end{equation}
    for some product of intervals $[Z]$, then from the integral form of the solution to~\eqref{ODEintro}
    we obtain
    \begin{equation}
      \varphi([0,h],[X_0]) \subset [Y].
    \end{equation}
    Such enclosure is very coarse, but for large $r$ the last term in formula~\eqref{Taylor} will still be negligibly small.
\end{itemize}

Similar methods can be applied to produce enclosures of time step maps for partial derivatives of the flow.
We remark that integration performed by a sequential evaluation of formula~\eqref{Taylor} in interval arithmetics
can lead to a rapid accumulation of overestimates, also known as the \emph{wrapping effect}. 
Different approaches can be used to suppress this problem; one of them is the \emph{Lohner algorithm}~\cite{Lohner}, 
which represents the sets in a suitable evolving coordinate frame.
Rigorous integration of the flow and its variational equations
based on the Lohner algorithm, and resulting rigorous computation of Poincar\'e maps\footnote{First return maps to transversal sections, see Subsection~\ref{subsec:poinc}.}
and their derivatives
is implemented in e.g. the CAPD library for rigorous computations~\cite{CAPD}
and documented in~\cite{Zgliczynski, CrLohner}.

In this thesis we use CAPD integrators for our proofs, however one should keep in mind that 
our methods are independent of the integration scheme.
Therefore, later on we will often use Poincar\'e maps without much reference to underlying numerical procedures,
and the reader should be aware that procedures for rigorous computation of enclosures of their values
are publicly available, and at our disposal.

\subsubsection{Computer assisted proofs}

Interval arithmetics can also be employed to give qualitative information, in form of computer assisted proofs.
For dynamical systems the typical setting is when we would like to 
to verify some qualitative scenario in a specific system, based on an abstract theorem,
which assumptions would be tedious to check with pen-and-paper calculations.
If these assumptions are amenable to verification by a finite algorithm 
and endure a certain amount of overestimates, then there is hope to conduct a proof
with assistance of interval arithmetics.
Although such proofs are limited to concrete systems with preset values of parameters,
they are usually proofs of concept.
Methods are often of a greater scientific value than the properties of the particular equation,
and a successful implementation in an emblematic example ``proves'' that they should
be applicable to all problems with similar structure.

In the context of computer assisted proofs for differential equations, rigorous integration becomes
an immensely useful tool, as it allows to recast problems for continuous systems as problems for (Poincar\'e) maps.
For instance, one can use rigorous integration to verify assumptions of the Brouwer fixed point theorem 
for a Poincar\'e map of a given ODE to prove the existence of an (apparently stable) periodic solution.
Rigorous integration has been successfully used to prove the existence of periodic orbits~\cite{Galias2, Zgliczynski},
connecting orbits~\cite{szczelir, WilczakZgliczynski2}, bifurcations~\cite{WilczakZgliczynski, Kokubu} and
chaos~\cite{MischaikowMrozek, GaliasZgliczynski} in various ODEs. 
One of the famous results obtained that way was the verification of existence of a strange attractor in Lorenz equations, solving Smale's 14th problem~\cite{Tucker}.
Certain scenarios have been also verified in dissipative partial differential equations (PDEs)~\cite{CyrankaZgliczynski, ZgliczynskiKS2, ZgliczynskiKS3, ArioliKoch2}.
References provided by us in this paragraph are in no way meant to be complete.

We remark, that there are other ways to design computer assisted proofs in differential equations;
one of them relies on rewriting the problem as a zero of an infinite-dimensional operator in some function space, e.g.~\cite{Reinhardt, Lessard},
the other uses solely the topology of the vector field e.g.~\cite{Wanner}.

It is clear that most computations 
using such methods will result in overestimates, therefore it is impossible to use interval arithmetics to check equalities.
However, a finite number of strict inequalities is already suitable for such verification.
Expressions in such inequalities typically vary in a continuous fashion with system parameters;
therefore only assumptions for \emph{structurally stable} scenarios\footnote{That is scenarios, which persist under small perturbations to the system of some given class (e.g. $C^1$).}
can be readily verified in such setup.
Problems, which are not structurally stable (such as bifurcations) usually require 
additional care in preparation of the inequalities.

Little attention has been given so far to computer assisted proofs for fast-slow systems.
One of the reasons is their numerically stiff nature, however this is not the biggest hurdle.
Typically solutions of interest to problems of the form~\eqref{slowfastIntro} 
exist for ranges $\epsilon \in (0,\epsilon_0]$ and are nonvanishing in the slow (i.e. $y$) variable.
Designing a proof for such range yields a problem that is certainly not structurally stable, as for $\epsilon = 0$ there can be no nonconstant in $y$ solutions.
One of the main results of this thesis is a method to bypass this instability and 
treat the whole range $\epsilon \in (0,\epsilon_0]$ in a computer assisted proof, with $\epsilon_0$ explicit
(e.g. Theorems~\ref{thm:main1}, \ref{thm:main4}).
For this purpose, the inequalities in assumptions of our theorems have to be carefully crafted,
so the parameter $\epsilon$ can be factored out before applying interval arithmetic verification.
To this end we have to resign from rigorous integration for certain parts of the proof
and substitute them with~\emph{isolating segments} described in the subsequent subsection.

In previous computer assisted studies of fast-slow systems authors would consider only a single value of $\epsilon$~\cite{ArioliKoch};
or use computer assistance only to aid the perturbation methods and derive results for $\epsilon_0$ ``small enough''~\cite{Haiduc}.
To the best of our knowledge the only published paper that treats such type of range with computer assistance 
is on the existence of homoclinic tangencies~\cite{GuckenheimerJohnson}.
There, the authors exploit a different mechanism, most likely inapplicable in our problems -- in their scenario the proof for the whole range follows from 
the (computer assisted) proof for its upper bound. 
Results similar to ours, with use of similar methods have been recently, independently obtained by Matsue, and released as a preprint~\cite{Matsue} 
-- in Subsection~\ref{subsec:matsue} we compare our approaches and address the question 
of chronology in which the results of us and Matsue appeared online.

\subsection{The topological method of isolating segments}\label{subsec:backsegments}

A key component in our proofs is the construction of certain compact sets called isolating segments.
The precise definition of these objects is postponed to Subsection~\ref{sec:segments}
and we refer the reader to Figure~\ref{segmentCovFig:A} given therein to grasp some intuition behind their geometry.
In short, isolating segments are solids diffeomorphic to hypercubes, and akin to isolating blocks from Conley index theory~\cite{Easton, Conley2}.
In our definition (Definition~\ref{defn:isegment}) the variables given by the diffeomorphism to a cube induce new directions 
some of them we label as exit, some as entry and one as central. 
It is required, that the faces in the exit direction are immediate exit sets for the flow and the faces in the entry direction are immediate entrance sets. 
Moreover, the projection onto the one-dimensional central direction has to be monotone along the trajectories of the flow.
The analogy between isolating segments and isolating blocks can be seen as follows. 
While isolating blocks give information on the structure of invariant sets in their interior, based on the direction of the vector field on their boundary,
isolating segments are designed to provide the same type of information for certain Poincar\'e maps
defined on sections containing their faces.
We emphasise that no integration of initial value problems is needed to construct an isolating segment and deduce this information.
The computational cost of placing a segment lies mainly in evaluation of scalar products of normals to their faces with the vector field.

Isolating segments were first introduced by Srzednicki~\cite{Srzednicki, SrzednickiWojcik} to obtain information
about the period shift map in time-periodic nonautonomous ODEs. The definition of Srzednicki
was restricted to nonautonomous equations with the central direction fixed to be the time direction, in particular it
was not adaptable to global problems in autonomous equations.
In this thesis we propose a definition of isolating segments suitable for use in autonomous ODEs 
and in conjunction with the method of~\emph{covering relations} of Zgliczy\'nski~\cite{GideaZgliczynski} applied to Poincar\'e maps.
In short, the method of covering relations is a transversality condition describing how certain compact
sets (called \emph{h-sets}) are mapped across each other. 
To verify a covering relation in and ODE one usually needs to perform rigorous integration.
We relate covering relations to isolating segments (Theorems~\ref{is:1} \ref{is:2}, \ref{is:3} and~\ref{is:4})
and include them in abstract theorems on existence of periodic and homoclinic orbits (Theorems~\ref{thm:1}, \ref{thm:2}, \ref{thm:hom1} and~\ref{thm:hom2}).

Our definition of an isolating segment is amenable to rigorous verification in interval arithmetics
and especially useful in verification of existence of strongly hyperbolic orbits.
This makes it ideal to employ in detection of strongly expanding orbits, where 
rigorous integration suffers from a rapid growth of error bounds.
In fast-slow systems such situation is encountered in the proximity of the slow manifold.
An important ingredient in the proofs of Theorems~\ref{thm:main1},~\ref{thm:main4}
is a construction of sequences of isolating segments enclosing fragments of branches of the slow manifold,
which allows us to avoid the problems connected with rigorous integration in that region.
By a suitable placement of the segments,
parameter $\epsilon$ causing the major hindrance of structural instability of the problem can be factored out in some computations.
This would have been impossible to achieve using rigorous integration alone.

We remark that in the early proofs of existence of periodic and homoclinic solutions
to the FitzHugh-Nagumo equation for small $\epsilon$ by Conley and Carpenter~\cite{Carpenter, Conley}, the authors
employed sequences of isolating blocks to track the solutions. 
At a certain level of abstraction our topological methods probably coincide with the ones of Carpenter and Conley.
In that context, the contribution of this thesis is that we provide a framework of definitions and theorems which  
can be efficiently applied in computer assisted proofs of existence for explicit ranges of $\epsilon$.

The original ideas of Srzednicki are still a subject of active research and evolved into computer assisted proofs
employing short-time integration~\cite{SrzednickiMrozek, SrzednickiWeilandt}.
However, at the current stage their applications are still limited to low-dimensional nonautonomous ODEs.

\subsubsection{Isolating segments in ill-posed PDEs -- a digression}\label{subsec:Boussinesq}

In this subsection we give a very brief digest of results from~\cite{CzechowskiZgliczynski2},
which were obtained by the author during his PhD studies, but have not been included in this thesis
to keep its exposition compact and devoted to fast-slow systems.

In the above-mentioned paper we consider
the nonautonomously forced Boussinesq equation~\cite{Boussinesq}:
\begin{equation}\label{eq:bsq}
  \frac{\partial^2 u}{\partial t^2} = \frac{\partial^2 u}{\partial x^2} + \beta \frac{\partial^4 u}{\partial x^4} + \sigma \frac{\partial^2 (u^2)}{\partial x^2} + \epsilon f(t,x),
\end{equation}
with $t,x \in \rr$ and functions $u$ and $f$ $2\pi$-periodic and even in $x$.
In addition, we assume that $f$ is smooth and $T$-periodic in $t$ for some $T>0$.

For $\beta>0$ the equation~\eqref{eq:bsq} suffers from ill-posedness, i.e. almost
all solutions of initial value problems blow up to infinity in their high Fourier modes, and therefore immediately lose smoothness.
Therefore, any attempts of numerical (rigorous or not) integration of such system
to find time-periodic solutions is bound to fail.
This behavior is caused by strong hyperbolicity coming from the linear part.
In the Fourier basis the linearized equation is of a block-diagonal form with 2x2 blocks.
Almost all blocks have eigenvalues of opposite signs, which
grow polynomially with respect to the block coordinate.
This causes a strong expansion in high frequencies, both forward and backward in time, which accumulates into nonlinearities 
and initiates the blowup.

We adapted the ``old'' definition of periodic isolating segments for nonautonomous ODEs from~\cite{Srzednicki}
to the framework of~\emph{self-consistent bounds} (cf.~\cite{ZgliczynskiMischaikow, ZgliczynskiKS3})
to produce a similar tool for infinite-dimensional nonautonomous systems.
Then, we applied it to give a computer assisted proof of existence of smooth $T$-periodic solutions of~\eqref{eq:bsq} for certain choices of $f$, $\sigma$, $\beta>0$
and certain (small, explicit) ranges of $\epsilon$.
High hyperbolicity of the linear part allowed us to construct isolating segments in a suitable diagonalization without much difficulty.
These solutions continue from the zero equilibrium at $\epsilon=0$ (a trivial periodic solution itself) and as such
are formed by a \emph{regular perturbation}, contrary to singular perturbations considered in this thesis.

This is the first computer assisted result of this type for an ill-posed system that we are aware of.
We remark that the choice of time-periodic forcing is somewhat artificial;
in future we would like to perform a similar proof for smooth, periodic, non-stationary solutions of the ``standard'' autonomous
Boussinesq equation (i.e.~\eqref{eq:bsq} for $\epsilon=0$), perhaps by applying 
(a suitable modification of) isolating segments for autonomous systems, defined in this thesis.
\pagebreak

\section{Main results}\label{sec:results}
\subsection{The FitzHugh-Nagumo model}
The FitzHugh-Nagumo model with diffusion
\begin{equation}\label{RDiff}
  \begin{aligned}
    \frac{\partial u}{\partial \tau} &= \frac{1}{\gamma} \frac{\partial^{2} u}{\partial x^{2}} + u(u-a)(1-u) - w, \\
    \frac{\partial w}{\partial \tau} &= \epsilon (u - w),
 \end{aligned}
\end{equation}
was introduced as a simplification of the Hodgkin-Huxley model
for the nerve impulse propagation in nerve axons~\cite{FitzHugh, Nagumo}.
The variable $u$ represents the axon membrane potential and $w$ a slow negative feedback.
%Parameter $\epsilon$ is a small positive number representing the ratio of time scales.
\emph{Traveling wave} solutions (that is solutions that can be represented as maps of argument $x+\theta \tau$ only, for some $\theta \neq 0$)
of~\eqref{RDiff} are of particular interest in neurobiology
as they resemble an actual motion of the nerve impulse~\cite{Hastings}.

By plugging the traveling wave ansatz $(u,w)(\tau ,x) = (u,w)(x+\theta \tau) = (u,w)(t)$, $\theta>0$
and rewriting the system as a set of first order equations we arrive at an ODE
\begin{equation}\label{FhnOde}
     \begin{aligned}
       u'&=v, \\
       v'&=\gamma(\theta v - u(u-a)(1-u) + w), \\
       w'&= \frac{\epsilon}{\theta} (u - w),
     \end{aligned}
\end{equation}
to which we will refer to as the \emph{FitzHugh-Nagumo system} or the \emph{FitzHugh-Nagumo equations}.
The FitzHugh-Nagumo system is a fast-slow system with two fast variables $u,v$ and one slow variable $w$. 
The parameter $\theta$ represents the wave speed and, as usual, the parameter $\epsilon$ is the small parameter, so $0 < \epsilon \ll 1$.
To focus our attention, following \cite{ArioliKoch, Champneys, GuckenheimerKuehn} we set the two remaining parameters to
\begin{align}\label{eq:parameters}
  a := 0.1, \qquad  \gamma := 0.2,
\end{align}
throughout the rest of the thesis.

Aside from its physical meaning, the FitzHugh-Nagumo system 
is one of the most prominent examples of its class, along with the van der Pol system.
Their analysis led to many important methodological innovations in the field of multiple time scale dynamics,
applicable to a wide range of biological and physical systems, and driven the development of large portions of GSPT, see~\cite{JonesBook,KuehnBook}.

Bounded solutions of~\eqref{FhnOde} yielding different wave profiles have been studied by many authors both rigorously and numerically, see
\cite{Conley, Carpenter, Hastings2,
Hastings3, Maginu, Mischaikow, Kopell, Ambrosi, ArioliKoch, Jones, Yanagida, Nagumo, Champneys, GuckenheimerKuehn} and references given there.
Periodic orbits leading to \emph{periodic wave trains} exist for an open range of $\theta$'s and were treated in \cite{Conley,Hastings2,Hastings3,Maginu,Mischaikow, ArioliKoch};
\emph{traveling pulses} generated by homoclinic orbits exist for two isolated values of $\theta$, and are sometimes referred to as the \emph{slow pulse} and the \emph{fast pulse}.
The slow pulse is generated by a regular perturbation from a homoclinic of the fast subsystem~\cite{Krupa} and forms
an unstable wave~\cite{Flores}; as such it will not be of our interest in this thesis.
Existence of the fast pulse was proved in~\cite{Carpenter,Hastings3, Kopell, Ambrosi, ArioliKoch}.
Stability of waves was discussed in~\cite{Jones,Maginu,Yanagida, ArioliKoch}.
Proofs of existence use various methods, but most share the same perturbative theme\footnote{In~\cite{Ambrosi, ArioliKoch} authors
use non-perturbative computer assisted methods for a single value $\epsilon=0.01$ where the system becomes a regular, although stiff ODE, cf. Subsection~\ref{subsec:ArioliKoch}.}.
We outline it below, first for the periodic orbit.

\begin{figure}[t]
  \centering
  \begin{subfigure}{0.48\textwidth}
    \centering
    \begin{overpic}[width=0.7\textwidth, clip=true, trim = 60mm 20mm 100mm 0mm]{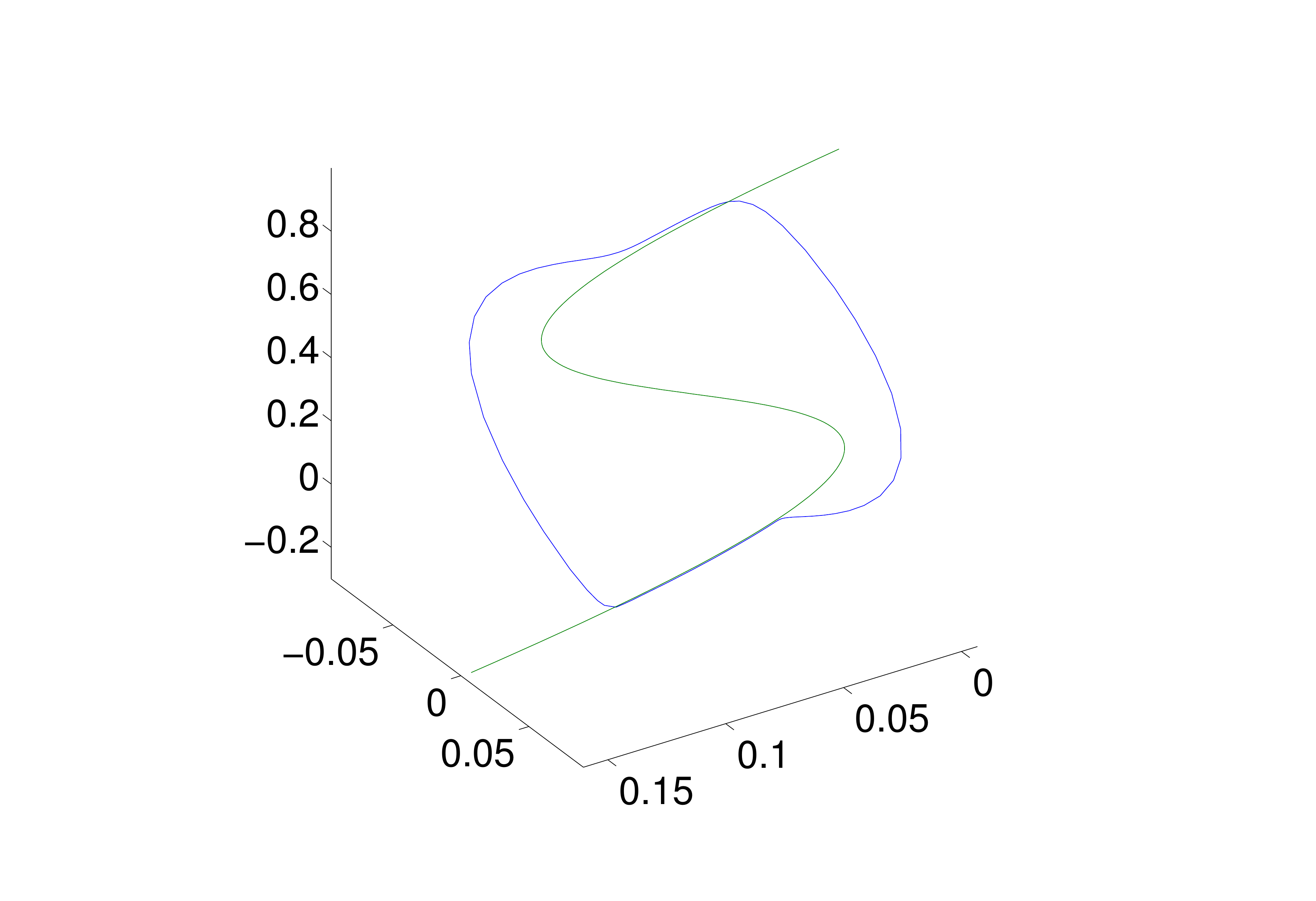}
      \put (17,52) {\scriptsize{$u$}}
      \put (17,9) {\scriptsize{$v$}}
      \put (71,6){\scriptsize{$w$}}
    \end{overpic}
    \caption{Approximate periodic orbit for $\epsilon=0.001$ in blue.}
  \end{subfigure} 
  \begin{subfigure}{0.48\textwidth}
    \centering
    \begin{overpic}[width=0.78\textwidth, clip=true, trim = 25mm 0mm 70mm 10mm]{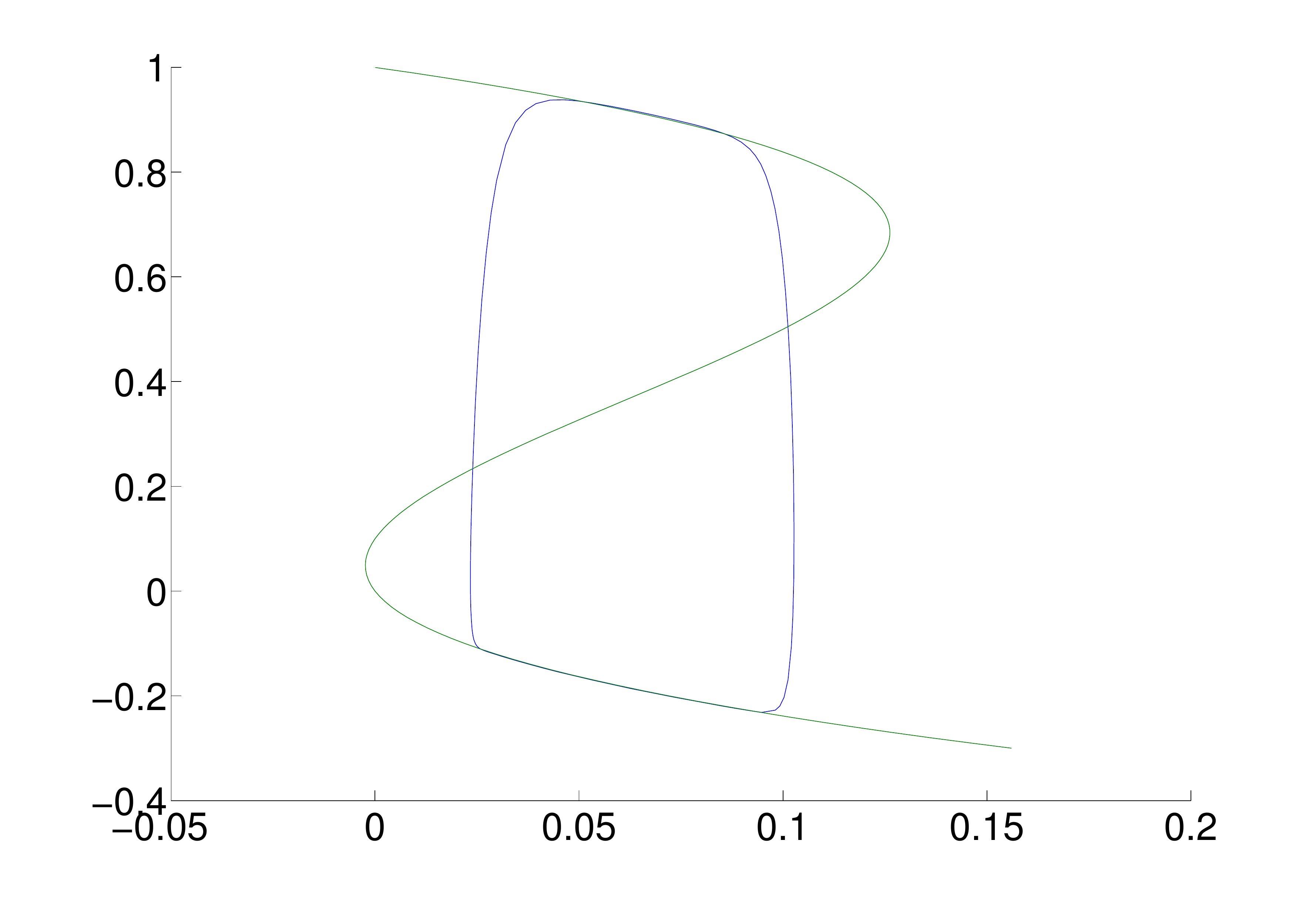}
      \put (11,49) {\scriptsize{$u$}}
      \put (53, 0) {\scriptsize{$w$}}
    \end{overpic}
    \caption{Projection onto $(u,w)$ plane.}
  \end{subfigure}
  \\
  \begin{subfigure}{0.45\textwidth}
    \centering
      \begin{tikzpicture}[line cap=round,line join=round,>=latex,x=4.5mm,y=4.5mm]
 
  \draw[color=black] (-0.,0.) -- (10,0.);
  \foreach \x in {0,...,10}
  \draw[shift={(\x,0)},color=black] (0pt,1pt) -- (0pt,-1pt) node[below] {};

  \draw[color=black] (0.,0.) -- (0.,10);
  \foreach \y in {0,...,10}
  \draw[shift={(0,\y)},color=black] (2pt,0pt) -- (-1pt,0pt) node[left] {};
 
 % \draw[color=black] (0.,10.) -- (10.,10);
 % \foreach \z in {0,...,10}
 % \draw[shift={(\z,10)},color=black] (0pt,1pt) -- (0pt,-1pt) node[left] {};

  \clip(0,-1) rectangle (10,10);

  \draw [color=darkgreen,semithick,domain=0.:10.] plot(\x,{8});
  \draw [color=darkgreen,semithick,domain=0.:10.] plot(\x,{2});

  \draw [->,semithick,dash pattern=on 5pt off 5pt,color=blue] (2.,2.) -- (2.,8.);
  \draw [->,semithick,dash pattern=on 5pt off 5pt,color=blue] (8.,8.) -- (8.,2.);

  \draw [->,color=blue] (4.99,8.) -- (5.01,8.);
  \draw [->,color=blue] (5.01,2.) -- (4.99,2.);

  \draw (9.,8.) node[anchor=north] {\scriptsize{$\Lambda_u(w)$}};
  \draw (9.,2.) node[anchor=north] {\scriptsize{$\Lambda_d(w)$}};

  \draw (2.,5.) node[anchor=west] {\scriptsize{$w_{*}$}};
  \draw (8.,5.) node[anchor=west] {\scriptsize{$w^{*}$}};

  \draw (5.,0.) node[anchor=north] {\scriptsize{$w$}};
  \draw (0.,5.) node[anchor=west] {\scriptsize{$u,v$}};

\end{tikzpicture}
      \caption{A schematic drawing of the singular periodic orbit.}
  \end{subfigure}
  \caption{A numerical approximation of the periodic orbit close to the singular orbit, the slow manifold in green.}
  \label{orbitFig}
\end{figure}

Consider the limit equation at $\epsilon = 0$. There, the velocity of $w$
is zero and the phase space can be fibrated into a family of two-dimensional fast subsystems parameterized by $w$.
These subsystems serve as a good approximation to the system with $\epsilon > 0$ small, except for regions of phase
space near the Z-shaped slow manifold
\begin{equation}
  C_{0} = \{(u,v,w):\ v=0,\ w = u(u-0.1)(1-u) \},
\end{equation}
where the velocities of fast variables become small and the slow flow takes over.
For a range of $w$ the slow manifold has exactly three branches - by looking from a perspective of the respective $u$ values the lower and the upper one are formed by saddles,
and the middle one is formed by sources. We denote the upper/lower branches of saddles by $\Lambda_u(w)$ and $\Lambda_d(w)$, respectively.
For exactly two values $w \in \{w_{*}, w^{*} \}$, with $w_{*} < w^{*}$ there are heteroclinic connections from $\Lambda_d(w_{*})$ to $\Lambda_u(w_{*})$
and from $\Lambda_u(w^{*})$ to $\Lambda_d(w^{*})$ (a proof of that phenomenon is given in~\cite{Conley}).
It happens that in the range $[w_{*},w^{*}]$ the slow flow on the branch $\Lambda_u$ is monotonically decreasing
and on the branch $\Lambda_d$ monotonically increasing, so by connecting the heteroclinics with pieces of the slow manifold
one assembles the \emph{singular periodic orbit}, see Figure~\ref{orbitFig}.
The proof of existence of an actual periodic orbit goes by perturbing to $\epsilon>0$ small and using
certain arguments based on topological methods~\cite{Conley,Carpenter,Mischaikow} or Fenichel theory and differential forms~\cite{Kopell, JonesBook}.

\begin{figure}[t]
  \centering
  \begin{subfigure}{0.48\textwidth}
    \centering
    \begin{overpic}[width=0.8\textwidth, clip=true, trim = 70mm 110mm 70mm 110mm]{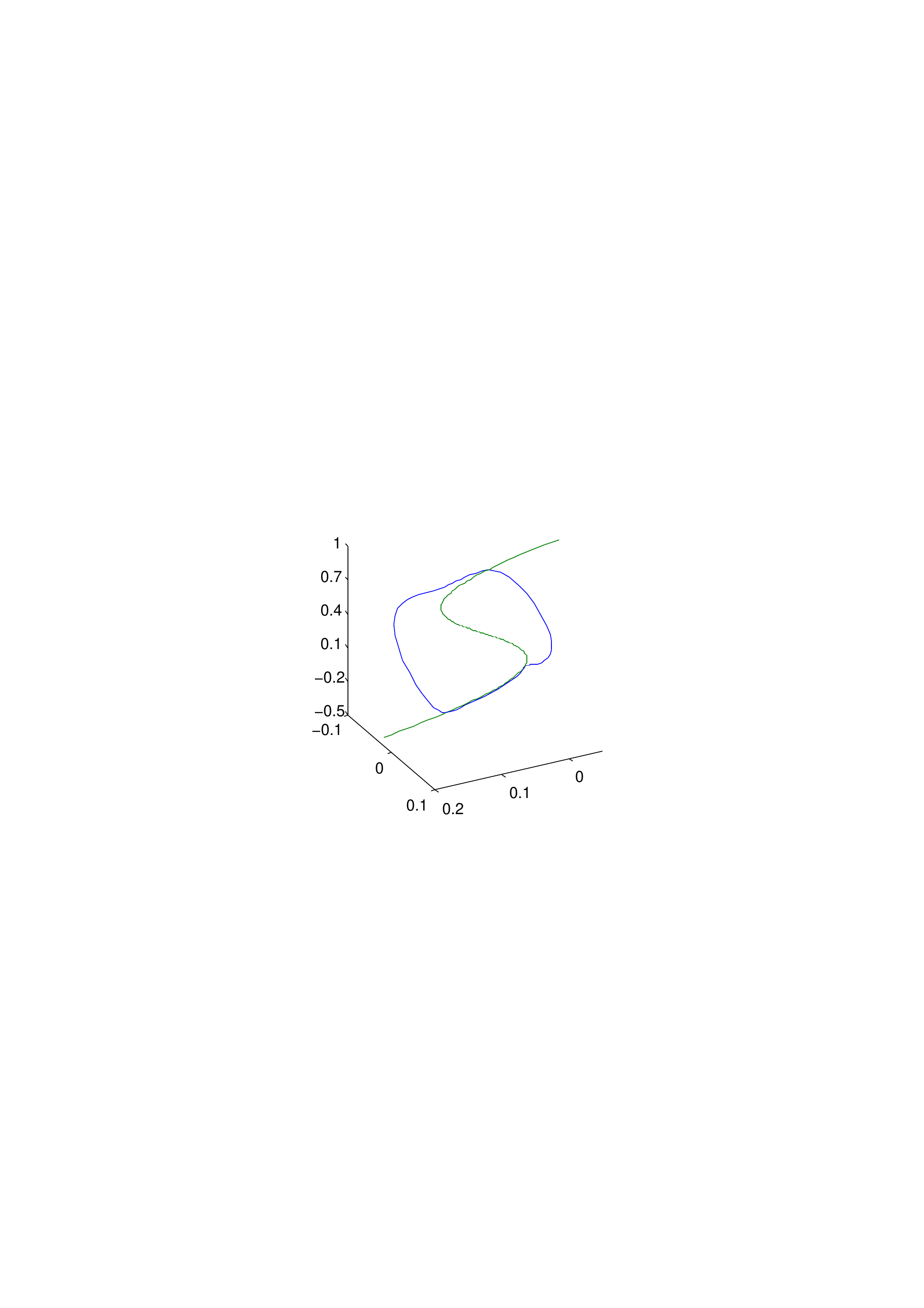}
      \put (14,57) {\scriptsize{$u$}}
      \put (17,12) {\scriptsize{$v$}}
      \put (66,4){\scriptsize{$w$}}
    \end{overpic}
    \caption{Approximate homoclinic orbit for $\epsilon=0.005$, $\theta \approx 1.2$ in blue.}
  \end{subfigure} 
  \begin{subfigure}{0.48\textwidth}
    \centering
    \begin{overpic}[width=0.8\textwidth, clip=true, trim = 50mm 97mm 60mm 100mm]{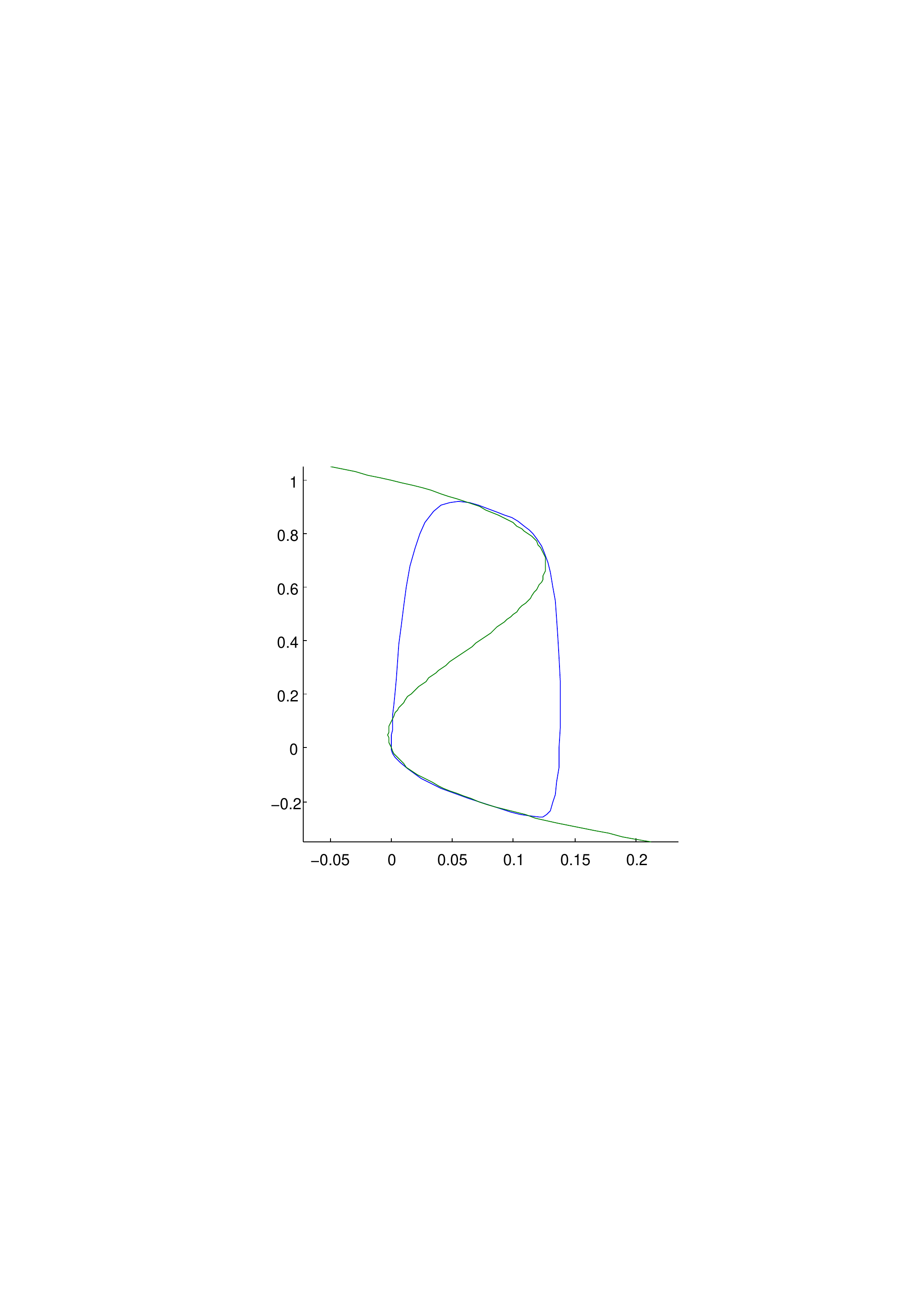}
      \put (23,52) {\scriptsize{$u$}}
      \put (58, 0) {\scriptsize{$w$}}
    \end{overpic}
    \caption{Projection onto $(u,w)$ plane.}
  \end{subfigure}
  \\
  \begin{subfigure}{0.45\textwidth}
    \centering
      \begin{tikzpicture}[line cap=round,line join=round,>=latex,x=4.5mm,y=4.5mm]
 
  \draw[color=black] (-0.,0.) -- (10,0.);
  \foreach \x in {0,...,10}
  \draw[shift={(\x,0)},color=black] (0pt,1pt) -- (0pt,-1pt) node[below] {};

  \draw[color=black] (0.,0.) -- (0.,10);
  \foreach \y in {0,...,10}
  \draw[shift={(0,\y)},color=black] (2pt,0pt) -- (-1pt,0pt) node[left] {};
 
 % \draw[color=black] (0.,10.) -- (10.,10);
 % \foreach \z in {0,...,10}
 % \draw[shift={(\z,10)},color=black] (0pt,1pt) -- (0pt,-1pt) node[left] {};

  \clip(0,-1) rectangle (10,10);

  \node[draw,circle,inner sep=1pt,fill, color=blue] at (2,2){};

  \draw [color=darkgreen,semithick,domain=0.:10.] plot(\x,{8});
  \draw [color=darkgreen,semithick,domain=0.:10.] plot(\x,{2});

  \draw [->,semithick,dash pattern=on 5pt off 5pt,color=blue] (2.,2.4) -- (2.,8.);
  \draw [->,semithick,dash pattern=on 5pt off 5pt,color=blue] (8.,8.) -- (8.,2.);

  \draw [->,color=blue] (4.99,8.) -- (5.01,8.);
  \draw [->,color=blue] (5.01,2.) -- (4.99,2.);
  \draw [->,color=blue] (0.99,2.) -- (1.01,2.);
  \draw [->,color=blue] (3.01,2.) -- (2.99,2.);
  \draw [->,color=blue] (7.01,2.) -- (6.99,2.);

  \draw (9.,8.) node[anchor=north] {\scriptsize{$\Lambda_u(w)$}};
  \draw (9.,2.) node[anchor=north] {\scriptsize{$\Lambda_d(w)$}};

  \draw (2.,5.) node[anchor=west] {\scriptsize{$\theta=\theta_{*}$}};
  \draw (2.,4.3) node[anchor=west] {\scriptsize{$w_*=0$}};
  \draw (8.,5.) node[anchor=west] {\scriptsize{$w^{*}$}};

  \draw (5.,0.) node[anchor=north] {\scriptsize{$w$}};
  \draw (0.,5.) node[anchor=west] {\scriptsize{$u,v$}};

\end{tikzpicture}
      \caption{A schematic drawing of the singular homoclinic orbit.}
  \end{subfigure}
  \caption{A numerical approximation of the homoclinic orbit close to the singular orbit, the slow manifold in green.}
  \label{homFig}
\end{figure}

The homoclinic orbit is formed in a similar scenario.
As in the periodic case, it is constructed by perturbing a \emph{singular homoclinic orbit} at $\epsilon=0$, consisting of two heteroclinic connections
in the fast subsystems and two fragments of the slow manifold, on which the flow is monotone.
However, since the we are looking for a homoclinic to the point $(0,0,0)$, to construct the singular orbit we need to set $w_{*}=0$, and
another parameter needs to be varied to create a heteroclinic connection in the respective fast subsystem.
Conveniently, we have the wave speed $\theta$, which is set to some $\theta_{*}$ to create the desired connecting orbit
from $\Lambda_d(0)$ to $\Lambda_u(0)$. For such $\theta_{*}$ the slow variable $w$ can
be varied to find $w^{*}>0$ such that the second heteroclinic from $\Lambda_u(w^{*})$ to $\Lambda_d(w^{*})$ is formed.
These two are in turn connected by pieces of the slow manifold, on which the slow flow is monotone, to assemble the singular homoclinic, see Figure~\ref{homFig}.

Using methods described above proofs of existence have been given for $\epsilon \in (0,\epsilon_0]$, $\epsilon_0$ ``small enough''.
With aid of computer we are able to improve these results.
We give an \emph{explicit} $\epsilon_{0}$, such that for parameter range $\epsilon \in (0,\epsilon_{0}]$ there exists a periodic orbit
and a homoclinic orbit of~\eqref{FhnOde}.
Our secondary goal is to make $\epsilon_{0}$ as large as possible. 
In particular, for the periodic orbit we show that for $\epsilon \geq \epsilon_{0}$ one can perform
further continuation using well-established computer-aided methods such as the interval Newton-Moore method
applied to a sequence of Poincar\'e maps~\cite{Alefeld, Neumaier, Moore}.

%\pagebreak
Our main results concerning periodic solutions are:
\begin{thm}\label{thm:main1}
  For each $\epsilon \in (0,1.5 \times 10^{-4}]$, for $\theta = 0.61$ and other parameter values given in~\eqref{eq:parameters} there exists a periodic orbit of~\eqref{FhnOde}.
\end{thm}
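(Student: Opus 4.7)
The plan is to realize the singular periodic orbit pictured in Figure~\ref{orbitFig} as a persistent trajectory for all $\epsilon\in(0,1.5\times 10^{-4}]$ by assembling a closed chain of covering relations and isolating segments around it. Fix $\theta=0.61$ and choose transversal sections cutting the four characteristic pieces of the singular loop: the left fast jump from $\Lambda_d(w_{*})$ to $\Lambda_u(w_{*})$, the slow traverse along $\Lambda_u$ from $w_{*}$ to $w^{*}$, the right fast jump from $\Lambda_u(w^{*})$ to $\Lambda_d(w^{*})$, and the slow return along $\Lambda_d$. On each section I would place an h-set in the sense of~\cite{GideaZgliczynski}; the theorem will follow by closing the cycle of covering relations and invoking the abstract periodic-orbit results (Theorems~\ref{thm:1},~\ref{thm:2}).

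First I would handle the two fast heteroclinic portions by rigorously integrating the relevant Poincar\'e maps with CAPD and verifying the covering relations between h-sets on consecutive sections. These pieces are traversed in $O(1)$ time and the expansion is uniformly bounded in $\epsilon$; the slow drift $\dot{w}=(\epsilon/\theta)(u-w)$ shifts $w$ by at most $O(\epsilon)$ across these segments, which is negligible. For the two slow portions along $\Lambda_u$ and $\Lambda_d$, rigorous integration is doomed by stiffness and exponential fast-direction expansion, so I would cover each branch by a finite sequence of isolating segments in the sense of Definition~\ref{defn:isegment}, each aligned with the frozen fast stable and unstable directions and with the one-dimensional central monotone direction chosen as the slow variable $w$. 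Theorems~\ref{is:1}--\ref{is:4} then convert each such sequence into a covering relation between the h-sets on its bracketing sections, with no numerical integration whatsoever.

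The heart of the argument, and the step that forces the quantitative bound $1.5\times 10^{-4}$, is verifying the defining inequalities for the slow-region segments uniformly in $\epsilon\in(0,1.5\times 10^{-4}]$. The saving feature is that the lateral entry/exit conditions are governed by the fast field $f(x,y,0)$ plus an $O(\epsilon)$ correction, while the central monotonicity condition reads $\dot{w}=(\epsilon/\theta)(u-w)$ and is strictly signed on each branch in the relevant $w$-range. Dividing the central condition through by $\epsilon$ removes the small parameter altogether, and for the lateral conditions a uniform positive margin in $f$ (ensured by making the segments extend a bounded distance from the slow manifold in the fast directions) absorbs the $O(\epsilon)$ perturbation across the whole admissible range. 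This is precisely the factoring-out-of-$\epsilon$ trick advertised in the introduction, which rescues the structurally unstable limit $\epsilon\to 0^{+}$ from a direct integration approach.

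The step I expect to be most delicate is the matching at the two ``knees'' where each fast jump docks onto the slow manifold: there the normal hyperbolicity of $C_0$ degenerates, the slow flow is still slow, and neither a direct rigorous integration nor a single isolating segment is immediately applicable. I plan to place the endpoints of each fast Poincar\'e chain at a definite distance from the fold points, so that rigorous integration reaches with room to spare into the entry face of the first slow segment, and to make the extreme segments along each branch sufficiently wide in the fast directions that the incoming h-sets lie comfortably inside. A secondary but very real concern is computational cost: the number of isolating segments needed per slow branch, and the width of their fast faces, must be tuned so that the entire chain of interval inequalities can actually be discharged within feasible time, and this tuning will be what ultimately pins down the explicit value $\epsilon_0=1.5\times 10^{-4}$.
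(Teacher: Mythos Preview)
Your overall architecture is right and matches the paper: fast jumps by rigorous Poincar\'e-map coverings, slow branches by chains of isolating segments with $w$ as the central direction, $\epsilon$ factored out of the monotonicity check, and Theorem~\ref{thm:2} to close the loop. Two points need correction.

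First, a misconception: the four corner points $\Lambda_d(w_{*}),\Lambda_u(w_{*}),\Lambda_u(w^{*}),\Lambda_d(w^{*})$ are \emph{not} fold points and the slow manifold does \emph{not} lose normal hyperbolicity there. They are ordinary normally hyperbolic points on $\Lambda_{u},\Lambda_{d}$ at which the fast subsystem happens to have a heteroclinic connection. The genuine folds of $C_{0}$ lie elsewhere and are never touched by the singular periodic orbit.

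Second, and this is the real gap: your proposed fix at the corners --- integrate a bit further and land ``into the entry face of the first slow segment'' --- does not work uniformly in $\epsilon$. The front face $X_{S,\text{in}}$ of a slow segment is a section $\{w=\text{const}\}$. After a fast jump the orbit arrives near the slow manifold essentially at the corner value of $w$; to reach any $\{w=\text{const}\}$ section a fixed distance away requires traversing an $O(1)$ interval in $w$ at speed $O(\epsilon)$, i.e.\ $O(1/\epsilon)$ integration time, which is exactly what you set out to avoid. Making the first slow segment ``wide in the fast directions'' does not help, because the obstruction is in the central direction, not the lateral ones.

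The paper resolves this with dedicated \emph{corner isolating segments} and the ``fast--slow switch'' of Theorems~\ref{is:3} and~\ref{is:4}. At each corner one places a short isolating segment $S$ (still with $w$ central) whose side faces $X_{S,\text{ls}},X_{S,\text{rs}},X_{S,\text{lu}},X_{S,\text{ru}}$ lie on sections where a \emph{fast} coordinate is fixed --- these are reachable by $O(1)$-time rigorous integration from the fast regime. Theorem~\ref{is:4} then produces a backcovering $X_{S,\text{ls}}\Leftarrow X_{S,\text{out}}$ (and dually~\ref{is:3} gives $X_{S,\text{in}}\Rightarrow X_{S,\text{lu}}$), so the role of exit direction swaps between the fast unstable coordinate and $w$ without any integration near the slow manifold. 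This switch is the missing ingredient in your plan; you cite Theorems~\ref{is:1}--\ref{is:4} for the slow chains, but their specific role at the four corners is what makes the whole closed chain $\epsilon$-uniform. As a minor practical note, the paper also splits $(0,1.5\times 10^{-4}]$ into two subranges because the fixed segment geometry cannot absorb the full interval in one pass.
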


\begin{thm}\label{thm:main2}
  For each $\epsilon \in [1.5 \times 10^{-4},0.0015]$, for $\theta = 0.61$ and other parameter values given in~\eqref{eq:parameters} there exists a periodic orbit of~\eqref{FhnOde}.
\end{thm}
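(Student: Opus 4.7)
The plan is to cover the compact interval $[1.5 \times 10^{-4}, 0.0015]$ by finitely many closed subintervals $[\epsilon_i, \epsilon_{i+1}]$, on each of which the existence of a periodic orbit of~\eqref{FhnOde} is established by a single chain of covering relations between h-sets placed on a suitable family of Poincar\'e sections. In contrast to Theorem~\ref{thm:main1}, the parameter $\epsilon$ here is bounded away from $0$, so the stiffness near the slow manifold remains controllable for the Lohner-type integrator; this makes it feasible to abandon isolating segments on the slow branches and replace them entirely with rigorously computed Poincar\'e maps. On each subinterval I would fix a sequence of hyperplane sections $\Sigma_0, \Sigma_1, \ldots, \Sigma_{k-1}$ arranged along a numerically produced approximate periodic orbit (corresponding to, say, a midpoint value of $\epsilon$), pick h-sets $N_j \subset \Sigma_j$ centered at the numerical trajectory, and verify for the parameter interval $[\epsilon_i,\epsilon_{i+1}]$ the covering chain
\begin{equation*}
N_0 \cover{P_{01}} N_1 \cover{P_{12}} N_2 \cover{P_{23}} \cdots \cover{P_{(k-1)0}} N_0,
\end{equation*}
where each $P_{j(j+1)}$ is the Poincar\'e map between consecutive sections. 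By the topological properties of covering relations this yields a fixed point of the composed Poincar\'e map, hence a periodic orbit of~\eqref{FhnOde}.

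First, I would produce a decent numerical approximation of the periodic orbit along a fine grid of parameter values $\epsilon$ in the target range, and use it to place sections transversal to the flow and size the h-sets adaptively. Because the orbit shape changes noticeably with $\epsilon$ (the slow excursion stretches as $\epsilon$ decreases), the number of sections $k$ must be adjusted: fewer sections suffice for $\epsilon$ near $0.0015$, while more are needed as $\epsilon$ approaches $1.5\times 10^{-4}$, so I would allow $k=k(i)$ to vary from one subinterval to the next. For each subinterval, verification of each covering relation reduces to a finite number of strict inequalities on rigorously computed enclosures of $P_{j(j+1)}(N_j)$, which CAPD produces via the Lohner algorithm together with its variational equations; this part is structurally stable and benefits from parameter $\epsilon$ appearing only as an input interval to the integrator.

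At the rightmost subinterval, containing $\epsilon = 0.0015$, I would additionally verify the assumptions of the interval Newton--Moore operator applied to the full return map $P = P_{(k-1)0} \circ \cdots \circ P_{01}$ on $N_0$, using the rigorous derivative enclosures produced simultaneously with the value enclosures. Success of the Newton--Moore test there gives a quantitatively sharp localization of the periodic orbit and, importantly, confirms that at this chosen upper endpoint the classical computer-assisted approach based on a sequence of Poincar\'e maps already works out of the box --- which is the criterion for choosing the value $0.0015$ in the first place.

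The main obstacle I anticipate is bookkeeping and cost rather than new mathematics: choosing the partition of $[1.5\times 10^{-4}, 0.0015]$ fine enough that the rigorous integration tolerates the uncertainty in $\epsilon$ on each piece, while still coarse enough to terminate in reasonable time, and simultaneously selecting the section count $k(i)$ and h-set dimensions so that all covering inequalities hold with comfortable margin. For the smallest values of $\epsilon$ in the range the wrapping effect along the slow segments will dominate error growth, forcing $k(i)$ and the number of parameter subintervals to blow up; controlling this growth, and making a clean handoff to the isolating-segment regime of Theorem~\ref{thm:main1} at $\epsilon = 1.5\times 10^{-4}$, will be the delicate part of the implementation.
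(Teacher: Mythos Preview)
Your proposal is essentially the paper's approach: a validated continuation over finitely many parameter subintervals, on each of which a closed chain of covering relations between h-sets on Poincar\'e sections placed along a numerical approximation of the orbit is verified (Theorem~\ref{cov:sequence}), with the number of sections adapted as $\epsilon$ decreases. The only discrepancy is that the Newton--Moore verification at $\epsilon=0.0015$ is the content of Theorem~\ref{thm:main3}, not part of the proof of Theorem~\ref{thm:main2}; otherwise your outline matches the paper's implementation.
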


\begin{thm}\label{thm:main3}
  For $\epsilon = 0.0015$, $\theta = 0.61$ and other parameter values given in~\eqref{eq:parameters} there exists a periodic orbit of~\eqref{FhnOde},
  which is formed from a locally unique fixed point of a Poincar\'e map.
\end{thm}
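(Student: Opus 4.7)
The plan is to produce, for the fixed parameter values $\epsilon = 0.0015$, $\theta = 0.61$, $a = 0.1$, $\gamma = 0.2$, a rigorous enclosure of a fixed point of a Poincar\'e return map of~\eqref{FhnOde}, and then to invoke the interval Newton--Moore operator to certify both existence and local uniqueness of that fixed point. Because~\eqref{FhnOde} is already somewhat stiff at this value of $\epsilon$, I expect that a single global Poincar\'e map would incur a prohibitive wrapping effect, so I would proceed by a multiple-shooting scheme set up along a non-rigorously computed approximate periodic orbit $\tilde{\Gamma}$.

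First I would generate $\tilde{\Gamma}$ by standard (non-rigorous) numerical integration, pick points $p_1,\ldots,p_N$ spaced along it, and for each $p_i$ define a hyperplane section $\Sigma_i \ni p_i$ orthogonal to the vector field $F(p_i)$, where $F$ denotes the right-hand side of~\eqref{FhnOde}. The sections must be transversal to the flow and ordered so that the partial Poincar\'e maps $P_i : \Sigma_i \to \Sigma_{i+1}$ (with indices taken modulo $N$) are well-defined on neighbourhoods of the $p_i$'s. Their density is chosen so that each integration leg between $\Sigma_i$ and $\Sigma_{i+1}$ is short enough to keep the interval enclosure of $DP_i$ on the candidate set of moderate size; in the neighbourhood of the slow manifold, where the fast directions generate strong expansion and contraction, this forces the $\Sigma_i$'s to be packed more densely.

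With the sections fixed, I would evaluate in interval arithmetic both $P_i$ and its derivative $DP_i$ on suitably chosen boxes $[X_i] \subset \Sigma_i$ centred at $p_i$, using the CAPD Lohner-type integrator referenced earlier in the excerpt (which delivers rigorous enclosures of the flow and of its first variational equation, and hence of $P_i$ and $DP_i$ via the implicit function theorem on $\Sigma_i$). I would then assemble the multiple-shooting operator
\begin{equation*}
\mathcal{P}(x_1,\ldots,x_N) := \bigl(P_1(x_1)-x_2,\ P_2(x_2)-x_3,\ \ldots,\ P_N(x_N)-x_1\bigr)
\end{equation*}
and apply the interval Newton--Moore operator
\begin{equation*}
\mathcal{N}([X]) := \tilde{x} - \bigl[D\mathcal{P}([X])\bigr]^{-1}\mathcal{P}(\tilde{x}), \qquad \tilde{x} := (p_1,\ldots,p_N),
\end{equation*}
on the product box $[X] := \prod_{i=1}^{N}[X_i]$. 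If the computation certifies $\mathcal{N}([X]) \subset \inter [X]$, then standard interval Newton--Moore theory yields a unique zero of $\mathcal{P}$ in $[X]$; this zero corresponds to a locally unique fixed point of the composed return map $P_N \circ \cdots \circ P_1$ on $\Sigma_1$, and hence to a periodic orbit of~\eqref{FhnOde}, which is precisely the assertion of the theorem.

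The main obstacle I anticipate is the tension between stiffness and the self-inclusion test $\mathcal{N}([X]) \subset \inter [X]$. Too few sections make the integration legs long and inflate the enclosure of each $DP_i$, which may render $D\mathcal{P}([X])$ non-invertible as an interval matrix; too many sections increase the dimension of the linear system and still do not help unless the boxes $[X_i]$ are aligned with the local dominant directions of the flow. In practice the sections should be packed most densely along the slow-manifold passes, and each $[X_i]$ should be taken as a narrow parallelogram in local coordinates adapted to the flow, following the very Lohner-type representation that powers the underlying integrator, so that $[D\mathcal{P}([X])]$ remains close to a well-conditioned interval matrix and the inclusion test succeeds at the candidate $\tilde{x}$.
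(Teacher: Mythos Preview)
Your proposal is essentially the same as the paper's proof: a multiple-shooting formulation $\mathcal{P}(x_1,\ldots,x_N)=(P_1(x_1)-x_2,\ldots,P_N(x_N)-x_1)$ along a numerically computed approximate orbit, followed by the interval Newton--Moore inclusion test using CAPD's $C^1$ Lohner integrator; the paper uses the $N=179$ sections inherited from the last step of the validated continuation in Theorem~\ref{thm:main2} and verifies $\mathcal{N}\bigl(0,\overline{B_{\max}(0,10^{-6})}\bigr)\subset B_{\max}(0,4.8\times10^{-14})$. One practical detail worth noting: the paper explicitly chooses the section normals as the difference between consecutive orbit points rather than the vector-field direction you propose, remarking that the latter ``can be misleading close to the strongly hyperbolic slow manifold''; this does not affect the validity of your scheme but may matter for the numerics.
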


The reason we do not merge statements of Theorems~\ref{thm:main1} and~\ref{thm:main2} is a significant difference in proof techniques.
For the proof of Theorem~\ref{thm:main1} we exploit the fast-slow structure and construct a sequence of \emph{isolating segments} and \emph{covering relations}
around the singular orbit.
For Theorem~\ref{thm:main2} we perform a ``regular ODE'' type of proof,
with a parameter continuation method based on verifying covering relations around an approximation of the periodic orbit.
Finally, in Theorem~\ref{thm:main3} we are far enough from $\epsilon=0$, so that a proof by the interval Newton-Moore method applied to a sequence of Poincar\'e maps 
succeeds, establishing both the existence and local uniqueness.

The motivation for this choice of wave speed was that in numerical simulations parts of the periodic orbit
near the slow manifold stretched relatively long, which allowed us to fully exploit its hyperbolicity.
In the program files a lot of values were hardcoded for this particular $\theta$, but we report that
by substituting $\theta=0.53$, $\theta=0.47$, $\theta \in [0.55,0.554]$ we were also able
to produce results like Theorem~\ref{thm:main1}, for a (shorter) range of $\epsilon \in (0, 5 \times 10^{-5}]$.
We think that by spending time tuning the values in the proof, the range of $\epsilon$ for these $\theta$'s could have been made wider.
This is of course the very same orbit and if one had enough patience, then continuation in $\theta$ would be theoretically possible.

Our analogue of Theorem~\ref{thm:main1} for the homoclinic loop is as follows.

\begin{thm}\label{thm:main4}
  For each $\epsilon \in (0, 5 \times 10^{-5}]$ and for other parameter values given in~\eqref{eq:parameters} there exists a 
  wave speed $\theta=\theta(\epsilon) \in [1.2624, 1.2675]$  
  such that the system~\eqref{FhnOde} possesses a homoclinic orbit to the zero equilibrium.
\end{thm}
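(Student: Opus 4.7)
The strategy is to reduce the existence of a homoclinic orbit to a one-parameter shooting problem in the wave speed $\theta$, by combining local invariant-manifold estimates at the zero equilibrium with the topological machinery of covering relations and isolating segments developed earlier in the thesis. The linearization of~\eqref{FhnOde} at the origin has one slow eigenvalue of order $-\epsilon/\theta<0$ and two fast eigenvalues inherited from the saddle $\Lambda_d(0)$ (both with positive real part), so $W^{u}(0)$ is two-dimensional and $W^{s}(0)$ is one-dimensional. A transverse intersection is therefore expected to occur on a codimension-one set in $\theta$, which we will pin down inside the explicit interval $[1.2624,1.2675]$ using the singular value $\theta_{*}$ as a first approximation.

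The first step would be a local analysis on two transversal sections $\Sigma^{\mathrm{out}}$ and $\Sigma^{\mathrm{in}}$ placed close to the origin, producing h-sets $N^{\mathrm{out}}$ and $N^{\mathrm{in}}$ such that $W^{u}(0)\cap\Sigma^{\mathrm{out}}$ is a horizontal topological disk in $N^{\mathrm{out}}$ and $W^{s}(0)\cap\Sigma^{\mathrm{in}}$ is a vertical topological curve in $N^{\mathrm{in}}$. Because the slow eigenvalue degenerates linearly in $\epsilon$, standard cone conditions cannot yield uniform enclosures; instead I would rescale the slow coordinate by a factor involving $\epsilon$ and track an $\epsilon$-dependent cone field along the variational flow. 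The point of the rescaling is that after it, the cone condition becomes $\epsilon$-independent at leading order, so a single interval-arithmetic verification yields enclosures valid uniformly across $\epsilon\in(0,5\times 10^{-5}]$.

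The second step is to propagate $N^{\mathrm{out}}$ along the singular homoclinic scheme of Figure~\ref{homFig}: a covering relation for the first fast heteroclinic from $\Lambda_{d}(0)$ to $\Lambda_{u}(0)$ (available only when $\theta$ is near $\theta_{*}$ and established by rigorous integration in the fast region), a chain of isolating segments enclosing $\Lambda_{u}$ for $w\in[0,w^{*}]$, a covering relation for the second fast heteroclinic from $\Lambda_{u}(w^{*})$ to $\Lambda_{d}(w^{*})$, and finally a chain of isolating segments along $\Lambda_{d}$ back to $\Sigma^{\mathrm{in}}$. The isolating segments are crucial near the slow manifold, where rigorous integration would be defeated by stiffness and by the very strong hyperbolicity that drives $W^{u}(0)$ and $W^{s}(0)$ apart as $\epsilon\to 0$; their defining inequalities (scalar products of normals with the vector field) can be arranged so that $\epsilon$ factors out cleanly, making the verification independent of the particular value of $\epsilon$ in the whole interval.

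The final step is to combine the chain of covering relations and isolating segments with the abstract connecting-orbit result (Theorem~\ref{thm:hom2}) and the local cone-field enclosures of $W^{u}(0)$ and $W^{s}(0)$. As $\theta$ varies in $[1.2624,1.2675]$ the propagated image of $W^{u}(0)\cap\Sigma^{\mathrm{out}}$ sweeps across $N^{\mathrm{in}}$ while intersecting $W^{s}(0)\cap\Sigma^{\mathrm{in}}$ in opposite orientations at the two endpoints, so an intermediate-value argument inside the covering framework produces, for each $\epsilon$, a wave speed $\theta(\epsilon)$ for which the unstable and stable manifolds meet. I expect the main obstacle to be the $\epsilon$-dependent cone field: one needs simultaneously to choose a rescaling that keeps $W^{u}(0)$ and $W^{s}(0)$ well-separated inside $N^{\mathrm{out}}$ and $N^{\mathrm{in}}$ uniformly as $\epsilon\to 0$, and to match this rescaling with the entry/exit faces of the first and last isolating segments on $\Lambda_{d}$ so that the covering relations compose. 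A close second obstacle is tuning the slow-manifold segments so that the factorisation of $\epsilon$ in their inequalities actually succeeds at the segment-to-segment transitions near the fold points where $w\approx w^{*}$.
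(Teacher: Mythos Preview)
Your overall architecture---local invariant-manifold bounds via an $\epsilon$-dependent cone field, isolating segments along $\Lambda_u$ and $\Lambda_d$ with $\epsilon$ factored out, covering relations for the fast jumps, and a one-parameter shooting in $\theta$ assembled through Theorem~\ref{thm:hom2}---is exactly the scheme the paper uses. But your first paragraph contains a genuine error that propagates through the whole setup.

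You claim that the two fast eigenvalues at $\Lambda_d(0)$ both have positive real part. They do not: $\Lambda_d(0)=(0,0)$ is a \emph{saddle} of the fast subsystem, so it has one positive and one negative eigenvalue (the product of the eigenvalues of the $2\times 2$ fast block at the origin is $-\gamma a<0$). Together with the slow eigenvalue, which you correctly identify as negative of order $\epsilon/\theta$, this gives $\dim W^u(0)=1$ and $\dim W^s(0)=2$, the opposite of what you wrote. It is the \emph{stable} manifold that degenerates from two dimensions to one as $\epsilon\to 0$, and it is this degeneration that the $\epsilon$-dependent cone field must handle.

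This swap is not cosmetic. With the correct dimensions, $W^u(0)\cap\Sigma^{\mathrm{out}}$ is a single point, not a horizontal disk; the extra degree of freedom for the covering $Z\Rightarrow X_0$ in Theorem~\ref{thm:hom2} comes from the parameter $\theta$ itself, via the map $\theta\mapsto W^u_{0,\Sigma_0}(\theta)$. Conversely, $W^s(0)$ meets the terminal section in a one-dimensional vertical disk (obtained from the two-dimensional vertical disk in the block via Lemma~\ref{lem:sidedisk}), and the chain of segments and coverings tracks this object backward around the loop. Your proposed ``horizontal disk from $W^u$, vertical curve from $W^s$'' picture is the mirror image of what actually occurs, and would not compose with the covering formalism as stated. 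Once you correct the dimensions, the rest of your plan lines up with the paper's proof; the obstacle you anticipate (matching the rescaled cone field to the faces of the first and last segments on $\Lambda_d$) is real and is handled there by aligning one block direction with the slow variable so that admissibility of the coordinate change lets $\epsilon$ be factored out of both the isolation inequalities and the cone-condition matrix.
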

 
Further continuation of the homoclinic solution (analogues of Theorems~\ref{thm:main2}, \ref{thm:main3})
is still an open problem due to computational, rather than theoretical, difficulties.

Let us observe that proofs employing \emph{interval arithmetics} are fairly easy to adapt to compact parameter ranges. 
A \emph{validated continuation} can be performed by
subdividing the parameter interval finely and feeding the program with small parameter intervals instead of one exact value~\cite{Day,Lessard,Galias2,WilczakZgliczynski}.
However, we emphasise again that
when trying to prove Theorems~\ref{thm:main1}, \ref{thm:main4} we are dealing with a half-open range 
and such straightforward approach is bound to fail -- as $\epsilon \to 0^{+}$ 
the integration time along parts of these orbits near the slow manifold grows to infinity (as does the period of the periodic orbit)
and in the singular limit $\epsilon=0$ both of them are destroyed.
Therefore not all premises in theorems implying existence of such orbits can be verified for $\epsilon \in [0,\epsilon_0]$;
the assumptions need to be formulated in such a way, that the ones which are computationally difficult are possible to check with computer aid for $\epsilon \in [0,\epsilon_0]$,
and the leftover ones are in a simple form where $\epsilon$ can be factored out ``by hand'', assuming $\epsilon > 0$.

Full proofs are executed with computer assistance and described 
in detail in Section~\ref{sec:implementation}.
For the computer assisted assumption verification, 
in particular computation of enclosures of Poincar\'e maps and their derivatives, we use the previously mentioned CAPD library~\cite{CAPD}.
The code which executes the necessary computations is available at the author's homepage~\cite{Czechowski}.

Below we outline the basic ideas of each proof.

\subsection{Outline of the proof of Theorem~\ref{thm:main1}}

We conduct a phase space proof based on a reduction to a sequence of Poincar\'e sections and a fixed point argument for a sequence of Poincar\'e maps.
For $\epsilon > 0$ small the orbit switches between two regimes - the fast one close to heteroclinics of the fast subsystem
and the slow one along the branches $\Lambda_d, \Lambda_u$ of the slow manifold. The strategy is to form a closed sequence of
covering relations and isolating segments and deduce the existence of a fixed point of a sequence of Poincar\'e maps
via a topological theorem -- Theorem~\ref{thm:2}. 

For the fast regime we employ rigorous integration to compute Poincar\'e maps and check covering relations among h-sets
placed on Poincar\'e sections near the points $\Lambda_d(w_{*}),\Lambda_u(w_{*})$, $\Lambda_u(w^{*}),\Lambda_d(w^{*})$.
The h-sets come equipped with a coordinate system in which one direction is specified as exit and the other as entry.
To verify a covering relation by a Poincar\'e map between such two sets $X$, $Y$
one needs to check that the exit direction edges of $X$ are stretched over $Y$ in the exit direction
and that the image of $X$ is contained in the entry direction width of $Y$, see Figure~\ref{coveringFig} in Section~\ref{sec:covpoinc}.
The ``shooting'' in the exit direction is in fact made possible by a non-degenerate intersection of stable and unstable manifolds of the respective fixed
points in the singular limit $\epsilon=0$, as described in condition (P2) in Section~\ref{sec:covslowfast}.

Around the slow manifold branches we place isolating segments, which allow us to track the orbit in this region
by additional coverings, see Theorems~\ref{is:1}, \ref{is:2}.
Recall, that for each isolating segment one distinguishes three directions: exit, entry and central, and it is required that the faces
in the exit direction are immediate exit sets for the flow, the faces in the entry direction are immediate entrance sets
and the flow is monotone along the one-dimensional central direction.
The first two assumptions are checked by a computer (for $\epsilon \in [0,1.5 \times 10^{-4}]$), exploiting the hyperbolicity of branches of the slow manifold.
The last one we can easily fulfill by aligning the central direction of segments with the slow variable direction.
This setup reduces the central direction condition to a verification of whether $\frac{dw}{dt} \neq 0$ for all points in such segment. 
Under assumption $\epsilon \neq 0$ we can then factor out $\epsilon$ from the slow velocities,
and our condition reduces to a question whether $u \neq w$ for all points in each segment, which is straightforward to check.
This is the only moment in the proof when we need to assume that $\epsilon$ is strictly greater than $0$.

We place additional four ``corner segments'', containing the corner points $\Lambda_d(w_{*}),\Lambda_u(w_{*})$, $\Lambda_u(w^{*}),\Lambda_d(w^{*})$,
the role of which is to connect the h-sets with the segments around the slow manifold. 
From the viewpoint of definition these are no different than regular isolating segments.
However, the mechanism of topological tracking of the periodic orbit here is slightly distinct, as
the central direction changes roles with the exit/entry ones, see Theorems~\ref{is:3}, \ref{is:4}.

To obtain a closed loop, the sizes of the first and the last h-set in the sequence have to match.
For that purpose isolating segments around the slow manifold may need to
grow in the exit direction and compress in the entry one as we move along the orbit.
This way we can offset the size adjustments of the h-sets, which may be necessary to obtain covering relations in the fast regime.
The analysis of a model example performed in Theorem~\ref{thm:heuristic} is devoted to providing an argument for why this should work for $\epsilon$ small.
The main idea is that as $\epsilon \to 0^{+}$ the vector field in the slow/central direction decreases to 0 and
the slope of the segment becomes irrelevant when checking isolation, see Figure~\ref{slopeFig} in Section~\ref{sec:covslowfast}.

A schematic drawing representing the idea of the proof for the model example is given in Figure~\ref{schemeFig}, in Section~\ref{sec:covslowfast}.

\subsection{Outlines of the proofs of Theorems~\ref{thm:main2}, \ref{thm:main3}}

We are already at some distance from $\epsilon=0$, but for small $\epsilon$ the periodic orbit's normal bundle is consisting
of one strongly repelling and one strongly contracting direction, so any attempts of approximating the orbit by numerical integration, either forward or backward in time, fail.
On the other hand, the singular orbit at $\epsilon=0$ no longer serves as a good approximation for the purpose of a computer assisted proof.
As we can see, the challenge now is on the numerical, rather than the conceptual side.
To find our good numerical guess we introduce a large amount of sections, so that the integration times between each two of them
do not exceed some given bound and then apply Newton's method to a problem of the form
\begin{equation}\label{eq:problemform}
  \begin{aligned}
    P_{1}(x_{1}) - x_{2} &= 0, \\ P_{2}(x_{2}) - x_{3}&=0, \\ \dots  \\ P_{k}(x_{k}) - x_{1}&=0,
  \end{aligned}
\end{equation}
where $P_{i}$'s are the respective Poincar\'e maps.
Then, we construct a closed sequence of h-sets on these sections and verify covering relations between each two consecutive ones,
to prove the periodic orbit by means of Theorem~\ref{cov:sequence} (Corollary 7 in \cite{GideaZgliczynski}).
Since we control the integration times, isolating segments
are not needed anymore -- in Theorem~\ref{thm:main1} they were used for pieces of the orbit where the integration time tended to infinity.

By a rigorous continuation with parameter $\epsilon$, we are able to
get an increase of one order of magnitude for the upper bound of the range of $\epsilon$'s, for which the periodic orbit is confirmed.
Without much effort we show that for this value of $\epsilon$ the classical (see \cite{Galias, Zgliczynski, Alefeld, Neumaier} and references given there) method 
of application of the interval Newton-Moore operator to a problem of the form~\eqref{eq:problemform} succeeds.
This requires a rigorous $C^{1}$ computation, but these are handled efficiently by the $C^{1}$ Lohner algorithm implemented in CAPD~\cite{Zgliczynski}.

\subsection{Outline of the proof of Theorem~\ref{thm:main4}}

The main idea of the proof is to vary $\theta$ and create an
intersection of the two-dimensional stable manifold $W^s(0,0,0)$ and the one-dimensional unstable manifold $W^u(0,0,0)$ of the zero equilibrium.
For that we first need to obtain some ($\epsilon$-independent) bounds on these
two objects in vicinity of $(0,0,0)$. Let us observe that computing even these local bounds 
results in a structurally unstable problem, as for $\epsilon=0$ the stable manifold
degenerates to one dimension.

The steps below are performed for $\epsilon$ fixed as a range of the form $(0,\epsilon_0]$,
and same considerations as for the periodic orbit on when in computations this range can be enclosed by $[0,\epsilon_0]$
apply.

To produce the local bounds we use the method of isolating blocks
with cones, described in~\cite{ZgliczynskiMan}.
Details are provided in Section~\ref{sec:cc}.
In short, we have to estimate the directions of the vector field on the boundary of such block
and check positive definiteness of a matrix formed from a symmetrization of a product of a certain quadratic form 
(the \emph{cone field}) and an enclosure of the derivative of the vector field over the block, see Theorem~\ref{szczelir}
(Theorem 4 in~\cite{szczelir}).
For the first task, we can factor out $\epsilon$ wherever necessary in an analogous way 
as for the isolating segments, by aligning one of the directions of the block with the slow direction.
For verifying the positive definiteness of the matrix, parameter $\epsilon$ can be factored out
by a suitable choice of an $\epsilon$-dependent cone field~\eqref{eq:jeps},
again utilizing that $\epsilon>0$.

The above analysis is performed for a small range of $\theta$ containing $\theta_{*}$.
The next step is to use a topological theorem -- Theorem~\ref{thm:hom2} --
to connect $W^u(0,0,0)$ with $W^s(0,0,0)$.
Informally speaking, a part of the stable manifold is tracked backward in time 
through neighborhoods of $\Lambda_d(w^{*})$, $\Lambda_u(w^{*})$ and $\Lambda_u(w_{*})$
by a sequence of covering relations and isolating segments around slow manifolds, 
in the same manner as the periodic orbit, and for all $\theta$ in the preset range.
Once we reach an h-set on a suitable section in proximity of $\Lambda_u(w_{*})$
we propagate our upper bound on (a branch of) $W^u(0,0,0)$ to this section, by a Poincar\'e map
computed with $\theta$ varying in our range.
If the intersection with that h-set is topologically transverse with respect to $\theta$ 
(i.e. a certain covering relation has to be fulfilled),
then, by continuity, $W^u(0,0,0)$ and $W^s(0,0,0)$ have to intersect for some $\theta=\theta(\epsilon)$
from the preset range of $\theta$.

In Theorem~\ref{thm:heuristic2} in Section~\ref{sec:covslowfast2} we show that our construction
is bound to succeed on a model system sharing certain properties with FitzHugh-Nagumo system.
Figure~\ref{schemeFig2} contains a schematic drawing of the strategy of that proof.

\subsection{Related works}

In this subsection we compare our results with the ones reported in the~\cite{ArioliKoch, Matsue},
where a rigorous, computer assisted analysis of traveling waves in the FitzHugh-Nagumo model was also performed.

\subsubsection{Results for a single value of $\epsilon$ by Arioli \& Koch~\cite{ArioliKoch}}\label{subsec:ArioliKoch}

Authors perform a computer assisted proof of existence and stability of 
both the periodic wave train and the fast pulse in the FitzHugh-Nagumo system for $\epsilon$ set to $0.01$.
from the point of view of applications this is a more realistic parameter value than the ranges considered by us.
Contrary to our approach, the analysis does not exploit the fast-slow nature of the system and gives no insight
on how to design a computer assisted proof for $\epsilon$ arbitrarily small, positive
(which may be of interest in other systems coming from applications, where $\epsilon$ is much smaller and regular ODE tools suffer from stiffness,
see~\cite{KuehnBook}).

Existence of the periodic orbit in the traveling wave equation was proved by an application of a Newton-like operator to find solutions
of the system in a suitable space of smooth periodic maps. 
The homoclinic orbit was constructed by expanding the stable and unstable manifold of the zero equilibrium into power
series and propagating them by a rigorous integrator.
What appears to be the central result of the paper is a rigorous computer assisted proof of stability 
of both waves by the method of the Evans function~\cite{EvansIII, EvansIV}.
The Evans function has been previously used to determine stability for $\epsilon \in (0,\epsilon_0]$, $\epsilon_0$ small enough~\cite{Maginu, Jones}
and it would be interesting to see whether these proofs can also be adapted to such range with an explicit $\epsilon_0$.

\subsubsection{Results for explicit parameter ranges by Matsue~\cite{Matsue}}\label{subsec:matsue}

This recent preprint contains results very much alike ours.
Namely, the author proves the existence of invariant manifolds near the slow manifold and the existence 
of periodic, homoclinic and heteroclinic cycles in explicit ranges $\epsilon \in (0,\epsilon_0]$.
Similar tools to ours are used, sometimes under different names, e.g. isolating blocks of a certain form take place of what we call isolating segments,
and assumptions on chains of covering relations and isolating segments are referred to as the \emph{covering-exchange property}.

It seems that Matsue follows the methods of GSPT (see~\cite{JonesBook}) more closely than us
and also makes a more extensive use of cones. Even though no $C^1$ claims, such as on uniqueness or stability of the orbits (that is, as solutions
of the traveling wave ODE), are made, such analysis is certainly very helpful to solving these problems in future.
In addition, the author proves the existence of a heteroclinic cycle and the existence of invariant manifolds near the slow (critical) manifold.
These objects were not considered by us in this thesis, but we remark that
the method of proof for a heteroclinic cycle is not much different than for a homoclinic orbit,
and such result for sufficiently small $\epsilon$ was previously known, see~\cite{Deng};
and the validation of invariant manifolds near slow manifolds is a regular perturbation problem, which was previously discussed in e.g.~\cite{GuckenheimerJohnson},
and as such, was not pursued by us.

On the other hand, the author restricts himself in many places to the fast-slow setting, whereas
statements of topological theorems presented in this thesis are general and applicaple to other types of stiff ODEs.
Moreover, our statements of Theorems~\ref{thm:main1} and~\ref{thm:main4} are valid for ranges of $\epsilon$ wider by over one order of magnitude than the ones in 
corresponding theorems of Matsue.
In addition, in Theorem~\ref{thm:main2} we extend the range of existence of periodic orbit by an additional order of magnitude;
such continuation is not performed in~\cite{Matsue} and it is not made clear whether further continuation would have been computationally possible
from such small upper bounds on $\epsilon$.

Our first preprint~\cite{CzechowskiZgliczynski}
appeared online on ar{X}iv in February 2015, approximately five months before the one of Matsue (July 2015),
and to the best of our knowledge is the first computer assisted result for a fast-slow system of such type.
Both papers are yet to be published in a journal. We remark that in~\cite{CzechowskiZgliczynski}
we stated results for the periodic orbit only, so Matsue was first with the proof for the homoclinic orbit (and the heteroclinic orbit,
which we did not consider).
Our proof for the homoclinic orbit,
presented for the first time in this thesis was obtained independently; in fact for a long time we were unaware that someone else is working on 
the same topic.

\section{Organization of the thesis}

The contents of this thesis are arranged as follows. 

Chapter~\ref{chap:intro} is introductory, we motivate our research
and announce the results for the FitzHugh-Nagumo system. 
Even though we refer to these as our main theorems,
they are rather a demonstration of feasibility and significance of several abstract theorems and concepts,
stated and proved in subsequent chapters.

In Chapter~\ref{chap:covseg}
we provide a self-contained theory on how to incorporate isolating segments into the method of covering relations.
Contents of this chapter are independent of the fast-slow structure of the FitzHugh-Nagumo system
and can be applied to other types of ODEs. 
Section~\ref{sec:covpoinc} contains mostly prerequisites on h-sets, covering relations, cone conditions and isolating blocks.
Novel contributions start from Section~\ref{sec:segments}, where we introduce
the definition of an isolating segment for an autonomous system and show existence of certain 
covering relations among its faces (Theorems~\ref{is:1}, \ref{is:2}, \ref{is:3}, \ref{is:4}).
Based on these theorems, in Section~\ref{sec:applications} we 
prove theorems on how chains of covering relations and isolating segments 
can be used to track orbits of ODEs and imply 
the existence of periodic and connecting orbits (Theorems~\ref{thm:1}, \ref{thm:2},
\ref{thm:conmap}, \ref{thm:hom1}, \ref{thm:hom2}).
Theorems~\ref{thm:1},~\ref{thm:conmap} and~\ref{thm:hom1} are valid in arbitrary dimensions,
and stated mostly for future reference. Theorems~\ref{thm:2} and~\ref{thm:hom2}, 
restricted to 3-dimensional systems, additionally employ a certain switch between directions in segments (see Theorems~\ref{is:3}, \ref{is:4}),
which can be viewed as a topological version of the Exchange Lemma (cf. Chapter 5 in~\cite{JonesBook}) from GSPT.
These two theorems are in turn applied to prove Theorems~\ref{thm:main1} and~\ref{thm:main4}.

Chapter~\ref{chap:waves} contains applications of the previously introduced
theory to prove the existence of traveling waves (i.e. suitable periodic and homoclinic orbits) in the FitzHugh-Nagumo model.
To use theorems for connecting orbits we first need
some local estimates on the stable and the unstable manifold of the equilibrium at origin.
Section~\ref{sec:cc} deals with the question of how to organize the computations
to obtain these bounds from a suitable isolating block with cones.
Although this section does not contain qualitatively new abstract theorems, 
we stress its importance, as it allows us to 
obtain $\epsilon$-independent bounds in fast-slow systems 
for a range $\epsilon \in (0,\epsilon_0]$, with $\epsilon_0$ explicit,
and with tedious estimates passed to the computer.

Section~\ref{sec:model} contains two simplified, model fast-slow systems
sharing some qualitative properties with the FitzHugh-Nagumo system. 
We perform a pen-and-paper construction of suitable chains of covering relations and isolating segments
for $\epsilon \in (0,\epsilon_0]$, $\epsilon_0$ ``small enough''.
The purpose of this section is to give reader an insight on why our topological methods are bound to work
in certain singular perturbation scenarios, and its contents are not necessary
to prove any of the results outside of it.
For that reason, and to make this part reasonably short, the exposition is sometimes not very strict.
In some sense, in this section we repeat the methods and the results from~\cite{Conley,Carpenter,Hastings}, 
recast in a language of covering relations and isolating segments.

In Section~\ref{sec:implementation} we give details of the computer assisted proofs of Theorems~\ref{thm:main1}, \ref{thm:main2}, \ref{thm:main3}
and~\ref{thm:main4} and provide some numerical data from the programs.
Let us remark that description of the proof of Theorem~\ref{thm:main2} is quite concise as it did not contain any novel mathematical theory.
However, design and implementation of good heuristic algorithms for rigorous continuation of a periodic orbit in a stiff problem was quite a challenge,
and we consider it a certain achievement from the numerical point of view.

In Chapter~\ref{chap:conclusions} we formulate several concluding remarks
and outline possible future directions for research, based on results from this thesis.

\section{Notation} 

Most of the notation is introduced through definitions or at the beginning of the respective chapters, sections, subsections and theorems.
In the second case symbols are defined ``locally'', and outside the scope of a given part of the text they may be reused 
for other purposes.
Notation introduced here applies also to preceding sections.

By $\nn$, $\zz$ and $\rr$ we will denote natural, integer and real numbers, respectively.
We will also write $\zz^{*}$ for the set of non-zero integers and $\rr^+$ for positive reals. 
By $[a,b]$ we will denote
the closed interval $\{ x \in \rr: a \leq x \leq b \}$.

Unless otherwise stated $\vectornorm{\cdot}$ can be any fixed norm in $\rr^{n}$.
Sometimes we will restrict ourselves to the $\max$ norm, that is the norm given by
\begin{equation}
  \begin{aligned}
  \vectornorm{x}_{\max} &= \max\{x_1, \dots, x_n \}, \\ 
  x&=(x_1, \dots, x_n).
\end{aligned}
\end{equation}

Given a norm, by $B_{n}(c,r)$ we will denote the ball of radius $r$ centered at $c \in \rr^{n}$.
By $\langle \cdot , \cdot \rangle$ we will denote the standard dot product in $\rr^{n}$.

We assume that $\rr$ is always equipped with the following norm: $\vectornorm{x} = |x|$.

Given a set $Z$, by $\inter Z$, $\overline{Z}$, $\partial Z$ and $\conv{Z}$
we will denote the interior, closure, boundary and the convex hull of $Z$, respectively.

Given a topological space $X$, a subspace $D \subset \rr \times X$ and a local flow $\varphi : D \to X$,
by writing $\varphi(t,x)$ we will implicitly state that $(t,x) \in D$,
so for example by
\begin{equation}
 \varphi(t,x) = y
\end{equation}
we will mean $\varphi(t,x)$ exists and $\varphi(t,x) = y$.

By $\id_{X}$ we denote the identity map on $X$.

The symbol $\const$ denotes a constant -- usually some uniform bound -- the value of which being not important to us,
so for example the expression $f(x) > \const, \ x \in X$, means that there exists $C \in \rr$ such that
$f(x)>C \ \forall x \in X$.

The $i$-th partial derivative of a differentiable map $f(x)=f(x_1, \dots, x_n)$, $f: \rr^n \to \rr^m$ will be denoted by $\frac{\partial f}{\partial x_i}$
or $\frac{d f}{d x_i}$.
By $D_{x_0} f$, $Df(x_0)$ and $\frac{df}{dx} (x_0)$ we will denote the matrix of partial derivatives of $f$ for $x:=x_0$.
If $n=1$, by $\dot{f}$ we will denote the vector of derivatives of $f$ with respect to its only argument.
If $m=1$, $\nabla f (x_0)$ will denote the gradient of $f$ at $x_0$.
In case $n=m=1$ we will sometimes write $f'$ instead of $\dot{f}$.

We use the big $O$ notation exclusively to describe the limiting behavior near $0$
and only in a sublinear context -- so the meaning of $f(x)=O(x)$ is $|f(x)| \leq C |x|$ for some $C>0$.

By smoothness we mean $C^{1}$ smoothness.
In some assumptions differentiability would be enough,
but we do not go into such details.

%chapter -- topological methods
\chapter{Topological tools}\label{chap:covseg}

\section{H-sets, covering relations and cone conditions}\label{sec:covpoinc}

In this section we recall the definitions of h-sets, covering relations and \emph{backcovering relations} 
for maps as presented in~\cite{GideaZgliczynski}.
We make a following change in the nomenclature: in~\cite{GideaZgliczynski} various objects related to h-sets (directions, subsets, etc.) 
are being referred to as \emph{unstable} or \emph{stable}. We will refer to them as \emph{exit} and \emph{entry}/\emph{entrance}, respectively; 
we think that this reflects better their dynamical nature and does not lead to misunderstandings.
However, we keep the original symbols $u,s$, so $u$ should be connoted with exit and $s$ with entry.

\begin{defn}[Definition 1 in~\cite{GideaZgliczynski}]\label{h-set}
  An h-set is formed by a quadruple 
  \begin{equation}
    X=(|X|,u(X),s(X),c_{X})
  \end{equation}
  consisting of
  a compact set $|X| \subset \rr^{n}$ - \emph{the support}, a pair of numbers $u(X), s(X) \in \mathbb{N}$
  such that $u(X)+s(X) = n$ (the number of exit and entry directions, respectively)
  and a coordinate change homeomorphism $c_{X}: \rr^{n} \rightarrow \rr^{u(X)} \times \rr^{s(X)}$
  such that
  \begin{equation}\label{h-set:1}
    c_X(|X|)=\overline{B_{u(X)}(0,1)} \times \overline{B_{s(X)}(0,1)}.
  \end{equation}

  We set:
  \begin{equation}
    \begin{aligned}
      X_{c}&:=\overline{B_{u(X)}(0,1)} \times \overline{B_{s(X)}(0,1)}, \\
      X_{c}^{-}&:=\partial B_{u(X)}(0,1) \times \overline{B_{s(X)}(0,1)}, \\
      X_{c}^{+}&:=\overline{B_{u(X)}(0,1)} \times \partial B_{s(X)}(0,1), \\
      X^{-} &:= c_{X}^{-1}(X_{c}^{-}), \\
      X^{+} &:= c_{X}^{-1}(X_{c}^{+}).
    \end{aligned}
  \end{equation}

  We will refer to $X^{-}$/$X^{+}$ as the exit/entrance sets, respectively.
  To shorten the notation we will sometimes drop the bars in the symbol $|X|$ and just write $X$ to denote both the
  h-set and its support.
\end{defn}

\begin{rem}
  Due to condition~\eqref{h-set:1}, it is enough to specify $u(X), s(X)$ and $c_{X}$ to unambiguously define an h-set $X$.
\end{rem}

Let us recall the standard axiomatic definition of the Brouwer degree. We follow the exposition given in~\cite{Schwartz}.

\begin{thmdefn}[Chapter III in~\cite{Schwartz}]
  \normalfont
  Let $\Omega \subset \rr^n$ be bounded and open and let $g: \overline{\Omega} \to \rr^n$ be continuous.
  Assume that $p \notin g(\partial \Omega )$. Then there is a unique integer $\deg(p,g,\Omega)$
  with the properties:
  \begin{itemize}
    \item[(A1)] \emph{Invariance under homotopy.} If $h(\xi ,x): [0,1] \times \overline{\Omega} \to \rr^n$
      is continuous and $p  \notin h([0,1],\partial \Omega )$, then
      \begin{equation}
        \deg(p,h(0,\cdot),\Omega) = \deg(p, h(\xi,\cdot), \Omega), \quad \forall \xi \in (0,1].
      \end{equation}
    \item[(A2)] \emph{Dependence on the boundary values.} Let $\tilde{g} : \Omega \to \rr^n$ be continuous.
      If $g_{|_{\partial \Omega}} = \tilde{g}_{|_{\partial \Omega}}$, then
      \begin{equation}
        \deg(p, g , \Omega) = \deg(p, \tilde{g} , \Omega).
      \end{equation}
    \item[(A3)] \emph{Continuity.} There exists
      a neighbourhood $U$ of $g$ in the space of continuous maps from $\Omega$ to $\rr^n$ (with the $\sup$ norm topology), 
      such that if $\tilde{g} \in U$, then $p \notin \tilde{g}(\partial \Omega)$ and
      \begin{equation}
        \deg(p,g,\Omega) = \deg(p,\tilde{g},\Omega).
      \end{equation}
    \item[(A4)] \emph{Degree is locally constant.} If $p \notin g(\overline{\Omega})$, then $\deg(p,g,\Omega)=0$.
      If $p$ and $q$ belong to the same connected component of $\rr^{n} \setminus g(\overline{\Omega})$,
      then
      \begin{equation}
        \deg(p,g,\Omega) = \deg(q,g,\Omega).
      \end{equation}
    \item[(A5)] \emph{Decomposition of the domain.} Let $\Omega = \bigcup_{i \in I} \Omega_i$, where the family $\{\Omega_i\}_{i \in I}$
      consists of disjoint open sets, and $\partial \Omega_i \subset \partial \Omega \ \forall i$.
      It holds that
      \begin{equation}
        \deg(p, g, \Omega)  = \sum_{i \in I} \deg(p, g, \Omega_i)
      \end{equation}
    \item[(A6)] \emph{The excision property.} If $\tilde{\Omega}$ is an open subset of $\Omega$, such that $g^{-1}(p) \cap \Omega \subset \tilde{\Omega}$,
      then 
      \begin{equation}
        \deg(p,g,\Omega) = \deg(p,g,\tilde{\Omega}).
      \end{equation}
    \item[(A7)] \emph{The product property.} Let $\Omega = \Omega_1 \times \Omega_2 \subset \rr^{n_1} \times \rr^{n_2}$, $g = (g_1,g_2)$,
      $g_i : \Omega_i \to \rr^{n_i}$, $i \in \{1,2\}$ and $p=(p_1,p_2) \in \rr^{n_1} \times \rr^{n_2}$. Then
      \begin{equation}
        \deg(p,g,\Omega) = \deg(p_1,g_1, \Omega_1) \deg(p_2, g_2, \Omega_2),
      \end{equation}
      whenever the right-hand side is defined.
    \item[(A8)]  \emph{Degree for smooth maps.} Assume $g$ is smooth (in the sense, that it has a smooth extension to some neighborhood of $\overline{\Omega}$, and 
      for all $x \in g^{-1}(p)$ the derivative $D_x f$ is nonsingular. Then 
      \begin{equation}
        \deg(p,g,\Omega) = \sum_{x \in g^{-1}(p)} \sgn \det D_x g.
      \end{equation}
    \end{itemize}
\end{thmdefn}

We remark, that only a subset of the axioms given above is needed to uniquely define the degree.
We are now ready to state the definition of the covering relation between h-sets.

\begin{defn}[Definitions 2, 6 in~\cite{GideaZgliczynski}]\label{covering}
  Assume that $X,Y \subset \rr^{n}$ are h-sets, such that $u(X)=u(Y)=u$ and $s(X)=s(Y)=s$. Let $g:\Omega \to \rr^{n}$ be a
  map with $|X| \subset \Omega \subset \rr^n$. Let $g_{c}= c_{Y} \circ g \circ c_{X}^{-1}: X_{c} \to \rr^{u} \times \rr^{s}$
  and let $w$ be a non-zero integer.
  We say that $X$ $g$-covers $Y$ with degree $w$ and write
  \begin{equation}
     X \cover{g,w} Y
  \end{equation}
  iff $g$ is continuous and the following conditions hold
  \begin{itemize}
    \item[1.] there exists a continuous homotopy $h: [0,1] \times X_{c} \to \rr^{u} \times \rr^{s}$, such that
     \begin{align}
       h_{0}&=g_{c},\label{cover:1a} \\
       h([0,1],X_{c}^{-}) \cap Y_{c} &= \emptyset ,\label{cover:1b}  \\
       h([0,1],X_{c}) \cap Y_{c}^{+} &= \emptyset .\label{cover:1c}
    \end{align}
  \item[2.] There exists a continuous map $A:\rr^{u} \to \rr^{u}$, such that
    \begin{equation}\label{cover:2}
      \begin{aligned}
        h_{1}(p,q)&=(A(p),0) \quad \forall p \in \overline{B_{u}(0,1)} \text{ and } q \in \overline{B_{s}(0,1)}, \\
        A(\partial B_{u}(0,1)) &\subset  \rr^{u} \setminus \overline{B_{u}(0,1)}, \\
        \deg \left( 0,A_{|_{\overline{B_u(0,1)}}}, B_u(0,1) \right) &= w.
      \end{aligned}
   \end{equation}
  \end{itemize}

  In case $A$ is a linear map, from (A8) we get $\deg \left( 0,A_{|_{\overline{B_u(0,1)}}}, B_u(0,1) \right) = \sgn \det A = \pm 1$.
   In such situation we will often say that $X$ $g$-covers $Y$, omit the degree and write $X \cover{g} Y$.
\end{defn}

\begin{rem}[Remark 3 in~\cite{GideaZgliczynski}]
  For $u=0$ we have $B_{u}(0,1) = \emptyset$ and $X$ $g$-covers $Y$ iff $g(|X|)$ is a subset of $\inter |Y|$. 
  In that case, we formally set the degree $w$ to $1$.
\end{rem}

\begin{defn}[Definition 3 in~\cite{GideaZgliczynski}]
  Let $X$ be an h-set. We define the \textit{transposed h-set} $X^{T}$ as follows:
  \begin{itemize}
    \item $|X| = |X^{T}|$,
    \item $u(X^{T}) = s(X)$ and $s(X^{T}) = u(X)$,
    \item $c_{X^{T}}(x) = j(c_{X}(x))$, where $j: \rr^{u(X)} \times \rr^{s(X)} \to \rr^{s(X)} \times \rr^{u(X)}$ is given by $j(p,q)=(q,p)$.
  \end{itemize}
\end{defn}

Observe that $(X^{T})^{+} = X^{-}$ and $(X^{T})^{-} = X^{+}$, thus transposition
changes the roles of exit and entry directions.

\begin{defn}[Definition 4, 7 in~\cite{GideaZgliczynski}]
  Let $X,Y$ be h-sets with $u(X) = u(Y)$ and $s(X) = s(Y)$. Let $g: \Omega \subset \rr^{n} \to \rr^{n}$.
  We say that $X$ $g$-backcovers $Y$ with degree $w$
  and write $X \backcover{g,w} Y$ iff 
  \begin{itemize}
    \item $g^{-1}: |Y| \to \rr^{n}$ exists and is continuous, 
    \item $Y^{T}$ $g^{-1}$-covers $X^{T}$ with degree $w$.
   \end{itemize}
\end{defn}

\begin{defn}[Definition 5 in~\cite{GideaZgliczynski}]
  We will use the notation $X \gencover{g,w} Y$ and say that $X$ generically $g$-covers $Y$ with degree $w$ iff any of these two hold:
  \begin{itemize}
    \item $X$ $g$-covers $Y$ with degree $w$,
    \item $X$ $g$-backcovers $Y$ with degree $w$.
  \end{itemize}
\end{defn}

Again, we will sometimes omit the degree in our notation, in case the homotopy can be given to a linear map.

In the next chapters we will sometimes work with parameter-dependent maps.
Let $g: \Omega \times Z \to \rr^n$, where $\Omega \subset \rr^n$ and $Z \subset \rr^p$ represents a set of parameters. 
The meaning of the expression $X \longcover{g(\cdot,z),w} Y \ \forall z \in Z$ is
clear, however the meaning of $X \longgencover{g(\cdot,z),w} Y \ \forall z \in Z$ can be ambiguous.
A strict interpretation would imply that the parameter domain $Z$ be split so 
the map $g$ produce a covering relation only for some $z \in Z$ and a backcovering relation for others.
We would like to have an either-or relation, therefore
to shorten the formulation of several theorems we (re)define the parameter-dependent generic covering as follows.

\begin{defn}
  We will use the notation $X \longgencover{g(\cdot,z),w} Y \ \forall z \in Z$ 
  and say that $X$ generically $g$-covers $Y$ with degree $w$ for all $z \in Z$, iff any of these two hold:
  \begin{itemize}
    \item $g$ is continuous on $|X| \times Z$ and $X$ $g(\cdot,z)$-covers $Y$ for all $z \in Z$ with degree $w$,
    \item for any fixed $z \in Z$ 
      $g^{-1}(y, z)$ is defined for all $y \in |Y|$ and is continuous as a map on $|Y| \times Z$. 
      Moreover $X$ $g(\cdot,z)$-backcovers $Y$ for all $z \in Z$ with degree $w$.
  \end{itemize}
\end{defn}

\subsection{Verification of covering relations in low dimensions}

For an h-set $X$ with $u(X)=1, \ s(X)=s$ we have:
\begin{equation}
  \begin{aligned}
    X_{c} &= [-1,1] \times \overline{B_{s}(0,1)}, \\
    X_{c}^{-} &= (\{-1\} \times \overline{B_{s}(0,1)}) \cup (\{1\} \times \overline{B_{s}(0,1)}).
  \end{aligned}
\end{equation}

We will often use the following geometrical criterion for verifying of the covering relation in such case:

\begin{lem}[Theorem 16 in \cite{GideaZgliczynski}]\label{covlemma}
  Let $X,Y$ be h-sets with $u(X)=u(Y)=1$, $s(X)=s(Y)=s$. Let $g:X \to \rr^{s+1}$ be a continuous map.
  Assume that both of the following conditions hold:
  \begin{itemize}
    \item[(C1)] We have
      \begin{equation}
        g_{c}(X_{c}) \subset \inter( ((-\infty, -1) \times \rr^{s}) \cup Y_{c} \cup ((1, \infty) \times \rr^{s}) ),
      \end{equation}
    \item[(C2)] either
      \begin{equation}
        \begin{aligned}
          g_{c}(\{-1\} \times \overline{B_{s}(0,1)}) \subset (-\infty, -1) \times \rr^{s} &\text{ and } g_{c}(\{1\} \times \overline{B_{s}(0,1)}) \subset (1, \infty) \times \rr^{s} \\
          &\text{or} \\
          g_{c}(\{-1\} \times \overline{B_{s}(0,1)}) \subset (1,\infty) \times \rr^{s} &\text{ and } g_{c}(\{1\} \times \overline{B_{s}(0,1)} ) \subset (-\infty, -1) \times \rr^{s}.
        \end{aligned}
      \end{equation}
  \end{itemize}

  Then
  \begin{equation}
   X \cover{g} Y.
  \end{equation}
\end{lem}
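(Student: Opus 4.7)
The plan is to realize the covering relation by explicitly constructing the homotopy required by Definition~\ref{covering}, deforming $g_c$ into a map of the form $(A(p),0)$ with $A$ linear. Write $g_c(p,q) = (G_1(p,q), G_2(p,q))$ with $G_1: X_c \to \rr$ and $G_2 : X_c \to \rr^s$. Before constructing the homotopy, I unpack the hypotheses: (C1) forces the implication ``$G_1(p,q) \in [-1,1] \Rightarrow |G_2(p,q)|<1$'', which is the description of the interior appearing there; (C2) provides the uniform boundary bound $G_1(\pm 1, q) \notin [-1,1]$ for every $q \in \overline{B_s(0,1)}$, with $G_1(-1,\cdot)$ and $G_1(1,\cdot)$ landing on opposite sides of $[-1,1]$ (this is what determines the sign of the resulting degree).

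The deformation is performed in three stages that I then reparametrize into a single homotopy $h : [0,1] \times X_c \to \rr \times \rr^s$. Stage one is the straight-line contraction of the entry direction: $(p,q) \mapsto (G_1(p,q), (1-\tau) G_2(p,q))$. Condition~\eqref{cover:1b} holds because for $(p,q) \in X_c^-$ the first coordinate $G_1(\pm 1, q) \notin [-1,1]$ by (C2), so the image cannot be in $Y_c$; condition~\eqref{cover:1c} holds because whenever $G_1(p,q) \in [-1,1]$, (C1) gives $|G_2(p,q)| < 1$, hence $|(1-\tau) G_2(p,q)| < 1$ and the image avoids $\overline{B_u(0,1)} \times \partial B_s(0,1)$. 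Stage two contracts the $q$-dependence of the first coordinate: $(p,q) \mapsto (G_1(p,(1-\tau)q), 0)$. Since $(1-\tau)q \in \overline{B_s(0,1)}$ for all $\tau$, (C2) applied at $p = \pm 1$ again keeps $X_c^-$ off $Y_c$, and the second coordinate being identically $0$ trivially avoids $Y_c^+$ (which is empty if $s=0$, and whose $s$-factor $\partial B_s(0,1)$ does not contain $0$ if $s \geq 1$). The stage ends at $(A_0(p), 0)$ with $A_0(p) := G_1(p,0)$.

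Stage three linearizes $A_0$. By (C2) either $A_0(-1)<-1$ and $A_0(1)>1$, or $A_0(-1)>1$ and $A_0(1)<-1$; in the first case set $A(p):=2p$, in the second $A(p):=-2p$, and extend $A$ to all of $\rr$. The straight-line homotopy $(1-\lambda)A_0 + \lambda A$ preserves the sign condition at $p=\pm 1$ (the two endpoints being interpolated lie on the same side of $[-1,1]$), so the first coordinate at $p=\pm 1$ stays outside $[-1,1]$ throughout, verifying~\eqref{cover:1b}; the second coordinate is $0$, verifying~\eqref{cover:1c}. Concatenating the three homotopies and reparametrising to $[0,1]$ produces a continuous $h$ with $h_0 = g_c$ and $h_1(p,q) = (A(p), 0)$, where $A$ is linear with $A(\partial B_u(0,1)) = \{\pm 2\} \subset \rr \setminus \overline{B_u(0,1)}$ and $\det A = \pm 2 \neq 0$, so the degree is $\pm 1$ by axiom (A8). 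All conditions of Definition~\ref{covering} are thereby satisfied, giving $X \cover{g} Y$.

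The only conceptually delicate step is checking~\eqref{cover:1c} during stage one, where the (untouched) first coordinate $G_1(p,q)$ may roam across $[-1,1]$: this is precisely where hypothesis (C1) is needed in its full strength, since one must exclude simultaneously landing on $\partial B_s(0,1)$ in the entry direction. The other stages are routine in the sense that (C2) mechanically handles the exit-boundary condition and the second coordinate is either already shrunk or already zero. No further analytical input is needed, so I expect the verification to be a careful but direct exercise in homotopy algebra.
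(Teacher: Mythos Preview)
Your proof is correct. The paper does not give its own proof of this lemma but cites it as Theorem~16 in~\cite{GideaZgliczynski}, so there is nothing to compare against directly; your three-stage homotopy (shrink the entry coordinate, collapse the $q$-dependence of the exit coordinate, linearize) is exactly the standard construction used in that reference, and your verification of conditions~\eqref{cover:1b} and~\eqref{cover:1c} at each stage is accurate---in particular your reading of (C1) as the implication $G_1(p,q)\in[-1,1]\Rightarrow|G_2(p,q)|<1$ is the right way to describe the interior in question.
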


\begin{rem}\label{rem:covlemma}
In applications it is convenient to introduce the notation $X^{-,l} = c_{X}^{-1}(\{-1\} \times \overline{B_{s}(0,1)})$ (\emph{the left exit edge})
and $X^{-,r} = c_{X}^{-1}(\{1\} \times \overline{B_{s}(0,1)})$ (\emph{the right exit edge}) and check (C1), (C2)
by putting

\begin{equation}
  \begin{aligned}
    g_{c}(X_{c}) &= (c_{Y} \circ g)(|X|),\\
    g_{c}(\{-1\} \times \overline{B_{s}(0,1)}) &= (c_{Y} \circ g) ( X^{-,l} ),\\
    g_{c}(\{1\} \times \overline{B_{s}(0,1)}) &= (c_{Y} \circ g) ( X^{-,r} ),
  \end{aligned}
\end{equation}
see Figure~\ref{coveringFig}.
\end{rem}

\begin{figure}
    \centering
      \begin{tikzpicture}[line cap=round,line join=round,>=latex,x=2.5mm,y=2.5mm]
 
  \clip(-19,-8.4) rectangle (19,7);

  \fill [color=brown!50] (-5.,-5.) -- (-5.,5.) -- (5.,5.) -- (5.,-5.);
  \draw [color=orange,thick] (-5.,5.) -- (-5.,-5.);
  \draw [color=orange,thick] (5.,5.) -- (5.,-5.);
  \draw [color=brown,thick] (-5.,5.) -- (5.,5.);
  \draw [color=brown,thick] (5.,-5.) -- (-5.,-5.);

  \filldraw[draw=blue, fill=blue!50, opacity=0.8]
      (-9,-4) .. controls (-2,1) and (3,-4) .. (11,-1) -- (10,3) .. controls (3,0) and (-2,5) .. (-10,0) -- (-9,-4);

  \draw [color=red,thick] (11,-1) -- (10,3);
  \draw [color=red,thick] (-10,0) -- (-9,-4);

  \draw (3.,-5.) node[anchor=south] {\scriptsize{$\displaystyle Y_{c}$}};
  \draw (0.,1.) node[anchor=north] {\scriptsize{$\displaystyle(c_{Y} \circ g)\left(|X|\right)$}};
  \draw (10.5,1.) node[anchor=west] {\scriptsize{$\displaystyle(c_{Y} \circ g)(X^{-,r})$}};
  \draw (-9.5,-2.) node[anchor=east] {\scriptsize{$\displaystyle(c_{Y} \circ g)(X^{-,l})$}};

  \draw[color=black,->] (-7.,-7.) -- (-5,-7.);
  \draw[color=black,->] (-7.,-7.) -- (-7,-5.);
  \draw (-7.,-6) node[anchor=east] {\scriptsize{$x_{s}$}};
  \draw (-6.,-7) node[anchor=north] {\scriptsize{$x_{u}$}};

\end{tikzpicture}
      \caption{A covering relation $X \cover{g} Y$.}
      \label{coveringFig}
\end{figure}

Analogously, if for an h-set $Y$ we have $u(Y)=u$ and $s(Y)=1$, then
\begin{equation}
  \begin{aligned}
    Y_{c} &= \overline{B_{u}(0,1)} \times [-1,1], \\
    Y_{c}^{+} &= (\overline{B_{u}(0,1)} \times \{-1\}) \cup (\overline{B_{u}(0,1)} \times \{1\}),
  \end{aligned}
\end{equation}
and we can apply the same principle to transposed sets:
\begin{lem}\label{backcovlemma}
  Let $X,Y$ be h-sets with $u(X)=u(Y)=u$ and $s(X)=s(Y)=1$. Let $\Omega \subset \rr^{u+1}$ and $g: \Omega \to \rr^{u+1}$ be continuous.
  Assume, that $g^{-1}: |Y| \to \rr^{u+1}$ exists, is continuous and that both of the following conditions hold:
  \begin{itemize}
    \item[(C1a)] We have
      \begin{equation}
        g^{-1}_{c}(Y_{c}) \subset \inter( ( \rr^{u} \times (-\infty, -1) ) \cup X_{c} \cup (\rr^{u} \times (1, \infty)) );
      \end{equation}
    \item[(C2a)] either
      \begin{equation}
        \begin{aligned}
          g^{-1}_{c}( \overline{B_{u}(0,1)} \times \{-1\} ) \subset \rr^{u} \times (-\infty, -1) &\text{ and }g^{-1}_{c}(\overline{B_{u}(0,1)} \times \{1\}) \subset \rr^{u} \times (1,\infty) \\
          &\text{or} \\
          g^{-1}_{c}( \overline{B_{u}(0,1)} \times \{-1\}) \subset \rr^{u} \times (1,\infty) \text{ and } &g^{-1}_{c}(\overline{B_{u}(0,1)} \times \{1\}) \subset \rr^{u} \times(-\infty, -1) .
        \end{aligned}
      \end{equation}
  \end{itemize}
  Then
  \begin{equation}
   X \backcover{g} Y.
  \end{equation}
\end{lem}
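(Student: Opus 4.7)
The plan is to reduce Lemma~\ref{backcovlemma} to the already proved Lemma~\ref{covlemma} by unwinding the definitions of transposed h-set and backcovering. Recall that by definition $X \backcover{g} Y$ means precisely $Y^T \cover{g^{-1}} X^T$, and observe that the transposed h-sets satisfy $u(Y^T) = s(Y) = 1$, $s(Y^T) = u(Y) = u$, and analogously for $X^T$. Hence after transposition, the single exit direction of $Y^T$ and $X^T$ takes the place of the single entry direction of $Y$ and $X$, which is exactly the setting in which Lemma~\ref{covlemma} applies to the pair $(Y^T, X^T)$ together with the map $g^{-1}$.

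Concretely, I would first note that from the construction of the transposed h-set we have $c_{X^T} = j \circ c_X$ and $c_{Y^T} = j \circ c_Y$, where $j(p,q) = (q,p)$ is the coordinate flip and an involution. Thus the coordinate representation of $g^{-1}$ between the transposed h-sets is
\begin{equation}
(g^{-1})_{c}^{T} \;:=\; c_{X^T} \circ g^{-1} \circ c_{Y^T}^{-1} \;=\; j \circ \bigl( c_{X} \circ g^{-1} \circ c_{Y}^{-1} \bigr) \circ j \;=\; j \circ g^{-1}_{c} \circ j.
\end{equation}
Applying $j$ also swaps the roles of the two factors in the product of balls, so $j(Y_c) = Y^T_c$, $j(X_c) = X^T_c$, and for any $a,b$
\begin{equation}
j\bigl(\rr^{u} \times (a,b)\bigr) \;=\; (a,b) \times \rr^{u}.
\end{equation}

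Now I would verify the two hypotheses of Lemma~\ref{covlemma} for $Y^T \cover{g^{-1}} X^T$. For (C1), the assumption (C1a) says $g^{-1}_{c}(Y_c) \subset \inter\bigl((\rr^{u} \times (-\infty,-1)) \cup X_c \cup (\rr^{u} \times (1,\infty))\bigr)$; applying $j$ to both sides and using the identities above yields exactly $(g^{-1})_{c}^{T}(Y^T_c) \subset \inter\bigl(((-\infty,-1) \times \rr^{u}) \cup X^T_c \cup ((1,\infty) \times \rr^{u})\bigr)$, which is (C1) for $(Y^T, X^T, g^{-1})$. For (C2), note that the exit edges $\{-1\} \times \overline{B_u(0,1)}$ and $\{1\} \times \overline{B_u(0,1)}$ of $Y^T_c$ correspond, via $c_{Y^T}^{-1} = c_{Y}^{-1} \circ j$, to the entry edges $\overline{B_u(0,1)} \times \{-1\}$ and $\overline{B_u(0,1)} \times \{1\}$ of $Y_c$; composing with $g^{-1}$, assumption (C2a) and the identity $j(\rr^u \times (1,\infty)) = (1,\infty) \times \rr^u$ (and likewise for $-1$) give exactly the two alternatives required by (C2).

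Since both hypotheses of Lemma~\ref{covlemma} are met, that lemma yields $Y^T \cover{g^{-1}} X^T$, which by definition is $X \backcover{g} Y$. The only ``obstacle'' is purely bookkeeping: tracking the composition with $j$ and checking that the geometric roles of ``exit'' and ``entry'' really do get swapped in a consistent way when passing from $(X,Y,g)$ to $(Y^T, X^T, g^{-1})$. No new topology is needed beyond Lemma~\ref{covlemma}.
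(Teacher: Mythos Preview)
Your proposal is correct and is exactly the argument the paper intends: it introduces the lemma with the sentence ``we can apply the same principle to transposed sets'' and gives no further proof, so you have simply made explicit the reduction $X \backcover{g} Y \Leftrightarrow Y^T \cover{g^{-1}} X^T$ together with the observation that conjugating by the coordinate flip $j$ turns (C1a), (C2a) into (C1), (C2) of Lemma~\ref{covlemma}.
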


In such case we will sometimes operate with the notation $Y^{+,l} = c_{Y}^{-1}( \overline{B_{u}(0,1)} \times \{-1\} )$ (\emph{the left entrance edge})
and $Y^{+,r} =  c_{Y}^{-1}( \overline{B_{u}(0,1)} \times \{1\} )$ (\emph{the right entrance edge}).
Conditions (C1a) and (C2a) can then be rephrased in the same manner as in Remark~\ref{rem:covlemma}.

\subsection{Horizontal and vertical disks}\label{sec:conebasic}

Below we recall the definitions of \emph{cone conditions} and \emph{horizontal} and \emph{vertical disks},
as introduced in~\cite{ZgliczynskiMan}. These tools will be later used to represent and 
control unstable and stable manifolds (to be introduced in Definition~\ref{defn:man}) of a stationary point of a flow.
Our exposition with regard to cone conditions is very brief.
We only give the definitions of cone conditions for horizontal and vertical disks,
and our only purpose is to enjoy the convenient parametrization of these disks as graphs of Lipschitz maps, given by Theorem~\ref{thm:lipschitz}.
There is a quite developed theory of cone conditions for maps, with applications
to proving existence and uniqueness of invariant manifolds, which we omit in our presentation (cf.~\cite{ZgliczynskiMan, Capinski2}).
For instance, the proof of theorem about the existence of the stable and the unstable manifold within an isolating block with cones, cited by us in the next subsection 
(Theorem~\ref{szczelir}) follows from an application of cone conditions to time step maps of a flow.

\begin{defn}[Definition 5 in~\cite{ZgliczynskiMan}]
  Let $X$ be an h-set with $u(X) > 0$. Let $b: \overline{B_{u(X)} (0,1)} \to |X|$ be continuous and set $b_c := c_X \circ b$.
  We say that $b$ is a horizontal disk in $X$ if there exists a homotopy $h: [0,1] \times \overline{B_{u(X)} (0,1)} \to X_c$, such that
  \begin{align}
    h_0 &= b_c,\\
    h_1(x) &= (x,0) \quad \forall x \in \overline{B_{u(X)}(0,1)},\\
    h\left([0,1],x\right) &\subset X_c^{-} \quad \forall  x \in \partial B_{u(X)}(0,1).
  \end{align}
\end{defn}

\begin{defn}[Definition 6 in~\cite{ZgliczynskiMan}]
  Let $X$ be an h-set with $s(X) > 0$. Let $b: \overline{B_{s(X)} (0,1)} \to |X|$ be continuous and set $b_c := c_X \circ b$.
  We say that $b$ is a vertical disk in $X$ if there exists a homotopy $h: [0,1] \times \overline{B_{s(X)} (0,1)} \to X_c$, such that
  \begin{align}
    h_0 &= b_c,\\
    h_1(y) &= (0,y) \quad \forall y \in \overline{B_{s(X)}(0,1)},\\
    h\left([0,1],y\right) &\subset X_c^{+} \quad \forall  y \in \partial B_{s(X)}(0,1).
  \end{align}
\end{defn}

For an h-set $X$, we will sometimes say that the subset of its support $Y \subset |X|$ is a horizontal/vertical disk.
By that we mean, that there exists a horizontal/vertical disk $b$ in $X$, such that $Y$ is the image of $b$.

\begin{defn}[Definition 8 in~\cite{ZgliczynskiMan}]
 Let $X \subset \rr^n$ be an h-set and let $Q$ be a quadratic form given by
 \begin{equation}
   Q(x,y) = \alpha (x_u) - \beta (x_s), \quad (x_u,x_s) \in \rr^{u(X)} \times \rr^{s(X)},
 \end{equation}
 where $\alpha: \rr^{u(X)} \to \rr$ and $\beta: \rr^{s(X)} \to \rr$ are positive definite quadratic forms.
 The pair $(X,Q)$ will be called an \emph{h-set with cones}.
\end{defn}

\begin{defn}[Definition 9 in~\cite{ZgliczynskiMan}]
 Let $(X,Q)$ be an h-set with cones and let $b$ be a horizontal disk in $X$.
 We will say that $b$ satisfies the~\emph{cone condition} (with respect to $Q$) iff for
 any $x_{u,1}, x_{u,2} \in \overline{B_{u(X)}(0,1)}$, $x_{u,1} \neq x_{u,2}$ we have
 \begin{equation}
   Q(b_c(x_{u,1}) - b_c(x_{u,2})) > 0.
 \end{equation}
\end{defn}

\begin{defn}[Definition 10 in~\cite{ZgliczynskiMan}]
 Let $(X,Q)$ be an h-set with cones and let $b$ be a vertical disk in $X$.
 We will say that $b$ satisfies the~\emph{cone condition} (with respect to $Q$) iff for
 any $x_{s,1}, x_{s,2} \in \overline{B_{s(X)}(0,1)}$, $x_{s,1} \neq x_{s,2}$ we have
 \begin{equation}
   Q(b_c(x_{s,1}) - b_c(x_{s,2})) < 0.
 \end{equation}
\end{defn}

The geometrical intuition behind the notion of a horizontal and a vertical disk satisfying the cone condition is portrayed in Figure~\ref{fig:disk}.

\begin{figure}
  \centering
  \begin{tikzpicture}[line cap=round,line join=round,>=latex,x=3.0mm,y=3.0mm]

  \fill [color=shadecolor!43] (-0.,-0.) -- (-0.,10.) -- (10.,10.) -- (10.,-0.); 
  \draw[color=black] (-0.,0.) -- (10,0.);

  \draw[color=black] (0.,0.) -- (0.,10);
 
  \draw[color=black] (0.,10.) -- (10.,10);
 
  \draw[color=black] (10.,10.) -- (10.,0.);
 
  \draw [semithick,dash pattern=on 5pt off 5pt,color=black] (-1,11) -- (11,-1);
  \draw [semithick,dash pattern=on 5pt off 5pt,color=black] (11,11) -- (-1,-1);

  \draw [color=red] (5,5) .. controls (6,7) and (4,8) .. (5,10);
  \draw [color=red] (5,5) .. controls (4,3) and (6,2) .. (5,0);
  
  \draw [color=blue] (5,5) .. controls (7,4) and (8,6) .. (10,5);
  \draw [color=blue] (5,5) .. controls (3,6) and (2,4) .. (0,5);

  \clip(-5,-3) rectangle (15,13);
  
  \begin{scriptsize}
  \draw (10.,5.) node[anchor=west] {$\scriptsize{X^{-,r}}$};
  \draw (0.,5.) node[anchor=east] {$\scriptsize{X^{-,l}}$};

  \draw (5.,10.) node[anchor=south] {$\scriptsize{X^{+,r}}$};
  \draw (5.,0.) node[anchor=north] {$\scriptsize{X^{+,l}}$};

  \draw[color=black,->] (-2.,-2.) -- (-0,-2.);
  \draw[color=black,->] (-2.,-2.) -- (-2,-0.);
  \draw (-2.,-1) node[anchor=east] {\scriptsize{$x_{s}$}};
  \draw (-1.,-2) node[anchor=north] {\scriptsize{$x_{u}$}};
\end{scriptsize}

\end{tikzpicture}
  \caption{A horizontal (blue) and a vertical (red) disk satisfying the cone condition in an h-set $X$ with cones $Q(x_u,x_s)=x_u^2 -x_s^2$.}\label{fig:disk}
\end{figure}
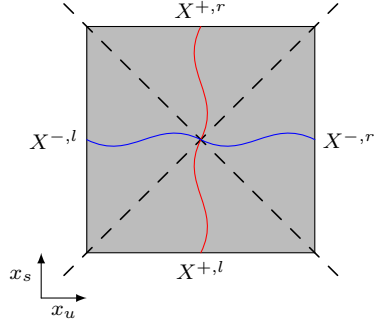 

\begin{thm}[Lemma 5 in~\cite{ZgliczynskiMan}]\label{thm:lipschitz}
  Let $(X,Q)$ be an h-set with cones and let $b$ be a horizontal disk in $X$ satisfying the cone condition.
  Then there exists a Lipschitz map $y: \overline{B_{u(X)}(0,1)} \to \overline{B_{s(X)}(0,1)}$
  such that
  \begin{equation}
      b_c(x_u) = (x_u,y(x_u)).
  \end{equation}
  Analogously, if $b$ is a vertical disk in $X$ satisfying the cone condition,
  then there exists a Lipschitz map $y: \overline{B_{s(X)}(0,1)} \to \overline{B_{u(X)}(0,1)}$
  such that
  \begin{equation}
      b_c(x_s) = (y(x_s),x_s).
  \end{equation}
\end{thm}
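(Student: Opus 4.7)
My plan is to handle the horizontal-disk case; the vertical-disk case is strictly dual and follows by swapping $u\leftrightarrow s$ and reversing the sign of $Q$. Writing $b_c=(b_c^u,b_c^s)$ under the splitting $\rr^{u(X)}\times\rr^{s(X)}$, I would reduce the conclusion to showing that $\phi:=b_c^u\colon \overline{B_{u(X)}(0,1)}\to\overline{B_{u(X)}(0,1)}$ is a homeomorphism. Given this, $y:=b_c^s\circ\phi^{-1}$ is the candidate graph map, and after reparameterising $b$ by $\phi^{-1}$ one obtains the identity $b_c(x_u)=(x_u,y(x_u))$ stated in the theorem.

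I would establish injectivity of $\phi$ directly from the cone condition: for $x_{u,1}\neq x_{u,2}$, $\alpha(\phi(x_{u,1})-\phi(x_{u,2}))>\beta(b_c^s(x_{u,1})-b_c^s(x_{u,2}))\geq 0$, and the positive-definiteness of $\alpha$ then forces $\phi(x_{u,1})\neq\phi(x_{u,2})$. For surjectivity I would use the homotopy $h$ provided by the definition of a horizontal disk: composing with projection onto the $u$-factor produces a continuous homotopy $\pi_u\circ h$ of maps $\overline{B_{u(X)}(0,1)}\to\rr^{u(X)}$ between $\phi$ and $\id$. The boundary condition $h([0,1],\partial B_{u(X)}(0,1))\subset X_c^-=\partial B_{u(X)}(0,1)\times\overline{B_{s(X)}(0,1)}$ keeps the image of the parameter boundary on $\partial B_{u(X)}(0,1)$ throughout, so no interior point of $B_{u(X)}(0,1)$ is ever swept by the boundary of the domain. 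Homotopy invariance of the Brouwer degree (axiom (A1)) then yields $\deg(p,\phi,B_{u(X)}(0,1))=\deg(p,\id,B_{u(X)}(0,1))=1$ for every $p\in B_{u(X)}(0,1)$, hence $\phi$ is surjective onto the open ball, and by compactness of its image onto the closed ball as well. A continuous bijection of a compact Hausdorff set is a homeomorphism, so $\phi^{-1}$ exists and is continuous.

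With $y$ defined as above, the cone condition applied to the pair $\phi^{-1}(x_{u,1}),\phi^{-1}(x_{u,2})$ reads $\alpha(x_{u,1}-x_{u,2})>\beta(y(x_{u,1})-y(x_{u,2}))$. Positive-definiteness supplies constants $C_\alpha,c_\beta>0$ with $\alpha(v)\leq C_\alpha \|v\|^2$ and $\beta(w)\geq c_\beta \|w\|^2$, whence $\|y(x_{u,1})-y(x_{u,2})\|\leq\sqrt{C_\alpha/c_\beta}\,\|x_{u,1}-x_{u,2}\|$, so $y$ is Lipschitz with constant $\sqrt{C_\alpha/c_\beta}$. The hard part will be the surjectivity step: the role of the boundary condition in the definition of a horizontal disk is precisely to prevent the Brouwer degree from being dissipated through $\partial B_{u(X)}(0,1)$ as the homotopy runs, and turning this intuition into a rigorous computation requires the explicit degree argument rather than an ad hoc deformation. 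Injectivity and the Lipschitz bound are essentially immediate consequences of the cone inequality.
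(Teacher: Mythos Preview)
Your proof is correct. Note that the paper does not actually prove this statement: it is quoted as Lemma~5 from the reference~\cite{ZgliczynskiMan} and used without an in-text argument, so there is no proof in the paper to compare against. Your argument---injectivity of $\pi_u\circ b_c$ from the cone inequality, surjectivity via the Brouwer degree using the boundary-preserving homotopy built into the definition of a horizontal disk, and the Lipschitz bound from the positive-definiteness of $\alpha$ and $\beta$---is the standard route and matches what one finds in the cited source. Your remark that the conclusion $b_c(x_u)=(x_u,y(x_u))$ should be read up to the reparameterisation $\phi^{-1}$ is also well taken; the paper's later use of the theorem (in Lemma~\ref{lem:sidedisk}) tacitly relies on exactly this reading.
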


\subsection{H-sets for vector fields and isolating blocks}

In this subsection we will consider an ODE
\begin{equation}\label{ODE}
  \begin{aligned}
    \dot{x} &= f(x), \\
    \ x &\in \rr^{N}
  \end{aligned}
\end{equation}
given by a smooth vector field $f$, and describe how h-sets and covering relations can be used in such setting.

\subsubsection{Transversal sections and Poincar\'e maps}\label{subsec:poinc}

Assume, that we are given a diffeomorphism $\Phi: \rr^N \to \rr^N$, 
and let $\Sigma \subset \rr^{N}$ be a subset of the hypersurface $\Xi:=\Phi^{-1}( \{0\} \times \rr^{N-1} )$.
A point $x \in \Xi$ is \emph{regular} iff $\langle n(x), f(x) \rangle \neq 0$, where $n(x)$ is a normal to $\Xi$ at $x$.
If every point $x \in \Sigma$ is regular, then we will say that $\Sigma$ is a \emph{transversal section}.

For a given $x_{0} \in \rr^{N}$ we will denote by
\begin{equation}\label{flow}
  \varphi(t,x_{0})
\end{equation}
the local flow generated by $f$, that is the value of the solution $x(t)$
to~\eqref{ODE} with the initial condition $x(0)=x_{0}$.

Let $\Sigma_{1},\ \Sigma_{2}$ be two transversal sections such that:
\begin{itemize}
  \item either $\Sigma_1 \subset \Sigma_2$ or $\Sigma_1 \cap \Sigma_2 = \emptyset$,
  \item we have $\overline{\inter \Sigma_1} = \Sigma_1$ and $\overline{\inter \Sigma_2} = \Sigma_2$, where closures and interiors are taken in
    the hypersurface topology,
  \item for each $x \in \Sigma_1$ there exists a $\tau > 0$ such that $\varphi(\tau,x) \in \Sigma_2$
\end{itemize}
It is well known that the \emph{Poincar\'e map}:
\begin{equation}\label{poincareMap}
  P : \Sigma_1 \ni x \to  \inf_{\tau: \varphi(\tau,x) \in \Sigma_{2}} \varphi(\tau,x) \in \Sigma_{2}
\end{equation}
is well-defined and smooth for points $x \in \inter \Sigma_1$ such that $P(x) \in \inter \Sigma_2$ (interiors in the hypersurface topology).
The proof can be found in e.g.~\cite{Kapela}.

To make the formulation of some theorems in future shorter, we extend the above definition of a Poincar\'e map
in the scenario $\Sigma_1 \subset \Sigma_2$ to also cover the embedding by identity $\id : \Sigma_1 \to \Sigma_2$.
In such case we will always specifically refer to such map as the identity map, to differentiate from a Poincar\'e map $P$ given by~\eqref{poincareMap}.

For such a Poincar\'e map we define the h-sets in a natural manner. We can identify $\Sigma_{1}, \Sigma_{2}$ with two copies
of $\rr^{N-1}$. Then we can proceed to describe the h-sets on each of these copies - note that they will be h-sets
in $\rr^{N-1}$, not $\rr^{N}$.

\begin{rem}\label{rem:hset}
  Treating h-sets as subsets of sections is a slight abuse when compared to Definition~\ref{h-set}, where they were subsets of 
  the Euclidean space $\rr^{N}$.
  Nevertheless, we can always compose the change of coordinates homeomorphism for the h-set with the global coordinate frame on the section
  to get back to the Euclidean space.
  Therefore, given a section $\Sigma \subset \rr^{N}$, for an h-set $X \subset \Sigma$ the actual coordinate
  change will take the form $c_{X} =  \tilde{c}_{X} \circ \Phi$,
  where $\Phi : \Sigma \to \{0\} \times \rr^{N-1}$ is the global coordinate frame for the section and $\tilde{c}_{X} : \rr^{N-1} \to \rr^{u(X)} \times \rr^{s(X)}$
  is a coordinate change homeomorphism satisfying \eqref{h-set:1}.
\end{rem}

\subsubsection{Isolating blocks}\label{sec:blockbasic}

In this subsection we will give a working definition of an isolating block for~\eqref{ODE}, which is a special case of the
classical definition from Conley theory used in~\cite{szczelir},
and cite a theorem from that paper on the existence of unstable and stable manifolds in blocks.

For convenience of formulation we assume, that we only work in the $\max$ norm.
Let $\pi_i$ denote the projection onto $i$-th coordinate in $\rr^N$.

\begin{defn}[cf. Lemma 16 in~\cite{ZgliczynskiMan}]
  Let $B \subset \rr^N$ be an $N$-dimensional h-set in $\rr^N$ with $c_B$ given by a diffeomorphism.
  We say that $B$ is an isolating block, iff  
 the following conditions hold for all:
  \begin{itemize}
    \item[(B1)] $\frac{d}{dt} | \pi_{i} c_{B}( \varphi(t,x) ) |_{|_{t=0}} > 0$ for all $i \in \{1,\dots , u(B)\}$ 
      and all $x \in B^{-}$ such that $|\pi_i c_B(x)|=1$ (\emph{exit set isolation}),
    \item[(B2)] $\frac{d}{dt} | \pi_{i} c_{B}( \varphi(t,x) ) |_{|_{t=0}} < 0$ for all $i \in \{u(B)+1,\dots , N\}$ 
      and all $x \in B^{+}$ such that $|\pi_i c_B(x)|=1$ (\emph{entrance set isolation}).
  \end{itemize}
\end{defn}

Note, that contrary to h-sets on transversal sections, isolating blocks are objects of dimension $N$.
The following remark gives alternative conditions for isolation.

\begin{rem}\label{alternativeB}
  The conditions (B1)-(B2) are equivalent to:
  \begin{itemize}
    \item[(B1a)] $\langle \nabla |( \pi_{i} \circ c_{B} )(x)|, f(x) \rangle > 0$ for all $i \in \{1,\dots , u(B)\}$ and all $x \in B^{-}$ such that $|\pi_i c_B(x)|=1$,
    \item[(B2a)] $\langle \nabla |( \pi_{i} \circ c_{B} )(x)|, f(x) \rangle < 0$ for all $i \in \{u(B)+1,\dots , N\}$ and all $x \in B^{+}$ such that $|\pi_i c_B(x)|=1$,
  \end{itemize}
  respectively.

  Since $B^{-}$, $B^{+}$ are formed of subsets of the level sets of the form $\{ x \in \rr^{N}: | \pi_i c_{B}(x) | = 1 \}$,
  and gradients are normals to level sets, (B1) and (B2) can also be expressed as as:
  \begin{itemize}
    \item[(B1b)] $\langle n_{i}(x), f(x) \rangle > \const > 0$ for all $i \in \{1,\dots , u(B)\}$ and all $x \in B^{-}$ such that $|\pi_i c_B(x)|=1$,
    \item[(B2b)] $\langle n_{i}(x), f(x) \rangle < \const < 0$ for all $i \in \{u(B)+1,\dots , N\}$ and all $x \in B^{+}$ such that $|\pi_i c_B(x)|=1$ ,
  \end{itemize}
  respectively, 
  where $n_{i}(x)$ are normals to $\{ x \in \rr^{N}: | \pi_i c_{B}(x) | = 1 \}$, 
  pointing in the outward direction of $|B|$.
\end{rem}

Put simply, an isolating block is a set diffeomorphic to a ball, faces of which being transversal sections.
In addition, the vector field on opposite faces is required to point in opposite directions, see Figure~\ref{blockFig}.

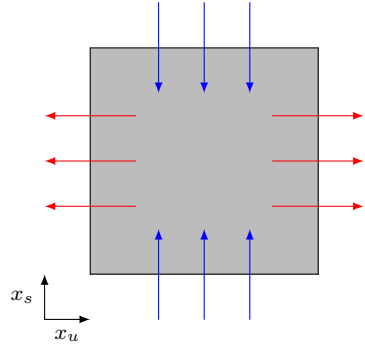
\begin{figure}
    \centering
      \begin{tikzpicture}[line cap=round,line join=round,>=latex,x=3mm,y=3mm]
 
  \clip(-19,-8.4) rectangle (19,7);

  \fill [color=shadecolor!43] (-5.,-5.) -- (-5.,5.) -- (5.,5.) -- (5.,-5.);
  \draw [color=black] (-5.,5.) -- (-5.,-5.);
  \draw [color=black] (5.,5.) -- (5.,-5.);
  \draw [color=black] (-5.,5.) -- (5.,5.);
  \draw [color=black] (5.,-5.) -- (-5.,-5.);
 
 % \draw (5.,0.) node[anchor=west] {\scriptsize{$X^{-,r}$}};
 % \draw (-5.,0.) node[anchor=east] {\scriptsize{$X^{-,l}$}};
 
 % \draw (0.,5.) node[anchor=south] {\scriptsize{$X^{+,r}$}};
 % \draw (0.,-5.) node[anchor=north] {\scriptsize{$X^{+,l}$}};

  \draw[color=red,->] (3.,2.) -- (7.,2.);
  \draw[color=red,->] (3.,-2.) -- (7.,-2.);
  \draw[color=red,->] (3.,-0.) -- (7.,-0.);

  \draw[color=red,->] (-3.,2.) -- (-7.,2.);
  \draw[color=red,->] (-3.,-2.) -- (-7.,-2.);
  \draw[color=red,->] (-3.,-0.) -- (-7.,-0.);

  \draw[color=blue,->] (2.,-7.) -- (2.,-3.);
  \draw[color=blue,->] (-2.,-7.) -- (-2.,-3.);
  \draw[color=blue,->] (-0.,-7.) -- (-0.,-3.);

  \draw[color=blue,->] (2.,7.) -- (2.,3.);
  \draw[color=blue,->] (-2.,7.) -- (-2.,3.);
  \draw[color=blue,->] (-0.,7.) -- (-0.,3.);

  \draw[color=black,->] (-7.,-7.) -- (-5,-7.);
  \draw[color=black,->] (-7.,-7.) -- (-7,-5.);
  \draw (-7.,-6) node[anchor=east] {\scriptsize{$x_{s}$}};
  \draw (-6.,-7) node[anchor=north] {\scriptsize{$x_{u}$}};

\end{tikzpicture}
      \caption{An isolating block.}
      \label{blockFig}
\end{figure}

\begin{defn}[Definition 11 in~\cite{szczelir}]
Let $B$ be an h-set in $\rr^N$ such that $c_B$ is given by a diffeomorphism. We define a vector field $f_c$ on $B_c$ by
\begin{equation}
  f_c(x) = Dc_B (c_B^{-1}(x))f(c_B^{-1}(x)).
\end{equation}
\end{defn}
The new field $f_c$ is in fact the vector field~\eqref{ODE} expressed in variables given by $c_B$.

\begin{defn}[Definition 12 in~\cite{szczelir}]
  Let $U$ be a closed subset of $\rr^N$ with a nonempty interior. Let $g: U \to \rr^M$ be a smooth map.
  We define the interval enclosure of $Dg(U)$ by
  \begin{equation}
    [Dg(U)]:= \left\{ A \in \rr^{N \times M}: A_{ij} \in \left[ \inf_{x \in U} \frac{\partial g_i}{\partial x_j}, \sup_{x \in U} \frac{\partial g_i}{\partial x_j} \right] 
  \ \forall i,j \right\}.
  \end{equation}
  We say that $[Dg(U)]$ is positive definite, iff for all $A \in [Dg(U)]$ the matrix $A$ is positive definite.
\end{defn} 

We recall the definition of the unstable and stable manifold of an equilibrium point.
\begin{defn}\label{defn:man}
  Let $B \subset \rr^N$ and let $x_0 \in B$ be such that $f(x_0)=0$. We define the sets
  \begin{align}
    W^u(x_0) &:= \{ x: \ \lim_{t \to -\infty} \varphi(t,x) = x_0 \},\\
    W^s(x_0) &:= \{ x: \ \lim_{t \to \infty} \varphi(t,x) = x_0 \},\\
    W^u_B(x_0) &:= \{ x: \ \forall t\leq 0 \ \varphi(t,x) \in B \text{ and } \lim_{t \to -\infty} \varphi(t,x) = x_0 \},\\
    W^s_B(x_0) &:= \{ x:\ \forall t\geq 0 \ \varphi(t,x) \in B \text{ and } \lim_{t \to \infty} \varphi(t,x) = x_0 \}.
  \end{align}
  We say that $W^{u}(x_0)$ is the \emph{unstable manifold} of $x_0$ and $W^s(x_0)$ is the \emph{stable manifold} of $x_0$. We will
  use the same names for $W^{u}_B(x_0)$ and $W^{s}_B(x_0)$, respectively, but it will always be clear from the context, 
  to which object we are referring to. 
\end{defn}

The statement of the following Theorem was given in~\cite{szczelir}. The proof follows from the proof of
the stable manifold theorem given in~\cite{ZgliczynskiMan} (Theorem 26).
\begin{thm}[Theorem 4 in~\cite{szczelir}]\label{szczelir}
  Let $(B,Q)$ be an h-set with cones, such that $B$ is an isolating block for~\eqref{ODE} and the matrix
  \begin{equation}\label{eq:szczelir}
    [Df_c(B_c)]^T Q  + Q [Df_c(B_c)]
  \end{equation}
  is positive definite. Then, there exists a unique stationary point $x_0 \in |B|$ of~\eqref{ODE}. Moreover, $W_B^u(x_0)$ is a horizontal disk in $B$ 
  satisfying the cone condition and $W_B^s(x_0)$ is a vertical disk in $B$ satisfying the cone condition.
\end{thm}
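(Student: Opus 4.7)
The plan is to reduce the continuous-time statement to a discrete-time one by passing to the time-$h$ map $\Phi_h(x) := \varphi(h,x)$ for a small $h>0$, and then invoke the stable manifold theorem for maps (Theorem 26 in~\cite{ZgliczynskiMan}) cited in the excerpt. After expressing everything in the block coordinates via $c_B$, I would first check that $(B,Q)$ satisfies the hypotheses of that theorem for $\Phi_h$: the block structure (B1)--(B2) ensures that trajectories can only leave $|B|$ through $B^-$ in forward time and through $B^+$ in backward time, so $B$ functions as the appropriate h-set with cones for a short time-step map, and the covering/backcovering requirements of~\cite{ZgliczynskiMan} follow for $h$ sufficiently small.

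The core analytic step is to translate the positive definiteness of $[Df_c(B_c)]^T Q + Q [Df_c(B_c)]$ into the cone condition for $\Phi_h$. Given two solutions $\varphi_c(t,x),\varphi_c(t,y)$ of the pushed-forward field $f_c$ that stay in $B_c$, set $z(t) := \varphi_c(t,x)-\varphi_c(t,y)$ and write
\begin{equation}
\dot z(t) = \Bigl(\int_0^1 Df_c(\varphi_c(t,y)+s z(t))\,ds\Bigr) z(t) =: A(t)\,z(t),
\end{equation}
with $A(t)\in[Df_c(B_c)]$. Identifying $Q$ with its defining symmetric matrix, I compute
\begin{equation}
\frac{d}{dt} Q(z(t)) = z(t)^T\bigl(A(t)^T Q + Q\,A(t)\bigr) z(t) \ge \lambda\,|z(t)|^2,
\end{equation}
where $\lambda>0$ exists by compactness of $[Df_c(B_c)]$ and the hypothesis. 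Consequently $Q$ is strictly monotone along differences of trajectories, which gives both the strict cone expansion needed for the cone condition on $\Phi_h$ and, taking $y$ to be a stationary point, the cone-monotonicity needed to identify $W^u_B$ and $W^s_B$ as disks satisfying the cone condition.

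From here, existence and uniqueness of the stationary point follow almost immediately. Uniqueness is direct: two distinct stationary points $x_0\neq y_0$ would make $z(t)\equiv x_0-y_0$ constant, contradicting $\frac{d}{dt}Q(z)\ge \lambda|z|^2>0$. For existence I would apply Theorem 26 of~\cite{ZgliczynskiMan} to $\Phi_h$ to get a unique fixed point $x_0\in|B|$; this fixed point is automatically stationary for the flow, since for any $s>0$ the point $\varphi(s,x_0)$ is another fixed point of $\Phi_h$ (the flow commutes with $\Phi_h$) and must coincide with $x_0$ by uniqueness. The same theorem yields $W^u_{\Phi_h}(x_0)$ as a horizontal disk and $W^s_{\Phi_h}(x_0)$ as a vertical disk, both satisfying the cone condition with respect to $Q$.

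The last paragraph of the argument identifies the discrete-time invariant manifolds with the flow ones restricted to $B$. By the isolating block property, a point $x$ has $\varphi(t,x)\in B$ for all $t\ge 0$ (respectively all $t\le 0$) if and only if $\Phi_h^n(x)\in B$ for all $n\ge 0$ (respectively all $n\le 0$) and the corresponding orbit segments do not leave $B$ between samples, which follows from the transversality of $f$ on $\partial B$ in the exit/entry faces; and in either case the limit $\lim_{t\to\pm\infty}\varphi(t,x)=x_0$ holds iff $\lim_{n\to\pm\infty}\Phi_h^n(x)=x_0$. Thus $W^u_B(x_0)=W^u_{\Phi_h}(x_0)$ and $W^s_B(x_0)=W^s_{\Phi_h}(x_0)$, and the disk/cone conclusions transfer. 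I expect the main obstacle to be technical rather than conceptual, namely selecting $h$ small enough so that the time-$h$ map satisfies the precise covering relations required by Theorem 26 of~\cite{ZgliczynskiMan} on the block faces; the monotonicity of $Q$ and the strict exit/entrance inequalities of (B1)--(B2) should make this routine.
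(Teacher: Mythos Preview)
Your proposal is correct and matches the approach the paper itself indicates: the thesis does not give a self-contained proof of this theorem but states that it ``follows from the proof of the stable manifold theorem given in~\cite{ZgliczynskiMan} (Theorem 26)'', and the remark immediately following (Lemma 27 of~\cite{ZgliczynskiMan}) records exactly your core computation that positive definiteness of $[Df_c(B_c)]^T Q + Q[Df_c(B_c)]$ yields $\frac{d}{dt}Q(z(t))>0$. Your reduction to the time-$h$ map and subsequent identification of discrete and continuous invariant manifolds is the natural way to flesh out that citation, so there is nothing to add.
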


\begin{defn}
  In a situation when $B$ is an isolating block that can be equipped with cones $Q$, such that the condition~\eqref{eq:szczelir} is satisfied,
  we will sometimes say that $B$ is an isolating block satisfying the cone condition.
\end{defn}
 
\begin{rem}[Lemma 27 in~\cite{ZgliczynskiMan}]
  The geometrical meaning of positive definiteness of the matrix~\eqref{eq:szczelir} is that it 
  implies the following condition, known as \emph{the cone condition for flows}:
  \begin{equation}
    \frac{d}{dt} Q ( x_1(t) - x_2(t) )_{|_{t=0}} > 0,
  \end{equation} 
  for any two distinct solutions $x_1, x_2: [0,T] \to \rr^N$ of $\dot{x}=f_{c}(x)$ such that $x_1(0), x_2(0) \in B_c$.
\end{rem}

The following theorem follows from combined proofs of Theorems 26 and 20 (on continuous dependence of the manifolds on parameters) 
in~\cite{ZgliczynskiMan}. Continuous and smooth dependence of unstable/stable manifolds of hyperbolic equilibria on parameters is in fact a standard
result in dynamical systems theory, e.g. Theorem 4.1 in~\cite{Hirsch}.
  
\begin{thm}\label{szczelirPar}
  Assume that $(B,Q)$ is an h-set with cones and let 
  \begin{equation}
    \dot{x} = f(x,z), \quad x \in \rr^N,\ z \in \rr^p
  \end{equation}
  be a vector field depending on parameter $z$, given by a smooth map $f$.
  Assume that for all $z \in Z \subset \rr^p$ 
  the vector field $f(\cdot,z)$ together with the pair $(B,Q)$ satisfy assumptions of Theorem~\ref{szczelir}.
  Then, the horizontal and vertical disks $W_{B,z}^u$, $W_{B,z}^s$ parameterizing the unstable and the stable manifold of the equilibrium $x_0(z) \in |B|$
  are continuous as maps from $\overline{B_{u(B)}(0,1)} \times Z$ and $\overline{B_{s(B)}(0,1)} \times Z$ to $|B|$, respectively.
\end{thm}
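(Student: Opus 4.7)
The plan is to upgrade the pointwise construction of $W^u_B(x_0)$ and $W^s_B(x_0)$ from Theorem~\ref{szczelir} to a construction that varies continuously with the parameter $z$, by going through time-$T$ maps of the flow and invoking the parametric contraction-mapping principle. First, for each fixed $z \in Z$, Theorem~\ref{szczelir} applied to $f(\cdot,z)$ already gives the unique equilibrium $x_0(z) \in |B|$ together with horizontal and vertical disks $W^u_{B,z}$, $W^s_{B,z}$ satisfying the cone condition. All that is missing is joint continuity in $(x_u,z)$ and $(x_s,z)$.

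The key reduction is to replace the flow by a single time-$T$ map. Fix $z_0 \in Z$ and a compact neighbourhood $Z_0 \subset Z$ of $z_0$; by smooth dependence of ODE solutions on initial data and parameters, the map $\Phi^z_T(x) := \varphi_z(T,x)$ is continuous (in fact smooth) jointly in $(x,z)$ on $|B| \times Z_0$. Integrating the flow cone condition (Remark after Theorem~\ref{szczelir}) over $[0,T]$ shows that, in the $c_B$-coordinates, $\Phi^z_T$ strictly expands the cone $\{Q>0\}$ and strictly contracts the cone $\{Q<0\}$. Because the matrix $[Df_c(B_c)]^T Q + Q[Df_c(B_c)]$ depends continuously on $z$ and $|B|$ is compact, its minimal eigenvalue is bounded below uniformly for $z \in Z_0$, so the expansion/contraction constants for $\Phi^z_T$ can be taken uniform in $z \in Z_0$.

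Having a uniform cone condition for the maps $\{\Phi^z_T\}_{z \in Z_0}$, I would apply the graph-transform construction underlying the proof of Theorem 26 in~\cite{ZgliczynskiMan} fibrewise: the horizontal disk $W^u_{B,z}$ is obtained as the unique fixed point of a graph transform acting on Lipschitz sections
\begin{equation}
\mathcal{S} = \{\, y : \overline{B_{u(B)}(0,1)} \to \overline{B_{s(B)}(0,1)} \mid \mathrm{Lip}(y) \le L \,\},
\end{equation}
equipped with the sup metric, where $L$ is controlled by the cones $Q$. The cone expansion makes this graph transform, call it $\Gamma_z$, a contraction on $\mathcal{S}$ with constant bounded away from $1$ uniformly in $z \in Z_0$. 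The analogous statement holds for the vertical disks with a corresponding graph transform $\Gamma^s_z$.

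The final step is the parametric contraction mapping principle, which is precisely how Theorem 20 in~\cite{ZgliczynskiMan} is used: if $\{\Gamma_z\}_{z \in Z_0}$ is a uniformly contracting family on the complete metric space $\mathcal{S}$ that depends continuously on $z$, then the unique fixed point $z \mapsto y^*_z \in \mathcal{S}$ depends continuously on $z$, and hence so does its evaluation $(x_u,z) \mapsto (x_u, y^*_z(x_u))$. Composing with $c_B^{-1}$ gives joint continuity of $W^u_{B,z}$ on $\overline{B_{u(B)}(0,1)} \times Z_0$, and the same argument yields continuity of $W^s_{B,z}$. The main obstacle is the uniformity-in-$z$ of the contraction constants and of the Lipschitz bound $L$; I expect this to reduce to the observations above about continuity of the coefficients of~\eqref{eq:szczelir} and compactness of $|B|$, and since $z_0 \in Z$ was arbitrary, the local continuity thus obtained patches together to the claimed global continuity on $Z$.
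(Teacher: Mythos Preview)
Your proposal is correct and aligns with what the paper does: the paper gives no detailed proof at all, stating only that the result ``follows from combined proofs of Theorems 26 and 20 (on continuous dependence of the manifolds on parameters) in~\cite{ZgliczynskiMan}'' and that it is ``a standard result in dynamical systems theory, e.g.\ Theorem 4.1 in~\cite{Hirsch}.'' Your sketch via the graph transform and the parametric contraction mapping principle is exactly the content of those two referenced theorems, so you have simply filled in what the paper leaves implicit.
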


In what follows, we will show that the intersection of a horizontal/vertical disk satisfying the cone condition with a part of the block boundary
is a horizontal/vertical disk within h-sets on the boundary of the block.
We will use that lemma later, to conclude that the two-dimensional stable manifold of an equilibrium of the FitzHugh-Nagumo equation
generates a one-dimensional vertical disk on a transversal section.

Till the rest of this subsection $\hat{\pi}_i$  will denote the projection from $\rr^N$ onto $\rr^{N-1}$ given by omitting the $i$-th coordinate.

\begin{defn}\label{defn:bdset}
  Let $B$ be an h-set such that $B$ is an isolating block for~\eqref{ODE}.
  We define its $i$-th \emph{boundary h-set} (see Remark~\ref{rem:hset}) by
  \begin{equation}
    \begin{aligned}
    |X_{B,i}| &:=  \{x \in \partial |B|: ( \pi_{|i|} \circ c_B )(x) = \sgn{i}  \},\\  
    c_{X_{B,i}} &:= \hat{\pi}_{|i|} \circ c_B,\\
    i &\in \{-N,\dots,-1,1,\dots,N\}.
  \end{aligned}
  \end{equation}
  For $|i| \leq u(B)$ we set $u(X_{B,i}) := u(B)-1$, $s(X_{B,i}) := s(B)$, elsewise $u(X_{B,i}) := u(B)$, $s(X_{B,i}) := s(B)-1$.
\end{defn}

\begin{lem}\label{lem:sidedisk}
  Let $(B,Q)$ be an h-set with cones, such that $B$ is an isolating block for~\eqref{ODE} and $u(B)>1$ .
  Let $b$ be a horizontal disk in $B$ satisfying the cone condition. 
  Then the sets $b(\overline{B_{u(B)}(0,1)}) \cap |X_{B,i}|$ are horizontal disks in $X_{B,i}$,
  for $|i| \in \{1,\dots,u(B)\}$.

  Analogously, if $s(B)>1$ and $b$ is a vertical disk satisfying the cone condition, then the sets
  $b(\overline{B_{s(B)}(0,1)}) \cap |X_{B,i}|$ are vertical disks for $|i| \in \{u(B)+1, \dots,N\}$.
\end{lem}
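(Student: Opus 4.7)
The plan is to reduce the lemma to a graph parameterization using Theorem~\ref{thm:lipschitz} and then read off the horizontal/vertical disk structure on each face combinatorially. Set $u := u(B)$ and $s := s(B)$. Since $b$ is a horizontal disk satisfying the cone condition, Theorem~\ref{thm:lipschitz} provides a Lipschitz map $y: \overline{B_{u}(0,1)} \to \overline{B_{s}(0,1)}$ with $b_c(x_u) = (x_u, y(x_u))$. Working in the $\max$ norm, the face $|X_{B,i}|$ corresponds in the $c_B$ coordinates to $\{(x_u,x_s) : (x_u)_{|i|} = \sgn(i)\}$, so the intersection $b(\overline{B_{u}(0,1)}) \cap |X_{B,i}|$ is precisely the image of the restriction of $b_c$ to the face $F_i := \{x_u \in \overline{B_{u}(0,1)} : (x_u)_{|i|} = \sgn(i)\}$.

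Next I would introduce the insertion map $\iota_i : \overline{B_{u-1}(0,1)} \to F_i$ that inserts $\sgn(i)$ at position $|i|$, and define the candidate disk $\tilde{b} : \overline{B_{u-1}(0,1)} \to |X_{B,i}|$ by $\tilde{b}(\xi) := c_B^{-1}\bigl(\iota_i(\xi),\, y(\iota_i(\xi))\bigr)$. By the definition of $c_{X_{B,i}} = \hat{\pi}_{|i|} \circ c_B$, its coordinate representation is simply $\tilde{b}_c(\xi) = \bigl(\xi,\, y(\iota_i(\xi))\bigr)$, which lies in $(X_{B,i})_c = \overline{B_{u-1}(0,1)} \times \overline{B_{s}(0,1)}$ and is continuous.

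To see that $\tilde{b}$ is a horizontal disk I would use the straight-line homotopy
\begin{equation}
\tilde{h}(t,\xi) := \bigl(\xi,\, (1-t)\, y(\iota_i(\xi))\bigr), \qquad t \in [0,1],\ \xi \in \overline{B_{u-1}(0,1)},
\end{equation}
and verify the three axioms: $\tilde{h}_0 = \tilde{b}_c$ and $\tilde{h}_1(\xi) = (\xi,0)$ are immediate; for the exit set condition, if $\xi \in \partial B_{u-1}(0,1)$ then some component of $\xi$ has absolute value one, hence $\tilde{h}(t,\xi) \in \partial B_{u-1}(0,1) \times \overline{B_{s}(0,1)} = (X_{B,i})_c^{-}$ for every $t$, using the $\max$ norm and the fact that $y$ takes values in $\overline{B_{s}(0,1)}$. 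The vertical disk case is entirely analogous: apply the second half of Theorem~\ref{thm:lipschitz} to write $b_c(x_s) = (y(x_s), x_s)$, restrict to the face $(x_s)_{|i|-u(B)} = \sgn(i)$ for $|i| \in \{u(B)+1,\dots,N\}$, and run the same straight-line contraction in the remaining $u(B)$ coordinates.

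There is no substantial obstacle here; the argument is pure bookkeeping once Theorem~\ref{thm:lipschitz} is invoked. The only point requiring mild care is the consistent use of the $\max$ norm, which is what makes the set $\{\pi_{|i|}(c_B(x)) = \sgn(i)\}$ cover an entire codimension-one face of $B_c$ rather than a lower-dimensional slice, and what ensures that the coordinate condition $|\xi_j|=1$ for some $j$ transfers to $\xi \in \partial B_{u-1}(0,1)$ in the reduced cube. The isolating-block hypothesis on $B$ itself is not used in the argument; it appears in the statement because the lemma is intended to be applied to the invariant manifolds furnished by Theorem~\ref{szczelir}.
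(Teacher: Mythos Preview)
Your proposal is correct and follows essentially the same approach as the paper: invoke Theorem~\ref{thm:lipschitz} to write $b_c$ as a graph $(x_u, y(x_u))$, restrict to the face $(x_u)_{|i|} = \sgn i$, and contract the $y$-component linearly to zero via the straight-line homotopy. The paper treats only $i=1$ explicitly and declares the rest analogous, whereas you package all faces at once via the insertion map $\iota_i$; your remarks about the role of the $\max$ norm and the unused isolating-block hypothesis are accurate.
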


\begin{proof}
  We will only prove the first assertion, the case $i=1$. Proofs of all the other cases are analogous.

  We define a disk $\tilde{b}: \overline{B_{u(B)-1}(0,1)} \to |X_{B,1}|$ by 
  \begin{equation}
    \tilde{b}(x) = b\left(1,x_1,\dots,x_{u(B)-1}\right) \quad \text{ for } x=(x_1,\dots,x_{u(B)-1}).
  \end{equation}

  By Theorem~\ref{thm:lipschitz} we have $b_c(x)= c_B( b(x) ) = (x,y(x))$ for all $x \in \overline{B_{u(B)}(0,1)}$ and some Lipschitz map $y$. 
  Therefore, $\tilde{b}$ indeed maps into $|X_{B,1}|$, moreover
  \begin{equation}
    \tilde{b}_c(x) = (c_{X_{B,1}} \circ \tilde{b})(x) = (\hat{\pi}_1 \circ c_{B} \circ \tilde{b})(x)= \hat{\pi}_1 (1,x,y(x)) = (x,y(1,x)),
  \end{equation}
  for all $x \in \overline{B_{u(B)-1}(0,1)}$.
  The homotopy required in the definition of a horizontal disk can now be given by
  \begin{equation}
    \tilde{h}(\xi,x) = (x,(1-\xi) y(1,x)).
  \end{equation}
  Indeed, we have  
  \begin{equation}
    \tilde{h}\left([0,1], \partial B_{u(B)-1}(0,1) \right) \subset \partial B_{u(B)-1}(0,1) \times \overline{B_{s(B)}(0,1)},
  \end{equation}
  hence $\tilde{b}$ is a horizontal disk.
\end{proof}

\section{Isolating segments}\label{sec:segments}

In this section we will give a definition of isolating segments in an autonomous ODE and prove several useful theorems about them.
Such objects will be geometrically similar to isolating blocks, and can be used to track trajectories of the given system
without need for rigorous integration.

Assume, that we are given a smooth vector field
\begin{equation}
  \begin{aligned}
  \dot{x} &= f(x),\\
  x &\in \rr^N,
\end{aligned}
\end{equation}
an associated local flow $\varphi(t,x)$
and a pair of transversal sections $\Sigma_{\text{in}}, \ \Sigma_{\text{out}}$.

\begin{defn}
  A segment between two transversal sections $\Sigma_{\text{in}}$ and $\Sigma_{\text{out}}$ is formed by a quadruple $S = (|S|, u(S), s(S), c_{S})$,
  consisting of a compact set $|S| \subset \rr^{N}$ (\emph{the support}), a pair of numbers $u(S), s(S) \in \mathbb{N}$
  with $u(S) + s(S) = N-1$ (the number of exit and entrance directions, respectively)
  and a coordinate change diffeomorphism $c_{S}: \rr^{N} \to \rr^{u(S)} \times \rr^{s(S)} \times \rr$ such that:
  \begin{equation}
    \begin{aligned}
      c_{S}(|S|) &= \overline{B_{u(S)}(0,1)} \times \overline{B_{s(S)}(0,1)} \times [0,1], \\
      c_{S}^{-1}( \overline{B_{u(S)}(0,1)} \times \overline{B_{s(S)}(0,1)} \times \{0\} ) &\subset \Sigma_{\text{in}}, \\
      c_{S}^{-1}( \overline{B_{u(S)}(0,1)} \times \overline{B_{s(S)}(0,1)} \times \{1\}) &\subset \Sigma_{\text{out}} . \\
    \end{aligned}
  \end{equation}

  We set:
  \begin{equation}
    \begin{aligned}
      S_{c}&:=\overline{B_{u(S)}(0,1)} \times \overline{B_{s(S)}(0,1)} \times [0,1], \\
      S_{c}^{-}&:=\partial B_{u(S)}(0,1) \times \overline{B_{s(S)}(0,1)} \times [0,1], \\
      S_{c}^{+}&:=\overline{B_{u(S)}(0,1)} \times \partial B_{s(S)}(0,1) \times [0,1], \\
      S^{-} &:= c_{S}^{-1}(S_{c}^{-}), \\
      S^{+} &:= c_{S}^{-1}(S_{c}^{+}).
    \end{aligned}
  \end{equation}

  We will refer to $S^{-}$/$S^{+}$ as \emph{the exit/entrance sets}, respectively.
  Again, to shorten the notation sometimes we will drop the bars in the symbol $|S|$ and just write $S$ to denote both the
  segment and its support.
\end{defn}

\begin{rem}
  As with h-sets, it is enough to give $u(S)$, $s(S)$ and $c_{S}$ to define a segment $S$.
\end{rem}

Given a segment $S$ we introduce the following notation for projections:

\begin{equation}
  \begin{aligned}
    \pi_{u}: \rr^{u(S)} \times \rr^{s(S)} \times \rr \ni (x_{u},x_{s},x_{\mu}) &\to x_{u} \in \rr^{u(S)}, \\
    \pi_{s}: \rr^{u(S)} \times \rr^{s(S)} \times \rr \ni (x_{u},x_{s},x_{\mu}) &\to x_{s} \in \rr^{s(S)}, \\
    \pi_{\mu}: \rr^{u(S)} \times \rr^{s(S)} \times \rr \ni (x_{u},x_{s},x_{\mu}) &\to x_{\mu} \in \rr.
  \end{aligned}
\end{equation}

\begin{defn}\label{defn:isegment}
  We say that $S$ is an isolating segment between two transversal sections $\Sigma_{\text{in}}$ and $\Sigma_{\text{out}}$
  if $S$ is a segment,
  the functions $x \to \vectornorm{\pi_{u}(x)}$, $x \to \vectornorm{\pi_{s}(x)}$, $x \to \vectornorm{\pi_{\mu}(x)}$,
  $x \in \rr^{u(S)} \times \rr^{s(S)} \times \rr$ are smooth everywhere except at $0$
  and the following conditions are satisfied:
  \begin{itemize}
    \item[(S1)] $\frac{d}{dt} \pi_{\mu} c_{S}( \varphi(t,x) )_{|_{t=0}} > 0$ for all $x \in |S|$ (\emph{monotonicity}),
    \item[(S2)] $\frac{d}{dt} \vectornorm{ \pi_{u} c_{S}( \varphi(t,x) ) }_{|_{t=0}} > 0$ for all $x \in S^{-}$ (\emph{exit set isolation}),
    \item[(S3)] $\frac{d}{dt} \vectornorm{ \pi_{s} c_{S}( \varphi(t,x) ) }_{|_{t=0}} < 0$ for all $x \in S^{+}$ (\emph{entrance set isolation}).
  \end{itemize}
\end{defn}

As one can see, our definition of an isolating segment $S$ relies on splitting the phase space into:
\begin{itemize}
  \item the \emph{exit} directions $\pi_{u} \circ c_{S}$,
  \item the \emph{entry} directions $\pi_{s} \circ c_{S}$,
  \item the one-dimensional \emph{central} direction $\pi_{\mu} \circ c_{S}$.
\end{itemize}
In that sense, it is a simplification of the concept of periodic isolating segments in nonautonomous systems, as originally introduced in~\cite{SrzednickiWojcik}
(also, under the name of periodic isolating blocks in~\cite{Srzednicki}),
where a wider range of boundary behavior was considered.
On the other hand, contrary to~\cite{SrzednickiWojcik}, we are able to work with an autonomous ODE
-- in~\cite{SrzednickiWojcik} the central direction had to be given by time.

When introducing an isolating segment, we will sometimes omit specifying the transversal sections $\Sigma_{\text{in}}, \Sigma_{\text{out}}$
-- in such situation we consider the suitable sections implicitly defined by $c_{S}$.

\begin{rem}\label{alternativeS}
  Each of the conditions (S1)-(S3) is equivalent to its following counterpart:
  \begin{itemize}
    \item[(S1a)] $\langle \nabla (\pi_{\mu} \circ c_{S})(x), f(x) \rangle > 0$ for all $x \in |S|$,
    \item[(S2a)] $\langle \nabla \vectornorm{ \pi_{u} \circ c_{S} }(x), f(x) \rangle > 0$ for all $x \in S^{-}$,
    \item[(S3a)] $\langle \nabla \vectornorm{ \pi_{s} \circ c_{S} }(x), f(x) \rangle < 0$ for all $x \in S^{+}$.
  \end{itemize}

  Since $S^{-}$, $S^{+}$ are subsets of the level sets $\{ x \in \rr^{N}: \vectornorm{ \pi_{u} \circ c_{S}(x) } = 1 \}$,
  $\{ x \in \rr^{N}: \vectornorm{ \pi_{s} \circ c_{S}(x) } = 1 \}$, respectively, and gradients are normals to level sets,
  (S2a) and (S3a) can also be rewritten as:
  \begin{itemize}
    \item[(S2b)] $\langle n_{-}(x), f(x) \rangle > \const > 0$ for all $x \in S^{-}$,
    \item[(S3b)] $\langle n_{+}(x), f(x) \rangle < \const < 0$ for all $x \in S^{+}$,
  \end{itemize}
  respectively, 
  where $n_{\mp}(x)$ are normals to $S^{\mp}$, pointing in the outward direction of $|S|$\footnote{The sets $S^{\mp}$ are manifolds with boundary, so by normals
  at the boundary points we mean normals to any smooth extension of $S^{\mp}$ to a manifold without boundary.}.
\end{rem}

In our applications the faces of segments will always lie in affine subspaces,
hence conditions (S2b) and (S3b) are easy to check by an explicit computation.
Let $\pi_i$ be the projection onto $i$-th variable, $i \in 1,\dots,N$.
In the central direction our changes of coordinates will take an affine form
\begin{equation}\label{eq:centralform}
  \pi_{\mu} c_{S}(x) = a \pi_i (x) + b,
\end{equation}
for $a \neq 0,\ b \in \rr$.
In that situation (S1a) is equivalent with
\begin{equation}\label{eq:sgna}
  \sgn(a) \pi_i (f(x)) > 0, \ \forall x \in |S|,
\end{equation}
which again is easily established. In particular, if the sign of $\pi_i(f(x))$ is negative,
one needs to orient the segment in the direction reverse to the $i$-th coordinate direction
by giving $a$ a negative sign.

We will now introduce the notion of the transposed segment, analogous to the transposed h-set.

\begin{defn}
  Given a segment $S$ between two transversal sections $\Sigma_{\text{in}}$ and $\Sigma_{\text{out}}$ we define the
  transposed segment $S^{T}$ between $\Sigma_{\text{out}}$ and $\Sigma_{\text{in}}$ by setting:
  \begin{equation}
    \begin{aligned}
      |S^{T}| &:= |S|, \\
      u(S^{T}) &:= s(S), \\
      s(S^{T}) &:= u(S), \\
      c_{S^{T}} &:= {\rm o} \circ c_{S};
    \end{aligned}
  \end{equation}
  where ${\rm o}: \rr^{u(S)} \times \rr^{s(S)} \times \rr \to \rr^{u(S^{T})} \times \rr^{s(S^{T})} \times \rr$, ${\rm o}(p,q,r) = (q,p,1-r)$.

  Observe that
  \begin{equation}
    \begin{aligned}
      (S^{T})^{-} &= S^{+}, \\
      (S^{T})^{+} &= S^{-}.
    \end{aligned}
  \end{equation}

\end{defn}

\begin{prop}\label{ts:1}
  Let $S$ be an isolating segment between transversal sections $\Sigma_{\text{in}}$ and $\Sigma_{\text{out}}$ for $\dot{x} = f(x)$.
  Then $S^{T}$ is an isolating segment between $\Sigma_{\text{out}}$ and $\Sigma_{\text{in}}$ for the inverted vector field $\dot{x} = -f(x)$.
  The sections $\Sigma_{\text{out}}$ and $\Sigma_{\text{in}}$ are transversal for the inverted vector field.
\end{prop}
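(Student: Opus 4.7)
The plan is to unwind the two reversals encoded in the definition of $S^T$ -- namely the swap of exit and entry directions via $(p,q) \mapsto (q,p)$ and the inversion of the central variable via $r \mapsto 1-r$ -- and show they are exactly compensated by passing to the time-reversed flow $\tilde{\varphi}(t,x) = \varphi(-t,x)$ generated by $-f$.

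First I would check that $S^T$ is a segment between $\Sigma_{\text{out}}$ and $\Sigma_{\text{in}}$. With $c_{S^T} = {\rm o} \circ c_S$ and ${\rm o}(p,q,r) = (q,p,1-r)$, applying ${\rm o}$ to $c_S(|S|)$ produces $\overline{B_{u(S^T)}(0,1)} \times \overline{B_{s(S^T)}(0,1)} \times [0,1]$ since $u(S^T)=s(S)$ and $s(S^T)=u(S)$. Because ${\rm o}$ sends $r=0$ to $r=1$, the preimage of the face $\{x_\mu = 0\}$ under $c_{S^T}$ is the preimage of $\{x_\mu = 1\}$ under $c_S$, which lies in $\Sigma_{\text{out}}$; the opposite face similarly lies in $\Sigma_{\text{in}}$. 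Transversality of $\Sigma_{\text{in}}$ and $\Sigma_{\text{out}}$ for $-f$ is immediate from $\langle n(x), -f(x) \rangle = -\langle n(x), f(x)\rangle$, which is nonzero whenever $\langle n(x), f(x)\rangle$ is.

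Next I would verify (S1)--(S3) for $S^T$ with respect to $\tilde{\varphi}$. For (S1) at $x \in |S|$, using $\pi_\mu \circ c_{S^T} = 1 - \pi_\mu \circ c_S$ and the chain rule,
\[
  \frac{d}{dt}\pi_\mu c_{S^T}(\tilde{\varphi}(t,x))_{|_{t=0}} = -\frac{d}{dt}\pi_\mu c_S(\varphi(-t,x))_{|_{t=0}} = \frac{d}{ds}\pi_\mu c_S(\varphi(s,x))_{|_{s=0}} > 0
\]
by (S1) for $S$. For (S2) at $x \in (S^T)^- = S^+$, using $\pi_u \circ c_{S^T} = \pi_s \circ c_S$ I would compute $\frac{d}{dt}\vectornorm{\pi_s c_S(\varphi(-t,x))}_{|_{t=0}} = -\frac{d}{ds}\vectornorm{\pi_s c_S(\varphi(s,x))}_{|_{s=0}} > 0$, the positivity coming from (S3) for $S$. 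Condition (S3) for $S^T$ is dual, using (S2) for $S$.

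The argument is pure bookkeeping and I don't expect any genuine obstacle. The only thing to track with care is that the swap $\pi_u \leftrightarrow \pi_s$ exchanges the role of (S2) and (S3), while the two independent sign reversals -- time reversal and the involution $r \mapsto 1-r$ in the central coordinate -- combine to reproduce the correct inequality sense in each of the three isolation conditions.
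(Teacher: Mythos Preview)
Your proof is correct. The paper states this proposition without proof, treating it as an immediate consequence of the definitions; your careful unwinding of the two involutions (the swap $(p,q)\mapsto(q,p)$ and the central flip $r\mapsto 1-r$) against time reversal is exactly the routine check the paper leaves to the reader.
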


\subsection{Isolating segments imply coverings}

Given a segment $S$ between transversal sections $\Sigma_{\text{in}}$ and $\Sigma_{\text{out}}$ there is a natural structure of h-sets defined
on the faces given by intersections $\Sigma_{\text{in}} \cap |S|$ and $\Sigma_{\text{out}} \cap |S|$.

\begin{defn}
  We define the h-sets:
  \begin{itemize}
    \item $X_{S,\text{in}} \subset \Sigma_{\text{in}}$ (\emph{the front face}),
    \item $X_{S,\text{out}} \subset \Sigma_{\text{out}}$ (\emph{the rear face}),
  \end{itemize}
  as follows:
  \begin{itemize}
    \item $u( X_{S,\text{in}} ) = u( X_{S,\text{out}} ) := u(S)$ and $s( X_{S,\text{in}} ) = s( X_{S,\text{out}} ) := s(S)$;
    \item $|X_{S, \text{in}}| =\Sigma_{\text{in}} \cap |S|$ and $|X_{S, \text{out}}| = \Sigma_{\text{out}} \cap |S|$;
    \item $c_{ X_{S,\text{in}} } := (\pi_{u},\pi_{s}) \circ {c_{S}}_{|_{\Sigma_{\text{in}}}}$ 
      and $c_{ X_{S,\text{out}} } := (\pi_{u},\pi_{s}) \circ {c_{S}}_{|_{\Sigma_{\text{out}}}}$.
  \end{itemize}

\end{defn}

\begin{defn}
  Let $S$ be an isolating segment. We define \emph{the exit map} $E_{S}: |X_{S,\text{in}}| \to S^{-} \cup |X_{S,\text{out}}|$ by
  \begin{equation}
   E_{S}(x)=\varphi(t_{e},x), \ t_{e} = \min \left\{ t \in \rr^{+} \cup \{0\}: \ \varphi(t,x) \in S^{-} \cup |X_{S,\text{out}}| \right\},
  \end{equation}
  and \emph{the persistent set} by
  \begin{equation}
   S^{0} := \{ x \in |X_{S,\text{in}}|: E_{S}(x) \in |X_{S,\text{out}}| \}.
  \end{equation}
\end{defn}

\begin{rem}
  From (S1), (S2), (S3) it follows that the function $E_{S}$ is well-defined and a homeomorphism onto its image. 
\end{rem}

\begin{thm}\label{is:1}
  Let $S$ be an isolating segment between transversal sections $\Sigma_{\text{in}}$ and $\Sigma_{\text{out}}$.
  Define $V := \{x \in \Sigma_{\text{in}}: \exists \tau > 0 : \varphi(\tau,x) \in \Sigma_{\text{out}} \}$
  and a Poincar\'e map $P: V \to \Sigma_{out}$ as in equation~\eqref{poincareMap}. Then
  \begin{itemize}
    \item $V \neq \emptyset$;
    \item there exists a diffeomorphism $R: \Sigma_{\text{in}} \to \Sigma_{\text{out}}$ such that we have a covering relation
      \begin{equation}
        X_{S, \text{in}} \cover{R} X_{S, \text{out}}
      \end{equation}
    and
    \begin{equation}\label{R=P}
      P(x) = R(x) \quad \forall x \in S^{0};
    \end{equation}
    \item it holds that 
      \begin{equation}\label{S0=RSet}
        S^{0}= \{ x \in |X_{S,\text{in}}|: R(x) \in |X_{S,\text{out}}| \}.
      \end{equation}
      In particular, for every $x \in |X_{S, \text{in}}|$ such that $R(x) \in |X_{S, \text{out}}|$ the part of the trajectory
      between $x$ and $P(x)=R(x)$ is contained in $|S|$.
  \end{itemize}
\end{thm}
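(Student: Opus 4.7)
The plan is to work throughout in the straightened coordinates $c_S$. In these coordinates $|S|$ is the standard prism $\overline{B_{u(S)}(0,1)} \times \overline{B_{s(S)}(0,1)} \times [0,1]$, (S1) says the pushforward $f_c$ has strictly positive $\mu$-component on $S_c$, (S2) says $\vectornorm{\pi_u}$ strictly grows on the lateral exit set $S^-$, and (S3) says $\vectornorm{\pi_s}$ strictly decreases on the entrance set $S^+$. In particular (S3) prevents any trajectory starting in $|S|$ from crossing $S^+$ outward, so every such trajectory can leave $|S|$ only through $S^-$ or through the rear face $|X_{S,\text{out}}|$, and the exit map $E_S$ is a continuous injection from $|X_{S,\text{in}}|$ onto $S^- \cup |X_{S,\text{out}}|$.

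First I would show $S^0 \neq \emptyset$, which immediately gives $V \supset S^0 \neq \emptyset$, by a retraction obstruction. The case $u(S)=0$ is trivial since then $S^- = \emptyset$. For $u(S) \geq 1$, suppose $S^0 = \emptyset$; then $E_S$ maps $|X_{S,\text{in}}|$ entirely into $S^-$. Composing $c_S \circ E_S \circ c_{X_{S,\text{in}}}^{-1}$ with the projection $\partial B_{u(S)} \times \overline{B_{s(S)}} \times [0,1] \to \partial B_{u(S)}$ and restricting to the slice $\pi_s = 0$ yields a continuous map $\overline{B_{u(S)}(0,1)} \to \partial B_{u(S)}(0,1)$ that is the identity on $\partial B_{u(S)}(0,1)$ (points on $\partial B_{u(S)} \times \{0\} \times \{0\}$ already lie in $S^-$ with exit time zero), contradicting the non-retractibility of a closed ball onto its boundary.

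Next I would construct $R$ as the time map of a carefully extended vector field. Extend $f_c$ to a smooth $\tilde{f}_c$ on the slab $\rr^{u(S)+s(S)} \times [0,1]$ so that (i) $\tilde{f}_c = f_c$ on $S_c$; (ii) the $\mu$-component is uniformly positive; (iii) in $\{\vectornorm{\pi_u} \geq 1\}$ the field is strictly radial outward in $\pi_u$; and (iv) in $\{\vectornorm{\pi_s} \geq 1\}$ the field is strictly radial outward in $\pi_s$. Such an extension exists by a standard cutoff/gluing argument, using the boundary sign information from (S2)--(S3) to guarantee continuity with $f_c$ along $\partial S_c$. Thanks to (ii), every trajectory of $\tilde{f}_c$ starting at $\mu = 0$ reaches $\mu = 1$ in a smoothly varying time, so the associated flow map between the hyperplanes $\{\mu=0\}$ and $\{\mu=1\}$ is a diffeomorphism; transporting it back to $\Sigma_{\text{in}}$ and $\Sigma_{\text{out}}$ via $c_S$ (and extending arbitrarily off the face $X_{S,\text{in}}$) yields $R : \Sigma_{\text{in}} \to \Sigma_{\text{out}}$.

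Finally the three conclusions follow. Equality $R|_{S^0} = P|_{S^0}$ is immediate from (i). If $x \notin S^0$ its $\tilde f_c$-trajectory first hits $S^-$ at $\vectornorm{\pi_u} = 1$ and (iii) forces $\vectornorm{\pi_u}$ to keep growing, so $R_c(x) \notin X_{S,\text{out},c}$, giving $S^0 = \{x \in |X_{S,\text{in}}| : R(x) \in |X_{S,\text{out}}|\}$. For the covering relation $X_{S,\text{in}} \cover{R} X_{S,\text{out}}$ I would build the homotopy required by Definition~\ref{covering} in two concatenated stages: first the backward flow homotopy $(p,q) \mapsto $ (time map of $\tilde f_c$ from $\{\mu=0\}$ to $\{\mu=1-\xi\}$), which at $\xi=0$ equals $R_c$ and at $\xi=1$ is the identity embedding into $\rr^{u(S)} \times \rr^{s(S)}$; then the linear compression $(p,q) \mapsto (p,(1-\xi)q)$, ending at $(A(p),0)$ with $A = \id$, giving degree $+1$. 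Properties (iii)--(iv) together with the monotonicity in $\mu$ ensure that \eqref{cover:1b} and \eqref{cover:1c} hold throughout both stages. The main obstacle is precisely the construction of $\tilde{f}_c$: (iii) and (iv) have to be arranged simultaneously, to hold uniformly for every $\mu \in [0,1]$ rather than only at $\mu=1$, and to be compatible with $f_c$ along $\partial S_c$ while preserving smoothness; after this extension is in place, the remaining verifications reduce to routine sign computations from (S1)--(S3).
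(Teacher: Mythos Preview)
Your overall strategy --- extend the vector field outside the prism so that the resulting Poincar\'e map from $\{\mu=0\}$ to $\{\mu=1\}$ is a globally defined diffeomorphism $R$ agreeing with $P$ on $S^0$ --- is exactly the paper's, and your direct retraction argument for $S^0\neq\emptyset$ is correct (and in fact more elementary than the paper, which obtains nonemptiness only as a corollary of the covering relation).

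The genuine gap is your covering homotopy. You end at $A=\id$, but Definition~\ref{covering} requires $A(\partial B_{u(S)}(0,1))\subset\rr^{u(S)}\setminus\overline{B_{u(S)}(0,1)}$, which the identity fails. The same defect infects the intermediate stages: as $\xi\to 1$ in your first stage the map tends to the identity, and throughout your second stage $(p,q)\mapsto(p,(1-\xi)q)$ the $\pi_u$-component is fixed, so $h(\xi,X_c^-)=X_c^-\subset Y_c$ and condition~\eqref{cover:1b} is violated on the whole second half. Shrinking the flow time destroys precisely the expansion in $\pi_u$ that the covering relation needs. The paper resolves this by homotoping the \emph{vector field} rather than the flow time: it glues $f_c$ to the explicit linear hyperbolic model $g(x)=(\pi_u x,-\pi_s x,1)$ via a bump function, then deforms the glued field convexly to $g$. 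Every intermediate Poincar\'e map then strictly increases $\vectornorm{\pi_u}$ on $\{\vectornorm{\pi_u}\geq 1\}$ and strictly decreases $\vectornorm{\pi_s}$ on $\{\vectornorm{\pi_s}\geq 1\}$, so \eqref{cover:1b}--\eqref{cover:1c} hold uniformly and the endpoint is the linear map $(e\,\id,\,e^{-1}\id)$, which satisfies \eqref{cover:2}. As a side remark, your condition (iv) is backwards: on $S^+$ the field points \emph{inward} in $\pi_s$ by (S3), so no continuous extension agreeing with $f_c$ on $S_c$ can be strictly outward in $\pi_s$ on $\{\vectornorm{\pi_s}\geq 1\}$; you want $\vectornorm{\pi_s}$ decreasing there, as the paper's model $g$ has.
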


The intuition behind this theorem is portrayed in Figure~\ref{segmentCovFig}.
The proof was inspired by the algorithm for integration of ill-posed PDEs, presented in~\cite{ZgliczynskiUnpublished}. 

\begin{figure}
    \centering
      \input{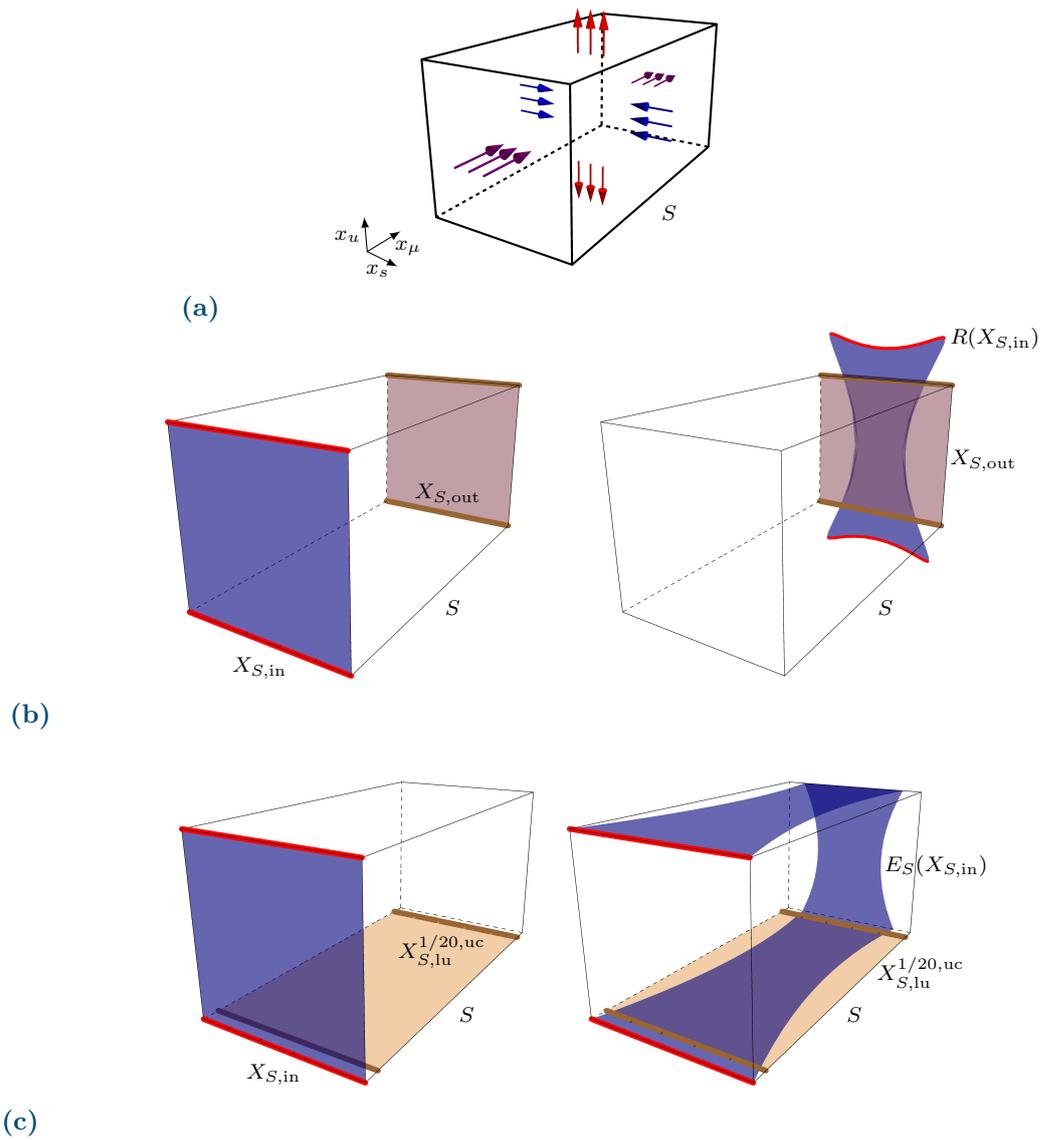}
      \caption{Schematic drawings of covering relations in a three-dimensional isolating segment $S$ with $u(S)=s(S)=1$.}
      \label{segmentCovFig}
\end{figure}

\begin{proof}
  To make the formulas clearer, without any loss of generality we assume that $c_{S}=\id_{\rr^{N}}$. Define
  \begin{equation}
    g(x) = \frac{d}{dt} \tilde{\varphi}( t, x )_{|_{t=0}},
  \end{equation}
  where
  \begin{equation}
    \tilde{\varphi}(t,x) = \left( \begin{array}{c} e^{t} \pi_{u}(x) \\ e^{-t} \pi_{s}(x) \\ t + \pi_{\mu}(x) \end{array} \right)
  \end{equation}
  is a global flow on $\rr^{u(S)} \times \rr^{s(S)} \times \rr$.

  We have
    \begin{align}
      \langle \nabla \vectornorm{ \pi_{u} (x) }, g(x) \rangle &= \frac{d}{dt} \vectornorm{ \pi_{u} \tilde{\varphi}(t, x ) }_{|_{t=0}} = \vectornorm{ \pi_{u} (x) },\label{eq:ga} \\
      \langle \nabla \vectornorm{ \pi_{s} (x) }, g(x) \rangle &= \frac{d}{dt} \vectornorm{ \pi_{s} \tilde{\varphi}(t, x ) }_{|_{t=0}} = -\vectornorm{ \pi_{s} (x) },\label{eq:gb}
    \end{align}
  for all $x \neq 0$ and   
  \begin{equation}
    \langle \nabla \pi_{\mu} (x), g(x) \rangle = \frac{d}{dt} \pi_{\mu} \tilde{\varphi}(t, x )_{|_{t=0}} = 1,\label{eq:gc}
  \end{equation}
  for all $x \in \rr^{N}$.

  Let $U$ be a bounded, open neighborhood of $|S|$, small enough so that the following conditions are satisfied:
    \begin{align}
      \langle \nabla \vectornorm{ \pi_{u} (x) }, f(x) \rangle > \const > 0 \ &\forall x \in U: \vectornorm{ \pi_{u} (x) } \geq 1, \label{eq:fa}\\
      \langle \nabla \vectornorm{ \pi_{s} (x) }, f(x) \rangle < \const < 0 \ &\forall x \in U: \vectornorm{ \pi_{s} (x) } \geq 1, \label{eq:fb}\\
      \langle \nabla \pi_{\mu} (x), f(x) \rangle > \const > 0 \ &\forall x \in U.\label{eq:fc}
    \end{align}

  Let $\eta: \rr^{N} \to [0,1]$ be a $C^{\infty}$ function equal to $1$ on $|S|$ and equal to $0$ on $\rr^{N} \backslash U$.
  Put $\hat{f}(x) = \eta(x) f(x) + (1-\eta(x))g(x)$. Denote by $\hat{\varphi}(t,x)$ the local flow generated by
  \begin{equation}
    \dot{x} = \hat{f}(x), \quad x \in \rr^{N}.
  \end{equation}
  Since $\hat{\varphi}(t,x) = \tilde{\varphi}(t, x),\ x \in \rr^{N} \backslash U$ and $U$ is bounded,
  $\hat{\varphi}$ is also a global flow.
  From~\eqref{eq:gc} and~\eqref{eq:fc} we have:
  \begin{equation}
    \begin{aligned}
      \langle \nabla \pi_{\mu} (x), \hat{f}(x) \rangle
      &= \eta(x)\langle \nabla \pi_{\mu} (x), f(x) \rangle
      + (1 - \eta(x)) \langle \nabla \pi_{\mu} (x), g(x) \rangle
      \\ &> \const > 0
    \end{aligned}
  \end{equation}
  for all $x \in \rr^{N}$. Therefore, the Poincar{\'e} map
  \begin{equation}
    P_{\hat{f}}: \rr^{u(S)} \times \rr^{s(S)} \times \{ 0 \} \to \rr^{u(S)} \times \rr^{s(S)} \times \{ 1 \},
  \end{equation}
  is a well-defined diffeomorphism. We set $R := P_{\hat{f}}$.

  First, we will prove that $X_{S, \text{in}}$ $R$-covers $X_{S, \text{out}}$, cf. Figures~\ref{segmentCovFig:A} and~\ref{segmentCovFig:B}.
  In what is below we identify the spaces $\rr^{u(S)} \times \rr^{s(S)} \times \{0\}$ and $\rr^{u(S)} \times \rr^{s(S)} \times \{1\}$ 
  with two copies $\rr^{u(S)} \times \rr^{s(S)}$ wherever necessary, by projecting/embedding the first $u(S)+s(S)$ coordinates.
 
  We need a homotopy of $R$ to a linear map.
  Consider the parameterized family of vector fields
  \begin{equation}
    f_{\xi}(x) = (1 - \xi) \hat{f}(x) + \xi g(x), \quad x \in \rr^{N},
  \end{equation}
  where $\xi \in [0,1]$. By the same reasoning as with $\hat{f}$ each of these vector fields
  generates a global flow and induces an associated Poincar{\'e} map
  \begin{equation}
    P_{f_{\xi}}: \rr^{u(S)} \times \rr^{s(S)} \times \{ 0 \} \to \rr^{u(S)} \times \rr^{s(S)} \times \{ 1 \}.
  \end{equation}
  We define a continuous
  homotopy of maps $h: [0,1] \times \rr^{u(S)} \times \rr^{s(S)} \to \rr^{u(S)} \times \rr^{s(S)}$:
  \begin{equation}
    \begin{aligned}
      h(\xi, \cdot )&:= P_{f_{2\xi}},\quad \xi \in [0, 1/2], \\
      h(\xi, \cdot )&:= \left[ \begin{array}{cc} e\id_{\rr^{u(S)}} & 0 \\ 0 & (2-2\xi)e^{-1}\id_{\rr^{s(S)}} \end{array} \right],\quad \xi \in [1/2, 1].
    \end{aligned}
  \end{equation}
  Indeed, the homotopy agrees at $1/2$. Moreover, $h(0,\cdot) = R$
  and $h(1,\cdot)$ is a linear map satisfying the requirements given by~\eqref{cover:2}. Since 
  it is also clear that~\eqref{cover:1b} and~\eqref{cover:1c}
  hold for $\xi \in [1/2, 1]$, we proceed to check these two conditions on the other half of the interval.

  Denote by $\varphi^{\xi}$ the family of global flows generated by $\dot{x} = f_{\xi}(x)$.
  From~\eqref{eq:fa} and~\eqref{eq:ga}, for $\xi \in [0, 1]$ and $x: \vectornorm{\pi_{u} (x)} \geq 1$ we get

  \begin{equation}\label{hatf:1}
   \begin{aligned}
     &\frac{d}{dt} \vectornorm{ \pi_{u}  \varphi^{\xi}(t, x)  ) }_{|_{t=0}} \\
     & =\langle \nabla \vectornorm{ \pi_{u} (x)}, f_{\xi}(x) \rangle\\
     &= (1-\xi)\langle \nabla \vectornorm{ \pi_{u} (x)} , \hat{f}(x) \rangle
     + \xi \langle \nabla \vectornorm{\pi_{u} (x)} , g(x) \rangle  \\
     &= (1-\xi) \eta(x) \langle \nabla \vectornorm{\pi_{u} (x)} , f(x) \rangle
     + ( 1-\eta(x) + \xi \eta(x) ) \langle \nabla \vectornorm{\pi_{u} (x)} , g(x) \rangle \\
      & > \const > 0.
   \end{aligned}
  \end{equation}

  Therefore, $\vectornorm{\pi_{u} (x)} = 1$ implies $\vectornorm{\pi_{u}(P_{f_{\xi}}(x))} > 1$ for all $\xi \in [0,1]$
  and proves \eqref{cover:1b}.
  By a mirror argument, from~\eqref{eq:fb} and~\eqref{eq:gb} we obtain 
 \begin{equation}\label{hatf:2}
   \begin{aligned}
     \frac{d}{dt} \vectornorm{ \pi_{s}  \varphi^{\xi}(t, x)  ) }_{|_{t=0}} < \const < 0, \quad x: \vectornorm{\pi_{s}(x)} \geq 1,
   \end{aligned}
  \end{equation} 
  hence $\vectornorm{\pi_{s}(P_{f_{\xi}}(x))} = 1$ implies $\vectornorm{\pi_{s} (x)} > 1$ for all $\xi \in [0,1]$.
  This proves \eqref{cover:1c}.

  We are left to prove~\eqref{R=P} and~\eqref{S0=RSet}. 
  Let us start with the latter. 
  
  Observe, that $\hat{f}_{|_{|S|}} = f_{|_{|S|}}$, which proves the ``$\subset$'' inclusion.
  For the other one we proceed as follows.
  Since $\varphi^{0} = \hat{\varphi}$, from~\eqref{hatf:1} and~\eqref{hatf:2}
  we obtain the forward invariance of the sets $\{ x \in \rr^{N}: \  \vectornorm{ \pi_{u}  x } \geq 1 \}$,
  $\{ x \in \rr^{N}: \  \vectornorm{ \pi_{s} ( x ) } \leq 1 \}$ under $\hat{\varphi}$.
  Therefore, for $x \in |X_{S,\text{in}}|$ such that $R(x) \in |X_{S,\text{out}}|$ we have
  \begin{equation}
    \begin{aligned}
      \vectornorm{ \pi_{u}\varphi(t,x) } \leq 1, \ \forall t \geq 0: \pi_{\mu}  \varphi(t,x)  \leq 1, \\
      \vectornorm{ \pi_{s}\varphi(t,x) } \leq 1, \ \forall t \geq 0: \pi_{\mu}  \varphi(t,x)  \geq 0.
    \end{aligned}
  \end{equation}

  As a consequence the part of the trajectory between $x$ and $P_{\hat{f}}(x)$ is wholly contained in $|S|$, where the vector field $\hat{f}$ is equal to $f$,
  hence $x \in S^{0}$. By the same argument~\eqref{S0=RSet} implies~\eqref{R=P}.
\end{proof}

One can also prove a backcovering lemma, which is superfluous in the context of our applications but illustrates
that covering and backcovering occur simultaneously in isolating segments.

\begin{thm}\label{is:2}
  Let $S$ be as in Theorem~\ref{is:1}. Then, there exists a diffeomorphism $\tilde{R}: \Sigma_{\text{in}} \to \Sigma_{\text{out}}$
  such that there is a backcovering relation:
  \begin{equation}
    X_{S, \text{in}} \backcover{\tilde{R}} X_{S, \text{out}}.
  \end{equation}
  Moreover
  \begin{equation}
    \tilde{R}(x) = P(x) \ \forall x \in S^{0},
  \end{equation}
  and it holds that
  \begin{equation}
    S^{0}= \{ x \in |X_{S,\text{in}}|: \tilde{R}(x) \in |X_{S,\text{out}}| \}.  
  \end{equation}
  In particular, for every $x \in |X_{S, \text{in}}|$ such that $\tilde{R}(x) \in |X_{S, \text{out}}|$ the part of the trajectory
  between $x$ and $P(x)=\tilde{R}(x)$ is contained in $|S|$.
\end{thm}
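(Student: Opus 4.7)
The plan is to reduce Theorem~\ref{is:2} to Theorem~\ref{is:1} applied to the transposed segment $S^T$ under the reversed vector field. By Proposition~\ref{ts:1}, $S^T$ is an isolating segment between $\Sigma_{\text{out}}$ and $\Sigma_{\text{in}}$ for $\dot{x}=-f(x)$, and both sections remain transversal for this field. So the first step is simply to invoke Theorem~\ref{is:1} applied to $S^T$ with the reversed flow; this produces a diffeomorphism $R' : \Sigma_{\text{out}} \to \Sigma_{\text{in}}$ such that $X_{S^T,\text{in}} \cover{R'} X_{S^T,\text{out}}$, agreeing with the reversed-flow Poincar\'e map on $(S^T)^{0}$, and with
\begin{equation}
(S^T)^{0} = \{\, y \in |X_{S^T,\text{in}}| : R'(y) \in |X_{S^T,\text{out}}| \,\}.
\end{equation}

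Next I would sort out the identifications between the boundary h-sets of $S$ and those of $S^T$. From $c_{S^T}(p,q,r)=(q,p,1-r)$ one reads off directly that $|X_{S^T,\text{in}}|=|X_{S,\text{out}}|$ and $|X_{S^T,\text{out}}|=|X_{S,\text{in}}|$, and the projections defining $c_{X_{S^T,\text{in}}}$, $c_{X_{S^T,\text{out}}}$ swap the $\pi_u$ and $\pi_s$ factors. This gives the h-set equalities $X_{S^T,\text{in}} = (X_{S,\text{out}})^{T}$ and $X_{S^T,\text{out}} = (X_{S,\text{in}})^{T}$. Setting $\tilde R := (R')^{-1}:\Sigma_{\text{in}} \to \Sigma_{\text{out}}$ -- which is a diffeomorphism since $R'$ is one -- the covering from the previous paragraph reads
\begin{equation}
(X_{S,\text{out}})^{T} \cover{\tilde R^{-1}} (X_{S,\text{in}})^{T},
\end{equation}
which, by the very definition of a backcovering relation, is nothing else than $X_{S,\text{in}} \backcover{\tilde R} X_{S,\text{out}}$.

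Finally I would transfer the ``$\tilde R = P$ on $S^{0}$'' statement and the set equality back from $S^T$. The reversed-flow Poincar\'e map from $\Sigma_{\text{out}}$ to $\Sigma_{\text{in}}$, restricted to trajectories that remain in $|S|$ during their backward evolution, is exactly $P^{-1}$ (monotonicity (S1) forbids multiple returns). If $x \in S^{0}$, then the forward trajectory from $x$ to $P(x)$ lies in $|S|$, so the backward trajectory from $P(x)$ reaches $x$ without leaving $|S|$; thus $P(x) \in (S^T)^{0}$ and $R'(P(x))=x$, i.e.\ $\tilde R(x)=P(x)\in |X_{S,\text{out}}|$. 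Conversely, if $\tilde R(x)\in |X_{S,\text{out}}|$, put $y=\tilde R(x)$, so $R'(y)=x\in |X_{S,\text{in}}|$. The set-theoretic characterization of $(S^T)^{0}$ from Theorem~\ref{is:1} then places $y$ in $(S^T)^{0}$; translating ``backward trajectory of $y$ reaching $|X_{S,\text{in}}|$ inside $|S|$'' into forward time gives $x\in S^{0}$. This yields both the pointwise identity $\tilde R(x)=P(x)$ on $S^{0}$ and the equality
\begin{equation}
S^{0} = \{\, x \in |X_{S,\text{in}}| : \tilde R(x) \in |X_{S,\text{out}}| \,\}.
\end{equation}

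The only place that requires real care -- and is the main conceptual obstacle -- is the bookkeeping in the second paragraph: one must be sure that transposition of the h-set on the boundary of $S$ coincides with the boundary h-set on the corresponding side of $S^T$, since the roles of $u$ and $s$ switch simultaneously with the orientation reversal $r\mapsto 1-r$ in the central direction. Once this identification is checked, the rest is routine bookkeeping between forward and backward flows.
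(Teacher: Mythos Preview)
Your proposal is correct and follows the same route as the paper: apply Theorem~\ref{is:1} to the transposed segment $S^T$ under the reversed flow (via Proposition~\ref{ts:1}) and invert the resulting diffeomorphism. The paper's proof is considerably terser, writing simply $X_{S^T,\text{in}} = X_{S,\text{out}}$, $X_{S^T,\text{out}} = X_{S,\text{in}}$ and leaving the transfer of the $S^0$ characterization and the identity $\tilde R = P$ on $S^0$ implicit; your explicit identification $X_{S^T,\text{in}} = (X_{S,\text{out}})^T$ is in fact the more precise statement at the level of h-sets (the supports coincide, but the exit/entry roles swap), and it is exactly what is needed to match the definition of backcovering.
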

\begin{proof}
  Consider the reversed flow given by $\dot{x} = -f(x)$
  for which the transposed segment $S^{T}$ is an isolating segment (see Proposition~\ref{ts:1}). We have:
  \begin{equation}
    \begin{aligned}
      X_{S^{T}, \text{in}} = X_{S, \text{out}}, \\
      X_{S^{T}, \text{out}} = X_{S, \text{in}}.
    \end{aligned}
  \end{equation}
  Backcovering is now a consequence of applying Theorem~\ref{is:1} to $S^{T}$ and inverting the obtained diffeomorphism.
\end{proof}

\subsubsection{Additional coverings within an isolating segment -- the ``fast-slow switch''.}\label{subsec:switch}

Let us first explain the ideas behind this subsection without formality.
Consider a three-dimensional isolating segment $S$ with one exit and one entry direction
let us write $X_{S,\text{lu}}$, $X_{S,\text{ru}}$ for the two connected components of the exit set $S^{-}$
a ``left exit'' and a ``right exit'' one, respectively.
Each of them lies within a level set given by fixing the exit direction level to $\mp 1$.
They can be equipped with an h-set structure with one exit and one entry direction by
setting the entry direction of the segment as the entry one
and the central direction of the segment as the exit one.

If we now consider the function $E_{S}$, which maps each point of the front face $X_{S,\text{in}}$
to the point of $\partial S$ where the trajectory leaves $S$, then 
its image will give a similar alignment as in Lemma~\ref{covlemma}, see Figures~\ref{segmentCovFig:A} and~\ref{segmentCovFig:C}.
The left/right exit edges of $X_{S,\text{in}}$ remain stationary and coincide with the left exit edges
of $X_{S,\text{lu}}$, $X_{S,\text{ru}}$, so to get an actual covering one needs to constrict the h-sets in the image in the exit direction
by a small factor.

For the two connected components of $S^{+}$ -- the ``left/right entrance'' h-sets $X_{S,\text{ls}}$, $X_{S,\text{rs}}$ one needs
to fix the entry direction height in the segment coordinates so the central direction of the segment
takes its role, while the exit direction of the segment induces the exit direction for the h-set.
Then one can prove similar theorems with backcovering relations, by reversing the vector field.

In the context of the FitzHugh-Nagumo model such relations allow us to describe the passage between
the slow and the fast dynamics where the periodic orbit detaches from the slow manifold and starts following a heteroclinic connection of the fast subsystem.
With an eye on this application we will state the subsequent results for a range of dimension combinations
which allows an easy proof by Lemma~\ref{covlemma}.
We suspect similar theorems hold for all dimension combinations,
and it will be a subject of further studies to formulate adequate proofs.

\begin{defn}
  Let $S$ be a segment with $u(S) = 1$ and $s(S) = s$.
  We define the h-sets:
  \begin{itemize}
    \item $X_{S,\text{lu}} \subset c_{S}^{-1}(\{-1\} \times \rr^{s} \times \rr)$ (\emph{the left exit face}),
    \item $X_{S,\text{ru}} \subset c_{S}^{-1}(\{1\} \times \rr^{s} \times \rr)$ (\emph{the right exit face})
  \end{itemize}
  as follows:
  \begin{itemize}
    \item $u( X_{S,\text{lu}} ) = u( X_{S,\text{ru}} ):= 1$ and $s( X_{S,\text{lu}} ) = s( X_{S,\text{ru}} ):= s$;
    \item we set
      \begin{equation}
        \begin{aligned}
          |X_{S, \text{lu}}| &:= c_{S}^{-1}(\{-1\} \times \rr^{s} \times \rr) \cap |S|, \\
          |X_{S, \text{ru}}| &:= c_{S}^{-1}(\{1\} \times \rr^{s} \times \rr) \cap |S|;
        \end{aligned}
      \end{equation}
    \item we identify $\{ \mp 1\} \times \rr^{s} \times \rr$ with $\rr^{s+1}$
      and then set
       \begin{equation}
        \begin{aligned}
          c_{ X_{S,\text{lu}} } &:= \rho_{u} \circ {c_{S}}_{|_{ c_{S}^{-1}(\{-1\} \times \rr^{s} \times \rr) }}, \\
          c_{ X_{S,\text{ru}} } &:= \rho_{u} \circ {c_{S}}_{|_{ c_{S}^{-1}(\{1\} \times \rr^{s} \times \rr) }},
        \end{aligned}
      \end{equation}
      where $\rho_{u}(p,q,r) = (2r-1, q)$.
  \end{itemize}
\end{defn}

In the above definition the role of $\rho_{u}$ is to change the order of coordinates, as the third center variable in $S$ becomes
an exit variable in $X_{S,\text{lu}}$ and  $X_{S,\text{ru}}$.

\begin{defn}
  Let $S$ be a segment with $u(S)=u$ and $s(S)=1$.
  We define the h-sets:
  \begin{itemize}
    \item $X_{S,\text{ls}} \subset c_{S}^{-1}(\rr^{u} \times \{-1\} \times \rr)$ (\emph{the left entrance face}),
    \item $X_{S,\text{rs}} \subset c_{S}^{-1}(\rr^{u} \times \{1\} \times \rr)$ (\emph{the right entrance face})
  \end{itemize}
  as follows:
  \begin{itemize}
    \item $u( X_{S,\text{lu}} ) = u( X_{S,\text{ru}} ):=u$ and $s( X_{S,\text{ls}} ) = u( X_{S,\text{rs}} ) := 1$;
    \item we set
      \begin{equation}
        \begin{aligned}
          |X_{S, \text{ls}}| &:= c_{S}^{-1}(\rr^{u} \times \{-1\} \times \rr) \cap |S|, \\
          |X_{S, \text{rs}}| &:= c_{S}^{-1}(\rr^{u} \times \{1\} \times \rr) \cap |S|;
        \end{aligned}
      \end{equation}
    \item we identify $\rr^{u} \times \{ \mp 1\} \times \rr$ with $\rr^{u+1}$,
      then set
       \begin{equation}
        \begin{aligned}
          c_{ X_{S,\text{ls}} } &:= \rho_{s} \circ {c_{S}}_{|_{ c_{S}^{-1}(\rr^{u} \times \{-1\} \times \rr) }}, \\
          c_{ X_{S,\text{rs}} } &:= \rho_{s} \circ {c_{S}}_{|_{ c_{S}^{-1}(\rr^{u} \times \{1\} \times \rr) }};
        \end{aligned}
      \end{equation}
      where $\rho_{s}(p,q,r) = (p, 2r-1)$.
  \end{itemize}
\end{defn}

The role of $\rho_{s}$ is to change the center variable in $S$ to an entry variable in the h-sets $X_{S,\text{ls}}$ and $X_{S,\text{rs}}$.  

\begin{defn}
\label{def:hset-constr}
 Let $X$ be an h-set with $u(X)=u$ and $s(X)=s$ and let $\delta>0$. We define:
 \begin{itemize}
   \item the $\delta$-constricted in the exit direction h-set $X^{\delta,\text{uc}}$,
   \item the $\delta$-constricted in the entry direction h-set $X^{\delta,\text{sc}}$,
 \end{itemize}
 by setting:
 \begin{equation}
   \begin{aligned}
     c_{X^{\delta,\text{uc}}} &=  \upsilon_{\text{uc}} \circ c_{X}, \\
     c_{X^{\delta,\text{sc}}} &= \upsilon_{\text{sc}} \circ c_{X}, \\
     u(X^{\delta,\text{uc}}) = s(X^{\delta,\text{sc}}) &= u, \\
     u(X^{\delta,\text{uc}}) = s(X^{\delta,\text{sc}}) &= s;
   \end{aligned}
 \end{equation}
 where $\upsilon_{\text{uc}}, \upsilon_{\text{sc}}: \rr^{u} \times \rr^{s} \to \rr^{u} \times \rr^{s}$ and:
 \begin{equation}
   \begin{aligned}
     \upsilon_{\text{uc}}(p,q) &= ( (1+\delta)p, q ), \\
     \upsilon_{\text{sc}}(p,q) &= ( p, (1+\delta) q ).
   \end{aligned}
 \end{equation}
\end{defn}

Geometrically, $\delta$-constriction shortens the h-set by a factor $1/(1+\delta)$ in the exit/entry direction.
Our notation $\text{uc}$, $\text{sc}$ stands for constricted in the ``unstable''/``stable'' (i.e. exit/entry) direction.

\begin{thm}\label{is:3}
  Let $S$ be an isolating segment between transversal sections $\Sigma_{\text{in}}$ and $\Sigma_{\text{out}}$ with $u(S)=1$ and $s(S)=s$.
  We have the following covering relations:
  \begin{equation}
    \begin{aligned}
      X_{S,\text{in}}  &\cover{E_{S}} X^{\delta, \text{uc}}_{S, \text{lu}}, \\
      X_{S,\text{in}} &\cover{E_{S}} X^{\delta, \text{uc}}_{S, \text{ru}},
    \end{aligned}
  \end{equation}
  for all $\delta > 0$. 
\end{thm}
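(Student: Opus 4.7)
The plan is to verify the geometric criterion of Lemma~\ref{covlemma} for $(E_S)_c$, by the same modified-flow trick used in the proof of Theorem~\ref{is:1}. I would replace $f$ by a smooth global vector field $\hat f$ that coincides with $f$ on $|S|$ and, outside $|S|$, steers trajectories toward the left-face hypersurface $\Sigma_{\text{lu}} := c_S^{-1}(\{-1\} \times \rr^s \times \rr)$, so that the first-crossing map $R\colon \Sigma_{\text{in}} \to \Sigma_{\text{lu}}$ of $\hat\varphi$ is a globally defined diffeomorphism. As in Theorem~\ref{is:1}, $R$ will coincide with $E_S$ on the subset of $|X_{S,\text{in}}|$ whose $f$-trajectories exit $|S|$ through the left face, so a covering for $R$ transfers to the stated covering for $E_S$.

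The design of $\hat f$ has two essential calibrations. First, I preserve the entry isolation condition (S3) in a tubular neighborhood of $|S|$, so that $\|\pi_s c_S\|$ remains strictly below $1$ along every $\hat\varphi$-trajectory from $|X_{S,\text{in}}|$; combined with $\pi_\mu c_S \in [0,1]$ on the image points encountered by $R$, this yields condition (C1), since the first coordinate of $(E_S)_c$ then lies in $[-(1+\delta), 1+\delta]$. Second, $\hat f$ must route trajectories that would have left $|S|$ through the right face $X_{S,\text{ru}}$ or the rear face $X_{S,\text{out}}$ around the segment and into $\Sigma_{\text{lu}}$ at \emph{virtual} central coordinate $\pi_\mu c_S > 1$. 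This asymmetric routing is what converts the naively symmetric behavior of $E_S$ on the two edges $\{x_u = \pm 1\} \cap |X_{S,\text{in}}|$ into the opposite-side landing required by condition (C2): the left edge already lies on $\Sigma_{\text{lu}}$ at $\pi_\mu c_S = 0$, is fixed by $R$, and is mapped by $c_{X^{\delta,\text{uc}}_{S,\text{lu}}}$ into $(-\infty,-1) \times \rr^s$, whereas the right edge is routed into $(1,\infty) \times \rr^s$.

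With (C1) and (C2) in hand, Lemma~\ref{covlemma} gives $X_{S,\text{in}} \cover{R} X^{\delta,\text{uc}}_{S,\text{lu}}$, and the identification of $R$ with $E_S$ on the persistent subset transfers it to the stated form. The second covering $X_{S,\text{in}} \cover{E_S} X^{\delta,\text{uc}}_{S,\text{ru}}$ follows from an entirely symmetric construction in which $\hat f$ routes trajectories toward the right-face hypersurface instead. The hard part of the plan is building $\hat f$ with both calibrations holding simultaneously: one has to glue $f$ on $|S|$ to an external reference field by bump functions so that entry isolation is preserved globally and the $\pi_\mu c_S > 1$ wraparound is actually realized on trajectories leaving through the right or rear face. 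A workable choice can be obtained by perturbing the reference field $g$ used in Theorem~\ref{is:1} so that its integral curves first move in the $+\pi_\mu c_S$ direction and only later turn into $-\pi_u c_S$, accumulating enough central coordinate before deflecting toward $\Sigma_{\text{lu}}$.
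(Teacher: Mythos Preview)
Your plan contains a genuine gap in the final ``transfer'' step. The theorem asserts a covering relation for the exit map $E_S$ itself, and $E_S$ maps $|X_{S,\text{in}}|$ into $S^{-}\cup |X_{S,\text{out}}|$, not into the flat hypersurface $\Sigma_{\text{lu}}=c_S^{-1}(\{-1\}\times\rr^s\times\rr)$. Your map $R$, by contrast, is designed to land in $\Sigma_{\text{lu}}$. These two maps agree only on the subset of $|X_{S,\text{in}}|$ whose trajectories exit through the left face; on points that exit through $X_{S,\text{ru}}$ or $X_{S,\text{out}}$, $E_S$ stops where the trajectory leaves $|S|$, whereas your $R$ continues under $\hat f$ to $\Sigma_{\text{lu}}$. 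A covering relation is a property of the map on the \emph{entire} support of the h-set, so establishing $X_{S,\text{in}}\cover{R}X^{\delta,\text{uc}}_{S,\text{lu}}$ does not directly yield $X_{S,\text{in}}\cover{E_S}X^{\delta,\text{uc}}_{S,\text{lu}}$. The analogy with Theorem~\ref{is:1} breaks down here: there the covering is stated for $R$, and the identity $R=P$ on $S^0$ is a separate assertion, not a transfer of the covering relation to $P$.

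The paper avoids all of this by a much simpler device that you are missing. Observe that the codomain of $E_S$, namely $S^{-}\cup |X_{S,\text{out}}|$, can be embedded in a \emph{folded} piecewise-smooth hypersurface $\Sigma_{S,u}$ built from the left face, the rear face, and the right face of $S$ glued along their common edges. One extends the coordinate system $c_{X_{S,\text{lu}}}$ from the left-face portion homeomorphically to all of $\Sigma_{S,u}$. Then $E_S$ itself maps into this folded section, and the conditions of Lemma~\ref{covlemma} are checked for $E_S$ directly: (C2) holds because $E_S$ restricted to $X_{S,\text{in}}^{-}$ is the identity, so the left exit edge stays at exit-coordinate $-1$ and the right exit edge lands at coordinate $>1$ in the extended chart; (C1) holds because any point of $|X_{S,\text{in}}|$ not already on the left face requires strictly positive time to reach $\Sigma_{S,u}$, during which $\|\pi_s c_S\|$ strictly decreases by (S3). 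No modified flow is needed at all; the $\delta$-constriction is exactly what turns the equality on the left edge into the strict inequality Lemma~\ref{covlemma} requires.
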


\begin{proof}
  We will only prove
  $X_{S,\text{in}} \cover{E_{S}}  X^{\delta, \text{uc}}_{S, \text{lu}}$, the other case is analogous.
  The idea of the proof should become immediately clear by looking at Figure~\ref{segmentCovFig:C}.
  We embed the codomain of $E_{S}$ in a folded a folded hyperplane $\Sigma_{S,u}$ 
  consisting of three parts:
  \begin{itemize}
    \item The ``upper part'' $\Sigma_{S,u}^{u} := c_{S}^{-1}\left( \{1\} \times \rr^{s} \times (-\infty, 1] \right)$;
    \item the ``middle part'' $\Sigma_{S,u}^{m} := c_{S}^{-1}\left( [-1,1] \times \rr^{s} \times \{ 1 \} \right)$;
    \item the ``lower part'' $\Sigma_{S,u}^{l} := c_{S}^{-1}\left( \{-1\} \times \rr^{s} \times (-\infty, 1] \right)$.
  \end{itemize}
  It can be regarded as a piecewise smooth section homeomorphic to $\rr^{s+1}$, transversal in the sense that there exist smooth extensions of its smooth pieces
  $\Sigma_{S,u}^{u}$, $\Sigma_{S,u}^{m}$, $\Sigma_{S,u}^{l}$ to manifolds without boundary which are transversal sections for the vector field.
  
  We equip $\Sigma_{S,u}$ with a coordinate system which is given by any homeomorphic extension
  of coordinates given on $\Sigma_{S,u}^{l}$ by ${c_{X_{S,\text{lu}}}}_{|_{\Sigma_{S,u}^{l}}}$ to all $\Sigma_{S,u}$ -- we denote this extension by $c_{\Sigma_{S,u}}$.
  
  The plan is to use Lemma~\ref{covlemma} and prove conditions that give the same topological alignment 
  as needed for a covering relation.

  Recall, that by $E_{S,c}$ we denote the exit map expressed in local coordinates of the h-set $X_{S,\text{in}}$ and the section $\Sigma_{S,u}$.
  In the $\Sigma_{S,u}$ coordinates the support of $X^{\delta, \text{uc}}_{S, \text{lu}}$ is a product of two balls 
  $[\frac{-1}{1+\delta}, \frac{1}{1+\delta}] \times \overline{B_{s}(0,1)}$. To be in formal
  agreement with the definition of the support we would need to stretch out the first ball to $[-1,1]$ but it is clear that assumptions of Lemma~\ref{covlemma}
  are given by geometrical conditions which persist under such rescaling. Therefore we omit this transformation to keep the notation simple.

  By definition of $\Sigma_{S,u}$ we have
  \begin{equation}\label{eq:intersection}
    |X_{S,\text{in}}| \cap \Sigma_{S,u} = X_{S,\text{in}}^{-},
  \end{equation}
  hence ${E_{S}}_{|_{X_{S,\text{in}}^{-}}} = \id_{X_{S,\text{in}}^{-}}$. Coupled with the coordinate system
  we have chosen on $\Sigma_{S,u}$ we get
  \begin{equation}\label{eq:image}
    \begin{aligned}
      E_{S,c}\left(\{-1\} \times \overline{B_{s}(0,1)}\right) &= \{-1\} \times \overline{B_{s}(0,1)}, \\
      \pi_{u}E_{S,c}\left(\{1\} \times \overline{B_{s}(0,1)}\right) &> 1.
    \end{aligned}
  \end{equation}
  This, after the aforementioned rescaling of the exit coordinate, implies Condition (C2) in Lemma~\ref{covlemma}.

  Condition (C1) follows easily. From~\eqref{eq:intersection} and (S3a) we have
  \begin{equation}\label{eq:image2}
    \pi_{s}\left(E_{S,c} ( X_{S,\text{in},c} ) \cap [-1/(1+\delta),1/(1+\delta)] \times \rr^{s}\right) \subset B_{s}(0,1),
  \end{equation}
  since we need non-zero positive time to reach $\Sigma_{S,u}$. 
\end{proof}

If we consider the exit map $E_{S^{T}}$ for the reversed flow $\dot{x}=-f(x)$ in a transposed segment $S^{T}$, we obtain the following theorem.

\begin{thm}\label{is:4}
  Let $S$ be an isolating segment between transversal sections $\Sigma_{\text{in}}$ and $\Sigma_{\text{out}}$ with $u(S)=u$ and $s(S)=1$.
  We have the following covering relations
  \begin{equation}
    \begin{aligned}
      X_{S,\text{out}}  &\backcover{E_{S^{T}}^{-1}} X^{\delta, \text{sc}}_{S, \text{ls}}, \\
      X_{S,\text{out}} &\backcover{E_{S^{T}}^{-1}} X^{\delta, \text{sc}}_{S, \text{rs}}
    \end{aligned}
  \end{equation}
  for all $\delta > 0$.
\end{thm}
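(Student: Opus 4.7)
The strategy is to reduce Theorem~\ref{is:4} to Theorem~\ref{is:3} by a formal duality, applying the latter to the transposed segment $S^{T}$ under the reversed vector field $\dot{x}=-f(x)$. By Proposition~\ref{ts:1}, $S^{T}$ is an isolating segment for $-f$ between $\Sigma_{\text{out}}$ and $\Sigma_{\text{in}}$, and the interchange of roles gives $u(S^{T})=s(S)=1$ and $s(S^{T})=u(S)=u$. Hence the hypotheses of Theorem~\ref{is:3} are met by $S^{T}$ under $-f$, and applying that theorem yields
\begin{equation*}
X_{S^{T},\text{in}}\cover{E_{S^{T}}} X^{\delta,\text{uc}}_{S^{T},\text{lu}},\qquad X_{S^{T},\text{in}}\cover{E_{S^{T}}} X^{\delta,\text{uc}}_{S^{T},\text{ru}}
\end{equation*}
for every $\delta>0$, where $E_{S^{T}}$ is the exit map of $S^{T}$ for the reversed flow (equivalently, the backward-in-$f$-time exit map starting from $|X_{S,\text{out}}|$).

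Next, I would identify the h-sets above with transposed versions of the h-sets in the statement of Theorem~\ref{is:4}. Using $c_{S^{T}}={\rm o}\circ c_{S}$ with ${\rm o}(p,q,r)=(q,p,1-r)$, a direct calculation gives $|X_{S^{T},\text{in}}|=|X_{S,\text{out}}|$, and the coordinate systems are related by the swap $j$, so $X_{S^{T},\text{in}}=(X_{S,\text{out}})^{T}$. Similarly $|X_{S^{T},\text{lu}}|=|X_{S,\text{ls}}|$ and $|X_{S^{T},\text{ru}}|=|X_{S,\text{rs}}|$; the coordinate maps $c_{X_{S^{T},\text{lu}}}$, $c_{X_{S^{T},\text{ru}}}$ agree with those of $(X_{S,\text{ls}})^{T}$, $(X_{S,\text{rs}})^{T}$ up to a reflection of the one-dimensional exit variable coming from the $1-r$ in ${\rm o}$, and such a reflection is a linear homeomorphism that preserves the h-set axioms and thus does not affect any covering relation. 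The two families of constrictions match as well, since $\upsilon_{\text{uc}}$ acts on the exit coordinate of $X_{S^{T},\text{lu}}$, which under transposition is exactly the entry coordinate of $X_{S,\text{ls}}$ acted on by $\upsilon_{\text{sc}}$; hence $X^{\delta,\text{uc}}_{S^{T},\text{lu}}$ and $(X^{\delta,\text{sc}}_{S,\text{ls}})^{T}$ coincide as h-sets (modulo the harmless sign flip), and likewise on the right.

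Combining Steps~1 and~2 converts the two coverings into
\begin{equation*}
(X_{S,\text{out}})^{T}\cover{E_{S^{T}}} (X^{\delta,\text{sc}}_{S,\text{ls}})^{T},\qquad (X_{S,\text{out}})^{T}\cover{E_{S^{T}}} (X^{\delta,\text{sc}}_{S,\text{rs}})^{T},
\end{equation*}
which, by the definition of backcovering, are exactly the two backcover relations asserted by Theorem~\ref{is:4}. The continuity and invertibility of $E_{S^{T}}$ on the relevant sets, required by that definition, is supplied by the remark following the definition of the exit map, which guarantees that $E_{S^{T}}$ is a homeomorphism onto its image. The main technical obstacle is the bookkeeping in the middle step: one has to carefully compose ${\rm o}$ with the face-slicing maps $\rho_{u}$ (used to build $X_{S^{T},\text{lu}}$) and $\rho_{s}$ (used to build $X_{S,\text{ls}}$), and then verify that after transposition the constrictions $\upsilon_{\text{uc}}$ and $\upsilon_{\text{sc}}$ act on the correct coordinate, so that the only residual discrepancy is a harmless sign flip. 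Once this is done, no new dynamical content beyond Theorem~\ref{is:3} is needed.
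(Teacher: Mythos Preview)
Your approach is exactly the paper's: the paper's entire argument for Theorem~\ref{is:4} is the single sentence preceding its statement, namely to pass to the transposed segment $S^{T}$ under the reversed vector field $\dot{x}=-f(x)$ (Proposition~\ref{ts:1}) and invoke Theorem~\ref{is:3}, which is precisely the reduction you carry out. You supply considerably more of the bookkeeping (identifying $X_{S^{T},\text{in}}$ with $(X_{S,\text{out}})^{T}$, matching the constriction $\upsilon_{\text{uc}}$ on $S^{T}$ with $\upsilon_{\text{sc}}$ on $S$ after transposition, and noting the harmless sign flip from the $1-r$ in ${\rm o}$) than the paper does, but the strategy is identical.
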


\section{Applications}\label{sec:applications}

\subsection{Theorems for periodic orbits}

Let us recall the fundamental theorem motivating the use of covering relations for finding periodic points of sequences of maps.

\begin{thm}[Theorem 9 in \cite{GideaZgliczynski}]\label{cov:sequence}
  Let $X_i, \ i \in \{0, \dots, k\}$ be h-sets with $u(X_0) = \dots = u(X_k)$, $s(X_0)= \dots = s(X_k)$ and set $n=u(X_0)+s(X_0)$.
  Assume that we have the following chain of covering relations:
 \begin{equation}
  X_{0} \longgencover{g_{1},w_1} X_{1} \longgencover{g_{2},w_2} X_{2} \longgencover{g_{3},w_3}  \dots \longgencover{g_{k},w_k} X_{k}.
  \end{equation}
  for some $w_i \in \zz^{*}$. Then there exists a point $x \in \inter X_{0}$ such that
 \begin{equation}
  (g_{i} \circ g_{i-1} \circ \dots \circ g_{1}) (x) \in \inter X_{i}, \ i \in \{ 1, \dots k \}.
 \end{equation}
 Moreover, if $X_{k}=X_{0}$, then $x$ can be chosen so that
 \begin{equation}
  (g_{k} \circ g_{k-1} \circ \dots \circ g_{1}) (x) = x.
 \end{equation}
\end{thm}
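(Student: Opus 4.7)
The plan is to first reduce to the case of pure forward coverings by transposition, then reformulate the orbit problem as a single degree computation on a product of coordinate cubes, and finally evaluate that degree by concatenating the homotopies supplied by the individual covering relations.

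First I would reduce to pure forward coverings. Since $X_{i-1} \backcover{g_i, w_i} X_i$ is by definition $X_i^T \cover{g_i^{-1}, w_i} X_{i-1}^T$, and transposition preserves supports while swapping the roles of exit and entry, each backcovering arrow can be recast as a forward covering of the transposed h-sets by $g_i^{-1}$. After this reduction, without loss of generality every relation in the chain is a forward covering, and the problem becomes to produce a true orbit $y_0 \in \inter X_0$, $y_i = g_i(y_{i-1}) \in \inter X_i$ of the composition $g_k \circ \cdots \circ g_1$ — and a fixed one when $X_k = X_0$.

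Next I would phrase this as a zero-finding problem. On the product of coordinate cubes $\Omega = X_{0,c} \times \cdots \times X_{k-1,c}$, consider, in the looped case $X_k = X_0$, the map
$$F(y_0, \ldots, y_{k-1}) = \bigl(g_{1,c}(y_0) - y_1,\; g_{2,c}(y_1) - y_2,\; \ldots,\; g_{k,c}(y_{k-1}) - y_0\bigr),$$
whose zeros are precisely the periodic orbits of the composition; in the non-looped case the last slot is replaced by $g_{k,c}(y_{k-1})$ and one instead seeks preimages of $\{0\}^{k-1} \times \inter X_{k,c}$. Using the homotopies $h^i$ from Definition~\ref{covering} slot by slot, I would deform $F$ to a block-triangular linear model in which the $i$-th slot contributes the linear part $A^i$ coming from \eqref{cover:2} on the exit factor together with $-\id$ on the entry factor (with the final slot producing $A^k - \id$ in the looped case). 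Conditions~\eqref{cover:1b} and \eqref{cover:1c} at each slot, together with \eqref{cover:2}, guarantee that $0 \notin F(\partial \Omega)$ uniformly along the entire deformation.

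The degree computation then closes the argument. Block-triangularity plus standard axioms of the Brouwer degree — the product property~(A7) applied inductively, and~(A8) for the linear diagonal blocks — yield degree $\pm w_1 w_2 \cdots w_k \neq 0$ for the terminal model at the origin; homotopy invariance~(A1) transports this nonzero degree back to $F$, producing a zero in $\inter \Omega$ whose first coordinate is the desired $x \in \inter X_0$, with the remaining coordinates certifying that the partial orbit stays in the interiors of the $X_i$. The main obstacle will be arranging the homotopy of $F$ so that the boundary non-vanishing is preserved at every slot throughout the deformation: this requires a component-by-component reparametrization in which $h^i$ is activated only after the preceding slots have already been reduced to their linear models, so that the hypotheses of each $h^i$ are applicable to inputs that actually lie in $X_{i-1,c}$. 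This delicate bookkeeping is the technical core of the original proof in~\cite{GideaZgliczynski}.
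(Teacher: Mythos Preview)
The paper does not prove this theorem at all: it is stated as a citation (Theorem~9 in \cite{GideaZgliczynski}) and used as a black box throughout the thesis, so there is no ``paper's own proof'' to compare against. Your sketch is a faithful outline of the argument in the original Gidea--Zgliczy\'nski paper --- reduction to forward coverings via transposition, recasting the orbit problem as a zero of a map on the product of coordinate cubes, and a degree computation using the covering homotopies slot by slot --- so nothing further is needed here.
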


We will now state and prove a theorem for finding periodic trajectories of systems given by vector fields,
which allows to use both covering relations and isolating segments.

\begin{thm}\label{thm:1}
  Let $\dot{x}=f(x), \ x \in \rr^{N}$ be given by a smooth vector field. Assume that there exists a sequence of transversal sections $\{ \Sigma_{i} \}_{i=0}^{k},\ k \in \mathbb{N}$
  and a sequence of h-sets
  \begin{equation}
   \mathcal{X} = \{X_{i}: \ |X_{i}| \subset \Sigma_{i}, \ i = 0,\dots, k \},
  \end{equation}
  such that for each two consecutive h-sets $X_{i-1}$, $X_{i} \in \mathcal{X}$ we have one of the following:
  \begin{itemize}
    \item there exists a Poincar\'e map $P_{i} : \Omega_{i-1} \to \Sigma_{i}$ with $\Omega_{i-1} \subset \Sigma_{i-1}$ and an integer $w_i \in \zz^{*}$ such that
        \begin{equation}\label{eq:pgencover}
         X_{i-1} \longgencover{P_{i},w_i} X_{i},
        \end{equation}
      \item there exists an isolating segment $S_{i}$ between $\Sigma_{i-1}$ and $\Sigma_{i}$ such that $X_{S_{i},\text{in}} = X_{i-1}$ and $X_{S_{i},\text{out}} = X_{i}$.
  \end{itemize}
  Then, there exists a solution $x(t)$ of the differential equation passing consecutively through the interiors of all $X_{i}$'s.
  Moreover: 
  \begin{itemize}
    \item whenever $X_{i-1}$ and $X_{i}$ are connected by an isolating segment, the solution passes through $S^{0}_{i}$;
    \item if $X_{0} = X_{k}$ the solution $x(t)$ can be chosen to be periodic.
  \end{itemize}
\end{thm}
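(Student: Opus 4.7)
The plan is to reduce the mixed chain (Poincar\'e maps plus isolating segments) to a single chain of covering relations between h-sets, and then invoke Theorem~\ref{cov:sequence}. The key device for this reduction is Theorem~\ref{is:1}: whenever two consecutive h-sets $X_{i-1}, X_i$ are connected by an isolating segment $S_i$ with $X_{S_i,\text{in}} = X_{i-1}$ and $X_{S_i,\text{out}} = X_i$, Theorem~\ref{is:1} furnishes a diffeomorphism $R_i : \Sigma_{i-1} \to \Sigma_i$ with $X_{i-1} \cover{R_i} X_i$, together with the crucial properties~\eqref{R=P} and~\eqref{S0=RSet}, namely $R_i$ coincides with the Poincar\'e map of $f$ on the persistent set $S_i^0$, and $S_i^0 = \{ x \in |X_{i-1}| : R_i(x) \in |X_i|\}$.

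First I would define, for each index $i \in \{1,\dots,k\}$, an auxiliary map $g_i : \Sigma_{i-1} \to \Sigma_i$: set $g_i := P_i$ in the Poincar\'e-map case (then the generic covering~\eqref{eq:pgencover} already provides $X_{i-1} \longgencover{g_i,w_i} X_i$), and $g_i := R_i$ in the isolating-segment case (in which Theorem~\ref{is:1} gives $X_{i-1} \cover{g_i} X_i$, and we set $w_i = \pm 1$ according to the degree of the linear map supplied by the homotopy in Theorem~\ref{is:1}). We thus obtain a global chain
\begin{equation}
  X_0 \longgencover{g_1,w_1} X_1 \longgencover{g_2,w_2} \cdots \longgencover{g_k,w_k} X_k.
\end{equation}
Applying Theorem~\ref{cov:sequence} yields a point $x_0 \in \inter X_0$ with $(g_i \circ \cdots \circ g_1)(x_0) \in \inter X_i$ for every $i$; moreover, if $X_0 = X_k$, this point can be chosen to be a fixed point of $g_k \circ \cdots \circ g_1$.

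Next I would upgrade the statement ``$g_i$-iterates land in the interiors'' into a genuine trajectory of $\dot x = f(x)$. This is proved by induction on $i$. At each step, the point $x_{i-1} := (g_{i-1} \circ \cdots \circ g_1)(x_0)$ lies in $\inter X_{i-1}$, and $g_i(x_{i-1}) \in \inter X_i$. In the Poincar\'e-map case, $g_i = P_i$ is already the flow map, so the actual trajectory of $f$ from $x_{i-1}$ reaches $x_i = g_i(x_{i-1})$ while remaining in $\Omega_{i-1}$. In the isolating-segment case, $R_i(x_{i-1}) \in |X_i|$ together with~\eqref{S0=RSet} implies $x_{i-1} \in S_i^0$, so by~\eqref{R=P} the actual Poincar\'e map $P_i$ of $f$ is defined at $x_{i-1}$ with $P_i(x_{i-1}) = R_i(x_{i-1}) = x_i$, and the arc of the trajectory between $x_{i-1}$ and $x_i$ lies in $|S_i|$. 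Concatenating these arcs yields the desired solution $x(t)$ passing through the interiors of all $X_i$, and through $S_i^0$ whenever $S_i$ is used. In the periodic case $X_0 = X_k$, the fixed point property $(g_k \circ \cdots \circ g_1)(x_0) = x_0$ combined with the above identification of $g_i$ with the true flow along the constructed trajectory forces the concatenated arc to close up, producing a periodic orbit.

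The main obstacle is the bookkeeping in the isolating-segment case: one has to check that the topological information furnished by the covering $X_{i-1} \cover{R_i} X_i$ actually certifies membership of $x_{i-1}$ in the persistent set $S_i^0$, so that the \emph{modified} flow used to construct $R_i$ in the proof of Theorem~\ref{is:1} agrees with the \emph{original} flow of $f$ along the relevant arc. This is precisely the content of~\eqref{S0=RSet} combined with~\eqref{R=P}, so no further work is needed beyond recording that $g_i(x_{i-1}) \in \inter X_i \subset |X_i|$ at every step. A minor additional point is that in the generic covering case~\eqref{eq:pgencover}, Theorem~\ref{cov:sequence} must be invoked in its full generic form; this is already handled by the referenced statement, so no separate argument for back-covering is required.
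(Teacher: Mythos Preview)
Your proposal is correct and follows essentially the same approach as the paper: replace each isolating segment by the diffeomorphism $R_i$ from Theorem~\ref{is:1}, apply Theorem~\ref{cov:sequence} to the resulting chain, and then use~\eqref{S0=RSet} and~\eqref{R=P} to certify that the abstract orbit is an actual flow trajectory (passing through $S_i^0$ in the segment case). The paper's proof is slightly terser but identical in structure; your explicit induction and the remark about the ``main obstacle'' simply spell out what the paper compresses into two sentences.
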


\begin{proof}
  By applying Theorem~\ref{is:1} we get a chain of covering relations
  \begin{equation}
    X_{0} \longgencover{g_{1},w_1} X_{1} \longgencover{g_{2},w_2} X_{2} \longgencover{g_{3},w_3} \dots \longgencover{g_{k},w_k} X_{k},
  \end{equation}
  where $g_{i} = P_{i}$ or $g_{i} = R_{i}$, $R_{i}$ being the diffeomorphism given by Theorem~\ref{is:1} associated with the segment $S_{i}$
  (then $w_i=\pm 1$).
  From Theorem~\ref{cov:sequence} there exists a sequence
  $\{ x_{i}: x_{i} \in \inter |X_{i}|, i = 1,\dots, k \}$ such that $g_{i}( x_{i-1} ) = x_{i}$ and we can choose $x_{0}=x_{k}$ whenever $X_{0}=X_{k}$.

  Suppose that for certain $i$'s we have $g_{i} = R_{i}$. Since $x_{i-1} \in |X_{i-1}|$ and $R_{i}(x_{i-1}) = x_{i} \in |X_{i}|$,
  Theorem~\ref{is:1} implies that $x_{i-1} \in S_{i-1}^{0}$ and 
  $R_{i}(x_{i-1}) = P_{i}(x_{i-1})$, $P_{i}:V_{i-1} \to \Sigma_{i}$ being a Poincar\'e map defined on a subset of $\Sigma_{i-1}$.
  This proves that this orbit is an orbit of a full sequence of Poincar\'e maps, hence a real trajectory for the flow.
  Furthermore, it is a periodic trajectory if $x_{0} = x_{k}$ (notice that it cannot be an equilibrium as the vector field on transversal sections
  cannot equal $0$).
\end{proof}

\begin{cor}
  For an isolating segment $S$ the set $S^{0}$ is nonempty.
\end{cor}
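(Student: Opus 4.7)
The plan is to apply Theorem~\ref{thm:1} to the trivial chain consisting of a single segment $S$. More concretely, I would take the two-section, two-h-set configuration with $\Sigma_0 = \Sigma_{\text{in}}$, $\Sigma_1 = \Sigma_{\text{out}}$, and $X_0 := X_{S,\text{in}}$, $X_1 := X_{S,\text{out}}$, linked by the isolating segment $S$ itself. The assumptions of Theorem~\ref{thm:1} are then satisfied tautologically, since this chain has $k=1$ and the only required transition between consecutive h-sets is covered by the second bullet in its hypothesis.

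The conclusion of Theorem~\ref{thm:1} asserts, among other things, that whenever $X_{i-1}$ and $X_i$ are linked by an isolating segment $S_i$, the produced trajectory passes through $S_i^0$. Applied to our one-step chain, this immediately yields a point $x \in S^0$, so $S^0 \neq \emptyset$.

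Alternatively, one can unwind the proof of Theorem~\ref{thm:1} in this special case: Theorem~\ref{is:1} supplies a diffeomorphism $R : \Sigma_{\text{in}} \to \Sigma_{\text{out}}$ with $X_{S,\text{in}} \cover{R} X_{S,\text{out}}$, and Theorem~\ref{cov:sequence} with $k=1$ produces $x \in \inter X_{S,\text{in}}$ with $R(x) \in \inter X_{S,\text{out}}$. The characterization \eqref{S0=RSet} from Theorem~\ref{is:1} then identifies this $x$ as an element of $S^0$.

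There is essentially no obstacle here; the statement is a direct consequence of Theorem~\ref{thm:1} (or equivalently of Theorems~\ref{is:1} and \ref{cov:sequence} applied jointly). The only thing worth noting is that the nonemptiness assertion for a single segment is what morally guarantees that the topological tracking mechanism produced throughout Section~\ref{sec:segments} is nontrivial at the most elementary level -- for a longer chain of covering relations and isolating segments, one gets analogously a nonempty set of initial conditions threading through all the prescribed h-sets.
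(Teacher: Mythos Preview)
Your proposal is correct and matches the paper's approach: the corollary is stated immediately after Theorem~\ref{thm:1} without a separate proof, so the paper treats it as an immediate consequence of that theorem, exactly as you do by applying it to the one-step chain $X_{S,\text{in}}$, $X_{S,\text{out}}$ linked by $S$. Your alternative unwinding via Theorems~\ref{is:1} and~\ref{cov:sequence} is also fine and simply makes explicit what the proof of Theorem~\ref{thm:1} does internally.
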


\begin{rem}
  Backward covering in conditions like~\eqref{eq:pgencover} can be verified by computation of a Poincar\'e map from $\Sigma_i$
  to $\Sigma_{i-1}$ for the inverse vector field $\dot{x} = -f(x)$, denoted by us with some abuse of notation by $P_i^{-1}$. 
  This is indeed an inverse of some Poincar\'e map $P_i$ for $\dot{x}=f(x)$, provided the domain of $P_i$ is taken to be the image of $P_i^{-1}$.
\end{rem}

We consider Theorem~\ref{thm:1} as a prototypical theorem for application of methods of covering relations and isolating segments.
However, in the context of periodic orbits in 
FitzHugh-Nagumo equations (and fast-slow systems in general) we will use a following three-dimensional modification
that makes use of the fast-slow switch described in Subsection~\ref{subsec:switch}.

\begin{thm}\label{thm:2}
  Let $\dot{x}=f(x), \ x \in \rr^{3}$ be given by a smooth vector field. Assume that there exists a sequence of transversal sections $\{ \Sigma_{i} \}_{i=0}^{k},\ k \in \mathbb{N}$,
  and sequence of h-sets
  \begin{equation}
   \mathcal{X} = \{X_{i}: \ u(X_{i})=s(X_{i})=1, \ i = 0,\dots, k \}
  \end{equation}
  such that for each two consecutive h-sets $X_{i-1}$, $X_{i} \in \mathcal{X}$ we have one of the following:
  \begin{itemize}
    \item $X_{i-1} \subset \Sigma_{i-1}$, $X_{i} \subset \Sigma_{i}$ and there exists a Poincar\'e map
      $P_{i} : \Omega_{i-1} \to \Sigma_{i}$ with $\Omega_{i-1} \subset \Sigma_{i-1}$ and an integer $w_i \in \zz^{*}$ such that
       \begin{equation}
         X_{i-1} \longgencover{P_{i},w_i} X_{i},
       \end{equation}
      \item there exists an isolating segment $S_{i}$ between $\Sigma_{i-1}$ and $\Sigma_{i}$ such that $X_{S_{i},\text{in}} = X_{i-1}$ and $X_{S_{i},\text{out}} = X_{i}$;
      \item there exists an isolating segment $S_{i}$ between $\Sigma_{i-1}$ and $\Sigma_{i}$ such that $X_{S_{i},\text{in}} = X_{i-1}$ and either
        $X_{S_{i},\text{lu}} = X_{i}$ or $X_{S_{i},\text{ru}} = X_{i}$;
      \item there exists an isolating segment $S_{i}$ between $\Sigma_{i-1}$ and $\Sigma_{i}$ such that $X_{S_{i},\text{out}} = X_{i}$ and either
        $X_{S_{i},\text{ls}} = X_{i-1}$ or $X_{S_{i},\text{rs}} = X_{i-1}$.
    \end{itemize}
  Then there exists a solution $x(t)$ of the differential equation passing consecutively through the interiors of all $X_{i}$'s.
  Moreover: 
  \begin{itemize}
    \item whenever $X_{i-1}$ and $X_{i}$ are connected by an isolating segment as its front and rear faces, respectively, the solution passes through $S^{0}_{i}$;
   \item if $X_{0} = X_{k}$ the solution $x(t)$ can be chosen to be periodic.
  \end{itemize}
\end{thm}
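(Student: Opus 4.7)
The plan is to extend the argument of Theorem~\ref{thm:1} so as to accommodate the two switch cases (3 and~4). As before, I would convert each link of the chain into an appropriate (generic) covering relation between h-sets on transversal sections, chain them by means of Theorem~\ref{cov:sequence}, and then reconstruct a genuine trajectory of the ODE from the resulting sequence of matched points.

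First, I would treat each consecutive pair $(X_{i-1}, X_i)$ according to the type of link. Poincar\'e links (case~1) are taken as given. For a standard segment link (case~2) I would invoke Theorem~\ref{is:1} to produce a diffeomorphism $R_i: \Sigma_{i-1} \to \Sigma_i$ with $X_{i-1} \cover{R_i} X_i$ that agrees with $P_i$ on the persistent set $S_i^0$. For a switch-out link (case~3, say $X_i = X_{S_i,\text{lu}}$) I would apply Theorem~\ref{is:3} to obtain, for every $\delta_i > 0$, the covering $X_{i-1} \cover{E_{S_i}} X_i^{\delta_i,\text{uc}}$, substituting $X_i^{\delta_i,\text{uc}}$ for $X_i$ in the chain. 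Switch-in links (case~4) are handled symmetrically via Theorem~\ref{is:4}, which yields the backcovering $X_i \backcover{E_{S_i^T}^{-1}} X_{i-1}^{\delta_i,\text{sc}}$ and the substitution $X_{i-1} \mapsto X_{i-1}^{\delta_i,\text{sc}}$.

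With the rewritten chain of generic coverings in hand, Theorem~\ref{cov:sequence} would deliver a sequence of points $\{x_i\}$ with $x_i \in \inter|X_i|$ (or inside the appropriate constriction) satisfying $g_i(x_{i-1}) = x_i$, where $g_i$ is $P_i$, $R_i$, $E_{S_i}$, or $E_{S_i^T}^{-1}$ depending on the link type; in the periodic situation $X_0 = X_k$ the second part of Theorem~\ref{cov:sequence} permits $x_0 = x_k$. The trajectory reconstruction then proceeds exactly as in the proof of Theorem~\ref{thm:1}: whenever $g_i$ originates from a segment, Theorems~\ref{is:1}, \ref{is:3}, \ref{is:4} guarantee that the piece of trajectory joining $x_{i-1}$ and $x_i$ stays inside $|S_i|$ and coincides with the honest flow, so concatenating the pieces yields a continuous (possibly periodic) orbit visiting every $\inter|X_i|$.

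The principal obstacle lies in the compatibility of the constricted h-sets with the adjacent links. After the substitution $X_i \mapsto X_i^{\delta_i,\text{uc}}$ the subsequent link (posed originally with the full $X_i$) must still give a generic covering with the constricted set as source, and analogously for switch-in substitutions. The ``for all $\delta > 0$'' clause in Theorems~\ref{is:3} and~\ref{is:4} affords the needed flexibility: one should select each $\delta_i$ small enough so that the strict inequalities implicit in the original covering and segment conditions survive the shrinkage of the exit (respectively entry) face, and propagate these choices consistently along the whole chain. The delicate propagation of these constants, respecting junctions where two switch substitutions meet or where a switch abuts a Poincar\'e link, is the core technical step; the rest of the argument is a direct transcription of the scheme used to prove Theorem~\ref{thm:1}.
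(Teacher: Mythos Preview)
Your proposal is correct and follows essentially the same route as the paper: replace the switch faces by their $\delta$-constricted versions, argue that for $\delta_i$ small enough all adjacent covering relations and isolating-segment conditions persist (the paper phrases this as a $C^0$-openness argument for coverings and a $C^1$-small reconstruction of the affected segments), then apply Theorems~\ref{is:1}, \ref{is:3}, \ref{is:4} to obtain a closed chain of generic coverings and finish exactly as in Theorem~\ref{thm:1}. The ``delicate propagation'' you flag is precisely the paper's points (1.)\ and (2.), handled one substitution at a time.
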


\begin{proof}
  First, we replace all the h-sets $X_{i}$ of the form $X_{S_{i},\text{lu}}$,\ $X_{S_{i},\text{ru}}$
  by the constricted versions $X^{\delta_{i},\text{uc}}_{S_{i},\text{lu}}$,\ $X^{\delta_{i},\text{uc}}_{S_{i},\text{ru}}$
  and the h-sets of the form $X_{S_{i},\text{ls}},\ X_{S_{i},\text{rs}}$
  by $X^{\delta_{i},\text{sc}}_{S_{i},\text{ls}}\ X^{\delta_{i},\text{sc}}_{S_{i},\text{rs}}$.
  Let us denote the new h-sets by $\tilde{X}_{i}$.
  The replacement procedure is done one by one. Each time an h-set $X_{i}$ needs to be replaced
  we choose $\delta_{i}>0$ small enough, such that
  \begin{itemize}
    \item[(1.)] any covering relation $X_{i}$ was involved in is preserved for $\tilde{X}_{i}$,
    \item[(2.)] any isolating segment that was built including $X_{i}$ as either the front or the rear face 
      can be reconstructed as an isolating segment $\tilde{S}_{i}$/$\tilde{S}_{i+1}$ with the face $\tilde{X}_{i}$.
  \end{itemize}
  It is intuitively clear that both should hold for a sufficiently small perturbation.
  To show (1.) it is enough to observe that a covering relation 
  is a $C^{0}$-open condition with respect to homeomorphisms defining the h-sets
  and persists after constricting one (or both) h-sets with $\delta$ small enough.
  The proof of such proposition would be almost the same as the proof of Theorem 13 in \cite{GideaZgliczynski}
  stating stability of covering relations under $C^{0}$ perturbations, and therefore we omit it.

  For (2.) the segment $\tilde{S}_{i}$ is constructed so that $c_{S_{i}}$ is $O(\delta_{i})$-close in the $C^{1}$ norm to $c_{\tilde{S}_{i}}$.
  We omit the details; describing the construction by precise formulas would introduce a lot of unnecessary notation.
  It is easy to see that for $\delta_{i}$ small enough the conditions (S1)-(S3) (or their counterparts) will still hold.

  We apply Theorems \ref{is:1}, \ref{is:3}, \ref{is:4} to get and a chain of covering relations
  \begin{equation}
   \tilde{X}_{0} \longgencover{g_{1},w_1} \tilde{X}_{1} \longgencover{g_{2},w_2} \tilde{X}_{2} \longgencover{g_{3},w_3} \dots \longgencover{g_{k},w_k} \tilde{X}_{k},
  \end{equation}
  where for each $g_{i}$ we have one of the following:
  \begin{itemize}
    \item $g_{i} = P_{i}$,
    \item $g_{i} = R_{i}$, $R_{i}$ given by Theorem~\ref{is:1},
    \item $g_{i} = E_{S_{i}}$,
    \item $g_{i} = E_{S^{T}_{i}}^{-1}$.
  \end{itemize}
  From here, the proof continues in the same way as the proof of Theorem~\ref{thm:1}. We obtain a sequence of points
  $\{ x_{i}: x_{i} \in \inter X_{i}, i = 1,\dots, k \}$ such that $g_{i}(x_{i-1}) = x_{i}$ and we can choose $x_{0}=x_{k}$ whenever $X_{0}=X_{k}$.
  By the same argument as in Theorem~\ref{thm:1} the sequence lies on a true trajectory of the flow; the trajectory is periodic if $x_{0}=x_{k}$.
\end{proof}

We note that the formulation of Theorem~\ref{thm:2} is not aimed at full generality. By using only Theorem~\ref{is:3} or \ref{is:4} one can produce similar theorems
when one direction is expanding and arbitrary number of directions are contracting or vice versa.

\subsection{Theorems for connecting orbits}

In this subsection using covering relations and isolating segments we will provide abstract topological theorems
that can be employed for finding homoclinic orbits
(and certain other types of connecting orbits) for maps and differential equations.
Homoclinic loops to a hyperbolic point are in general a codimension one phenomenon,
hence we need to include some kind of shooting from the parameter space in the formulation of our theorems.

We focus on the case, where the unstable manifold of one equilibrium is one-dimensional 
and the stable manifold of the other equilibrium can be multidimensional, say of dimension $s$. 
If $N$ is the dimension of the equation, then, generically, the ``dimension gap'' has to be patched by $N-s$ parameters. 
As an example, for the homoclinic to zero equilibrium in the FitzHugh-Nagumo system~\eqref{FhnOde}, 
we have $N=3$, $s=2$ and we need one parameter -- we will use the wave speed $\theta$.
By reversing the vector field, one can readily apply our theorems to treat the symmetric case of a one-dimensional stable manifold
and a multidimensional unstable manifold.

Certainly, without much effort similar theorems involving isolating segments can be formulated for some other dimension 
combinations, e.g. the ``stable'' codimension zero scenarios of
transverse connecting orbits (such as a connection from a saddle with two-dimensional unstable manifold
to a saddle with two-dimensional stable manifold in a 3D phase space).
However, formulation of such theorems is postponed to later research, once we find good example applications.
 
Throughout this subsection we assume we only work in the $\max$ norm.
This is to facilitate the exposition as in this norm a product of two balls is a ball.
Given an h-set $X$ we will denote by $\pi_{u(X)}: \rr^{u(X)} \times \rr^{s(X)} \to \rr^{u(X)}$
the projection onto first $u(X)$ coordinates and by $\pi_{s(X)}: \rr^{u(X)} \times \rr^{s(X)} \to \rr^{s(X)}$
the projection onto the last $s(X)$ ones.

\begin{defn}
  Let $X,Y$ be h-sets of not necessarily the same dimensions. We define the h-set $X \times Y$ by setting 
  \begin{itemize}
    \item $u(X \times Y) = u(X) + u(Y)$ and $s(X \times Y) = s(X) + s(Y)$,
    \item $|X \times Y| = |X| \times |Y|$,
    \item $c_{X \times Y} = (\pi_{u(X)} \circ c_X, \pi_{u(Y)} \circ c_Y, \pi_{s(X)} \circ c_X, \pi_{s(Y)} \circ c_Y)$. 
   \end{itemize}
\end{defn}

We now extend the definition of the covering relation to cases where the covering set has a lower entry dimension than the set to be covered,
by adding dummy variables to the domain.
It will be used for coverings by an h-set of parameters, which will have zero entry dimension.
We remark that there exists a more general definition of a covering relation, which allows the covering h-set also to have a higher dimension than the h-set to be covered,
introduced by Wilczak in~\cite{WilczakShilnikov} (Definition 2.2).
Our ad-hoc extension is a special case of the definition given by Wilczak.
We chose against using Wilczak's definition throughout all the thesis to avoid reproving certain theorems which we cited, in particular
Lemma~\ref{covlemma}.

\begin{defn}\label{defn:altcover}
 Let $X,Y$ be h-sets with $u(X) = u(Y)$, $s(X) < s(Y)$ and let $n=u(Y)+s(Y)$ and $s=s(X)-s(Y)$. 
 Let $g: |X| \to \rr^{n}$. We say that $X$ $g$-covers $Y$ with degree $w$, and write 
 \begin{equation}
   X \cover{g,w} Y
 \end{equation}
 iff
 \begin{equation}\label{eq:bs01}
 X \times \overline{B_s(0,1)} \cover{\tilde{g},w} Y,
 \end{equation}
 where $\tilde{g}: |X| \times \overline{B_s(0,1)} \to \rr^n$ is given by
 $\tilde{g}(x,y) = g(x)\ \forall x \in |X|, \ y \in \overline{B_s(0,1)}$ and by $\overline{B_s(0,1)}$ in~\eqref{eq:bs01} we denote (with a slight abuse of notation)
 an h-set given by the quadruple $( \overline{B_s(0,1)},0,s,\id)$.
\end{defn}

We now state a basic topological theorem which can be used to find connecting orbits for maps.

\begin{thm}\label{thm:conmap}
  Let $Z$, and $X_0,\dots,X_k$, $k \geq 0$ be h-sets with $u(Z)=u(X_1) = \dots = u(X_k)=u$, $s(X_1)= \dots = s(X_k)=s$, $s(Z)=0$ and set $n:=u+s$.
  We assume the following:
  \begin{itemize}
    \item $b: \overline{B_s(0,1)} \times |Z| \to |X_k|$ is continuous and for each $z \in |Z|$ the map $b(\cdot,z)$ is a vertical disk in $X_k$;
    \item there exists a map $W: |Z| \to \rr^n$ such that $Z$ $W$-covers $X_0$ with degree $w_0 \in \zz^{*}$; 
    \item there is a sequence of maps $g_i: \Omega_i \times |Z| \to \rr^n,\ \Omega_i \subset \rr^n, \ i = 1,\dots,k$ such that for all $i \in 1,\dots, k$ 
      we have
      \begin{equation}
        X_{i-1} \longlonggencover{g_{i}(\cdot, z), w_i} X_{i}, \ \forall z \in |Z|.
      \end{equation}
  \end{itemize}

  Then, there exists a $\bar{z} \in |Z|$ such that
  \begin{equation}
    \begin{aligned}
      W(\bar{z}) &\in |X_0|, \\ 
      (g_i(\cdot, \bar{z}) \circ \dots \circ g_1(\cdot, \bar{z}))( W(\bar{z}) ) &\in |X_i|,\ i \in \{1, \dots, k\},\\
      (g_k(\cdot, \bar{z}) \circ \dots \circ g_1(\cdot, \bar{z}))( W(\bar{z}) ) &\in b\left(\overline{B_s(0,1)}, \bar{z} \right).
    \end{aligned}
  \end{equation}
\end{thm}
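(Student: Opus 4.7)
The plan is to reduce the claim to a single Brouwer-degree argument on a product domain. Set
\[ U := |Z| \times |X_0| \times \cdots \times |X_{k-1}| \times \overline{B_s(0,1)}, \]
a compact subset of $\rr^{(k+1)n}$ with nonempty interior, and define $\Psi : U \to \rr^{(k+1)n}$ by
\[ \Psi(z, x_0, \ldots, x_{k-1}, y) := \bigl(c_{X_0}(W(z)) - c_{X_0}(x_0),\ c_{X_1}(g_1(x_0, z)) - c_{X_1}(x_1),\ \ldots,\ c_{X_k}(g_k(x_{k-1}, z)) - c_{X_k}(b(y, z))\bigr). \]
A zero of $\Psi$ in $\inter U$ is precisely a triple $(\bar z, \bar y, (\bar x_i))$ realizing the conclusion of the theorem, so it suffices to show $\deg(0, \Psi, \inter U) \neq 0$.

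To compute that degree I would deform $\Psi$ through the homotopies furnished by the covering relations and the vertical disk. Definition~\ref{defn:altcover} supplies a homotopy of the $0$-th block component ending at $(A_W(c_Z(z)), 0)$ with $\deg(0, A_W, B_u(0,1)) = w_0$; each parametric covering $X_{i-1} \longlonggencover{g_i(\cdot, z), w_i} X_i$ supplies a homotopy of the $i$-th block component (chosen to depend continuously on $z$, for instance by gluing local straight-line-to-the-linear-model selections via a partition of unity on the compact $|Z|$) ending at $(A_i(\pi_u c_{X_{i-1}}(x_{i-1})), 0)$; finally the vertical-disk definition deforms $c_{X_k}(b(\cdot, z))$ to $y \mapsto (0, y)$. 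Assembled block-by-block these give a family $\Psi_t$ with $\Psi_0 = \Psi$ and $\Psi_1$ of ``block-triangular plus $-\id$'' form.

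The main technical step, and the principal obstacle, is verifying that $\Psi_t$ does not vanish on $\partial U$ for any $t \in [0,1]$, so that homotopy invariance (A1) applies. The boundary $\partial U$ decomposes into strata on which a single factor of $U$ lies at its own boundary, and the responsible block component is nonzero in each case. On $\partial |Z|$, condition~(1b) of $Z \cover{W, w_0} X_0$ keeps the image of the $W$-homotopy outside $X_{0,c}$, while $c_{X_0}(x_0) \in X_{0,c}$, so the $0$-th component cannot vanish. On an exit face $X_i^-$, condition~(1b) for the $(i{+}1)$-st covering sends the $(i{+}1)$-st homotopy image outside $X_{i+1,c}$, while $c_{X_{i+1}}(x_{i+1}) \in X_{i+1,c}$; on an entry face $X_i^+$, condition~(1c) for the $i$-th covering keeps the $i$-th homotopy image away from $X_{i,c}^+$, whereas $c_{X_i}(x_i) \in X_{i,c}^+$; the case $y \in \partial B_s(0,1)$ is analogous, combining~(1c) for $g_k$ with the $X_{k,c}^+$-boundary condition of the vertical-disk homotopy.

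Finally, $\Psi_1$ splits as a product under the variable partition $(z, x_{0,u}, \ldots, x_{k-1,u})$ versus $(x_{0,s}, \ldots, x_{k-1,s}, y)$. The stable part is $v \mapsto -v$ on $\overline{B_s(0,1)}^{k+1}$, of degree $(-1)^{(k+1)s}$; the unstable part is the block-bidiagonal map
\[ (z, x_{0,u}, \ldots, x_{k-1,u}) \mapsto \bigl(A_W(z) - x_{0,u},\ A_1(x_{0,u}) - x_{1,u},\ \ldots,\ A_{k-1}(x_{k-2,u}) - x_{k-1,u},\ A_k(x_{k-1,u})\bigr), \]
whose Jacobian is block upper-triangular with diagonal blocks $DA_W, DA_1, \ldots, DA_k$, so by (A8) and the composition rule for the Brouwer degree it has degree $w_0 w_1 \cdots w_k$. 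By the product property~(A7) one obtains $\deg(0, \Psi_1, \inter U) = w_0 w_1 \cdots w_k \cdot (-1)^{(k+1)s} \neq 0$, and (A1) then produces a zero of $\Psi$ in $\inter U$. The backcovering alternative of the generic covering is treated symmetrically, by replacing the relevant $g_i$ with $g_i^{-1}$ and swapping the two terms of the corresponding block component of $\Psi$; the zero locus is preserved and only the sign of the degree may change.
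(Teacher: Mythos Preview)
Your overall strategy---a direct Brouwer-degree computation on the single product domain $U = |Z|\times|X_0|\times\cdots\times|X_{k-1}|\times\overline{B_s(0,1)}$---is sound and in fact more elementary than the paper's route. The paper instead lifts everything to product h-sets $X_i\times Z^T$ (so each step carries its own copy of the parameter), introduces an auxiliary contraction $A_\delta$ on the $Z^T$-factor to manufacture covering relations there, applies the ready-made Theorem~\ref{thm:horver} on horizontal/vertical disk intersections, and finally sends $\delta\to 1$ by a compactness/continuity argument to force all the parameter copies to coincide. Your approach avoids both the extra $Z^T$-factors and the limiting step, at the price of having to run the degree/homotopy argument from scratch.

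The genuine gap is the sentence ``chosen to depend continuously on $z$, for instance by gluing local straight-line-to-the-linear-model selections via a partition of unity on the compact $|Z|$.'' Neither device does what you need. The definition of $X_{i-1}\Rightarrow_{g_i(\cdot,z)} X_i$ guarantees only the \emph{existence} of some homotopy $h_i^z$ for each fixed $z$; it does not assert that the straight-line homotopy from $g_{i,c}(\cdot,z)$ to $(A_i\pi_u,0)$ satisfies (\ref{cover:1b})--(\ref{cover:1c}), and in general it will not (e.g.\ $g_{i,c}$ may map part of $X_{i-1,c}$ outside $X_{i,c}$ in the entry direction, and contracting the entry coordinate linearly then passes through $X_{i,c}^+$). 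Likewise, a partition-of-unity combination $\sum_j\phi_j(z)h_i^{z_j}$ of admissible homotopies need not be admissible, because the avoidance conditions $h(\cdot,X_c^-)\cap Y_c=\emptyset$ and $h(\cdot,X_c)\cap Y_c^+=\emptyset$ are not preserved under convex combination. The same objection applies to your $z$-dependent vertical-disk homotopy for $b(\cdot,z)$.

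The fix is simple and is precisely the mechanism the paper exploits (there via $A_\delta$): first homotope the $z$-argument of $g_{i,c}$ (and of $b_c$) along a contraction of $|Z|$ to a single point $z_0\in|Z|$, i.e.\ use the phase $g_{i,c}(x_{i-1},\gamma_\xi(z))$ with $\gamma_0=\id$, $\gamma_1\equiv z_0$. This phase is admissible because for \emph{every} $z'\in|Z|$ the map $g_{i,c}(\cdot,z')$ itself satisfies the $\xi=0$ instance of (\ref{cover:1b})--(\ref{cover:1c}); no convexity is invoked. After this phase you apply the single homotopy $h_i^{z_0}$ (respectively the single vertical-disk homotopy at $z_0$). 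Crucially, this deformation is block-local and leaves block~$0$---the only block in which $z$ is used essentially via the $W$-covering---untouched, so the $\partial|Z|$ boundary condition survives throughout. With this correction your boundary analysis and final degree computation (including the product splitting and the block-triangular reduction to $w_0\cdots w_k$) go through.

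Two minor remarks. First, phrasing the unstable-part degree via a Jacobian presumes the $A_i$ are differentiable, which Definition~\ref{covering} does not require; replace this by the obvious homotopy $t\mapsto(A_W(z')-tx_{0,u},\dots,A_k(x_{k-1,u}))$ to reduce to the product case and invoke~(A7) directly. Second, in the backcovering case your swap gives block $i$ the form $c_{X_{i-1}}(x_{i-1})-g_{i,c}^{-1}(x_i,z)$ (with $b(y,z)$ in place of $x_k$ when $i=k$); the roles of (\ref{cover:1b}) and (\ref{cover:1c}) in the boundary analysis then exchange with those of the adjacent blocks, and the end map contributes an $A_i'$ acting on the \emph{entry} coordinates---worth spelling out, since the block-triangular structure of $\Psi_1$ changes accordingly.
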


Let us now comment on what is the meaning of objects in the statement of this theorem in the context of application to finding connecting orbits of ODEs.
The set $Z$ will be the set of parameters we need to fix to obtain a connection and the role of $W$ will be to assign to a parameter
the point of intersection of a branch of the unstable manifold of one equilibrium with some transversal section.
The maps $g_i$ will be defined as a sequence Poincar\'e maps that will allow to propagate the unstable manifold up to the last section where we will have control
over the stable manifold of the second equilibrium (equal to the first equilibrium for the case of a homoclinic orbit), given by $b$. 
This theorem is a modified version of Theorem 3.3 in~\cite{WilczakShilnikov}, in particular we allow backcoverings
and we allow the vertical disk to vary with the parameters.

The following Theorem on intersection of horizontal and vertical disks will be used as a lemma when proving Theorem~\ref{thm:conmap}:

\begin{thm}[Theorem 3 in~\cite{WZsymmetry}]\label{thm:horver}
  Let $X_i, \ i \in \{0, \dots, k\}$ be h-sets with $u(X_0) = \dots = u(X_k) = u$, $s(X_0)= \dots = s(X_k) = s$.
  Let $b_0$ be a horizontal disk in $X_0$ and $b_e$ be a vertical disk in $X_k$.
  Assume that we have the following chain of covering relations:
 \begin{equation}
  X_{0} \longgencover{g_{1},w_1} X_{1} \longgencover{g_{2},w_2} X_{2} \longgencover{g_{3},w_3}  \dots \longgencover{g_{k},w_k} X_{k},
  \end{equation}
  for some integers $w_i \in \zz^{*}$.
 Then there exists a point $x \in \inter |X_{0}|$ such that
 \begin{equation}
   \begin{aligned}
     x &= b_0(p), \ \text{for some $p \in B_{u}(0,1)$,}\\
     (g_{i} \circ g_{i-1} \circ \dots \circ g_{1}) (x) &\in \inter |X_{i}|, \ i \in \{ 1, \dots k \},\\ 
     (g_{k} \circ g_{k-1} \circ \dots \circ g_{1}) (x) &= b_e(q), \ \text{ for some $q \in B_{s}(0,1)$.}   
  \end{aligned}
 \end{equation}
\end{thm}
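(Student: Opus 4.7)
The plan is to set up a parameterized shooting map whose zeros correspond exactly to orbits satisfying the conclusion of the theorem, and then to compute its Brouwer degree via a homotopy to a standard linear model.

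First I would pass everything to canonical coordinates, writing $G_i := c_{X_i} \circ g_i \circ c_{X_{i-1}}^{-1}$, $B_0 := c_{X_0} \circ b_0$, and $B_e := c_{X_k} \circ b_e$, with the convention that in a backcovering step the relevant block is expressed through $c_{X_{i-1}} \circ g_i^{-1} \circ c_{X_i}^{-1}$ (the remainder of the argument is written for coverings and the backcover case is handled by transposing the corresponding block, which swaps the exit and entry roles). I would then introduce the master map
\[ F : \overline{B_u(0,1)} \times \bigl(\overline{B_u(0,1)} \times \overline{B_s(0,1)}\bigr)^{k-1} \times \overline{B_s(0,1)} \to (\rr^u \times \rr^s)^k, \]
\[ F(p, x_1, \ldots, x_{k-1}, q) = \bigl(G_1(B_0(p)) - x_1,\; G_2(x_1) - x_2,\; \ldots,\; G_k(x_{k-1}) - B_e(q)\bigr). \]
A zero of $F$ in the interior of the domain $D$ produces intermediate points $x_i \in \inter(B_u(0,1) \times B_s(0,1))$ satisfying $G_i(x_{i-1}) = x_i$ with $x_0 = B_0(p)$ and $x_k = B_e(q)$, which after conjugation by the $c_{X_i}^{-1}$ translates precisely into the asserted point $x = b_0(p) \in \inter|X_0|$ whose orbit under $g_1,\ldots,g_k$ stays in $\inter|X_i|$ and terminates at $b_e(q)$. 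Both domain and codomain have dimension $k(u+s)$, so the Brouwer degree $\deg(0, F, \inter D)$ is well defined once we rule out zeros on $\partial D$.

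The body of the proof is a simultaneous homotopy of $F$ to a linear model $F_1$, obtained by (i) replacing each $G_i$ via the homotopy $h_i$ supplied by Definition~\ref{covering} with the map $(x_u,x_s) \mapsto (A_i(x_u), 0)$, (ii) replacing $B_0$ via the horizontal-disk homotopy by $p \mapsto (p,0)$, and (iii) replacing $B_e$ via the vertical-disk homotopy by $q \mapsto (0,q)$. The main obstacle, and the technical heart of the argument, is to verify that throughout this combined homotopy the map does not vanish on $\partial D$. The boundary of $D$ decomposes into strata characterised by $p \in \partial B_u(0,1)$, $q \in \partial B_s(0,1)$, or $x_i \in \partial(B_u(0,1) \times B_s(0,1))$ for some $i$. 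If $p \in \partial B_u(0,1)$, the horizontal-disk homotopy keeps the image in $X_{0,c}^-$, after which condition~\eqref{cover:1b} applied to $h_1$ forces the first output block outside $\overline{B_u(0,1)} \times \overline{B_s(0,1)}$ in the exit direction, so that block of $F$ cannot vanish. Symmetrically, if $q \in \partial B_s(0,1)$, the vertical-disk homotopy keeps $B_e(q) \in X_{k,c}^+$ and condition~\eqref{cover:1c} applied to $h_k$ rules out a zero in the last block. If instead some $x_i \in \partial(B_u(0,1) \times B_s(0,1))$, then either $\pi_u x_i \in \partial B_u(0,1)$, in which case \eqref{cover:1b} for $h_{i+1}$ yields a discrepancy in the $(i+1)$-st block, or $\pi_s x_i \in \partial B_s(0,1)$, in which case \eqref{cover:1c} for $h_i$ yields a discrepancy in the $i$-th block. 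For backcovering relations the relevant $G_i$ is replaced by $G_i^{-1}$ and the roles of \eqref{cover:1b} and \eqref{cover:1c} are exchanged, but the conclusion is the same.

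Finally, the model map $F_1$ has a triangular structure: solving $F_1 = 0$ forces $\pi_s x_i = 0$ for every $i$, propagates the unstable coordinate as $\pi_u x_i = (A_i \circ \cdots \circ A_1)(p)$, and imposes $q = 0$ together with $(A_k \circ \cdots \circ A_1)(p) = 0$. By the product property (A7) of the Brouwer degree together with the identification of $w_i$ as $\deg(0, A_i, B_u(0,1))$ from Definition~\ref{covering}, one obtains $\deg(0, F_1, \inter D) = \pm \prod_{i=1}^k w_i \neq 0$ since each $w_i \in \zz^{*}$. Homotopy invariance (A1) then gives $\deg(0, F, \inter D) \neq 0$, hence $F$ admits a zero in $\inter D$; unwinding the definitions produces the required $p \in B_u(0,1)$, $q \in B_s(0,1)$, and $x = b_0(p) \in \inter|X_0|$ with the prescribed itinerary.
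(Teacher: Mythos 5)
The paper does not actually prove this theorem---it is imported verbatim as Theorem 3 of~\cite{WZsymmetry}---so there is no in-paper proof to compare against. That said, your degree-theoretic shooting argument (a master map $F$ on the product $\overline{B_u(0,1)} \times (\overline{B_u(0,1)}\times\overline{B_s(0,1)})^{k-1}\times\overline{B_s(0,1)}$, a simultaneous homotopy of the $g_i$'s and of both disks to their linear models, boundary exclusion via conditions~\eqref{cover:1b} and~\eqref{cover:1c}, and a degree computation for the model) is exactly the standard mechanism behind such chain-of-coverings theorems and very likely matches the cited source.

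Two places deserve more care before this would pass as a complete proof. First, a zero of $F$ in $\inter D$ gives $p\in B_u(0,1)$, $q\in B_s(0,1)$, and $x_i\in\inter X_{i,c}$ only for $1\le i\le k-1$; it does \emph{not} give $b_0(p)\in\inter|X_0|$ or $b_e(q)\in\inter|X_k|$, because a horizontal disk can sit entirely on $X_{0,c}^+$ (for example $b_c(p)=(p,p_0)$ with $p_0\in\partial B_s(0,1)$ fixed, which admits the homotopy $h(\xi,p)=(p,(1-\xi)p_0)$), and likewise for a vertical disk and $X_{k,c}^-$. So the interior assertions for $i=0$ and $i=k$ in the statement need a separate remark, or should be read as membership in $|X_0|$ and $|X_k|$; your final sentence quietly asserts $b_0(p)\in\inter|X_0|$ without justification. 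Second, the closing degree computation is only gestured at. Once backcoverings enter, the linear model $F_1$ is not a product in the obvious block coordinates: a backcover step produces a block of the form $(\pi_u x_{i-1},\ \pi_s x_{i-1}-A_i'(\pi_s x_i))$ rather than $(A_i(\pi_u x_{i-1})-\pi_u x_i,\ -\pi_s x_i)$, so the natural chain structure in the $u$-coordinates is interrupted and continued in the $s$-coordinates. One should first observe that the $u$- and $s$-components of $F_1$ decouple (which does use a coordinate permutation), and then either expand the resulting block-triangular determinants directly, or run one further homotopy (e.g.\ $A_j(y_{j-1})-\tau y_j$, admissible because $A_j(\partial B_u(0,1))\subset\rr^u\setminus\overline{B_u(0,1)}$) to reduce each chain to a genuine product $\prod_i A_i$ before invoking (A7). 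These are routine, but as written the appeal to (A7) skips the step that makes it applicable.
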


\begin{proof}[Proof of Theorem~\ref{thm:conmap}]
  We will apply Theorem~\ref{thm:horver} to a sequence
  of h-sets $Z  \times Z^T$, $X_0 \times Z^T,\  X_1 \times Z^T,\dots, X_k \times Z^T$ and disks $b_0$, $b_e$, which we will define later.
  Observe that these h-sets have dimension $2u+s$ with $u$ exit and $u+s$ entry directions, except for $Z \times Z^T$, which has
  $u$ exit and $s$ entry directions.

  Let $\delta \in [0,1]$. We define $A_\delta: \rr^u \to \rr^u$ to be a map given by 
  \begin{equation}
    A_\delta(z) = c_Z^{-1}\left( \delta c_Z(z) \right). 
  \end{equation}
  The role of $A_\delta$ for $\delta \in [0,1)$ is to generate an artificial covering between the ``parameter'' h-sets $Z^T$.
  Later in the proof we will pass with $\delta$ to $1$. 
  For the purpose of this proof we will overload our notation and denote by $A_\delta$ 
  also the map $(x,z) \rightarrow A_\delta(z), \ x \in \rr^n,\ z \in \rr^u$.

  Assume that for some $i \in \{ 1,\dots ,k\}$ we have a covering relation $X_{i-1} \cover{g_i(\cdot,z)} X_{i}$ for all $z \in |Z|$.
  We will construct a homotopy that establishes a covering relation
  \begin{equation}\label{eq:xicov}
    X_{i-1} \times Z^T \longlongcover{(g_i, A_\delta), w_i} X_i \times Z^T
  \end{equation}
  for all $\delta \in [0,1)$.  
  The required homotopy $\tilde{h}_i :[0,1] \times \overline{B_{u+s}(0,1)} \times \overline{B_{u}(0,1)} \to \rr^{2u+s}$ will be
  a composition of a deformation retraction of $Z_c$ onto $0$ with the homotopy $h_i=h_i(\xi,x,z)|_{z=0}$ from the definition of covering by $g_i(\cdot,z)$.
  The formula for $\tilde{h}_i$ is given by
  \begin{equation}\label{eq:newhomotopy}
    \tilde{h}_i (\xi, x, z) = \begin{cases} \left(g_c\left(x,A_{1-2\xi}(z)\right), (1-2\xi)\delta z \right), &\text{ for $\xi \in[0,1/2]$}, \\  
                                          \left( h_i(2\xi-1,x,0), 0 \right), &\text{ for $\xi \in[1/2,1]$}.
                            \end{cases}
  \end{equation}

  It is clear that~$\tilde{h}_i$ satisfies condition~\eqref{cover:1a}, \eqref{cover:1b}, \eqref{cover:1c} and \eqref{cover:2},
  since $h_i(\cdot,\cdot,z)$ does for all $z \in |Z|$. 

  By the same argument we obtain a covering relation
  \begin{equation}\label{eq:xicov2}
    Z \times Z^T \longcover{W_\delta, w_0} X_0 \times Z^T.
  \end{equation}
  with $W_\delta : Z \times Z^T \to \rr^{2u+s}$ given by $W_\delta (z_1,z_2) = \left( W(z_1), A_\delta (z_1) \right)$.

  Now assume that $\delta \in (0,1)$ and for some $i$ we have a backcovering $X_{i-1} \longbackcover{g_i(\cdot,z),w_i} X_{i}$ for all $z \in |Z|$.
  We now want to verify a backcovering relation 
  \begin{equation}\label{eq:xibackcov}
    X_{i-1} \times Z^T \longlongbackcover{(g_i, A_\delta),w_i} X_i \times Z^T.
  \end{equation}

  In other words we need to construct a homotopy $\tilde{h}_i$ that establishes a covering relation     
  $Z \times X_{i}^T \longlongcover{(A_\frac{1}{\delta}, g_i^{-1} ), w_i} Z \times X_{i-1}^T$.
  Let $h_i$ be a homotopy for the covering relation $X_{i}^T \longlongcover{g^{-1}_i(\cdot,z),w_i} X_{i-1}^T$.
  It can be easily checked that the homotopy given by $\tilde{h}_i(\xi,x,z) = (\frac{1}{\delta}z, h_i(\xi,x))$ satisfies 
  conditions~\eqref{cover:1a}, \eqref{cover:1b}, \eqref{cover:1c},
  and condition~\eqref{cover:2} follows from the product property of the Brouwer degree (A7).

  From the above considerations we obtain a following chain of covering relations
 \begin{equation}
    Z \times Z^T \longcover{W_\delta,w_0} X_0 \times Z^T \longlonggencover{(g_1, A_\delta),w_1} X_1 \times Z^T 
    \longlonggencover{(g_2, A_\delta),w_2}  
    \dots \longlonggencover{(g_k, A_\delta),w_k} X_{k} \times Z^T.
  \end{equation}
  
  We define a horizontal disk $b_0$ in $Z \times Z^T$ by $b_0(p) = \left(c_Z^{-1}(p), c_{Z^T}^{-1}(0)\right), \ p \in \overline{B_u(0,1)}$
  and a map $b_e: \overline{B_s(0,1)} \times \overline{B_u(0,1)} \to X_k \times Z^T$ by $b_e( q_1, q_2 ) = \left( b\left( q_1, c_{Z}^{-1}(q_2) \right), c_{Z^T}^{-1}(q_2) \right)$.
  The map $b_e$ is a vertical disk and the required homotopy $\hat{h}_e$ is given by 
  \begin{equation}
    \hat{h}_e (\xi, q_1, q_2) = \begin{cases} \left( b_c(q_1, (1-2\xi )q_2), q_2 \right), &\text{ for $\xi \in[0,1/2]$}, \\  
      \left( h_{e}(2\xi -1,q_1,0), q_2  \right), &\text{ for $\xi \in[1/2,1]$},
    \end{cases}  \end{equation}
  where $h_e(\cdot,\cdot,0)$ is the homotopy from the definition of the vertical disk $b\left(q_1,c_{Z}^{-1}(0)\right)$.

  By Theorem~\ref{thm:horver}, for all $\delta \in (0,1)$ there exists a solution 
  \begin{equation}
    \mathbf{x}(\delta) = (z_{-1}, z_{0}, \dots , z_{k}, x_0, \dots, x_{k-1}, x_s  )(\delta) 
  \end{equation}
    in $\inter |Z|^{k+1} \times \inter |X_0| \times \dots \times \inter |X_k| \times B_s(0,1) $
  to the following system
  \begin{equation}\label{eq:longsystem}
    \begin{aligned}
      W( z_{-1} ) - x_0 &= 0,\\
      A_\delta( z_{-1} )  - z_0&=0,\\
      g_1( x_0, z_0 ) - x_1&=0,\\
      A_\delta( z_0 ) -z_1&=0,\\
      g_2( x_1, z_1 ) -x_1&=0,\\
      A_\delta( z_1 ) -z_2&=0,\\  
      &\dots \\
      g_k( x_{k-1}, z_{k-1} ) - b(x_s, z_k) &=0,\\
      A_\delta( z_{k-1} ) - z_k&=0.
    \end{aligned}
  \end{equation}

  Denote the right-hand side of~\eqref{eq:longsystem} by $F_{\delta}$. Clearly $F_\delta$ depends continuously on the coefficients,
  hence for $\delta=1$ there exists a solution $\mathbf{\bar{x}} := \mathbf{x}(1) \in |Z|^{k+1} \times |X_0| \times \dots \times |X_k| \times \overline{B_s(0,1)}$ 
  of the following system
  \begin{equation}
    \begin{aligned}
      W( z_{-1} ) - x_0 &= 0,\\
      g_1( x_0, z_0 ) - x_1&=0,\\
      g_2( x_1, z_1 ) -x_1&=0,\\
      &\dots \\
      g_k( x_{k-1}, z_{k-1} ) - b(x_s, z_k) &=0,\\
      z_{-1} &= z_0 = z_1 = \dots = z_k.
    \end{aligned}
  \end{equation}
  Setting $\bar{z}:=z_{-1}$ proves our assertion.
\end{proof}

Our next theorem is designed for connecting orbits of vector fields and allows for use of isolating segments.

\begin{thm}\label{thm:hom1}
  Let 
  \begin{equation}\label{eq:vfield}
    \dot{x}=f(x,z), \ x \in \rr^{N}, \ z \in \rr^u
  \end{equation} 
  be an ODE given by a smooth parameter-dependent vector field $f: \rr^{N+u} \to \rr^N$, $u < N$. 
  Let $Z$ be an h-set in $\rr^u$ such that $u(Z)=u$, $s(Z)=0$. Let $x_1=x_1(z)$, $x_2=x_2(z)$ be (not necessarily distinct) equilibrium points of~\eqref{eq:vfield}.
  Assume that we are given a sequence $\{ \Sigma_{i} \}_{i=0}^{k},\ k \in \mathbb{N}$ of transversal sections for~\eqref{eq:vfield}
  for all $z \in |Z|$ and a family of h-sets
  \begin{equation}
   \mathcal{X} = \{X_{i}: \ |X_{i}| \subset \Sigma_{i}, \ i = 0,\dots, k \},
  \end{equation}
  with $u(X_0) = \dots = u(X_k) = u$ and $s(X_0) = \dots = s(X_k) = s$. 
  Moreover, we assume that the unstable manifold of $x_1$ has an intersection point with $\Sigma_0$, denoted by $W^u_{x_1,\Sigma_0} (z)$, that varies continuously with $z \in |Z|$
  and $Z$ $W^u_{x_1, \Sigma_0}$-covers $X_0$ with degree $w_0 \in \zz^{*}$;
  and that for each $z \in |Z|$ there is a vertical disk $b(\cdot,z)$ in $X_k$, such that all points in the image of $b(\cdot,z)$ belong to 
  the stable manifold of $x_2$, and $b$ is continuous as a map from $\overline{B_s(0,1)} \times |Z|$ to $|X_k|$.

  Suppose that for each two consecutive h-sets $X_{i-1}$, $X_{i} \in \mathcal{X}$ we have one of the following:
  \begin{itemize}
    \item there exists a (parameter-dependent) Poincar\'e map $P_{i}(\cdot,z) : \Omega_{i-1} \times |Z| \to \Sigma_{i}$ with $\Omega_{i-1} \subset \Sigma_{i-1}$ 
      and an integer $w_i \in \zz^{*}$ such that
        \begin{equation}
         X_{i-1} \longlonggencover{P_{i}(\cdot,z),w_i} X_{i} \quad \forall z \in |Z|.
        \end{equation}
      \item there exists a segment $S_{i}$ between $\Sigma_{i-1}$ and $\Sigma_{i}$, such that $X_{S_{i},\text{in}} = X_{i-1}$ and $X_{S_{i},\text{out}} = X_{i}$
        and $S_i$ is an isolating segment for all $z \in |Z|$.
  \end{itemize}
  Then there exists a $\bar{z} \in |Z|$,
  such that the solution $x(t)$ to~\eqref{eq:vfield} with parameter $z$ set to $\bar{z}$ and initial condition $W^u_{x_1,\Sigma_0}(\bar{z})$ satisfies the following
  \begin{itemize}
    \item  $x(t)$ passes consecutively through the supports of all $X_{i}$'s, 
    \item whenever $X_{i-1}$ and $X_{i}$ are connected by an isolating segment as its front and rear faces, respectively, 
       $x(t)$ passes through $S^{0}_{i}$;
    \item the image of $x(\cdot)$ intersects with the image of $b(\cdot,\bar{z})$,
  \end{itemize}
  In other words, $x(t)$ forms a connecting orbit between $x_1$ and $x_2$ and a homoclinic orbit iff $x_1(\bar{z})=x_2(\bar{z})$.
\end{thm}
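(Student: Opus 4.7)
The plan is to reduce Theorem~\ref{thm:hom1} to the map-level statement Theorem~\ref{thm:conmap} by invoking Theorem~\ref{is:1} pointwise in $z$ to convert each isolating segment into a covering relation of diffeomorphisms on transversal sections, in direct analogy with the proof of Theorem~\ref{thm:1}. The only genuinely new ingredient is the presence of the parameter $z \in |Z|$, which must be threaded consistently through the construction.

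First, fix any index $i$ for which $X_{i-1}$ and $X_i$ are joined by the family of isolating segments $\{S_i\}_{z \in |Z|}$. Applying Theorem~\ref{is:1} parameter-wise yields a family of diffeomorphisms $R_i(\cdot,z): \Sigma_{i-1} \to \Sigma_i$ with
\begin{equation}
X_{i-1} \cover{R_i(\cdot,z)} X_i \qquad \forall z \in |Z|,
\end{equation}
and the persistent-set property that $R_i(x,z) = P_i(x,z)$ whenever $R_i(x,z) \in |X_i|$, with the trajectory between $x$ and $R_i(x,z)$ staying in $|S_i|$. The continuity of $R_i$ in $z$ requires a brief check: inspecting the construction in the proof of Theorem~\ref{is:1}, $R_i(\cdot,z)$ is the section-to-section map of a modified flow built from $f(\cdot,z)$ and a fixed cut-off $\eta$, and hence depends continuously on $z$ through the continuity of $f$ in $(x,z)$. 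This parameter-continuity is where I expect the main technical care to be needed.

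Next, set $g_i := P_i$ in the Poincar\'e-map case and $g_i := R_i$ in the segment case. The assumptions of Theorem~\ref{thm:conmap} are then met with $W := W^u_{x_1,\Sigma_0}$ and the given vertical disk $b$: we have a covering $Z \cover{W,w_0} X_0$, a full chain of parameter-dependent generic coverings
\begin{equation}
X_0 \longlonggencover{g_1(\cdot,z),w_1} X_1 \longlonggencover{g_2(\cdot,z),w_2} \dots \longlonggencover{g_k(\cdot,z),w_k} X_k \quad \forall z \in |Z|,
\end{equation}
and a continuous family of vertical disks $b(\cdot,z)$ in $X_k$. Theorem~\ref{thm:conmap} produces $\bar z \in |Z|$ and points $p_0 = W(\bar z) \in |X_0|$, $p_i = g_i(p_{i-1},\bar z) \in |X_i|$ for $i = 1,\dots,k$, with $p_k \in b(\overline{B_s(0,1)},\bar z)$.

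Finally, I would promote this discrete orbit to an actual trajectory. At each segment step the inclusion $p_i = R_i(p_{i-1},\bar z) \in |X_i|$ together with the persistent-set identity places $p_{i-1}$ in $S_i^0$ for the parameter $\bar z$, makes $R_i(p_{i-1},\bar z)$ coincide with the genuine Poincar\'e map $P_i(p_{i-1},\bar z)$, and guarantees that the flow piece from $p_{i-1}$ to $p_i$ remains in $|S_i|$. Hence the solution $x(t)$ of~\eqref{eq:vfield} with parameter $\bar z$ and initial point $p_0$ threads through all the h-sets and all the segments as required. Since $p_0 \in W^u(x_1(\bar z))$ by the definition of $W^u_{x_1,\Sigma_0}$ and $p_k$ belongs to the image of $b(\cdot,\bar z) \subset W^s(x_2(\bar z))$, the orbit $x(t)$ is a connecting orbit between $x_1(\bar z)$ and $x_2(\bar z)$, homoclinic precisely when these two equilibria coincide.
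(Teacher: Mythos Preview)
Your proposal is correct and essentially identical to the paper's own proof: apply Theorem~\ref{is:1} at each segment step to obtain parameter-dependent diffeomorphisms $R_i(\cdot,z)$, note their joint continuity in $(x,z)$ (the paper does this by observing that $R_i$ is itself a Poincar\'e map of a modified vector field, so one may simply adjoin $z$ as an extra variable of zero velocity), feed the resulting chain together with $W^u_{x_1,\Sigma_0}$ and $b$ into Theorem~\ref{thm:conmap}, and then upgrade the discrete orbit to a genuine flow trajectory via the persistent-set identity from Theorem~\ref{is:1}. The only point the paper makes slightly more explicit than you do is that the degrees $w_i$ are automatically $z$-independent by continuity of the Brouwer degree (A3), which is what justifies the uniform chain display.
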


\begin{proof}
  The proof is almost identical to the proof of Theorem~\ref{thm:1}.
  We apply Theorem~\ref{is:1}, and for each $z \in |Z|$ we obtain a chain of covering relations
  \begin{equation}\label{eq:longchain}
    X_{0} \longlonggencover{g_{1}(\cdot,z),w_1} X_{1} \longlonggencover{g_{2}(\cdot,z),w_2} X_{2} \longlonggencover{g_{3}(\cdot,z),w_3} \dots \longlonggencover{g_{k}(\cdot,z),w_k} X_{k},
  \end{equation}
  where $g_{i}(\cdot,z)= P_{i}(\cdot,z)$ or $g_{i}(\cdot,z)= R_{i}(\cdot,z)$, $R_{i}$ being the diffeomorphism given by Theorem~\ref{is:1} associated with the segment $S_{i}$.

  Observe that each $P_i$ appearing in the chain is continuous on $\Sigma_{i-1} \times |Z|$ 
  as a Poincar\'e map with parameter dependence as we can add the parameter as an additional variable of zero velocity to the vector field;
  same considerations hold for maps $P_i^{-1}$ and $R_i$ (by the proof of Theorem~\ref{is:1} $R_i$'s are Poincar\'e maps of a vector field).
  In particular, the degrees $w_i$ are independent of $z$, by continuity of the Brouwer degree (A3).
  Theorem~\ref{thm:conmap} can be now applied to the chain~\eqref{eq:longchain}, the family of vertical disks $b(\cdot,z)$ and the map $W^u_{x_1,\Sigma_0}$
  (which takes the role of $W$ in assumptions of Theorem~\ref{thm:conmap}).
  
  From assertion of Theorem~\ref{thm:conmap} we obtain a $\bar{z} \in |Z|$ and a sequence
  $\{ x_{i}: x_{i} \in |X_{i}|,\ i = 0,\dots, k \}$ such that 
  \begin{equation}
    \begin{aligned}
      W^u_{x_1,\Sigma_0}(\bar{z}) &= x_0,\\
      g_{i}( x_{i-1}, \bar{z} ) &= x_{i} \quad \text{for } i \in 1,\dots,k,\\
      x_k &\in b(\overline{B_s(0,1)},\bar{z}).
    \end{aligned}
  \end{equation}
  Now we need to show that $x_0,x_1, \dots, x_k$ are consequent points of a true solution to $\dot{x}=f(x,\bar{z})$.
  We will repeat the same argument as in the proof of Theorem~\ref{thm:1}. 
  Suppose that for certain $i$'s we have $g_{i} = R_{i}$. 
  Since $x_{i-1} \in |X_{i-1}|$ and $x_i \in |X_i|$,
  we obtain that $x_{i-1} \in S_{i-1}^0$, hence, by assertion of Theorem~\ref{is:1} the points $x_{i-1}$ and $x_{i}$ indeed belong to a solution of $\dot{x}=f(x,\bar{z})$.
\end{proof}

Similarly as in the case of Theorem~\ref{thm:1} for periodic orbits, Theorem~\ref{thm:hom1} is stated only for future reference.
We will now state and prove the theorem which is applicable to the FitzHugh-Nagumo equation and uses the fast-slow switch (see Subsection~\ref{subsec:switch}).

\begin{thm}\label{thm:hom2}
  Let 
  \begin{equation}\label{eq:vfield2}
    \dot{x}=f(x,z), \ x \in \rr^{3}, \ z \in \rr
  \end{equation} 
  be an ODE given by a smooth parameter-dependent vector field $f: \rr^{3} \times \rr \to \rr^3$. 
  Let $Z$ be an h-set in $\rr$ such that $u(Z)=1$, $s(Z)=0$ and let $x_1=x_1(z)$, $x_2=x_2(z)$ be (not necessarily distinct) equilibrium points of~\eqref{eq:vfield2}.
  Assume that we are given a sequence $\{ \Sigma_{i} \}_{i=0}^{k},\ k \in \mathbb{N}$ of transversal sections for~\eqref{eq:vfield2}
  for all $z \in |Z|$ and that we have a family of h-sets
  \begin{equation}
   \mathcal{X} = \{X_{i}: \ |X_{i}| \subset \Sigma_{i}, \ i = 0,\dots, k \},
  \end{equation}
  with $u(X_0) = \dots = u(X_k) = 1$ and $s(X_0) = \dots = s(X_k) = 1$. 
  Moreover, we assume that the unstable manifold of $x_1$ has an intersection point with $\Sigma_0$, denoted by $W^u_{x_1,\Sigma_0}(z)$, that varies continuously with $z \in |Z|$
  and $Z$ $W^u_{x_1,\Sigma_0}$-covers $X_0$ with degree $w_0 \in \zz^{*}$;
  and that for each $z \in |Z|$ there is a vertical disk $b(\cdot,z)$ in $X_k$, such that all points in the image of $b(\cdot,z)$ belong to 
  the stable manifold of $x_2$, and $b$ is continuous as a map from $\overline{B_s(0,1)} \times |Z|$ to $|X_k|$.

  Suppose that for each two consecutive h-sets $X_{i-1}$, $X_{i} \in \mathcal{X}$ we have one of the following:
  \begin{itemize}
    \item there exists a (parameter-dependent) Poincar\'e map $P_{i}(\cdot,z) : \Omega_{i-1} \times |Z| \to \Sigma_{i}$ with $\Omega_{i-1} \subset \Sigma_{i-1}$ and
        \begin{equation}
         X_{i-1} \longlonggencover{P_{i}(\cdot,z),w_i} X_{i} \quad \forall z \in |Z|.
        \end{equation}
      \item there exists a segment $S_{i}$ between $\Sigma_{i-1}$ and $\Sigma_{i}$, such that $X_{S_{i},\text{in}} = X_{i-1}$ and $X_{S_{i},\text{out}} = X_{i}$
        and $S_i$ is an isolating segment for all $z \in |Z|$.
      \item there exists a segment $S_{i}$ between $\Sigma_{i-1}$ and $\Sigma_{i}$ such that $X_{S_{i},\text{in}} = X_{i-1}$, the segment $S_i$ is an isolating segment for all $z \in |Z|$,
        and either $X_{S_{i},\text{lu}} = X_{i}$ or $X_{S_{i},\text{ru}} = X_{i}$;
      \item there exists a segment $S_{i}$ between $\Sigma_{i-1}$ and $\Sigma_{i}$ such that $X_{S_{i},\text{out}} = X_{i}$, the segment $S_i$ is an isolating segment for all $z \in |Z|$,
        and either $X_{S_{i},\text{ls}} = X_{i-1}$ or $X_{S_{i},\text{rs}} = X_{i-1}$.
  \end{itemize}
  Then there exists a $\bar{z} \in |Z|$ 
  such that the solution $x(t)$ to~\eqref{eq:vfield2} with parameter $z$ set to $\bar{z}$ and initial condition $W^u_{x_1,\Sigma_0}(\bar{z})$ satisfies the following
  \begin{itemize}
    \item  $x(t)$ passes consecutively through the supports of all $X_{i}$'s, 
    \item whenever $X_{i-1}$ and $X_{i}$ are connected by an isolating segment as its front and rear faces, respectively, 
       $x(t)$ passes through $S^{0}_{i}$;
    \item the image of $x(\cdot)$ intersects with the image of $b(\cdot,\bar{z})$,
  \end{itemize}
  In other words, $x(t)$ forms a connecting orbit between $x_1$ and $x_2$ and a homoclinic orbit iff $x_1(\bar{z})=x_2(\bar{z})$.
\end{thm}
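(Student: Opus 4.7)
The proof will be a fusion of the proofs of Theorems~\ref{thm:2} and~\ref{thm:hom1}: we borrow from the former the device for handling the ``fast-slow switch'' (constrictions of h-sets together with Theorems~\ref{is:3}, \ref{is:4}) and from the latter the application of Theorem~\ref{thm:conmap} to handle the parameter shooting between the unstable and stable manifolds. Concretely, I will first replace each $X_i$ that appears as a switch face $X_{S_j,\mathrm{lu}/\mathrm{ru}}$ or $X_{S_j,\mathrm{ls}/\mathrm{rs}}$ by a constricted version $X_i^{\delta_i,\mathrm{uc}}$ or $X_i^{\delta_i,\mathrm{sc}}$. As in the proof of Theorem~\ref{thm:2}, each $\delta_i>0$ is chosen sequentially, small enough so that (i) every previously established covering relation persists, by the $C^0$-openness of covering relations with respect to the h-set homeomorphisms (cf.\ Theorem 13 in~\cite{GideaZgliczynski}); (ii) any segment for which $X_i$ served as a front or rear face can be reconstructed with the constricted face as an isolating segment for every $z \in |Z|$, because $c_{\tilde S_i}$ can be arranged to be $O(\delta_i)$-close to $c_{S_i}$ in $C^1$ and (S1)--(S3) are open conditions; (iii) if $X_k$ is itself constricted, the vertical disk $b(\cdot,z)$ remains a vertical disk in $\tilde X_k$, which is immediate since constriction is a smooth linear rescaling of the $c_{X_k}$-coordinates and therefore transforms the homotopy witnessing verticality in the obvious way.

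After this replacement we denote the modified h-sets by $\tilde X_i$ and apply Theorems~\ref{is:1}, \ref{is:3}, \ref{is:4} to each segment to produce, for every $z \in |Z|$, a chain of generic covering relations
\begin{equation*}
  \tilde X_0 \longlonggencover{g_1(\cdot,z),w_1} \tilde X_1 \longlonggencover{g_2(\cdot,z),w_2} \tilde X_2 \longlonggencover{g_3(\cdot,z),w_3} \cdots \longlonggencover{g_k(\cdot,z),w_k} \tilde X_k,
\end{equation*}
where each $g_i(\cdot,z)$ is one of $P_i(\cdot,z)$, the diffeomorphism $R_i(\cdot,z)$ from Theorem~\ref{is:1}, the exit map $E_{S_i}(\cdot,z)$, or the inverse exit map $E_{S_i^T}^{-1}(\cdot,z)$ for the reversed flow. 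Each of these is continuous in $(x,z)$: for $P_i$ and $P_i^{-1}$ this is the standard continuous dependence of Poincar\'e maps on parameters (treat $z$ as a variable with zero velocity in the extended vector field), while for $R_i$, $E_{S_i}$ and $E_{S_i^T}^{-1}$ the constructions in the proofs of Theorems~\ref{is:1}, \ref{is:3}, \ref{is:4} go through verbatim with a parameter added, because the modified vector field $\hat f$ and the associated Poincar\'e maps depend continuously on $z$. The degrees $w_i$ are independent of $z$ by the continuity axiom (A3) of the Brouwer degree.

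Now I would invoke Theorem~\ref{thm:conmap} with the map $W := W^u_{x_1,\Sigma_0}$, the chain $\{g_i(\cdot,z)\}$, and the parametrized vertical disk $b(\cdot,z)$ (possibly postcomposed with the coordinate rescaling if $X_k$ was constricted). This yields a parameter $\bar z \in |Z|$ and a sequence $x_0,\dots,x_k$ with $x_0 = W^u_{x_1,\Sigma_0}(\bar z)$, $g_i(x_{i-1},\bar z) = x_i \in |\tilde X_i| \subset |X_i|$, and $x_k = b(q,\bar z)$ for some $q \in \overline{B_{s(X)}(0,1)}$. To upgrade this sequence to a true trajectory of~\eqref{eq:vfield2}, I repeat the argument from Theorems~\ref{thm:1} and~\ref{thm:2}: whenever $g_i = R_i$, the assertion~\eqref{S0=RSet} of Theorem~\ref{is:1} forces $x_{i-1} \in S_i^0$ and $R_i(x_{i-1}) = P_i(x_{i-1})$; whenever $g_i$ is an exit map $E_{S_i}$ or $E_{S_i^T}^{-1}$, the relation is a genuine segment of the flow by construction. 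Concatenating these flow segments produces a single solution $x(t)$ with $x(0)=W^u_{x_1,\Sigma_0}(\bar z) \in W^u(x_1(\bar z))$ whose forward image meets $b(\cdot,\bar z)$, whose image lies in $W^s(x_2(\bar z))$; this is the desired connecting orbit, which is homoclinic precisely when $x_1(\bar z)=x_2(\bar z)$.

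The only step that requires genuine work beyond copying the earlier proofs is the sequential constriction argument. Its verification is essentially the same $C^0$-openness bookkeeping carried out in the proof of Theorem~\ref{thm:2}, with the additional nuisance that now every covering relation must persist \emph{uniformly} in $z \in |Z|$; this is harmless because $|Z|$ is compact and the conditions defining covering and isolation are strict inequalities, hence uniformly open. I therefore expect this to be the main (though not deep) obstacle, and I would leave the details implicit exactly as is done in the proof of Theorem~\ref{thm:2}.
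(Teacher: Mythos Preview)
Your proposal is correct and follows essentially the same route as the paper: first perform the constriction procedure from the proof of Theorem~\ref{thm:2} uniformly in $z\in|Z|$ (using compactness of $|Z|$), then apply Theorems~\ref{is:1}, \ref{is:3}, \ref{is:4} to obtain a chain of generic covering relations for each $z$, and finally invoke Theorem~\ref{thm:conmap} exactly as in the proof of Theorem~\ref{thm:hom1}. Your discussion is in fact slightly more careful than the paper's in that you explicitly address the persistence of the vertical disk when $X_k$ is constricted and the joint continuity in $(x,z)$ of the exit maps, points the paper leaves implicit.
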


\begin{proof}
  We perform the same replacement procedure as in the first part of the proof of Theorem~\ref{thm:2}, 
  but the new isolating segments and covering relations have to be valid for the whole range of parameter $z \in |Z|$.
  This is possible, since $|Z|$ is compact.

  We apply Theorems \ref{is:1}, \ref{is:3}, \ref{is:4} and obtain a chain of covering relations
  \begin{equation}
   \tilde{X}_{0} \longlonggencover{g_{1}(\cdot,z),w_1} \tilde{X}_{1} \longlonggencover{g_{2}(\cdot,z),w_2} \tilde{X}_{2} \longlonggencover{g_{3}(\cdot,z),w_3} 
   \dots \longlonggencover{g_{k}(\cdot,z),w_k} \tilde{X}_{k},\ \forall z \in |Z|,
  \end{equation}
  where for each $g_{i}$ we have one of the following:
  \begin{itemize}
    \item $g_{i} = P_{i}$,
    \item $g_{i} = R_{i}$, with $R_{i}$ given by Theorem~\ref{is:1},
    \item $g_{i} = E_{S_{i}}$,
    \item $g_{i} = E_{S^{T}_{i}}^{-1}$,
  \end{itemize}
  for each $z \in |Z|$.

  From now the proof continues in the same way as the proof of Theorem~\ref{thm:hom1}. 
  We obtain a $\bar{z} \in |Z|$ and a sequence of points $\{ x_{i}: x_{i} \in \inter X_{i}, i = 1,\dots, k \}$ such that $g_{i}(x_{i-1},\bar{z}) = x_{i}$,
  $W^u_{x_1,\Sigma_0} (\bar{z})=x_0$ and $x_k \in b\left(\overline{B_1(0,1)}, \bar{z} \right)$.
  By the same argument as in Theorem~\ref{thm:hom1} the sequence lies on a true trajectory of the flow, hence it forms a connecting orbit.
\end{proof}

Similarly as with Theorem~\ref{thm:2}, the formulation of Theorem~\ref{thm:hom2} is not aimed at full generality
and one can produce similar theorems for one expanding and arbitrary number of contracting directions or vice versa.

%chapter -- FitzHugh-Nagumo
\chapter{Traveling waves in the FitzHugh-Nagumo model}\label{chap:waves}

\section{Local estimates for the stable and the unstable manifold of the zero equilibrium}\label{sec:cc}

\noindent

The purpose of this section is to give proofs of existence and 
rigorous local bounds on the position of the stable manifold and the unstable manifold
of the zero equilibrium in the FitzHugh-Nagumo system~\eqref{FhnOde}. 
For $\epsilon > 0$, $\theta>0$ and other parameters given in~\eqref{eq:parameters} this equilibrium is hyperbolic, with one
repelling and two attracting directions. For such parameter choices $x_0 = (0,0,0)$ is a saddle point
of the fast subsystem, hence the second entry direction is spanned by the slow part of the vector field
(more precisely, by the direction tangent to the slow manifold at the equilibrium, as we will observe in Remark~\ref{slowrem}).

The abstract topological-geometric approach to computation of unstable and stable manifolds of equilibria has been 
summarized in Subsections~\ref{sec:conebasic} and~\ref{sec:blockbasic}.
Eventually we want to apply Theorem~\ref{szczelir} and conclude that the unstable manifold $W^u_{B_u}(x_0)$ and the stable manifold $W^s_{B_s}(x_0)$
are horizontal and vertical disks for some isolating blocks $B_u$, $B_s$.
Observe that by this theorem a single isolating block satisfying the cone conditions gives information about both the stable and the unstable manifold.
However, it is more profitable to construct separate blocks for each of them.
For example, for the unstable manifold it is desirable to have the size of the block in the entry direction small, as this gives tighter bounds.
At the same time we would like the block to be wide in the exit direction, so we can propagate the manifold far from the equilibrium point.
An analogous principle applies to the stable manifold block.

For a hyperbolic equilibrium point of an ODE it is reasonable to expect
that verification of assumptions of Theorem~\ref{szczelir} in interval arithmetics will succeed, see Theorem 26 in~\cite{ZgliczynskiMan}.
However, our parameter range of interest is $\epsilon \in (0,\epsilon_0]$ and a direct evaluation 
of cone conditions and isolation inequalities in interval arithmetics for such range will fail, as for the adjacent parameter value $\epsilon=0$
the equilibrium loses hyperbolicity and the stable manifold degenerates to one dimension.
Therefore we need to prepare the equations and factor out $\epsilon$ from the slow part of the vector field before applying interval verification.
For the isolation inequalities we do this in a similar way as for the isolating segments 
-- we divide the slow velocity by $\epsilon$ before checking isolation on the faces given by fixing the slow direction coordinate.
To verify the cone conditions we use an $\epsilon$-dependent cone field, which amplifies the slow velocities to be of magnitude of the fast ones.

We will formulate the assumptions in a semi-general setting.
Consider a fast-slow system given by
\begin{equation}\label{eq:modelsf}
  \begin{aligned}
    \dot{p} &= f(p,q),\\
    \dot{q} &= \epsilon g(p,q)
  \end{aligned}
\end{equation}
where $p = (p_1,p_2) \in \rr^2$, $q \in \rr$, $f$, $g$ are smooth functions of $(p,q)$ and 
$0<\epsilon \ll 1$ is the small parameter. We will denote the vector field by $F_\epsilon=(f,\epsilon g)$.
We do not assume the dependence of $f$, $g$ on $\epsilon$ as it is not the case in the FitzHugh-Nagumo system
and it would obscure the exposition; however the method can certainly be adapted to the $\epsilon$-dependent case, under some additional assumptions.
Let $(0,0,0)$ be an exact or approximate equilibrium of interest of~\eqref{eq:modelsf} -- this 
is not a restriction as we can always move other equilibria to the origin by a translation of the coordinate system.

Suppose the equilibrium appears to have the following properties: 
\begin{itemize}
  \item[(E1)] $(0,0)$ is a hyperbolic equilibrium of the fast subsystem $\dot{p}=f(p,0)$, in particular $\frac{d f}{d p}(0,0)$ has 
    one positive eigenvalue $\lambda_u$ and one negative eigenvalue $\lambda_s$, 
  \item[(E2)] Let $\vec{T}$ be a vector tangent to the slow manifold at $(0,0)$. To focus our attention let us choose $\vec{T}$ 
    in a way that its last coordinate is equal to $1$.
    Then the directional derivative $\nabla_{\vec{T}} g(0,0) := \langle \nabla g(0,0), \vec{T} \rangle$ is negative. 
\end{itemize}
We will argue that the above conditions imply the existence of a two-dimensional stable and one-dimensional unstable manifold
of the zero equilibrium point for $\epsilon \in (0,\epsilon_0]$, $\epsilon_0$ small enough.
We emphasize that we do not actually verify rigorously neither (E1) nor (E2);
for our proofs we only need to check isolation inequalities and cone conditions for blocks around the equilibrium.
However, throughout this section we will argue that if these two conditons hold,
then verification of isolation inequalities and cone conditions
will succeed for $\epsilon>0$ and block sizes small enough. 
As a consequence, if nonrigorous computations suggest that (E1) and (E2) hold, then we expect 
to succeed with our methods.

In further considerations it will be important that the slow direction remains unchanged in the block coordinates.
Therefore we introduce the concept of an \emph{admissible} linear change of coordinates.

\begin{defn}
  Let $A = \{ a_{i,j} \}_{i,j=1}^3$ be a $3 \times 3$ matrix. We say that $A$ is admissible if $\det{A} \neq 0$ and $a_{3,1} = a_{3,2}=0$.
\end{defn}

Let us observe that in isolating segments we employ a similar, affine condition~\eqref{eq:centralform} for changing coordinates in the central direction.

\begin{rem}\label{rem:invA}
  If $A$ is an admissible matrix then $A^{-1}$ is admissible. 
\end{rem}

Our goal is to define an h-set $B$ (in the $\max$ norm), which will form the isolating block with cones, as follows. 
We will set $u(B)=1$, $s(B)=2$ and require that the change of coordinates $c_{B}$ is linear, admissible and brings the linear part of~\eqref{eq:modelsf} with $\epsilon=0$
to an approximate diagonal form. In other words $c_{B}$ will satisfy
\begin{equation}\label{eq:cbA}
  \begin{aligned}
     c_{B} \circ DF_{0}|_{(p,q)=(0,0)} \circ c_{B}^{-1}  &\approx \left[ \begin{array}{ccc} \lambda_u & 0 & 0 \\ 0 & \lambda_s & 0 \\ 0 & 0 & 0 \end{array} \right].
  \end{aligned}
\end{equation}

\begin{rem}\label{slowrem}
To comply with~\eqref{eq:cbA} the columns of $c_{B}^{-1}$ should be chosen as the (approximate) eigenvectors of $DF_{0}|_{(p,q)=(0,0)}$.
The two eigenvectors corresponding to the two non-zero eigenvalues can be formed by adding zeros to the approximate eigenvectors of $Df|_{p=0}$,
which, via Remark~\ref{rem:invA}, implies the admissibility of $c_B$.
Assuming the equilibrium is at $(0,0)$, the third eigenvalue is $0$ and $\vec{T}$ is the corresponding eigenvector.
This follows from taking the derivative $\frac{d}{dq}$ on both sides of the equality 
\begin{equation}
  (F_0 \circ C_0)(q) = 0,
\end{equation} 
where $C_0(q) = (p(q), q)$ is the local parametrization of the slow manifold near the equilibrium, given by the implicit function theorem.
\end{rem}

Let us emphasize at this point that the h-set $B$, which will form an isolating block is independent of $\epsilon$ and will serve for 
the whole range $\epsilon \in (0,\epsilon_0]$. However, the associated cone field will depend on $\epsilon$.

In further considerations, without loss of generality we will assume that $c_{B}$ and $c_{B}^{-1}$ have the form
\begin{equation}\label{eq:cbform}
  \begin{aligned}
    c_{B} &=  \left[ \begin{array}{ccc} \frac{1}{\delta_u} & 0 & 0 \\ 0 & \frac{1}{\delta_s} & 0 \\ 0 & 0 & \frac{1}{\delta_y} \end{array} \right] 
    \circ \left[ \begin{array}{ccc} \multicolumn{3}{c}{  \multirow{2}{*}{ \dots }  } \\ & & \\ 0 & 0 & 1 \end{array} \right].\\
      c_{B}^{-1} &=  \left[ \begin{array}{ccc} \multicolumn{3}{c}{  \multirow{2}{*}{ \dots }  } \\ & & \\ 0 & 0 & 1 \end{array} \right] 
    \circ  \left[ \begin{array}{ccc} \delta_u & 0 & 0 \\ 0 & \delta_s & 0 \\ 0 & 0 & \delta_y \end{array} \right],
    \end{aligned}
\end{equation}
where $\delta_u, \delta_s, \delta_y \in \rr^+$ are some constants which emphasize that we can adjust the size proportions between the directions of the block.

From now on the new coordinates will be denoted by $[x_u,x_s,y]^T = c_B( [p_1,p_2,q]^T )$ 
and we will label the projections onto $x_u,x_s,y$ by $\pi_u, \pi_s,\pi_y$, respectively.

\subsection{Verification of isolation inequalities}\label{subsec:cc1}

\begin{lem}\label{lem:block}
  Let $\epsilon$ be greater than $0$, $B$ be an h-set with $u(B)=1$, $s(B)=2$ and let $c_{B}$ be admissible.
  Assume the following conditions hold
  \begin{align}
    \pm \pi_u (c_B \circ F_{\epsilon} \circ c_B^{-1})\left( \{\pm 1\} \times [-1,1]^2 \right) &> 0\label{block:xu}\\
    \pm \pi_s (c_B \circ F_{\epsilon} \circ c_B^{-1})\left([-1,1] \times \{\pm 1\} \times [-1,1]\right) &< 0\label{block:xs}\\
    \pm (g \circ c_B^{-1}) \left( [-1,1]^2 \times \{\pm 1\} \right) &< 0.\label{block:xmu}
  \end{align}
  Then the h-set $B$ is an isolating block for~\eqref{eq:modelsf}.
\end{lem}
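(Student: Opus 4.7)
The plan is to unpack each hypothesis into one of the two isolation conditions (B1), (B2) from the definition of an isolating block (equivalently, using Remark~\ref{alternativeB}), exploiting that $c_B$ is linear so that the chain rule gives $\tfrac{d}{dt}\pi_i c_B(\varphi(t,x))|_{t=0} = \pi_i(c_B \circ F_\epsilon \circ c_B^{-1})(c_B(x))$, and that the absolute value in (B1), (B2) splits into two cases according to whether $\pi_i c_B(x) = +1$ or $-1$. This case split is exactly what the $\pm$ notation in the stated hypotheses already encodes, so the task reduces to a bookkeeping exercise across the three coordinate directions $\pi_u,\pi_s,\pi_y$.

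First I would dispose of the exit direction $i = 1 = u(B)$ and the ``fast'' entry direction $i = 2$. Here hypotheses~\eqref{block:xu} and~\eqref{block:xs} translate term-for-term into (B1) and into (B2) for $i = 2$, respectively: they are already formulated in block coordinates, and their stated domains $\{\pm 1\}\times[-1,1]^2$ and $[-1,1]\times\{\pm 1\}\times[-1,1]$ coincide with the $c_B$-images of the relevant components of $B^-$ and $B^+$. Nothing beyond definitional unwrapping happens in this step.

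The nontrivial step is (B2) for the slow entry direction $i = 3$. By the assumed form~\eqref{eq:cbform} (in particular by admissibility of $c_B$), the third row of $c_B$ is $[0, 0, 1/\delta_y]$, so only the slow component of $F_\epsilon = (f, \epsilon g)$ survives the projection: $\pi_y(c_B F_\epsilon(x)) = \tfrac{\epsilon}{\delta_y}\, g(x)$. Thus the block-coordinate requirement $\pm \pi_y(c_B \circ F_\epsilon \circ c_B^{-1})(z) < 0$ on $z \in [-1,1]^2 \times \{\pm 1\}$ becomes $\pm \tfrac{\epsilon}{\delta_y}\, g(c_B^{-1}(z)) < 0$, and since $\epsilon, \delta_y > 0$ the positive prefactor drops out, yielding exactly hypothesis~\eqref{block:xmu}.

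I expect the only real subtlety to be appreciating \emph{why} admissibility is indispensable rather than cosmetic: had $c_B$ mixed the fast coordinates $p_1,p_2$ into its last row, the expression for $\pi_y(c_B F_\epsilon)$ would also carry $f$-terms of size $O(1)$ in $\epsilon$, destroying the clean factorization of $\epsilon$ and, with it, any hope of later verifying the hypothesis uniformly on a parameter range of the form $\epsilon \in [0, \epsilon_0]$ in interval arithmetic, as discussed in the text preceding the lemma.
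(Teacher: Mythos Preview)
Your proposal is correct and follows essentially the same approach as the paper: conditions~\eqref{block:xu} and~\eqref{block:xs} are recognized directly as the isolation inequalities (B1), (B2) for the first two coordinate directions, while for the slow direction admissibility of $c_B$ together with $\epsilon,\delta_y>0$ reduces the block-coordinate inequality $\pm\pi_y(c_B\circ F_\epsilon\circ c_B^{-1})<0$ to~\eqref{block:xmu}. Your write-up is slightly more explicit (spelling out the chain rule and the factor $\tfrac{\epsilon}{\delta_y}$), but the logical content is identical to the paper's proof.
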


\begin{proof}
  It is immediately clear that conditions~\eqref{block:xu}, \eqref{block:xs} and the following condition
  \begin{equation}
    \pm \pi_y(c_B \circ F_{\epsilon} \circ c_B^{-1})([-1,1]^2 \times \{\pm 1\}) < 0.\label{block:xueq}
  \end{equation}
  are the isolation inequalities (B1), (B2),
  with $x_u$ serving as the exit variable, and $x_s$, $y$ as the entry variables. Inequalities~\eqref{block:xueq}
  are equivalent to~\eqref{block:xmu} since $c_B$ is admissible and $\epsilon$, $\delta_y$ are greater than 0. 
\end{proof}

\begin{rem}
  For the choice of $c_B$ described in Remark~\ref{slowrem} 
  verification of~\eqref{block:xu} and~\eqref{block:xs} will be easy for small ranges of $\epsilon$, 
  as in new coordinates the system is approximately diagonalized. 
  From admissibility of $c_B$ it follows that
  the condition~\eqref{block:xmu} will hold if $\delta_y$, the ratios $\delta_u / \delta_y$, $\delta_s / \delta_y$ and the (range of) parameter $\epsilon$
  are small enough -- then the first order approximation of the left-hand side of 
  the inequalities for $\epsilon_0$ is $\langle \nabla g(0,0),  \delta_y \vec{T} \rangle < 0$, see Condition (E2).
  Observe that inequality~\eqref{block:xmu} is independent of $\epsilon$, 
  unlike~\eqref{block:xueq} which does not hold for $\epsilon = 0$.
  In consequence, all six inequalities~\eqref{block:xu}, \eqref{block:xs} and~\eqref{block:xmu}
  are robust with respect to $\epsilon$ and possible to verify in interval arithmetics by setting $\epsilon$
  to an interval of the form $[0,\epsilon_0]$.
\end{rem}

\subsection{Verification of cone conditions}\label{subsec:cc2}

\begin{lem}\label{lem:con}
  Let $\epsilon$ be greater than $0$, $B$ be an isolating block with $u(B)=1$, $s(B)=2$, $c_{B}$ be an admissible linear map
  and let $Q_{\epsilon}$ be a quadratic form given by $Q_{\epsilon}(x_u,x_s,y) = x_u^2 - x_s^2 - \frac{1}{\epsilon} y^2$.
  Consider the interval matrix
  \begin{equation}\label{eq:jeps}
    J_\epsilon := Q_{\epsilon} [DF_{\epsilon,c}(B_c)] + ( Q_{\epsilon} [DF_{\epsilon,c}(B_c)] )^T,
  \end{equation}
  where $F_{\epsilon,c} = c_{B} \circ F_\epsilon \circ c_{B}^{-1}$, $B_c = [-1,1]^3$.
   
  Assume that all three principal minors of $J_\epsilon$ are positive.
  Then, there exists a unique $x_0 = (p_0,q_0) \in \inter |B|$ 
  such that $F_\epsilon (x_0) = 0$. Moreover, $W^u_B(x_0)$ is a horizontal disk in $B$ satisfying the cone condition 
  and $W^s_B(x_0)$ is a vertical disk in $B$ satisfying the cone condition.
\end{lem}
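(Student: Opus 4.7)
The plan is to apply Theorem~\ref{szczelir} directly to the pair $(B,Q_\epsilon)$. First I verify that this is an h-set with cones in the sense of the definition in Subsection~\ref{sec:conebasic}: writing $Q_\epsilon(x_u,x_s,y) = \alpha(x_u) - \beta(x_s,y)$ with $\alpha(x_u) := x_u^2$ and $\beta(x_s,y) := x_s^2 + \tfrac{1}{\epsilon} y^2$, both $\alpha$ and $\beta$ are positive definite quadratic forms on $\rr^{u(B)}=\rr$ and $\rr^{s(B)}=\rr^2$ respectively, the latter crucially using $\epsilon > 0$. Since $B$ is already an isolating block by hypothesis, the only remaining assumption of Theorem~\ref{szczelir} to check is the positive definiteness of the interval matrix~\eqref{eq:szczelir}.

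Viewed as the diagonal matrix $\mathrm{diag}(1,-1,-1/\epsilon)$, $Q_\epsilon$ is symmetric, so $(Q_\epsilon A)^T = A^T Q_\epsilon$ for every real matrix $A$. Consequently the interval matrix $J_\epsilon$ from~\eqref{eq:jeps} coincides with $[DF_{\epsilon,c}(B_c)]^T Q_\epsilon + Q_\epsilon [DF_{\epsilon,c}(B_c)]$, which is exactly the interval matrix appearing in~\eqref{eq:szczelir}. For each fixed $M \in [DF_{\epsilon,c}(B_c)]$, the symmetrization $M^T Q_\epsilon + Q_\epsilon M$ is a symmetric real matrix whose three leading principal minors are contained in the corresponding interval minors of $J_\epsilon$. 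Under the standing hypothesis, these interval minors are strictly positive, so each symmetrization has positive leading principal minors and is therefore positive definite by Sylvester's criterion. Since this holds for every $M$ in the range, $J_\epsilon$ is positive definite in the sense of the definition in Subsection~\ref{sec:blockbasic}.

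Theorem~\ref{szczelir} now supplies a unique stationary point $x_0 \in |B|$ of~\eqref{eq:modelsf}, together with the horizontal disk structure on $W^u_B(x_0)$ and the vertical disk structure on $W^s_B(x_0)$, both satisfying the cone condition with respect to $Q_\epsilon$. The final step is to upgrade $x_0 \in |B|$ to $x_0 \in \inter |B|$, which follows immediately from the isolating block property: conditions (B1) and (B2) force $F_\epsilon$ to be non-vanishing on $B^- \cup B^+ \supset \partial |B|$, so the equilibrium cannot lie on the boundary.

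The substance of the argument is almost entirely bookkeeping, with the heavy lifting deferred to Theorem~\ref{szczelir}. The only point requiring a little care is the interval-arithmetic reading of positive definiteness: it is essential that each relevant \emph{symmetrization} $M^T Q_\epsilon + Q_\epsilon M$ is symmetric, so that Sylvester's criterion applies to its three minors; the interval matrix $J_\epsilon$ may well contain non-symmetric matrices for which positivity of leading minors would be insufficient, but these matrices play no role in the hypothesis of Theorem~\ref{szczelir}.
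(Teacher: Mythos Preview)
Your proof is correct and follows exactly the paper's approach: the paper's own proof is the single sentence ``This is a direct consequence of Theorem~\ref{szczelir}.'' You have simply unpacked that consequence---verifying that $(B,Q_\epsilon)$ is an h-set with cones, that $J_\epsilon$ coincides with the matrix in~\eqref{eq:szczelir}, and that Sylvester's criterion applies---and in addition you supply the small argument that $x_0$ lies in $\inter|B|$ rather than merely $|B|$, a point the paper leaves implicit.
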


\begin{proof}
  This is a direct consequence of Theorem~\ref{szczelir}.
\end{proof}

\begin{rem}\label{rem:block}
  The last row of $Q_{\epsilon} [DF_{\epsilon,c}(B_c)]$
  is independent of $\epsilon$, since $c_B$ is admissible and $g$ is independent of $\epsilon$. 
  This observation is crucial during computation of principal minors of $J_\epsilon$ in interval arithmetics,
  as we can evaluate directly the first two rows of $Q_{\epsilon} [DF_{\epsilon,c}(B_c)]$ for $\epsilon \in [0,\epsilon_0]$
  and the last row is replaced by the last row of (for example) $Q_{1} [DF_{1,c}(B_c)]$.
  Therefore, to check the assumptions of Lemma~\ref{lem:con} for a half-open
  range of the form $\epsilon \in (0,\epsilon_0]$, it is enough to perform computations on a closed range $\epsilon = [0,\epsilon_0]$.
\end{rem}

\begin{rem}
  For the choice of admissible $c_B$ described in Remark~\ref{slowrem}, for small $\epsilon$, $\delta_u$, $\delta_s$, $\delta_y$
  the matrix $J_\epsilon$ can be approximated as follows
  \begin{equation}
    J_{\epsilon} \approx \left[ \begin{array}{ccc} 2\lambda_u & 0 & O(\frac{\delta_u}{\delta_y}) \\ 0 & -2\lambda_s & O(\frac{\delta_s}{\delta_y}) 
        \\ O(\frac{\delta_u}{\delta_y}) &  O(\frac{\delta_s}{\delta_y})
      & -2\nabla g(0,0) \cdot \vec{T} \end{array} \right].
  \end{equation}
  Due to (E2) and for small ratios $\delta_u/\delta_y$, $\delta_s/\delta_y$ we therefore expect all three principal minors to be positive.
\end{rem}

\begin{rem}
  To verify the existence of the homoclinic orbit in the FitzHugh-Nagumo system~\eqref{FhnOde} we will perform a ``shooting'' with parameter $\theta$
  procedure, given by Theorem~\ref{thm:hom2}. For that reason, we will need to verify the existence of stable and unstable manifolds in a single block,
  not only for a half-open range of $\epsilon$, but simultaneously for a (small) range $[ \theta_l, \theta_r ]$. 
  To achieve that, we will build a suitable block based on diagonalization of $F_0$ for just one value of $\theta \in [ \theta_l, \theta_r ]$.
  For such block, and the range $[ \theta_l, \theta_r ]$ small enough, we expect conditions~\eqref{block:xs}, \eqref{block:xu}, \eqref{block:xmu} and~\eqref{eq:jeps} to persist,  
  since they are based only on an approximate diagonalization of the system, and given by strong inequalities. 
\end{rem}

\section{Model examples}\label{sec:model}

The purpose of this section is to discuss model examples of fast-slow systems,
which share some qualitative properties with the FitzHugh-Nagumo equations.
In our examples we will carry out a pen-and-paper construction of the chains of covering relations
and isolating segments necessary to prove periodic and homoclinic orbits.

The contents of this section are by no means necessary to prove the main Theorems~\ref{thm:main1}, \ref{thm:main2}, \ref{thm:main3}, \ref{thm:main4};
the proofs of these theorems are computer assisted and described in Section~\ref{sec:implementation}. 
Instead, our goal is to argue that an approach based on Theorem~\ref{thm:2} and Theorem~\ref{thm:hom2} will succeed,
thanks to the analytical properties of the singularly perturbed system.

\subsection{The periodic orbit}\label{sec:covslowfast}

The model example for the periodic orbit is given by a fast-slow ODE
\begin{equation}\label{fastslow}
  \begin{aligned}
    \dot{x} &= f(x,y,\epsilon), \\
    \dot{y} &= \epsilon g(x,y,\epsilon),
  \end{aligned}
\end{equation}
where $x \in \rr^{2}$, $y \in \rr$, $f,g$ are smooth functions of $(x,y,\epsilon)$ and $0 < \epsilon \ll 1$ is the small parameter.
We will also write $x=(x_{1},x_{2})$ to denote the respective fast coordinates. 
We will denote the projections onto $x_1,x_2, y$ by $\pi_{x_1}, \pi_{x_2}, \pi_y$, respectively.

We will use the notion of the unstable/stable manifold of branches of the slow manifold in~\eqref{fastslow} for $\epsilon=0$;
by that we will mean the union of unstable/stable manifolds of the equilibria forming a given branch.

We make the following assumptions:
\begin{itemize}
  \item[(P1)] we have two branches of the slow manifold $\Lambda_{\pm 1}$,
    that coincide with $\{ 0 \} \times \{ 1 \} \times \rr$ and $\{ 0 \} \times \{ -1 \} \times \rr$
    respectively\footnote{One expects that the branches would actually connect with each other, but we bear in mind that this is a model example and 
    the fold points are of no interest to us. }.
    Both are hyperbolic with one expanding and one contracting direction, and the vector field in their neighborhoods $U_{\pm 1}$ is of the following form:
    \begin{align}
      f(x,y,0) &= A_{\pm 1}(y)(x \mp [0,1]^{T} ) + h_{\pm 1}(x,y), \label{P1:1} \\
      \exists \tilde{\epsilon}_{0} >0: \ 0 &< \delta_{\pm 1}  \leq \pm g(x,y,\epsilon) \leq \delta^{-1}_{\pm 1}, \ (x,y) \in U_{\pm 1}, \ \epsilon \in (0, \tilde{\epsilon}_{0}]. \label{P1:2}
    \end{align}
    The functions $A_{\pm 1},h_{\pm 1}$ are assumed to be smooth and to have the following properties
    \begin{align}
      A_{\pm 1}(y) &= \left[ \begin{array}{cc} \lambda_{u,\pm1}(y) & 0 \\ 0 & \lambda_{s,\pm 1}(y) \end{array} \right], \label{P1:3} \\
        -\delta_{\pm 1}^{-1} \leq \lambda_{s,\pm 1}(y) \leq - \delta_{\pm 1} &< 0 < \delta_{\pm 1} \leq \lambda_{u, \pm 1}(y) \leq \delta_{\pm 1}^{-1}, \label{P1:4} \\
        \frac{h_{\pm 1}(x,y)}{ \vectornorm{ x - (0,\pm 1) } } &\xrightarrow{x \to (0,\pm 1)} 0\quad \forall y. \label{P1:5}
    \end{align}
    The values $\delta_{\pm 1} > 0$ are some constant bounds, which in particular do not depend on neither $\epsilon$ nor $y$.

  \item[(P2)] For the parameterized family of the fast subsystems
    we have two parameters $y_{*},y^{*}$, without loss of generality assumed to be equal to $\mp 1$, for which there exists a transversal heteroclinic connection between
    the equilibria $(0,-1)$ and $(0, 1)$ in the first case, and $(0,1)$ and $(0,-1)$ in the second.
    That means: given any two one-dimensional transversal sections $\Sigma_{f, \pm 1}$ for the fast subsystems for $y=\pm 1$
    which have a nonempty, transversal intersection with the heteroclinic orbits, the maps $\Psi_{\pm 1}$ given by
    \begin{equation}
      \Psi_{\pm 1}: y \to W^u_{\pm 1 ,\Sigma_{f, \pm 1} }(y)- W^s_{\mp 1 ,\Sigma_{f, \pm 1}}(y) \in \rr
    \end{equation}
    have zeroes and a non-zero derivative at $y = \pm 1$.

    Here $W^u_{\pm 1, \Sigma}(y)$ and $W^s_{\pm 1, \Sigma}(y)$ denote the first intersections between the appropriate branches\footnote{To not complicate further the notation,
    we make an implicit assumption that only one pair of branches cross in each of the two subsystems and only refer to them.}
    of the unstable/stable manifolds of the equilibria $(0,\pm 1)$ with a given section $\Sigma$ in the section coordinates.

  \item[(P3)] Denote the points $(0,-1,-1)$, $(0,1,-1)$, $(0,1,1)$, $(0,-1,1)$ by $\Gamma_{\alpha}$, $\alpha \in \mathcal{I} = \{ dl,$  $ul, ur, dr \}$\footnote{
      Index letters in $\mathcal{I}$ stand for up/down and left/right and refer to positions of the points in the $(y,x_2)$ plane, see Figure~\ref{schemeFig}.}, 
      respectively and set $\epsilon=0$.
    For each $\alpha \in \mathcal{I}$ there exists a neighborhood $V_{\alpha}$ of $\Gamma_{\alpha}$,
    such that if $\Lambda_{\pm 1} \cap V_{\alpha}$ is the part of the slow manifold contained
    in $V_{\alpha}$, then the part of its unstable manifold contained in $V_{\alpha}$ coincides with the plane $\rr \times \{\pm 1\} \times \rr$,
    and the part of the stable manifold contained in $V_\alpha$ - with the plane $ \{ 0 \} \times \rr \times \rr$. Without loss of generality we can have
    $\bigcup_{\alpha \in \mathcal{I}} V_{\alpha} \subset ( U_{-1} \cup U_{1} )$.
\end{itemize}

Assumptions that provide us with straightened coordinates are used mostly to simplify the exposition.
It is our impression that the Fenichel theory, and in particular the Fenichel normal form around the slow manifold
are well-suited for verifying such conditions, see~\cite{Fenichel, JonesBook}.

\begin{thm}\label{thm:heuristic}
  Under assumptions (P1)-(P3), there exists an $\epsilon_{0}>0$ and
  six sets forming isolating segments for~\eqref{fastslow} for $\epsilon \in (0,\epsilon_0]$:
  \begin{itemize}
    \item $S_{u}$, $S_{d}$ - two ``long'' isolating segments positioned around the branches $\Lambda_{\pm 1}$ of the slow manifold;
    \item $S_{\alpha},\ \alpha \in \mathcal{I}$ - four short ``corner'' isolating segments, each containing the respective point $\Gamma_{\alpha}$;
  \end{itemize}
  along with the associated transversal sections of the form $\Sigma_{S_{*},\text{in}},\Sigma_{S_{*},\text{out}}$, with
  \begin{equation}
    \begin{aligned}
      u(S_{dl}) &= s(S_{dl}) = u(S_{dr}) = s(S_{dr})
      \\ &= u(S_{ul})=s(S_{ul})= u(S_{ur})=s(S_{ur})
      \\ &=u(S_{u}) = s(S_{u})=u(S_{d})=s(S_{d})
      \\ &= 1.
    \end{aligned}
  \end{equation}
  Moreover, for the h-sets defined by isolating segments we have
  \begin{align}
   X_{S_{u},\text{out}} &= X_{S_{ur},\text{in}}, \label{eq:UR} \\
   X_{S_{dr},\text{out}} &= X_{S_{d},\text{in}}, \\
   X_{S_{d},\text{out}} &= X_{S_{dl},\text{in}}, \\
   X_{S_{ul},\text{out}} &= X_{S_{u},\text{in}}, \label{eq:UL}
  \end{align}
  and the collection
  \begin{equation}
    \begin{aligned}
      \mathcal{X}_{\text{FHN},\text{P}} &= \\
      \{ &X_{S_{u},\text{in}},\ X_{S_{u},\text{out}},\ X_{S_{ur},\text{lu}},
      \ X_{S_{dr},\text{rs}}, \\
      &X_{S_{d}, \text{in}},\ X_{S_{d},\text{out}}, \ X_{S_{dl},\text{ru}},
      \ X_{S_{ul},\text{ls}},\ X_{S_{u}, \text{in}}
      \}
    \end{aligned}
  \end{equation}
  satisfies assumptions of Theorem~\ref{thm:2} for $\epsilon \in (0,\epsilon_{0}]$.
  In particular we have the following covering relations among the h-sets
  not connected by an isolating segment:
  \begin{align}
     X_{S_{dl},\text{ru}} &\cover{P_{L}}  X_{S_{ul},\text{ls}}, \\
     X_{S_{ur},\text{lu}} &\cover{P_{R}}  X_{S_{dr},\text{rs}},
  \end{align}
  where $P_{*}$ are Poincar\'e maps between the respective h-sets and transversal sections containing the next h-set.
  
  As a consequence there exists a periodic solution of the system for these parameter values.
\end{thm}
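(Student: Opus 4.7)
The plan is to construct the six isolating segments explicitly in the straightened coordinates provided by (P1) and (P3), derive the two Poincar\'e covering relations from the transversality condition (P2), and then feed the resulting chain into Theorem~\ref{thm:2}. The whole construction will be done for $\epsilon \in (0,\epsilon_0]$ with $\epsilon_0$ chosen small at the end.

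First I would place the long segments $S_u$ and $S_d$ inside the neighborhoods $U_{\pm 1}$ of $\Lambda_{\pm 1}$. For $S_u$ the change of coordinates $c_{S_u}$ is chosen so that the one-dimensional central direction $\pi_\mu$ is aligned with $y$, the exit direction $\pi_u$ with $x_1$, and the entry direction $\pi_s$ with $x_2$. The monotonicity (S1) then reads $\epsilon g(x,y,\epsilon)>0$ for all $(x,y)$ in the segment, which reduces after factoring out $\epsilon>0$ to the bound $g \geq \delta_1$ from (P1.2); this is where $\epsilon>0$ is used, exactly as for isolating segments in the FitzHugh-Nagumo proof. The exit/entry set isolation (S2)--(S3) follow from the diagonal form of $A_1$ in (P1.3)--(P1.4) together with the sublinearity of $h_1$ in (P1.5): on the exit face $\{|x_1|= r_u\}$ the component $\dot x_1 = \lambda_{u,1}(y) x_1 + O(\|x-[0,1]^T\|)$ has sign $\operatorname{sgn}(x_1)$ provided $r_u$ and $r_s$ are small, and analogously on the entry face. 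The segment $S_d$ is built symmetrically.

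The four corner segments $S_\alpha$ are placed inside $V_\alpha$ where (P3) gives both invariant manifolds in straightened form. For $S_{ur}$ and $S_{dl}$ the central direction is still $y$, which makes the identities \eqref{eq:UR}--\eqref{eq:UL} just matching statements about shared sections and compatible coordinate boxes. The ``fast-slow switch'' h-sets $X_{S_{ur},\text{lu}}$ and $X_{S_{dl},\text{ru}}$ then arise as exit faces of these corner segments, with the central $y$-direction playing the role of entry variable and the fast unstable direction playing the role of exit variable, exactly as prepared in Theorem~\ref{is:3}; the symmetric h-sets $X_{S_{dr},\text{rs}}$ and $X_{S_{ul},\text{ls}}$ are entry faces in the sense of Theorem~\ref{is:4}. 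The coordinate change $c_{S_\alpha}$ can be taken affine, with $\pi_\mu = \pm y$ reflecting the sign of $g$, which gives (S1) for free.

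The two remaining ingredients are the Poincar\'e coverings across the fast regime. For $P_L$, the map from $\Sigma_{S_{dl},\text{ru}}$ to $\Sigma_{S_{ul},\text{ls}}$: by (P2) the manifolds $W^u(\Lambda_{-1})$ and $W^s(\Lambda_1)$ meet transversally along the heteroclinic at $y=-1$ in the $\epsilon=0$ subsystem, so on the level $y=-1$ the Poincar\'e map of the fast subsystem already stretches the unstable direction of $X_{S_{dl},\text{ru}}$ across $X_{S_{ul},\text{ls}}$ while strictly contracting in the stable direction; varying $y$ slightly gives the required topological alignment in the extra direction via the transversal zero of $\Psi_{-1}$. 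This is precisely the geometric condition of Lemma~\ref{covlemma}. Since covering is $C^0$-open and the Poincar\'e map of~\eqref{fastslow} depends continuously on $\epsilon$ on a uniform time interval bounded away from the slow manifold, the covering persists for $\epsilon\in(0,\epsilon_0]$. The covering $X_{S_{ur},\text{lu}} \cover{P_R} X_{S_{dr},\text{rs}}$ is obtained symmetrically at $y=1$.

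The main obstacle, as in the singular-limit argument hinted at in the paragraph around Figure~\ref{slopeFig}, will be closing up the chain: the sizes of the entry/exit cross-sections of the long segments $S_u, S_d$ have to be chosen so that the rear face of each segment in the chain matches the front face of the next one after the Poincar\'e maps $P_L, P_R$ have stretched/contracted by explicit non-trivial factors. I would permit the radii $r_u(y), r_s(y)$ of the fast cross-sections of $S_u$ and $S_d$ to vary with $y$, making the segments slanted. Isolation inequalities for such slanted boundaries acquire an additional term from the slope, of size $\epsilon\cdot (\text{slope})$, which is dominated by the $O(1)$ hyperbolic contribution from $\lambda_{u/s,\pm 1}$ for $\epsilon$ small enough. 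Choosing $\epsilon_0$ small enough to simultaneously ensure isolation of all six segments and preserve both Poincar\'e coverings then closes the loop, and an application of Theorem~\ref{thm:2} produces the periodic orbit.
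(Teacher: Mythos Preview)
Your overall strategy---corner segments first, slanted long segments to absorb the size mismatch, then Theorem~\ref{thm:2}---is the paper's approach, and your discussion of the long segments $S_u,S_d$ (slope contributes an $O(\epsilon)$ term to the isolation inequalities, dominated by the $O(1)$ hyperbolic part) is correct. However, there is a genuine error in your treatment of the fast--slow switch h-sets that invalidates your covering argument for $P_L$ and $P_R$.

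You assert that on $X_{S_{dl},\text{ru}}$ and $X_{S_{ur},\text{lu}}$ ``the central $y$-direction [plays] the role of entry variable and the fast unstable direction [plays] the role of exit variable''. This is backward, and in fact impossible: by the definition preceding Theorem~\ref{is:3} (via $\rho_u(p,q,r)=(2r-1,q)$), on an exit face $X_{S,\text{ru}}$ the segment's \emph{central} direction becomes the \emph{exit} coordinate of the h-set, while the segment's entry direction $x_2$ stays as entry. The fast unstable variable $x_1$ is the coordinate that is \emph{fixed} on this face, so it is not a coordinate of the two-dimensional h-set at all. Dually, on $X_{S_{ul},\text{ls}}$ the exit coordinate is $x_1$ and the entry coordinate is $y$.

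This matters because your argument for the covering $X_{S_{dl},\text{ru}}\cover{P_L}X_{S_{ul},\text{ls}}$ then invokes the wrong mechanism. You write that at the level $y=-1$ ``the Poincar\'e map of the fast subsystem already stretches the unstable direction of $X_{S_{dl},\text{ru}}$ across $X_{S_{ul},\text{ls}}$''. But at fixed $y$ the fast subsystem is two-dimensional and its section-to-section map is one-dimensional; the only remaining coordinate on a $y$-slice of $X_{S_{dl},\text{ru}}$ is the fast \emph{stable} variable $x_2$, which is contracted, not stretched. The correct mechanism---the one the paper uses in Lemma~\ref{lem:heuristic1}---is that the exit variable of the source is $y$, and by (P2) the nonvanishing of $\frac{d}{dy}\Psi_{-1}(-1)$ makes the image under $P_L$ sweep transversally in the exit variable $x_1$ of the target as $y$ varies, so the two exit edges $y=-1\pm\delta_{u,dl}$ land on opposite sides of $X_{S_{ul},\text{ls}}$ in $x_1$. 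Containment in the entry direction $y$ of the target is automatic at $\epsilon=0$ because $y$ is conserved. The paper carries this out by fixing $\delta_{s,ul}$ first, then shrinking $\delta_{s,dl},\delta_{u,dl}$ to force entry-containment, and finally reading off $\delta_{u,ul}\le\varepsilon_{ul}$ from (P2); the order matters because the corner-segment sizes determine the endpoints of $S_u,S_d$ in the second step.
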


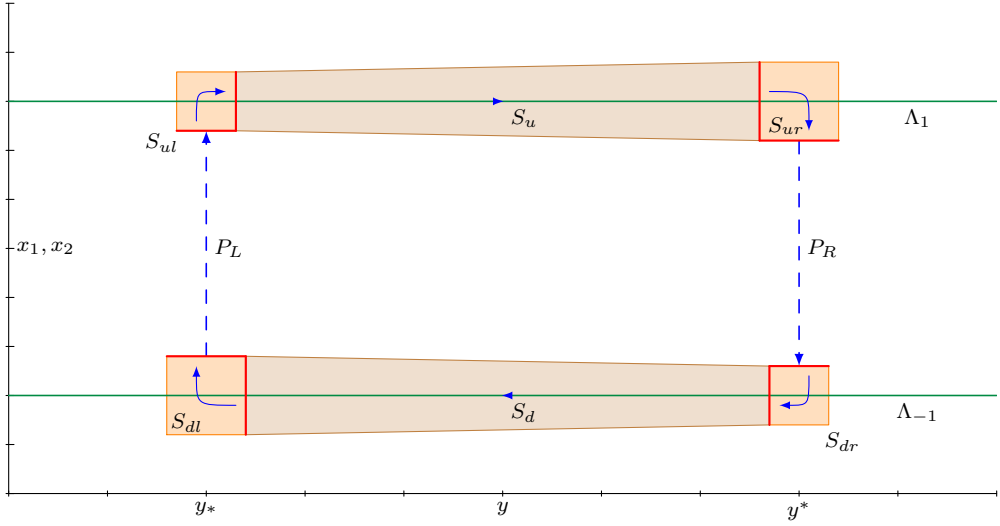
\begin{figure}
  \centering
  \begin{tikzpicture}[line cap=round,line join=round,>=latex,x=13mm,y=6.5mm]
 
  \draw[color=black] (-0.,0.) -- (10,0.);
  \foreach \x in {0,...,10}
  \draw[shift={(\x,0)},color=black] (0pt,1pt) -- (0pt,-1pt) node[below] {};

  \draw[color=black] (0.,0.) -- (0.,10);
  \foreach \y in {0,...,10}
  \draw[shift={(0,\y)},color=black] (2pt,0pt) -- (-1pt,0pt) node[left] {};
 
 % \draw[color=black] (0.,10.) -- (10.,10);
 % \foreach \z in {0,...,10}
 % \draw[shift={(\z,10)},color=black] (0pt,1pt) -- (0pt,-1pt) node[left] {};

  \clip(0,-1) rectangle (10,10);

  \filldraw[draw=orange, fill=orange!25] (1.7,7.4) -- (1.7,8.6) -- (2.3,8.6) -- (2.3,7.4) -- (1.7,7.4);
  \filldraw[draw=orange, fill=orange!25] (1.6,1.2) -- (1.6,2.8) -- (2.4,2.8) -- (2.4,1.2) -- (1.6,1.2);
  \filldraw[draw=orange, fill=orange!25] (7.7,1.4) -- (7.7,2.6) -- (8.3,2.6) -- (8.3,1.4) -- (7.7,1.4);
  \filldraw[draw=orange, fill=orange!25] (7.6,7.2) -- (7.6,8.8) -- (8.4,8.8) -- (8.4,7.2) -- (7.6,7.2);

  \filldraw[draw=brown, fill=brown!25] (7.7,1.4) -- (7.7,2.6) -- (2.4,2.8) -- (2.4,1.2) -- (7.7,1.4);
  \filldraw[draw=brown, fill=brown!25] (7.6,7.2) -- (7.6,8.8) -- (2.3,8.6) -- (2.3,7.4) -- (7.6,7.2);

  \draw [color=darkgreen,semithick,domain=0.:10.] plot(\x,{8});
  \draw [color=darkgreen,semithick,domain=0.:10.] plot(\x,{2});

  \draw [->,semithick,dash pattern=on 5pt off 5pt,color=blue] (2.,2.8) -- (2.,7.4);
  \draw [->,semithick,dash pattern=on 5pt off 5pt,color=blue] (8.,7.2) -- (8.,2.6);

  \draw [->,color=blue] (4.99,8.) -- (5.01,8.);
  \draw [->,color=blue] (5.01,2.) -- (4.99,2.);
 
  \draw [color=red,thick] (1.6,2.8) -- (2.4,2.8);
  \draw [color=red,thick] (7.7,2.6) -- (8.3,2.6);
  \draw [color=red,thick] (2.4,2.8) -- (2.4,1.2);
  \draw [color=red,thick] (7.7,2.6) -- (7.7,1.4);
 
  \draw [color=red,thick] (7.6,8.8) -- (7.6,7.2);
  \draw [color=red,thick] (7.6,7.2) -- (8.4,7.2);
  \draw [color=red,thick] (2.3,7.4) -- (2.3,8.6);
  \draw [color=red,thick] (2.3,7.4) -- (1.7,7.4);
 
  \draw [color=blue,<-] (2.3-0.1,8.+0.2) ..controls(2.-0.1,8.+0.2).. (2.-0.1,7.4+0.2);
  \draw [color=blue,<-] (8.4-0.3,8.-0.6) ..controls(8.4-0.3,8.8-0.6).. (8.-0.3,8.8-0.6);
 
  \draw [color=blue,->] (8.+0.1,2.6-0.2) ..controls(8.+0.1,2.-0.2).. (7.7+0.1,2.-0.2); 
  \draw [color=blue,<-] (1.6+0.3,2.+0.6) ..controls(1.6+0.3,1.2+0.6).. (2.+0.3,1.2+0.6);

  \begin{scriptsize}
    \draw (9.2,8.) node[anchor=north] {$\Lambda_{1}$};
    \draw (9.2,2.) node[anchor=north] {$\Lambda_{-1}$};
 
    \draw (2.,5.) node[anchor=west] {$P_{L}$};
    \draw (8.,5.) node[anchor=west] {$P_{R}$};
  
    \draw (5.,8.) node[anchor=north west] {$S_{u}$};
    \draw (5.,2.) node[anchor=north west] {$S_{d}$};   

    \draw (8.14,7.84) node[anchor=north east] {$S_{ur}$};
    \draw (1.3,7.5) node[anchor=north west] {$S_{ul}$};     
    \draw (1.56,1.09) node[anchor=south west] {$S_{dl}$};
    \draw (8.7,0.7) node[anchor=south east] {$S_{dr}$};     
  
    \draw (2.,0.) node[anchor=north] {$y_{*}$};
    \draw (8.,0.) node[anchor=north] {$y^{*}$};

    \draw (5.,0.) node[anchor=north] {$y$};
    \draw (0.,5.) node[anchor=west] {$x_{1}, x_{2}$};
  \end{scriptsize}

\end{tikzpicture}
  \caption{Isolating segments and Poincar\'e maps in the model example for the periodic orbit, the sequence of h-sets plotted in red.}\label{schemeFig}
\end{figure}

The conclusion of the theorem is portrayed in Figure~\ref{schemeFig}.
We break the proof into two parts, first we prove the existence of the corner isolating segments and coverings as a separate lemma.

\begin{lem}\label{lem:heuristic1}
  Consider the system~\eqref{fastslow}.
  For $\epsilon \in (0,\bar{\epsilon}_{0}]$, $\bar{\epsilon}_{0}>0$ small
  there exist two transversal sections of the form
  \begin{equation}
    \begin{aligned}
      \Sigma_{L} &:= \{ (x_{1},x_{2},y): \ x_{2} = 1-\varepsilon_{L} \} \cap \tilde{V}_{ul} \subset V_{ul}, \\
      \Sigma_{R} &:= \{ (x_{1},x_{2},y): \ x_{2} = -1+\varepsilon_{R} \} \cap \tilde{V}_{dr} \subset V_{dr},
    \end{aligned}
  \end{equation}
  $\tilde{V}_{ul}, \tilde{V}_{dr}$ being neighborhoods of $\Gamma_{ul}$ and $\Gamma_{dr}$
  and four isolating segments $S_{dl}$, $S_{ul}$, $S_{ur}$, $S_{dr}$ as specified in Theorem~\ref{thm:heuristic}
  such that
  \begin{equation}
    \begin{aligned}
      |X_{S_{ul},\text{ls}}| &\subset \Sigma_{L}, \\
      |X_{S_{dr},\text{rs}}| &\subset \Sigma_{R},
    \end{aligned}
  \end{equation}
  and there are coverings
  \begin{align}
    X_{S_{dl},\text{ru}} &\cover{P_{L}}  X_{S_{ul},\text{ls}}, \label{eq:PLcover} \\
    X_{S_{ur},\text{lu}} &\cover{P_{R}}  X_{S_{dr},\text{rs}}.
  \end{align}
  Moreover, the sections $\Sigma_{*}$ and the segments $S_{*}$ are $\epsilon$-independent, and 
  given a maximal diameter $\diam_{\text{max}}>0$ they can be chosen
  so that
  \begin{equation}\label{diam}
   \diam(S_{*}) < \diam_{\text{max}}.
  \end{equation}
\end{lem}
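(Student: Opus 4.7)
The plan is to build the four corner segments as explicit rectangular boxes in the straightened coordinates provided by (P3), then to derive the two covering relations by perturbation from the singular limit $\epsilon = 0$, where the transverse heteroclinics of (P2) are available. Since the problem is symmetric under the reflection $(x_2,y) \mapsto (-x_2,-y)$ swapping the ``$d$''/``$u$'' and ``$l$''/``$r$'' labels, it suffices to treat the pair $(S_{dl}, S_{ul})$ together with $P_L$; the pair $(S_{ur}, S_{dr})$ with $P_R$ is handled analogously.

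For each $\alpha \in \mathcal{I}$ I would take $|S_\alpha|$ to be a box of size $\delta_{x_1} \times \delta_{x_2} \times \delta_y$ about $\Gamma_\alpha$, with $x_1$ chosen as the exit direction, $x_2$ as the entry direction, and $y$ as the central direction. The orientation of the central axis is fixed so that $\pi_\mu$ increases along the slow flow: on corners adjacent to $\Lambda_1$ (where $\dot y > 0$ by~\eqref{P1:2}) take $\pi_\mu \propto +y$, and on those adjacent to $\Lambda_{-1}$ take $\pi_\mu \propto -y$. The sections are then defined by $\Sigma_L := \{x_2 = 1-\delta_{x_2}\} \cap V_{ul}$ and $\Sigma_R := \{x_2 = -1+\delta_{x_2}\} \cap V_{dr}$ with $\varepsilon_L = \varepsilon_R = \delta_{x_2}$, so that the required inclusions $|X_{S_{ul},\text{ls}}| \subset \Sigma_L$ and $|X_{S_{dr},\text{rs}}| \subset \Sigma_R$ hold by construction. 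All objects are manifestly $\epsilon$-independent, and taking $\delta_{x_1}, \delta_{x_2}, \delta_y < \diam_{\text{max}}/\sqrt{3}$ enforces~\eqref{diam}.

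Verification of (S1)--(S3) is routine. Central-direction monotonicity (S1) is exactly $\pm \epsilon g > 0$, which holds for $\epsilon > 0$ by~\eqref{P1:2} after factoring out the strictly positive constant $\epsilon/\delta_y$; this is the sole place where positivity of $\epsilon$ is used. Exit isolation (S2a) on the face $\{x_1 = \pm\delta_{x_1}\}$ reduces via~\eqref{P1:1},~\eqref{P1:3} to $\pm \lambda_{u,\pm 1}(y)\delta_{x_1} + O\bigl(\|h_{\pm 1}\|\bigr) + O(\epsilon) > 0$ with the correct sign, which holds by~\eqref{P1:4},~\eqref{P1:5} for $\delta_{x_1}$ and $\bar\epsilon_0$ sufficiently small. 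Entry isolation (S3a) is symmetric.

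The covering $X_{S_{dl},\text{ru}} \cover{P_L} X_{S_{ul},\text{ls}}$ is the substantive step, and I would verify its premises via Lemma~\ref{covlemma}. Unpacking the definitions, the source h-set is a square at $\{x_1 = \delta_{x_1}\}$ parametrised by $(y, x_2)$ with exit variable $y$ and entry variable $x_2$, while the target h-set sits on $\Sigma_L$ with exit variable $x_1$ and entry variable $y$. For $\epsilon = 0$ the $y$-coordinate is preserved along the flow, so the image of the source in $\Sigma_L$ lies in the $y$-strip $[-1-\delta_y, -1]$, matching the entry width of the target; this gives containment in the entry direction, i.e.\ (C1). For the exit direction, (P2) at $y_\star = -1$ states that the scalar map $y \mapsto W^u_{-1,\Sigma_L}(y) - W^s_{+1,\Sigma_L}(y)$ (which, transported through (P3), measures the $x_1$-displacement of the fast image on $\Sigma_L$) vanishes at $y = -1$ with non-zero derivative. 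Hence for $\delta_y$ small the two $y$-extreme edges $\{y = -1\}$ and $\{y = -1-\delta_y\}$ of the source are mapped to strictly opposite sides of the target in the $x_1$ coordinate, giving (C2). By Theorem 13 of~\cite{GideaZgliczynski} covering relations are $C^0$-open, and $P_L$ depends smoothly on $\epsilon$ near $\epsilon = 0$ with uniformly bounded passage time (the fast subsystem being non-degenerate in the relevant region), so the covering persists for $\epsilon \in (0, \bar\epsilon_0]$ with some $\bar\epsilon_0 > 0$. The main technical obstacle is the uniform bookkeeping: one has to pick the $\delta$'s and $\bar\epsilon_0$ so that all eight inequalities (isolation on three faces of each segment, plus the two $C^0$-open coverings) hold simultaneously. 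This is a routine compactness argument over the closed range $\epsilon \in [0,\bar\epsilon_0]$, shrinking the various $\delta$'s as necessary.
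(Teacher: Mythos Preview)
Your overall strategy coincides with the paper's: rectangular corner boxes in the straightened coordinates of (P3), isolation from the linearised fast dynamics, and the two covering relations via Lemma~\ref{covlemma} at $\epsilon=0$ followed by perturbation. The isolation checks (S1)--(S3) are fine.

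The gap is in the covering step, and it stems from taking a \emph{single} triple $(\delta_{x_1},\delta_{x_2},\delta_y)$ for all four boxes. At $\epsilon=0$ the coordinate $y$ is conserved, so the $y$-range of $P_L(X_{S_{dl},\text{ru}})$ is exactly the $y$-range of $S_{dl}$; with identical $\delta_y$ this coincides with the entry $y$-range of $X_{S_{ul},\text{ls}}$ and (C1) of Lemma~\ref{covlemma} fails on the boundary --- ``matching the entry width'' is precisely what you do \emph{not} want. Your alternative reading, with exit edges at $y=-1$ and $y=-1-\delta_y$, fares worse: at $y=-1$ the heteroclinic hits $\Sigma_L$ exactly at $x_1=0$ by (P2)--(P3), so that edge lands in the \emph{middle} of the target, not to one side, and (C2) fails. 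The paper cures this by allowing each segment its own size parameters and fixing them in a definite order: first the entry width $\delta_{s,ul}$ of $S_{ul}$ (the target's $y$-range), then $\delta_{s,dl},\delta_{u,dl}$ small enough that the image lies \emph{strictly} inside in $y$ (giving (C1)), and only afterwards $\delta_{u,ul}$ (the target's $x_1$-width) small enough to be overshot by the images of the two $y$-edges (giving (C2)). This dependency chain is the piece your ``routine compactness / shrink as necessary'' remark does not supply: uniformly shrinking a common $\delta_y$ never produces the strict inclusion needed for (C1).
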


\begin{proof}
  We focus first on the ``left'' part of the picture, since all arguments for the ``right'' part are symmetric and independent.
  Without loss of generality we can assume the crossing of the unstable and stable manifolds near the point
  $\Gamma_{ul}$ occurs for $x_{2}-1$ negative and take $\varepsilon_{L}> 0$. For $\varepsilon_{L}$
  and $\epsilon$ small enough, condition (P1) implies that the linear part of the vector field dominates the higher order terms $h_{\pm 1}$, so after having
  set a sufficiently small neighborhood $\tilde{V}_{ul}$ the section $\Sigma_{L}$ is transversal.

  The construction of the isolating segments $S_{ul}, S_{dl}$ is also enabled by (P1).
  Because we already work in straightened coordinates, their supports can be chosen to be of the form:
  \begin{equation}
    \begin{aligned}
      |S_{dl}| &= [-\varepsilon_{L}, \varepsilon_{L}] \times [-1-\delta_{s, dl}, -1+\delta_{s,dl} ] \times [ -1-\delta_{u,dl}, -1+\delta_{u,dl}], \\
      |S_{ul}| &= [-\delta_{u, ul}, \delta_{u,ul} ] \times [1-\varepsilon_{L}, 1+\varepsilon_{L}] \times [ -1-\delta_{s,ul}, -1+\delta_{s,ul}], \\
      u(S_{dl}) &= s(S_{dl}) = u(S_{ul}) = s(S_{dl}) = 1,
    \end{aligned}
  \end{equation}
  where the constants $\delta_{s,dl}, \delta_{u,dl}, \delta_{s,ul}, \delta_{u,ul}$ will be fixed later in the proof.
  The changes of coordinates $c_{S_{dl}}, c_{S_{ul}}$ are defined as a translation of the cuboids to the origin of the coordinate system
  composed with rescaling to $[-1,1]^{2} \times [0,1]$. We label the first coordinate as exit, second as entry, third central.
  Again, if $\varepsilon_{L}$, $\delta_{s,dl}$, $\delta_{u,dl}$, $\delta_{u,ul}$, $\delta_{s,ul}$ are small,
  then the linear part of the vector field dominates the nonlinear part and conditions (S2b), (S3b) are satisfied for $\epsilon=0$
  and for $\epsilon>0$ small.
  Since our change of coordinates is of the form as in~\eqref{eq:centralform}, for $\epsilon>0$ small (S1a) follows from the inequalities~\eqref{P1:2}.

  We can now move on to proving the covering relation~\eqref{eq:PLcover}.
  The supports of the h-sets $X_{S_{dl},\text{ru}}$, $X_{S_{ul}, \text{ls}}$ are of the form:
  \begin{equation}
    \begin{aligned}
      |X_{S_{dl},\text{ru}}| &= \{ \varepsilon_{L} \} \times [-1-\delta_{s, dl}, -1+\delta_{s,dl} ] \times [ -1-\delta_{u,dl}, -1+\delta_{u,dl}], \\
      |X_{S_{ul}, \text{ls}}| &= [-\delta_{u, ul}, \delta_{u,ul} ] \times \{ 1-\varepsilon_{L} \} \times [ -1-\delta_{s,ul}, -1+\delta_{s,ul}].
    \end{aligned}
  \end{equation}
  In $X_{S_{dl},\text{ru}}$ the $x_{2}$ variable takes the role of the entry variable and $y$ takes the role of the exit one;
  in $X_{S_{ul}, \text{ls}}$ the variable $x_{1}$ is exit and $y$ is entry.

  Since covering relations are robust with respect to perturbations of the vector field (see Theorem 13 in \cite{GideaZgliczynski})
  it is enough to show them for $\epsilon=0$.
  From (P2) and (P3) we know that
  \begin{equation}\label{P2:mon1}
    \begin{aligned}
      P_{L}(\varepsilon_{L}, -1, -1) &= (0, 1-\varepsilon_{L}, -1), \\
      \frac{d}{dy} \pi_{x_1} P_{L}(\varepsilon_{L},-1,-1) &\neq 0,
    \end{aligned}
  \end{equation}
  and without loss of generality let us assume that $\pi_{x_1} P_{L}$ is increasing in the neighborhood of the point $(\varepsilon_{L}, -1, -1)$.
  That is already enough to generate two h-sets with a covering relation between them.
  The procedure is as follows:
  \begin{itemize}
    \item fix some $\delta_{s,ul}>0$.
    \item To comply with the covering condition (C1) from Lemma~\ref{covlemma} choose $\delta_{s,dl}>0$ and $\delta_{u,dl}>0$ so that
      \begin{equation}
        \pi_{y}P_{L}(|X_{S_{ul},\text{ls}}|) \subset (-1 -\delta_{s,ul}, -1 + \delta_{s,ul}).
      \end{equation}
      Now, provided $\delta_{s,dl}$ and $\delta_{u,dl}$ were chosen small enough, from~\eqref{P2:mon1}
      there exists $\varepsilon_{ul}>0$ such that
      \begin{equation}
        \begin{aligned}
          \pi_{x_{1}} P_{L}\left( \{ \varepsilon_{L} \} \times  [-1-\delta_{s, dl}, -1+\delta_{s,dl} ] \times \{ -1 -\delta_{u,dl} \} \right) &< \varepsilon_{ul} < 0,  \\
          \pi_{x_{1}} P_{L}\left( \{ \varepsilon_{L} \} \times  [-1-\delta_{s, dl}, -1+\delta_{s,dl} ] \times \{ -1 +\delta_{u,dl} \} \right) &> \varepsilon_{ul} > 0.
        \end{aligned}
      \end{equation}
    \item To fulfill (C2) it is enough to choose $\delta_{u,ul} \leq \varepsilon_{ul}$.
  \end{itemize}
  It is clear that we can choose $\varepsilon_{L}$ small enough and then perform the procedure above with $\delta$'s small
  in a way, that the diameter constriction~\eqref{diam} is satisfied.

  The same procedure is repeated for the isolating segments $S_{ur}$, $S_{dr}$; we will only introduce the notation for these segments,
  as they will be used later in the main part of the proof of Theorem~\ref{thm:heuristic}. Similarly to the left side segments,
  we define them by giving the cuboid supports
  \begin{equation}
    \begin{aligned}
      |S_{ur}| &= [-\varepsilon_{R}, \varepsilon_{R}] \times [1-\delta_{s, ur}, 1+\delta_{s,ur} ] \times [ 1-\delta_{u,ur}, 1+\delta_{u,ur}], \\
      |S_{dr}| &= [-\delta_{u, dr}, \delta_{u,dr} ] \times [-1-\varepsilon_{R}, -1+\varepsilon_{R}] \times [ 1-\delta_{s,dr}, 1+\delta_{s,dr}], \\
      u(S_{ur}) &= s(S_{ur}) = u(S_{dr}) = s(S_{dr}) = 1,
    \end{aligned}
  \end{equation}
  and the coordinate changes $c_{S_{ur}}$, $c_{S_{dr}}$ are again simple translations and rescalings to $[-1,1]^{2} \times [0,1]$, so the first
  variable in the supports is the exit one and the second is entry.

  The supports of h-sets of interest $X_{S_{ur},\text{lu}}$, $X_{S_{ur}, \text{rs}}$ are as follows:
  \begin{equation}
    \begin{aligned}
      |X_{S_{ur},\text{lu}}| &= \{ -\varepsilon_{R} \} \times [1-\delta_{s, ur}, 1+\delta_{s,ur} ] \times [ 1-\delta_{u,ur}, 1+\delta_{u,ur}], \\
      |X_{S_{dr}, \text{rs}}| &= [-\delta_{u, dr}, \delta_{u,dr} ] \times \{ -1+\varepsilon_{R} \} \times [ -1-\delta_{s,dr}, -1+\delta_{s,dr}].
    \end{aligned}
  \end{equation}
  We will not go into details of determining $\delta_{u, ur}$, $\delta_{s,ur}$, $\delta_{u,dr}$, $\delta_{s,dr}$ and $\varepsilon_{R}$
  - the procedure is exactly the same as for the left side segments. The variable $y$ is the exit variable and $x_{2}$ is the entry variable in $X_{S_{ur},\text{lu}}$;
  as for $X_{S_{dr},\text{rs}}$, $x_{1}$ is the exit one and $y$ is entry.

  By taking  the minimum of all upper bounds on $\epsilon$'s throughout this lemma we obtain $\bar{\epsilon}_{0}$ and the proof is complete.
\end{proof}

We can now return to proving Theorem~\ref{thm:heuristic}. We import all the notation from the proof of the Lemma~\ref{lem:heuristic1}
and in particular assume that the isolating segments $S_{dl},S_{ul},S_{ur},S_{dr}$ and the respective h-sets can be chosen to be of the form given therein.

\begin{proof}[Proof of Theorem~\ref{thm:heuristic}]
  From Lemma~\ref{lem:heuristic1} for any given maximal corner segment diameter $\diam_{\text{max}}>0$
  we obtain a bound $\bar{\epsilon}_{0}$ on $\epsilon$'s and four isolating segments $S_{dl}, S_{ul}, S_{ur}, S_{dr}$ containing the respective points $\Gamma_{\alpha}$
  with covering relations between their respective faces.
  We set $\diam_{\text{max}}$ small enough to have
  \begin{align}
    f(x,y,0) \approx A_{1}(y)(x - [0,1]^{T} ),\ (x,y) \in \conv(|S_{ul}| \cup |S_{ur}|), \label{eq:ULin} \\
    f(x,y,0) \approx A_{-1}(y)(x + [0,1]^{T} ),\ (x,y) \in \conv(|S_{dl}| \cup |S_{dr}|),
  \end{align}
  so the higher order terms $h$ can be assumed negligible when checking the isolation inequalities in these neighborhoods.

  Given our four corner isolating segments we are left with construction of
  two isolating segments $S_{u}$ and $S_{d}$ which connect the pairs $S_{ul}$, $S_{ur}$ and $S_{dr}$, $S_{dl}$ respectively.
  We will only construct $S_{u}$, the case of $S_{d}$ is analogous. The strategy is to first connect the pairs by segments,
  then, if necessary, decrease $\bar{\epsilon}_{0}$ to some smaller $\epsilon_{0}$ to obtain isolation.

  We introduce the following notation for rectangular sets around the upper branch of the slow manifold:
  \begin{equation}
    L_{u}(\delta_{u},\delta_{s},y):= [-\delta_{u},\delta_{u}] \times [1-\delta_{s},1+\delta_{s}] \times \{ y \}.
  \end{equation}

  We set
  \begin{equation}
    \begin{aligned}
      a_{u} &:= -1 + \delta_{s,ul}, \\
      b_{u} &:= 1 - \delta_{u,ur},
    \end{aligned}
  \end{equation}
  and we can assume that $a_{u}<b_{u}$.
  Now, we can define $S_{u}$ as a cuboid stretching from $X_{S_{ul,\text{out}}}$ to $X_{S_{ur,\text{in}}}$ as follows.
  For the support we put
  \begin{equation}\label{eq:SuForm}
    \begin{aligned}
    |S_{u}| &:=  \\
    &\bigcup_{\xi \in [0,1]} L_{u}\left(
                         (1-\xi)\delta_{u,ul} + \xi \delta_{u,ur},
                         (1-\xi)\delta_{s,ul} + \xi \delta_{s,ur},
                         (1-\xi)a_{u} + \xi b_{u}
                         \right).
   \end{aligned}
  \end{equation}
  We also set $u(S_{u})=s(S_{u}):=1$.
  There is no need for description of $c_{S_{u}}$ by precise formulas, so we only mention that it is a composition of
  \begin{itemize}
    \item a diffeomorphism which rescales each fiber $L_{u}(\cdot,y)$, given by fixing $y \in [a_{u},b_{u}]$, to $[-1,1] \times [-1,1]$,
    \item a rescaling in the central, $y$ direction from $[a_{u}, b_{u}]$ to $[a_{u}, a_{u}+1]$,
    \item a translation to the origin of the coordinate system.
  \end{itemize}
  As with the corner segments, $x_{1}$ is labeled as the exit direction, $x_{2}$ as entry, and $y$ as the central direction.
  Then one sees that equalities~\eqref{eq:UR} and~\eqref{eq:UL} are true.
  Condition (S1a) is a consequence of inequalities~\eqref{P1:2} for small $\epsilon$ , as the change of variables $c_{S_{u}}$
  in the central direction takes the form~\eqref{eq:centralform}. The upper bound for $\epsilon$'s
  given by $\bar{\epsilon}_{0}$ may need to be decreased at this step.

  It remains to check (S2b) and (S3b) and for that purpose we may need to further reduce $\bar{\epsilon}_{0}$.
  Normals to $S_{u}^{-}$ pointing outward of $|S_{u}|$ are given by
  \begin{equation}\label{eq:n+}
    n_{-}(x,y) = \left(\sgn x_{1}, 0, -\frac{\delta_{u,ur} - \delta_{u,ul}}{b_{u}-a_{u}} \right).
  \end{equation}
  From~\eqref{eq:ULin}, \eqref{eq:n+} and \eqref{P1:3}, \eqref{P1:4} for $(x,y) \in S_{u}^{-}$ we have
  \begin{equation}
    \begin{aligned}
    \langle (f,\epsilon g), n_{-} \rangle (x,y,\epsilon) &\approx \lambda_{u,1}(y) |x_{1}|  - \epsilon g(x,y,\epsilon) \frac{\delta_{u,ur} - \delta_{u,ul}}{b_{u}-a_{u}}  \\
    &> \delta_{1} |x_{1}| - \frac{\epsilon}{\delta_{1}} \frac{|\delta_{u,ur} - \delta_{u,ul}|}{b_{u}-a_{u}}
    \end{aligned}
  \end{equation}
  and the right-hand side is greater than 0 for $\epsilon \in (0, \bar{\epsilon}_{0}]$, $\bar{\epsilon}_{0}$ small enough, see Figure~\ref{slopeFig}.
  This proves (S2b).

  \begin{figure}
    \centering
    \begin{tikzpicture}[line cap=round,line join=round,>=latex,x=2.5mm,y=2.5mm]
 
  \draw[color=black] (-0.,0.) -- (20,0.);
  \foreach \x in {0,...,10}
  \draw[shift={(2*\x,0)},color=black] (0pt,1pt) -- (0pt,-1pt) node[below] {};

  \draw[color=black] (0.,0.) -- (0.,10);
  \foreach \y in {0,...,5}
  \draw[shift={(0,2*\y)},color=black] (2pt,0pt) -- (-1pt,0pt) node[left] {};
 
  \clip(0,-2) rectangle (40,12);

\begin{scope}[spy using outlines=
      {magnification=4, size=90pt, rounded corners, connect spies}]

  \fill [color=brown!25] (2.,4.) -- (2.,6.) -- (18.,8.) -- (18.,2.);
  \draw [color=brown] (2.,4.) -- (18.,2.);
  \draw [color=brown] (2.,6.) -- (18.,8.);
  \draw [color=brown] (2.,4.) -- (2.,6.);
  \draw [color=brown] (18.,8.) -- (18.,2.);

  \draw [->,color=brown] (10.,7.) -- (11.0,9.2);
  \draw [->,color=brown] (6.,6.5) -- (7.0,8.7);
  \draw [->,color=brown] (14.,7.5) -- (15.0,9.7);
 
  \draw [->,color=brown] (10.,3.) -- (11.0,0.8);
  \draw [->,color=brown] (6.,3.5) -- (7.0,1.3);
  \draw [->,color=brown] (14.,2.5) -- (15.0,0.3);
 
  \draw [color=darkgreen,semithick,domain=0.:20.] plot(\x,{5});
  \draw [->,color=brown] (9.99,5.) -- (10.01,5.);

  \spy [black] on (10.5,7.8) in node at (30.,5.);

\end{scope}

  \draw [->,color=black] ( 30.-10.5*4.+10.*4., 5.-7.8*4.+7.*4) -- ( 30.-10.5*4.+10.*4. , 5.-7.8*4.+9.2*4);
  \draw [->,color=black] ( 30.-10.5*4.+10.*4., 5.-7.8*4.+7.*4) -- ( 30.-10.5*4.+11.*4. , 5.-7.8*4.+7.*4);
 
  \draw [color=black,dashed] ( 30.-10.5*4.+10.*4., 5.-7.8*4.+9.2*4) -- ( 30.-10.5*4.+11.*4. , 5.-7.8*4.+9.2*4);
  \draw [color=black,dashed] ( 30.-10.5*4.+11.*4., 5.-7.8*4.+7.*4) -- ( 30.-10.5*4.+11.*4. , 5.-7.8*4.+9.2*4);

  \draw ( 30.-10.5*4.+11.*4. , 5.-7.8*4.+7.*4) node[anchor=north] {\scriptsize{$\displaystyle\frac{dy}{dt}$}};
  \draw ( 30.-10.5*4.+10.*4. , 5.-7.8*4.+9.2*4 - 1) node[anchor=east] {\scriptsize{$\displaystyle\frac{dx_{1}}{dt}$}};

  \draw (10.,0.) node[anchor=north] {\scriptsize{$y$}};
  \draw (0.,9.5) node[anchor=west] {\scriptsize{$x_{1}$}};

\end{tikzpicture}
    \caption{Isolation in segments around the slow manifold for small~$\epsilon$. The fast component of the vector field dominates the slow one
    and offsets the influence of the slope on isolation inequalities.}\label{slopeFig}
  \end{figure}
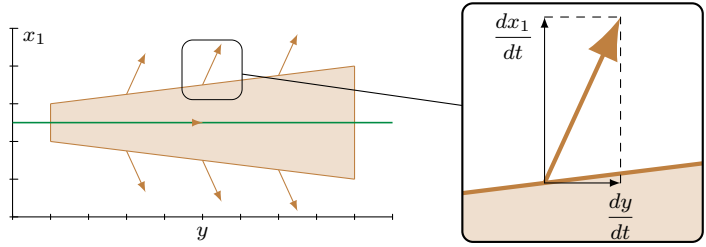

  Verifying (S3b) goes along the same lines, the expression for outward normals is
  \begin{equation}\label{eq:n-}
    n_{+}(x,y) = \left(0, \sgn(x_{2}-1), -\frac{\delta_{s,ur} - \delta_{s,ul}}{b_{u}-a_{u}} \right),
  \end{equation}
  and one readily checks that
  \begin{equation}
    \begin{aligned}
      \langle (f,\epsilon g), n_{+} \rangle (x,y,\epsilon) &\approx \lambda_{s,1}(y) |x_{2}-1|  - \epsilon g(x,y,\epsilon) \frac{\delta_{s,ur} - \delta_{s,ul}}{b_{u}-a_{u}}  \\
      &< -\delta_{1} |x_{2} - 1| + \frac{\epsilon}{\delta_{1}} \frac{|\delta_{s,ur} - \delta_{s,ul}|}{b_{u}-a_{u}}<0, \ \epsilon \in (0,\bar{\epsilon}_{0}],
    \end{aligned}
  \end{equation}
  decreasing $\bar{\epsilon}_{0}$ if necessary.
  
  We remark that our proof effectively relies on the fact that the fibers $L_u(\cdot , y)$ form suitable isolating blocks in the fast subsystem.

  The only difference in the construction of $S_{d}$ is that the recipe for $c_{S_{d}}$ has to include a flip in the $y$ direction
  so we can have $X_{S_{d},\text{in}}=X_{S_{dr},\text{out}}$ and $X_{S_{d},\text{out}}=X_{S_{dr},\text{in}}$.
  By taking minimum of all upper bounds for $\bar{\epsilon}_{0}$ throughout the proof we obtain the desired $\epsilon_{0}$.
\end{proof}

\subsection{The homoclinic orbit}\label{sec:covslowfast2}

We are looking for a codimension one situation, hence we add an additional parameter $\theta \in \rr$.
Our model example is now given by a family of fast-slow systems
\begin{equation}\label{fastslow2}
  \begin{aligned}
    \dot{x} &= f(x,y,\theta,\epsilon), \\
    \dot{y} &= \epsilon g(x,y,\theta,\epsilon).
  \end{aligned}
\end{equation}
where $x \in \rr^{2}$, $y \in \rr$, $f,g$ are smooth functions of $(x,y,\theta,\epsilon)$ and $0 < \epsilon \ll 1$ is the small parameter.
As in the previous example we will write $x=(x_{1},x_{2})$ to denote the respective fast coordinates and
use the symbols $\pi_{x_1}$, $\pi_{x_2}$, $\pi_y$ to denote the respective projections. 

As in the case of the periodic orbit, by the unstable/stable manifold of a branch of the slow manifold we will mean the union of the unstable/stable manifolds
of the equilibria forming it.

The counterparts of conditions (P1)-(P3) for
the existence of a homoclinic orbit of~\eqref{fastslow2} (in the vicinity of $\theta=0$ in the parameter space) are as follows:
\begin{itemize}
  \item[(H1)] we have two branches of the slow manifold $\Lambda_{\pm 1}$,
    that coincide with $\{ 0 \} \times \{ 1 \} \times \rr$ and $\{ 0 \} \times \{ -1 \} \times \rr$,
    respectively, and their position does not depend on $\theta$.
    Both are hyperbolic with one expanding and one contracting direction, and the vector field in their neighborhoods $U_{\pm 1}$ is of the following form:
    \begin{align}
      f(x,y,\theta,0) &= A_{\pm 1}(y,\theta)(x \mp [0,1]^{T} ) + h_{\pm 1}(x,y,\theta), \label{h1:1} \\
      \exists \tilde{\epsilon}_{0}, \tilde{\theta}_0 >0: \quad 0 &< \delta_{1}  \leq  g(x,y,\theta,\epsilon) \leq \delta^{-1}_{1}, \ (x,y) \in U_{1},  \label{h1:2} \\
      -\delta_{-1}^{-1} &<  g(x,y,\theta,\epsilon) < \delta_{-1} < 0, \ (x,y) \in U_{-1}, \label{h1:extra} \\
      \begin{split}
      -\delta_{-1}^{-1} &<\frac{\partial g(x,y,\theta,\epsilon)}{\partial y} < -\delta_{-1} < 0, \ (x,y) \in U_{-1},  \\
      &\forall \ \epsilon \in (0, \tilde{\epsilon}_{0}],\ \theta \in [-\tilde{\theta}_0, \tilde{\theta}_0]. \label{h1:3}
    \end{split}
    \end{align}
    The functions $A_{\pm 1},h_{\pm 1}$ are assumed to be smooth and to have the following properties
    \begin{align}
      A_{\pm 1}(y,\theta) &= \left[ \begin{array}{cc} \lambda_{u,\pm 1}(y,\theta) & O(\theta) \\ O(\theta) & \lambda_{s,\pm 1}(y,\theta) \end{array} \right], \label{H1:3} \\
        -\delta_{\pm 1}^{-1} \leq \lambda_{s,\pm 1}(y,\theta) \leq - \delta_{\pm 1} &< 0 < \delta_{\pm 1} \leq \lambda_{u, \pm 1}(y,\theta) \leq \delta_{\pm 1}^{-1}, \label{H1:4} \\
        \frac{h_{\pm 1}(x,y,\theta)}{ \vectornorm{ x - (0,\pm 1) } } &\xrightarrow{x \to (0,\pm 1)} 0,\ \forall y, \theta \label{H1:5}
    \end{align}
    The values $\delta_{\pm 1} > 0$ are some constant bounds, which in particular do not depend on neither $\epsilon$, $\theta$, nor $y$.

  \item[(H2)] For the parameterized family of fast subsystems,
    for $\theta = 0$ and $y=\mp 1$ there exist heteroclinic connections between
    the equilibria $(0,-1)$ and $(0, 1)$, in the first case, and $(0,1)$ and $(0,-1)$ in the second.
    The connection between $(0,-1)$ and $(0, 1)$ for $y=-1$ is assumed to be transversal with respect to $\theta$,
    and the connection between $(0,1)$ and $(0,-1)$ is assumed to be transversal with respect to $y$.
    That means: given any two one-dimensional transversal sections $\Sigma_{f, \pm 1}$ for the fast subsystems for $y=\pm 1$
    which have a nonempty, transversal intersection with the heteroclinic orbits, the maps $\Psi_{\pm 1}$ given by
    \begin{equation}
      \begin{aligned}
      \Psi_{1}: (y,\theta) &\to W^u_{1 ,\Sigma_{f,  1} }(y,\theta)- W^s_{-1,\Sigma_{f, 1}}(y,\theta) \in \rr,\\
      \Psi_{-1}: (y,\theta) &\to W^u_{-1 ,\Sigma_{f,  -1} }(y,\theta)- W^s_{1 ,\Sigma_{f, -1}}(y,\theta) \in \rr
    \end{aligned}
    \end{equation}
    satisfy the following:
    \begin{align}
       \Psi_{1} (1,0) &= 0, \\
       \frac{\partial{\Psi_{1}}}{\partial y} (1,0) &> 0, \\
       \Psi_{-1} (-1,0) &= 0, \\
       \frac{\partial{\Psi_{-1}}}{\partial \theta} (-1,0) &> 0.
    \end{align}

    Here $W^u_{\pm 1, \Sigma_{*}}(y,\theta)$ and $W^s_{\pm 1, \Sigma_{*}}(y,\theta)$ denote the first intersections between the appropriate branches
    of the unstable/stable manifolds of the equilibria $(0,\pm 1)$ with a given section $\Sigma$ in the section coordinates.

  \item[(H3)] Denote the points $(0,-1,-1)$, $(0,1,-1)$, $(0,1,1)$, $(0,-1,1)$ by $\Gamma_{\alpha}$, $\alpha \in \mathcal{I} = \{ dl, $ $ul, ur, dr \}$, 
    respectively. 
    For $\epsilon=\theta=0$, for each $\alpha \in \mathcal{I}$ there exist respective neighborhoods $V_{\alpha}$ of $\Gamma_{\alpha}$,
    such that if $\Lambda_{\pm 1} \cap V_{\alpha}$ is the part of the slow manifold contained
    in $V_{\alpha}$, then the part of its unstable manifold contained in $V_{\alpha}$ coincides with the plane $\rr \times \{\pm 1\} \times \rr$,
    and the part of the stable manifold contained in $V_\alpha$ - with the plane $ \{ 0 \} \times \rr \times \rr$. Without loss of generality we can have
    $\bigcup_{\alpha \in \mathcal{I}} V_{\alpha} \subset ( U_{-1} \cup U_{1} )$.

  \item[(H4)] The point $\Gamma_{dl}$ is an equilibrium of the system~\eqref{fastslow2} for 
    all $\theta \in \rr$ and $\epsilon \in [0,1]$\footnote{It corresponds to the $(0,0,0)$ equilibrium of the FitzHugh-Nagumo system~\eqref{FhnOde}.}.
    Moreover, its unstable manifold $W^u_{\Gamma_{dl}}$ varies continuously with parameters $\epsilon$ and $\theta$.
    We also assume, that there exists a compact set $B$ containing $\Gamma_{dl}$ in its interior, 
    which is of the form of a rectangular cuboid $[-\delta_{x, dl}, \delta_{x,dl}] \times [-\delta_{x, dl}, \delta_{x,dl}] \times [-\delta_{y, dl}, \delta_{y,dl}]$\footnote{We note,
      that the witdth in $x_1$ direction is assumed to be the same as in $x_2$ -- this will be important later in view of Remark~\ref{rem:ratio}.}, 
    and satisfies the following:
    \begin{itemize}
      \item given any maximal diameter $\diam_{\text{max}}>0$, the set $B$ can be chosen to satisfy $\diam(B) < \diam_{\text{max}}$.
      \item After fixing its size, $B$ forms an isolating block for~\eqref{fastslow2} with $u(B)=1$, $s(B)=2$, for all $\epsilon+|\theta|$ small enough and $\epsilon$ positive. 
          Its exit direction is spanned by $x_1$ and its entry directions are spanned by $x_2,y$ (so the coordinate change $c_B$ is given by a translation and rescaling).    
      \item $W^s_B (\Gamma_{dl})$ is a vertical disk in $B$ varying continuously with $\epsilon$ and $\theta$.
    \end{itemize}
\end{itemize}

Out of these conditions, only the second part of (H4) may seem artificial. 
In fact, such conditions can be deduced by equipping $B$ with an $\epsilon$-dependent cone field
from more qualitative assumptions (E1) and (E2) given in Section~\ref{sec:cc}.
For an $\epsilon$-independent $f$ and $g$ the construction of a required block with cones of the form $(B,Q_\epsilon)$
in such scenario is presented in Subsections~\ref{subsec:cc1} and~\ref{subsec:cc2}.

We will denote by $X_B$ the two-dimensional rectangular h-set with $u(X_B)=1$, $s(X_B)=1$ formed on the face of the block given by 
$[-\delta_{x, dl}, \delta_{x,dl}] \times [-\delta_{x, dl}, \delta_{x,dl}] \times \{ \delta_{y,dl} \}$.
By Lemma~\ref{lem:sidedisk} applied to the vertical disk $W^s_B (\Gamma_{dl}) \cap |B|$ one obtains that $W^s_B (\Gamma_{dl}) \cap |X_B|$
is a vertical disk in $X_B$.

For two real numbers $\theta_l, \theta_r$ with $\theta_l < 0 < \theta_r$, 
let $Z_{[\theta_l, \theta_r]}$ be a parameter h-set given by $u(Z_{[\theta_l, \theta_r]})=1$, $s(Z_{[\theta_l, \theta_r]})=0$, $|Z_{[\theta_l, \theta_r]}| = [\theta_l, \theta_r]$ 
(the coordinate change $c_{Z_{[\theta_l, \theta_r]}}$ is given by a rescaling to $[-1,1]$).

\begin{thm}\label{thm:heuristic2}
  Under assumptions (H1)-(H4), there are values of $\epsilon_{0}>0$, $ \theta_l < 0 < \theta_r$, such that
  for all $\epsilon \in (0,\epsilon_0]$ and $\theta \in [\theta_l,\theta_r]$ the system~\eqref{fastslow2} contains 
  the following ($\epsilon$ and $\theta$-independent) objects:
  \begin{itemize}
    \item an isolating block $B$ given by (H4),
    \item $S_{u}$, $S_{d}$ - two ``long'' isolating segments positioned around the branches $\Lambda_{\pm 1}$ of the slow manifold;
    \item $S_{\alpha},\ \alpha \in \mathcal{I} \backslash \{dl\}$ - three short ``corner'' isolating segments, each containing the respective point $\Gamma_{\alpha}$;
  \end{itemize}
  along with the associated transversal sections of the form $\Sigma_{S_{*},\text{in}},\Sigma_{S_{*},\text{out}}$, with
  \begin{equation}
    \begin{aligned}
      u(S_{dr}) &= s(S_{dr})
      \\ &= u(S_{ul})=s(S_{ul})= u(S_{ur})=s(S_{ur})
      \\ &=u(S_{u}) = s(S_{u})=u(S_{d})=s(S_{d})
      \\ &= 1.
    \end{aligned}
  \end{equation}

  We have the following relations among the h-sets on faces of the respective isolating segments/blocks:
  \begin{align}
   X_{S_{ul},\text{out}} &= X_{S_{u},\text{in}}, \\
   X_{S_{u},\text{out}} &= X_{S_{ur},\text{in}}, \\
   X_{S_{dr},\text{out}} &= X_{S_{d},\text{in}}, \\
   X_{S_{d},\text{out}} &= X_B.
  \end{align}
  The collection of h-sets
  \begin{equation}
    \begin{aligned}
      \mathcal{X}_{\text{FHN},\text{H}} &= \\
      \{ &Z_{[\theta_l, \theta_r]}, \ X_{S_{ul},\text{ls}},\ X_{S_{u}, \text{in}},\\
        &X_{S_{u},\text{out}},\ X_{S_{ur},\text{lu}},\ X_{S_{dr},\text{rs}}, \\
      &X_{S_{d}, \text{in}},\ X_{S_{d},\text{out}}, \ X_{B}
      \}
    \end{aligned}
  \end{equation}
  together with the vertical disk  satisfies assumptions of Theorem~\ref{thm:2} for $\epsilon \in (0,\epsilon_{0}]$.
  In particular we have the following covering relations among the h-sets
  not connected by an isolating segment:
  \begin{align}
    Z_{[\theta_l, \theta_r]} &\cover{W_L^{u} }  X_{S_{ul},\text{ls}} \quad \forall \epsilon \in (0,\epsilon_0],\label{eq:Zcover} \\
    X_{S_{ur},\text{lu}} &\cover{P_{R}}  X_{S_{dr},\text{rs}} \quad \forall  \epsilon \in (0,\epsilon_0],\ \theta \in [\theta_l, \theta_r],
  \end{align}
  %a
  where 
  \begin{itemize}
    \item $W_L^{u} : |Z_{[\theta_l, \theta_r]}| \to \Sigma_{L}$ is a map of argument $\theta$, which (for fixed $\epsilon$) 
      assigns the intersection point of the appropriate branch of the unstable manifold of $\Gamma_{dl}$\footnote{That means the same branch as in (H2) for $\epsilon=0$.}
      with a section $\Sigma_{L}$ containing $X_{S_{ul},ls}$
    \item $P_R$ is a Poincar\'e map from $X_{S_{ur},\text{lu}}$ to a transversal section containing $X_{S_{dr},\text{rs}}$.
  \end{itemize}
  
  As a consequence, there exists a homoclinic loop to $\Gamma_{dl}$ for these parameter values.
\end{thm}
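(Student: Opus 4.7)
The plan is to mirror the construction used in the proof of Theorem~\ref{thm:heuristic}, adapting it to the homoclinic setting by incorporating parameter dependence on $\theta$ and terminating the chain at the stable manifold given by (H4) instead of closing the loop. The output of the construction will feed directly into Theorem~\ref{thm:hom2}.

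First I would construct the three corner isolating segments $S_{ur}$, $S_{dr}$, $S_{ul}$ exactly as in Lemma~\ref{lem:heuristic1}, as rectangular cuboids around the respective corner points $\Gamma_\alpha$ with coordinate changes given by translation and rescaling, and with exit/entry directions aligned with $x_1$ and $x_2$. The Poincar\'e covering $X_{S_{ur},\text{lu}} \cover{P_R} X_{S_{dr},\text{rs}}$ is obtained from the transversality of $\Psi_1$ in $y$ (condition (H2)) in the same manner as in Lemma~\ref{lem:heuristic1}. The new ingredient is that every isolation inequality and covering condition involved is given by strict inequalities based on the linear part of the vector field, so each of them persists for $\theta$ in a small neighborhood of $0$ by compactness; this fixes an admissible range $[\theta_l,\theta_r]$ in which the whole construction can be carried out uniformly.

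Next I would build the long isolating segments $S_u$ and $S_d$ along $\Lambda_1$ and $\Lambda_{-1}$, respectively, following the template~\eqref{eq:SuForm} from the proof of Theorem~\ref{thm:heuristic}. For $S_u$ the central direction is aligned with $y$, which increases on $\Lambda_1$ by (h1:2). For $S_d$, which connects $X_{S_{dr},\text{out}}$ to $X_B$ on the top face of the block $B$ of (H4), the central direction has to be aligned with $-y$ since $g<0$ on $U_{-1}$ by (h1:extra); condition (S1a) then reduces to~\eqref{eq:sgna} and is easily verified for $\epsilon>0$. The isolation conditions (S2b), (S3b) on the lateral faces hold for sufficiently small $\epsilon_0$ and $|\theta|$ by exactly the same argument as in Theorem~\ref{thm:heuristic}: the fast component of the vector field dominates the slow one (cf. Figure~\ref{slopeFig}), and the $\theta$-perturbation to the linear part is $O(\theta)$ by~\eqref{H1:3}.

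The step I expect to be the main obstacle is verifying the parameter covering $Z_{[\theta_l,\theta_r]} \cover{W_L^u} X_{S_{ul},\text{ls}}$. The map $W_L^u(\theta)$ is the first intersection of the relevant branch of the unstable manifold of $\Gamma_{dl}$ with $\Sigma_L$, which is continuous in $(\theta,\epsilon)$ by (H4). For $\epsilon=0$, the forward trajectory starting from $\Gamma_{dl}$ along the unstable direction lies in the fast subsystem at $y=-1$, and the transversality $\frac{\partial\Psi_{-1}}{\partial\theta}(-1,0)>0$ from (H2) guarantees that the exit coordinate ($x_1$) of $W_L^u(\theta)|_{\epsilon=0}$ moves monotonically through $X_{S_{ul},\text{ls}}$ as $\theta$ varies through a small symmetric neighborhood of $0$. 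Choosing $\theta_l<0<\theta_r$ so that the exit coordinates at $\theta_l, \theta_r$ flank the two exit edges of $X_{S_{ul},\text{ls}}$, and then possibly shrinking $\epsilon_0$ so that the $\epsilon>0$ perturbation remains smaller than the achieved safety margins, yields conditions (C1), (C2) of Lemma~\ref{covlemma} for the parameter h-set $Z_{[\theta_l,\theta_r]}$ in the sense of Definition~\ref{defn:altcover}; the entry ($y$) condition (C1) follows from continuity, since $W_L^u(0,0)$ sits at $y=-1$ in the interior of the entry range of $X_{S_{ul},\text{ls}}$. The subtle point here is that for $\epsilon$ small the trajectory defining $W_L^u$ passes close to the equilibrium for a long time, so the ``smallness'' of the $\epsilon$-perturbation has to be interpreted carefully; this can be handled by combining (H4) (which already packages the required $(\epsilon,\theta)$-continuous description of the unstable manifold locally near $\Gamma_{dl}$) with the classical perturbation result for the fast heteroclinic segment. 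Once this covering is in place, Lemma~\ref{lem:sidedisk} applied to the vertical disk $W^s_B(\Gamma_{dl})$ from (H4) produces the terminating vertical disk in $X_B$, and Theorem~\ref{thm:hom2} delivers a $\bar\theta\in[\theta_l,\theta_r]$ and a trajectory starting on $W^u(\Gamma_{dl})$ and ending on $W^s(\Gamma_{dl})$, which is the desired homoclinic loop.
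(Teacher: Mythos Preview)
Your overall architecture is correct and matches the paper's: corner segments $S_{ur}, S_{dr}, S_{ul}$, long segments $S_u, S_d$, the block $B$ from (H4), and an application of Theorem~\ref{thm:hom2}. The covering $X_{S_{ur},\text{lu}} \cover{P_R} X_{S_{dr},\text{rs}}$ via the $y$-transversality of $\Psi_1$ and the termination at the vertical disk in $X_B$ via Lemma~\ref{lem:sidedisk} are both handled as the paper does.

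There is, however, a genuine circular dependency that you gloss over, and the paper explicitly flags it as the main difficulty. You build $S_{ul}$ first (with some fixed exit width $\delta_{u,ul}$), then find an admissible $\theta$-interval on which all isolation conditions persist, and \emph{then} choose $\theta_l,\theta_r$ so that $W_L^u(\theta_l), W_L^u(\theta_r)$ flank the exit edges of $X_{S_{ul},\text{ls}}$. But nothing guarantees the second choice lies inside the first: the required $|\theta_l|,|\theta_r|$ for the parameter covering scale with $\delta_{u,ul}$ (via $\partial_\theta\Psi_{-1}$), while the admissible $\theta$-range for isolation is governed by the off-diagonal $O(\theta)$ terms in $A_{\pm 1}(y,\theta)$ from~\eqref{H1:3}, and these can destroy isolation of a block that is thin in the exit direction at arbitrarily small $\theta$. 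Shrinking $\delta_{u,ul}$ to reduce the needed $\theta$-range can therefore simultaneously shrink the admissible $\theta$-range for isolation, so the two constraints may chase each other and never meet.

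The paper resolves this in two moves. First (Remark~\ref{rem:ratio}), it observes that if the fast-subsystem blocks forming the segments are kept \emph{almost square} (ratio $a/b$ in a fixed compact range), then the admissible $\theta$-range for isolation becomes \emph{independent} of their absolute size. Second (Lemma~\ref{lem:heuristic2} and the five-step proof of Theorem~\ref{thm:heuristic2}), it reverses your order of construction: it fixes $S_{ur},S_{dr},B$ and the $\theta$-range first, using the almost-square principle so that \emph{any} sufficiently small almost-square $S_{ul}$, and any $S_u$ connecting such an $S_{ul}$ to $S_{ur}$, will be isolating on that range. Only \emph{then} is $S_{ul}$ constructed, by a shooting-to-diagonals argument (Figure~\ref{shootingFig}) that simultaneously produces an almost-square segment and the parameter covering~\eqref{eq:Zcover}; the long segment $S_u$ is built last. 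This ordering is what breaks the circularity, and your sketch would need to incorporate some version of it to go through.
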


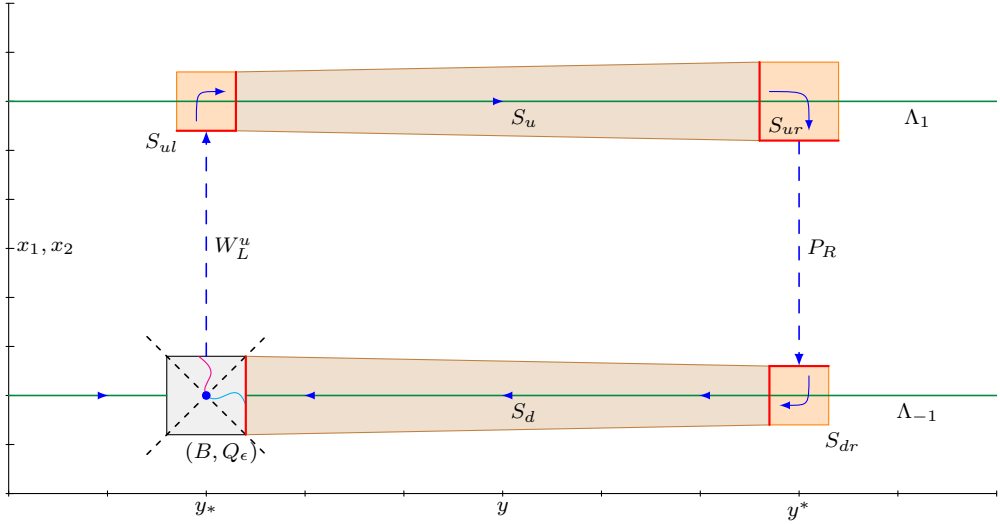
\begin{figure}
  \centering
  \begin{tikzpicture}[line cap=round,line join=round,>=latex,x=13mm,y=6.5mm]
 
  \draw[color=black] (-0.,0.) -- (10,0.);
  \foreach \x in {0,...,10}
  \draw[shift={(\x,0)},color=black] (0pt,1pt) -- (0pt,-1pt) node[below] {};

  \draw[color=black] (0.,0.) -- (0.,10);
  \foreach \y in {0,...,10}
  \draw[shift={(0,\y)},color=black] (2pt,0pt) -- (-1pt,0pt) node[left] {};
 
 % \draw[color=black] (0.,10.) -- (10.,10);
 % \foreach \z in {0,...,10}
 % \draw[shift={(\z,10)},color=black] (0pt,1pt) -- (0pt,-1pt) node[left] {};

  \clip(0,-1) rectangle (10,10);

  \filldraw[draw=orange, fill=orange!25] (1.7,7.4) -- (1.7,8.6) -- (2.3,8.6) -- (2.3,7.4) -- (1.7,7.4);
  \filldraw[draw=black, fill=shadecolor!10] (1.6,1.2) -- (1.6,2.8) -- (2.4,2.8) -- (2.4,1.2) -- (1.6,1.2);
  \filldraw[draw=orange, fill=orange!25] (7.7,1.4) -- (7.7,2.6) -- (8.3,2.6) -- (8.3,1.4) -- (7.7,1.4);
  \filldraw[draw=orange, fill=orange!25] (7.6,7.2) -- (7.6,8.8) -- (8.4,8.8) -- (8.4,7.2) -- (7.6,7.2);

  \filldraw[draw=brown, fill=brown!25] (7.7,1.4) -- (7.7,2.6) -- (2.4,2.8) -- (2.4,1.2) -- (7.7,1.4);
  \filldraw[draw=brown, fill=brown!25] (7.6,7.2) -- (7.6,8.8) -- (2.3,8.6) -- (2.3,7.4) -- (7.6,7.2);

  \draw [color=darkgreen,semithick,domain=0.:10.] plot(\x,{8});
  \draw [color=darkgreen,semithick,domain=0.:1.6] plot(\x,{2});
  \draw [color=darkgreen,semithick,domain=2.4:10.] plot(\x,{2});

  % --------------------- NEW -----------------------
 % \fill [color=shadecolor!43] (-0.,-0.) -- (-0.,10.) -- (10.,10.) -- (10.,-0.); 
  
  \draw [semithick,dash pattern=on 2pt off 3pt,color=black] (1.4,3.2) -- (2.6,0.8);
  \draw [semithick,dash pattern=on 2pt off 3pt,color=black] (1.4,0.8) -- (2.6,3.2);

  \draw [color=cyan!233] (2,2) .. controls (2.15,1.73) and (2.3,2.4) .. (2.4,1.8);
  \draw [color=magenta] (2,2) .. controls (1.92,2.3) and (2.2,2.4) .. (1.92,2.8);

 % \draw [color=blue] (5,5) .. controls (3,6) and (2,4) .. (0,5);
  % ------------------- OLD ---------------------------

  \draw [->,semithick,dash pattern=on 5pt off 5pt,color=blue] (2.,2.8) -- (2.,7.4);
  \draw [->,semithick,dash pattern=on 5pt off 5pt,color=blue] (8.,7.2) -- (8.,2.6);
 
  \node[draw,circle,inner sep=1pt,fill, color=blue] at (2,2){};

  \draw [->,color=blue] (4.99,8.) -- (5.01,8.);
  \draw [->,color=blue] (5.01,2.) -- (4.99,2.);
  \draw [->,color=blue] (0.99,2.) -- (1.01,2.);
  \draw [->,color=blue] (3.01,2.) -- (2.99,2.);
  \draw [->,color=blue] (7.01,2.) -- (6.99,2.);
 
  %\draw [color=red,thick] (1.6,2.8) -- (2.4,2.8);
  \draw [color=red,thick] (7.7,2.6) -- (8.3,2.6);
  \draw [color=red,thick] (2.4,2.8) -- (2.4,1.2);
  \draw [color=red,thick] (7.7,2.6) -- (7.7,1.4);
 
  \draw [color=red,thick] (7.6,8.8) -- (7.6,7.2);
  \draw [color=red,thick] (7.6,7.2) -- (8.4,7.2);
  \draw [color=red,thick] (2.3,7.4) -- (2.3,8.6);
  \draw [color=red,thick] (2.3,7.4) -- (1.7,7.4);
 
  \draw [color=blue,<-] (2.3-0.1,8.+0.2) ..controls(2.-0.1,8.+0.2).. (2.-0.1,7.4+0.2);
  \draw [color=blue,<-] (8.4-0.3,8.-0.6) ..controls(8.4-0.3,8.8-0.6).. (8.-0.3,8.8-0.6);
  \draw [color=blue,->] (8.+0.1,2.6-0.2) ..controls(8.+0.1,2.-0.2).. (7.7+0.1,2.-0.2);

  \begin{scriptsize}
    \draw (9.2,8.) node[anchor=north] {$\Lambda_{1}$};
    \draw (9.2,2.) node[anchor=north] {$\Lambda_{-1}$};
 
    \draw (2.,5.) node[anchor=west] {$W_L^{u}$};
    \draw (8.,5.) node[anchor=west] {$P_{R}$};
  
    \draw (5.,8.) node[anchor=north west] {$S_{u}$};
    \draw (5.,2.) node[anchor=north west] {$S_{d}$};   

    \draw (8.14,7.84) node[anchor=north east] {$S_{ur}$};
    \draw (1.3,7.5) node[anchor=north west] {$S_{ul}$};     
    \draw (2.61,1.22) node[anchor=north east] {$(B,Q_{\epsilon})$};
    \draw (8.7,0.7) node[anchor=south east] {$S_{dr}$};     
  
    \draw (2.,0.) node[anchor=north] {$y_{*}$};
    \draw (8.,0.) node[anchor=north] {$y^{*}$};

    \draw (5.,0.) node[anchor=north] {$y$};
    \draw (0.,5.) node[anchor=west] {$x_{1}, x_{2}$};
  \end{scriptsize}

\end{tikzpicture}
  \caption{A schematic drawing for the model example for the homoclinic orbit. 
    An isolating block with cones $(B,Q_{\epsilon})$ gives bounds on the unstable and stable manifold of the zero equilibrium. 
    The manifolds are connected by a sequence isolating segments and Poincar\'e maps,
    which forces the existence of the homoclinic orbit; the h-sets forming the sequence are plotted in red.}\label{schemeFig2}
\end{figure}

The proof is significantly more involved than the proof of Theorem~\ref{thm:heuristic}, due to $\theta$-dependent off-diagonal terms in the
diagonalization of the fast subsystem $A_{\pm 1}(y,\theta)$ assumed in~\eqref{H1:3}. We need to include these terms, as our model system is assumed to 
reflect the qualitative properties of a suitable diagonalization of the FitzHugh-Nagumo system at a particular value of $\theta$. 
We note, that we have to be careful with decreasing the range $[\theta_l, \theta_r]$, 
as it has to be wide enough to generate the covering~\eqref{eq:Zcover}.

Due to these difficulties, and because this theorem is not necessary to prove any of the main theorems in this thesis,
we will only sketch its proof without being very formal. 
We will focus on these parts, where it significantly differs from the proof of Theorem~\ref{thm:heuristic}.
Therefore we suggest to the reader to get acquainted with the proof of Theorem~\ref{thm:heuristic} first.

We start with the following technical remark.

\begin{rem}\label{rem:ratio}
  In the proof of Theorem~\ref{thm:heuristic} isolating segments are essentialy given by unions of isolating blocks
  for the fast subsystem $\dot{x}=f(x,y,0)$, over the slow variable $y$. The isolation inequalities (B1), (B2) (implying (S2b), (S3b) for the segment)
  are guaranteed to be satisfied 
  for any rectangular blocks with supports of the form\footnote{The change of variables is given by translation and rescaling, first coordinate exit, second entry.}
  \begin{equation}\label{eq:blockform}
    (0,\pm 1 ) + \left( [-a(y),a(y)] \times [-b(y),b(y)] \right)
  \end{equation}
  on compact ranges of $y$, with $a(y),b(y)$ small, due to the diagonalized form of $f$.

  In our current model example the diagonalization of $f$ at $\Lambda_{\pm 1}$ given by $A_{\pm 1}$ in~\eqref{H1:3} contains off-diagonal
  elements, which are small for $\theta$ small enough.
  However, later in the proofs we will need to decrease the sizes of the blocks after having already fixed the range $\theta \in [\theta_l, \theta_r]$.
  Whether the isolation inequalities persist will then depend on the ratio $a(y) / b(y)$;
  for example when the block is too thin in the exit direction, then (B1) may be easily violated by the off-diagonal $O(\theta)$ term.
  To prevent that, we a priori restrict ourselves to ranges $\theta \in [\theta_l, \theta_r]$ small enough such that
  the blocks of the \emph{almost square} form
    \begin{equation}
      (0,\pm 1 ) + \left( [-a,a] \times [-b,b] \right) , \quad b \approx a \quad \text{e.g. } |b-a| \leq a/4
   \end{equation}
   are isolating blocks for $a,b$ small for the fast subsystem
   \begin{equation}\label{eq:fasttemp}
     \dot{x} = f(x,y,\theta,0)
   \end{equation}
   for a (wide enough for our further constructions) compact range of $y \in \mathbb{Y}$ and all $\theta \in [\theta_l, \theta_r]$.
   In fact, given compact ranges $\mathbb{A} \subset \rr^+$, $\mathbb{Y} \subset \rr$, 
   we can always find values $\theta_l<0$, $\theta_r>0$,
   such that for all $\theta \in [\theta_l,\theta_r]$, all $y \in \mathbb{Y}$ and all $(a(y),b(y))$ sufficiently small,
   with 
   \begin{equation}
     a(y)/b(y) \in \mathbb{A},
   \end{equation}
   the sets~\eqref{eq:blockform} form isolating blocks for~\eqref{eq:fasttemp}, as described above.
\end{rem}

First we prove the existence of the corner isolating segments and the proposed covering relations as a separate lemma.

\begin{lem}\label{lem:heuristic2}
  For $\epsilon \in (0,\bar{\epsilon}_{0}]$, $\bar{\epsilon}_{0}>0$ small,
  there exist real numbers $\theta_l$, $\theta_r$ with 
  $\theta_l<0 < \theta_r$ such that for $\theta \in [\theta_l,\theta_r]$ the system~\eqref{fastslow2} possesses:
  \begin{itemize}
    \item two transversal sections of the form
  \begin{equation}
    \begin{aligned}
      \Sigma_{L} &:= \{ (x_{1},x_{2},y): \ x_{2} = 1-\varepsilon_{L} \} \cap \tilde{V}_{ul} \subset V_{ul}, \\
      \Sigma_{R} &:= \{ (x_{1},x_{2},y): \ x_{2} = -1+\varepsilon_{R} \} \cap \tilde{V}_{dr} \subset V_{dr},
    \end{aligned}
  \end{equation}
  $\tilde{V}_{ul}, \tilde{V}_{dr}$ being neighborhoods of $\Gamma_{ul}$ and $\Gamma_{dr}$;
  \item three isolating segments $S_{ul}$, $S_{ur}$, $S_{dr}$, as specified in Theorem~\ref{thm:heuristic2}
  such that
  \begin{equation}
    \begin{aligned}
      |X_{S_{ul},\text{ls}}| &\subset \Sigma_{L}, \\
      |X_{S_{dr},\text{rs}}| &\subset \Sigma_{R},
    \end{aligned}
  \end{equation}
  and the following covering relations hold
  \begin{align}
    Z_{[\theta_l, \theta_r]} &\cover{W_L^{u} }  X_{S_{ul},\text{ls}} \quad \forall \epsilon \in (0,\bar{\epsilon}_0], \label{eq:zthetacov}\\
    X_{S_{ur},\text{lu}} &\cover{P_{R}}  X_{S_{dr},\text{rs}} \quad  \forall  \epsilon \in (0,\bar{\epsilon}_0],\ \theta \in [\theta_l, \theta_r].\label{eq:rightcovering}
  \end{align}  
  \end{itemize}
  
  Moreover, the sections $\Sigma_{*}$ and the segments $S_{*}$ are $\epsilon$ and $\theta$-independent and 
  given a maximal diameter $\diam_{\text{max}}>0$, they can be chosen
  so that
  \begin{equation}
   \diam(S_{*}) < \diam_{\text{max}},
  \end{equation}
  and the segment $S_{ul}$ is formed by almost square isolating blocks of the fast subsystem, as described in Remark~\ref{rem:ratio}.
  In addition, the numbers $\theta_l$, $\theta_r$ can be chosen to be arbitrarily small in absolute value,
  and can be fixed after defining the segments $S_{ur}$, $S_{dr}$.
\end{lem}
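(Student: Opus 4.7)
The plan is to mirror the construction of Lemma~\ref{lem:heuristic1}, but with three substantive modifications: first, since we have replaced $S_{dl}$ with the isolating block $B$ from (H4), the ``left covering'' becomes a parameter-to-section covering through the unstable manifold of $\Gamma_{dl}$ rather than a Poincar\'e map; second, every object must be built robustly with respect to the extra parameter $\theta$; and third, the segment $S_{ul}$ has to be chosen in the ``almost square'' form dictated by Remark~\ref{rem:ratio}, to neutralize the off-diagonal $O(\theta)$ term in $A_1(y,\theta)$. The whole argument is based on establishing the topological data at $\theta=0$, $\epsilon=0^+$ and then invoking openness of covering relations under $C^0$ perturbations (Theorem 13 in~\cite{GideaZgliczynski}) plus continuous dependence of Poincar\'e maps and of $W^u_{\Gamma_{dl}}$ on parameters.

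First I would construct the right-side corner segments $S_{ur}$, $S_{dr}$ and the transversal section $\Sigma_R$. The construction is formally identical to the one in the proof of Lemma~\ref{lem:heuristic1} carried out at $\theta=0$: (H1)~\eqref{h1:1}--\eqref{H1:5} provides the straightened hyperbolic structure around $\Lambda_{\pm 1}$ near $\Gamma_{ur}$, $\Gamma_{dr}$, so cuboidal supports with $c_{S_{*}}$ given by translation and rescaling will satisfy (S1a), (S2b), (S3b) provided $\epsilon$ and the cuboid dimensions are small; here~\eqref{h1:2} supplies the monotonicity along the central direction. The transversality of $\Psi_1$ in $y$ from (H2) then produces the covering $X_{S_{ur},\text{lu}} \cover{P_R} X_{S_{dr},\text{rs}}$ at $\theta=0$ by the same ``shoot in the exit direction'' procedure as in Lemma~\ref{lem:heuristic1}. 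Since the Poincar\'e map $P_R$ and the isolation inequalities depend continuously on $\theta$, and covering is a $C^0$-open condition, there exists an open interval of $\theta$ around $0$ on which both the isolating-segment properties and \eqref{eq:rightcovering} persist.

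Next I would turn to the left corner. The isolating segment $S_{ul}$ is built as a union of almost-square fast-subsystem isolating blocks over a compact range of $y$: following Remark~\ref{rem:ratio}, I would first fix a target range for the ratio between the $x_1$- and $x_2$-widths of the fibres and choose the initial candidate $[\theta_l^{(0)}, \theta_r^{(0)}]$ small enough that such almost-square blocks are genuine isolating blocks for $\dot{x}=f(x,y,\theta,0)$ on the relevant $y$-range, uniformly in $\theta$. Condition (S1a) in the central direction follows from~\eqref{h1:2} and the affine form of $c_{S_{ul}}$, and is robust in $\theta$; (S2b), (S3b) follow from the diagonal part dominating both the $O(\theta)$ off-diagonal term (thanks to the almost-square geometry) and the $\epsilon$-small slow flow, exactly as in the second half of the proof of Theorem~\ref{thm:heuristic}. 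The section $\Sigma_L$ is defined on the face $\{x_2 = 1-\varepsilon_L\}$ of $S_{ul}$ and is transversal for all $\theta$ in the working interval by~\eqref{h1:1} and~\eqref{H1:4}.

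The hard part will be coordinating the sizes so that the covering $Z_{[\theta_l,\theta_r]} \cover{W_L^u} X_{S_{ul},\text{ls}}$ can be realized for a range of $\theta$ \emph{wide enough} while keeping everything else small. By (H4), the branch of $W^u_{\Gamma_{dl}}$ depends continuously on $(\epsilon,\theta)$, and at $\epsilon=0$ it coincides on $U_{-1}$ with the appropriate branch of the fast-subsystem unstable manifold of $(0,-1)$; condition (H2) with $\frac{\partial \Psi_{-1}}{\partial\theta}(-1,0)>0$ then guarantees that, for $\epsilon=0$, the map $\theta \mapsto W^u_L(\theta)$ crosses $\Sigma_L$ transversally through the limit point of the heteroclinic, with nonzero derivative in the exit coordinate $x_1$ of $X_{S_{ul},\text{ls}}$. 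The plan is: (i) use this transversality to pick $\theta_l<0<\theta_r$ together with a target exit-width for $X_{S_{ul},\text{ls}}$, so that $W^u_L([\theta_l,\theta_r])$ at $\epsilon=0$ overshoots both exit edges of $X_{S_{ul},\text{ls}}$; (ii) shrink the entry-width $\delta_{s,ul}$ so that the $y$-coordinate of $W^u_L(\theta)$ lies in the interior of the $y$-range of $X_{S_{ul},\text{ls}}$; (iii) use continuous dependence of $W^u_{\Gamma_{dl}}$ to propagate the covering to $\epsilon \in (0,\bar{\epsilon}_0]$, $\bar{\epsilon}_0$ small; and finally (iv) verify that $[\theta_l,\theta_r]$ is still contained in the earlier interval for which the right-side segments and the covering~\eqref{eq:rightcovering} were built (if not, shrink both $\varepsilon_L$ and the corner-segment sizes and repeat -- since the right-side construction is insensitive to the overall scale of $S_{ul}$, this termination is guaranteed). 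Taking $\bar{\epsilon}_0$ to be the minimum of the upper bounds appearing at each step yields the statement, including the freedom to take $|\theta_l|, |\theta_r|$ arbitrarily small by starting with a smaller exit-width in step~(i).
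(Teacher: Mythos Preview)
Your outline is structurally correct and matches the paper's for the right-side segments $S_{ur}$, $S_{dr}$ and the covering by $P_R$ (built at $\theta=0$ and perturbed). The substantive difference is on the left side, where you lean on the transversality of $\Psi_{-1}$ in $\theta$ and an iterative refinement in step~(iv). The paper instead uses a concrete ``shooting to diagonals'' device that resolves in one stroke the circularity you acknowledge between $\varepsilon_L$, the exit-width, and $[\theta_l,\theta_r]$: working in the fast subsystem at $y=-1$, it first fixes a larger square block $\tilde{B}_{ul}$ around $(0,1)$ (small enough that $W^{u/s}(0,1)$ avoid its diagonals for all small $|\theta|$), chooses diagonal points $D_{\pm}=(0,1)+(\pm\lambda_{ul},-\lambda_{ul})$, and applies the intermediate value theorem---as $\theta$ moves from $\tilde{\theta}_{l}$ (resp.\ $\tilde{\theta}_{r}$) to $0$, $W^{u}(0,-1)$ sweeps from the outer quadrant to the stable manifold of $(0,1)$---to produce $\theta_l<0<\theta_r$ at which $W^{u}(0,-1)$ passes \emph{exactly} through $D_-$, $D_+$. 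Since $D_\pm$ lie on $\{x_2=1-\lambda_{ul}\}$ at $x_1=\pm\lambda_{ul}$, defining $B_{ul}:=(0,1)+[-\lambda_{ul}+\varepsilon_{ul},\lambda_{ul}-\varepsilon_{ul}]\times[-\lambda_{ul},\lambda_{ul}]$ simultaneously gives the almost-square shape (so isolation holds on $[\theta_l,\theta_r]$ via Remark~\ref{rem:ratio}) and the overshooting needed for~\eqref{eq:zthetacov}, with $\varepsilon_L=\lambda_{ul}$. Shrinking $\lambda_{ul}$ shrinks $|\theta_l|,|\theta_r|$, giving the ``arbitrarily small'' clause and containment within the right-side $\theta$-interval without a separate iteration.

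Your route can be made to work, but you have not supplied the key ingredient: that shrinking $\varepsilon_L$ forces the required $|\theta_l|,|\theta_r|$ to shrink as well (a $\lambda$-lemma--type estimate on the amplification near the saddle $(0,1)$). Without it, step~(i) has a circular dependence---$W^u_L$ is defined on $\Sigma_L=\{x_2=1-\varepsilon_L\}$, but $\varepsilon_L$ is tied to the exit-width you are trying to choose---and the termination claim in step~(iv) (``since the right-side construction is insensitive to the scale of $S_{ul}$'') addresses a different issue. Also, step~(ii) is misstated: at $\epsilon=0$ the $y$-coordinate of $W^u_L(\theta)$ is exactly $-1$, so any positive entry-width works, and shrinking it only hurts; the containment for $\epsilon>0$ is handled by taking $\bar{\epsilon}_0$ small.
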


\begin{proof}
  To create the segments $S_{ur}$, $S_{dr}$ and generate the covering~\eqref{eq:rightcovering} we repeat the same construction as for the periodic orbit, for $\theta=0$
  and some small range $\epsilon \in (0,\bar{\epsilon}_0]$. 
  Covering relations and isolating segments will persist for $|\theta|$ small enough, hence
  from now on whenever we consider a small range of $\theta$'s containing $0$, we implicitly assume, that it is taken to be small enough for the above to hold.

  Now we will roughly sketch how to construct $S_{ul}$ and generate the covering~\eqref{eq:zthetacov}.
  Let us consider the $\theta$-dependent fast subsystem 
  \begin{equation}
    \dot{x}=f(x,-1,\theta,0).
  \end{equation}
  
  From (H1) it follows, that for $|\theta|$ small enough the set given by 
  \begin{equation}
    \tilde{\Sigma}_{L} := \{ (x_{1},x_{2}): \ x_{2} = 1-\tilde{\varepsilon}_{L} \} \cap \hat{V}_{ul},   
  \end{equation}
  is a transversal section for some small neighborhood $\hat{V}_{ul}$ of $(0,1)$, for $\tilde{\varepsilon}_L$ small. 
  Without loss of generality we can assume the crossing of the unstable and stable manifolds near the point
  $\Gamma_{ul}$ occurs for $x_{2}-1$ negative and take $\tilde{\varepsilon}_{L}> 0$.
  Moreover, we take $\tilde{\varepsilon}_L$ small enough, so for all $|\theta|$ small 
  and all $x$ in the set $\tilde{B}_{ul}:= (0,1) + ([- \tilde{\varepsilon}_L, \tilde{\varepsilon}_L]^2)$ the following two hold
  \begin{itemize}
   \item the forward trajectory of $x$ either belongs to the stable manifold of $(0,1)$ or escapes $\tilde{B}_{ul}$
  via the set $\tilde{B}_{ul}^- := (0,1) + (\{- \tilde{\varepsilon}_L,\tilde{\varepsilon}_L\} \times [- \tilde{\varepsilon}_L, \tilde{\varepsilon}_L])$.
    \item the backward trajectory of $x$ either belongs to the unstable manifold of $(0,1)$ or escapes $\tilde{B}_{ul}$
      via the set $\tilde{B}_{ul}^+ := (0,1) + ([- \tilde{\varepsilon}_L,\tilde{\varepsilon}_L ] \times \{- \tilde{\varepsilon}_L, \tilde{\varepsilon}_L \} )$
  \end{itemize}

  In fact, $\tilde{B}_{ul}$ forms an isolating block.
  The proof that fulfillment of these properties is possible for a small range of $\theta$ follows e.g. from arguments in~\cite{ZgliczynskiMan},
  by equipping $\tilde{B}_{ul}$ with a suitable cone field.
  In particular, a detailed proof of these properties would use the characterization
  of the unstable and stable manifold of a hyperbolic equilibrium as the forward invariant set of an isolating block with cones, given therein (Lemma 9). 

  In addition, for $|\theta|$ small enough we may ensure that the unstable and stable manifolds of $(0,1)$
  do not intersect the diagonals $\{ (0,1) + (\pm \lambda,\lambda) : \ \lambda \in \rr  \} \cap \tilde{B}_{ul}$.

  We now proceed to construct a range $\theta \in [\theta_l, \theta_r]$ and a smaller isolating block $B_{ul}$ such that
  \begin{itemize}
    \item the isolation inequalities for the block hold for all $\theta$'s in the range,
        so the block $B_{ul}$ will later be extended to the desired isolating segment $S_{ul}$; 
    \item the range is wide enough so the $\theta$-dependent unstable manifold $W^u(0,-1)$ covers
      (as a map of $\theta \in [\theta_l,\theta_r]=|Z_{[\theta_l,\theta_r]}|$) the bottom boundary h-set of $B_{ul}$ for $\theta=\theta_l$
      -- this will generate the covering relation~\eqref{eq:zthetacov}.
  \end{itemize}
  Our reasoning is depicted in Figure~\ref{shootingFig}. 

  First, let us consider the function $\Psi_{-1}(-1,\cdot)$ given by (H2) for the section $\tilde{\Sigma}_L$ .
  Without loss of generality we may assume that there exist arbitrarily small in absolute value parameters $\tilde{\theta}_l<0$ and $\tilde{\theta}_r>0$
  such that $\Psi_{-1}(-1,\cdot)$ is increasing for $\theta \in [\tilde{\theta}_l, \tilde{\theta}_r]$.
  Therefore $W^u(0,-1) \cap \tilde{B}_{ul}$ is in the bottom left quadrant of $\tilde{B}_{ul}$ for $\theta=\theta_l$
  and in the bottom right quadrant for $\theta=\theta_r$, where the quadrants are the four connected components of the set 
  $\tilde{B}_{ul} \setminus (W^u_{\tilde{B}_{ul}}(0,1) \cup W^s_{\tilde{B}_{ul}}(0,1) )$, see Figure~\ref{shootingFig}.

  We choose two points $D_{\pm}$ from the two half-diagonals $\{ (0,1) + (\pm \lambda, -\lambda), \ \lambda \in \rr^+\} \cap \tilde{B}_{ul}$
  defined by setting $\lambda:=\lambda_{ul}$, with $\lambda_{ul}>0$ taken to be small enough such that both of these points lie closer to $(0,1)$
  than the intersection points of $W^u(0,-1)$ with the half-diagonals for $\theta=\tilde{\theta}_l, \tilde{\theta}_r$.

  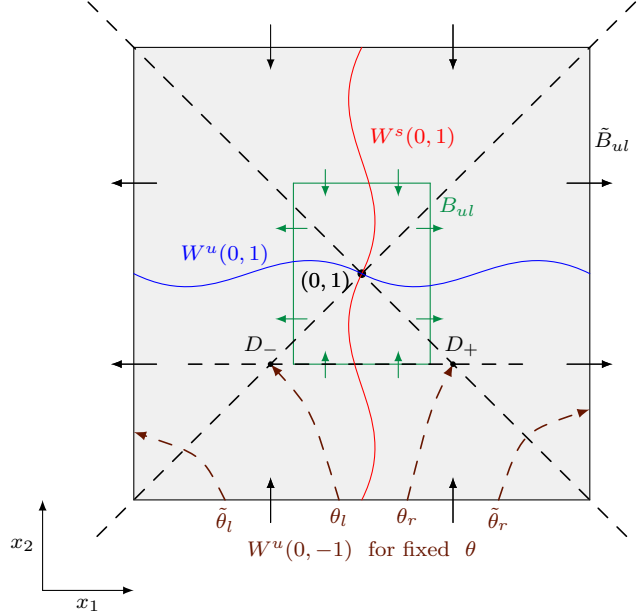
\begin{figure}
    \centering
    \begin{tikzpicture}[line cap=round,line join=round,>=latex,x=6.0mm,y=6.0mm]

  \fill [color=shadecolor!10] (-0.,-0.) -- (-0.,10.) -- (10.,10.) -- (10.,-0.); 
  \draw[color=black] (-0.,0.) -- (10,0.);
  \draw[color=black] (0.,0.) -- (0.,10);
  \draw[color=black] (0.,10.) -- (10.,10);
  \draw[color=black] (10.,10.) -- (10.,0.);

  \draw[color=darkgreen] (3.5,3.) -- (3.5,7.);
  \draw[color=darkgreen] (6.5,3.) -- (6.5,7.);
  \draw[color=darkgreen] (3.5,3.) -- (6.5,3.);
  \draw[color=darkgreen] (3.5,7.) -- (6.5,7.);
 
  \draw[color=darkgreen,->] (3.8,6.0) -- (3.1,6.0);
  \draw[color=darkgreen,->] (3.8,4.0) -- (3.1,4.0);
  \draw[color=darkgreen,->] (6.2,6.0) -- (6.8,6.0);
  \draw[color=darkgreen,->] (6.2,4.0) -- (6.8,4.0);
  \draw[color=darkgreen,->] (4.2,2.7) -- (4.2,3.3);
  \draw[color=darkgreen,->] (5.8,2.7) -- (5.8,3.3);
  \draw[color=darkgreen,->] (4.2,7.3) -- (4.2,6.7);
  \draw[color=darkgreen,->] (5.8,7.3) -- (5.8,6.7);

  \draw[color=black,->] (7.,10.5) -- (7.,9.5);
  \draw[color=black,->] (3.,-0.5) -- (3.,0.5);
  \draw[color=black,->] (7.,-0.5) -- (7.,0.5);
  \draw[color=black,->] (9.5,3.) -- (10.5,3.);
  \draw[color=black,->] (9.5,7.) -- (10.5,7.);
  \draw[color=black,->] (0.5,7.) -- (-0.5,7.);
  \draw[color=black,->] (0.5,3.) -- (-0.5,3.);
  
  \draw [semithick,dash pattern=on 5pt off 5pt,color=black] (1.2,3.) -- (8.8,3.);

  \draw[color=black,->] (3.,10.5) -- (3.,9.5);
  \draw[color=black,->] (7.,10.5) -- (7.,9.5);
  \draw[color=black,->] (3.,-0.5) -- (3.,0.5);
  \draw[color=black,->] (7.,-0.5) -- (7.,0.5);
  \draw[color=black,->] (9.5,3.) -- (10.5,3.);
  \draw[color=black,->] (9.5,7.) -- (10.5,7.);
  \draw[color=black,->] (0.5,7.) -- (-0.5,7.);
  \draw[color=black,->] (0.5,3.) -- (-0.5,3.);
 
  \draw [semithick,dash pattern=on 5pt off 5pt, color=Sepia, ->] (6,0) .. controls (6.5,2) .. (7,3);
  \draw [semithick,dash pattern=on 5pt off 5pt, color=Sepia, ->] (4.5,0) .. controls (3.9,2) .. (3,3);
  \draw [semithick,dash pattern=on 5pt off 5pt, color=Sepia, ->] (8,0) .. controls (8.5,1.5) .. (10,2);
  \draw [semithick,dash pattern=on 5pt off 5pt, color=Sepia, ->] (2.0,0) .. controls (1.5,1.0) .. (0,1.5);

 \node[draw,circle,inner sep=1pt,fill, color=black] at (5,5){};
 
 \node[draw,circle,inner sep=0.6pt,fill, color=black] at (3,3){};
 \node[draw,circle,inner sep=0.6pt,fill, color=black] at (7,3){};

  \draw [semithick,dash pattern=on 5pt off 5pt,color=black] (-1,11) -- (11,-1);
  \draw [semithick,dash pattern=on 5pt off 5pt,color=black] (11,11) -- (-1,-1);

  \draw [color=red] (5,5) .. controls (6,7) and (4,8) .. (5,10);
  \draw [color=red] (5,5) .. controls (4,3) and (6,2) .. (5,0);
  
  \draw [color=blue] (5,5) .. controls (7,4) and (8,6) .. (10,5);
  \draw [color=blue] (5,5) .. controls (3,6) and (2,4) .. (0,5);

  \clip(-3,-3) rectangle (13,13);
  
  \begin{scriptsize}
  \draw[color=black,->] (-2.,-2.) -- (-0,-2.);
  \draw[color=black,->] (-2.,-2.) -- (-2,-0.);

  \draw[color=red] (5.,8.) node[anchor=west] {$W^s(0,1)$};
  \draw[color=blue] (2.,5.) node[anchor=south] {$W^u(0,1)$};
  \draw[color=black] (4.2,4.4) node[anchor=south] {$(0,1)$};

  \draw[color=Sepia](4.5,0.0) node[anchor=north] {$\theta_l$}; 
  \draw[color=Sepia](6,0.0) node[anchor=north] {$\theta_r$}; 
  \draw[color=Sepia](2.0,0.0) node[anchor=north] {$\tilde{\theta}_l$}; 
  \draw[color=Sepia](8,0.0) node[anchor=north] {$\tilde{\theta}_r$}; 
 
  \draw[color=Sepia] (5.,-0.7) node[anchor=north] {$W^u(0,-1)$ \text{ for fixed } $\theta$};

  \draw[color=black] (4.2,4.4) node[anchor=south] {$(0,1)$};

  \draw[color=black] (2.8,3) node[anchor=south] {$D_{-}$};
  \draw[color=black] (7.2,3) node[anchor=south] {$D_{+}$};

  \draw[color=darkgreen]  (7.1,6.1) node[anchor=south] {$B_{ul}$};
  \draw[color=black]  (10.5,8.4) node[anchor=north] {$\tilde{B}_{ul}$};

  \draw (-2.,-1) node[anchor=east] {\scriptsize{$x_{2}$}};
  \draw (-1.,-2) node[anchor=north] {\scriptsize{$x_{1}$}};
\end{scriptsize}

\end{tikzpicture}
    \caption{Shooting with $W^u(0,-1)$ to the diagonal of $\tilde{B}_{ul}$.}\label{shootingFig}
  \end{figure}
  Let us now consider the bottom left quadrant.
  The first intersection point of $W^u(0,-1)$ with the section $\tilde{\Sigma}_L$ varies continuously with $\theta \in [\tilde{\theta_l},0]$,
  as does the intersection of the backward trajectory of $D_{-}$ with $\tilde{\Sigma}_L$ (which exists from our previous considerations
  about dynamics in $\tilde{B}_{ul}$).
  However, as $\theta$ varies from $\tilde{\theta}_l$ to $0$ these two points have to meet,
  as for $\theta=0$ the unstable manifold $W^u(0,-1)$ intersects $W^s(0,1)$.
  Therefore, there must exist a $\theta_l$ with $\tilde{\theta}_l < \theta_l<0$, such that $W^{u}(0,-1)$ passes through $D_{-}$.
  By the same argument there must exist a $\theta_r$ with $0 < \theta_r< \tilde{\theta}_r$ such that $W^u(0,-1)$ hits the half-diagonals at $D_{+}$.

  Now we are ready to define an h-set $B_{ul}$,
  which will form an isolating block for all $\theta \in [\theta_l,\theta_r]$ and give rise to the segment $S_{ul}$, by
  setting
  \begin{equation}
    \begin{aligned}
      |B_{ul}| &:= (0,1) + [-\lambda_{ul} + \varepsilon_{ul},\lambda_{ul} - \varepsilon_{ul}] \times [-\lambda_{ul},\lambda_{ul}],\\
      u(B_{ul})&:=s(B_{ul}) := 1, \\
      \varepsilon_{ul} &>0 \quad \text{small,}
    \end{aligned}
  \end{equation}
  with $x_1$ serving as the exit variable, $x_2$ as entry and the change of coordinates given by a translation to the origin and rescaling.
  Indeed, $B_{ul}$ is an isolating block for the whole range of $\theta$, as it can be chosen to be in an almost square form, 
  as described in Remark~\ref{rem:ratio}.
  
  We now add the variable $y$ and consider the full system for $\epsilon=0$, in which the fast subsystem is embedded.
  Let us observe, that for a given $\theta$ the previously considered unstable manifold $W^u(0,-1)$ from the fast subsystem at $y=-1$ 
  coincides now with the unstable manifold $W^u(\Gamma_{dl})$.

  We define the section $\Sigma_{L}$ by setting $x_2:=1-\lambda_{ul}$ (i.e. $\varepsilon_L:=\lambda_{ul}$).
  Since $B_{ul}$ forms an isolating block, this will indeed be a transversal section, when intersected with 
  some small neighborhood $\tilde{V}_{ul}$ of $\Gamma_{ul}$ containing $[-\lambda_{ul} + \varepsilon_{ul},\lambda_{ul} - \varepsilon_{ul}] \times \{ 1-\lambda_{ul} \} \times \{-1\}$.

  By Definition~\ref{defn:altcover}, 
  Lemma~\ref{covlemma} and previous considerations about the behavior of $W^u(0,-1)$, the h-set $Z_{[\theta_l,\theta_r]}$ $W^u_L$-covers the h-set formed from the set
  \begin{equation}
    \begin{aligned}
    & [-\lambda_{ul} + \varepsilon_{ul},\lambda_{ul} - \varepsilon_{ul}] \times \{ 1-\lambda_{ul} \} \times [-1-\delta_{u,ul}, -1+\delta_{u,ul}] \subset \Sigma_L, \\
    & \delta_{u,ul}>0 \quad \text{small},
  \end{aligned}
  \end{equation}
  by setting the first variable as exit, third as entry, and the change of coordinates as translation and rescaling.
  If we define a segment $S_{ul}$ by 
  \begin{equation}
    \begin{aligned}
    |S_{ul}|&:=  |B_{ul}| \times [-1-\delta_{u,ul}, -1+\delta_{u,ul}], \\
    u(S_{ul})&:=s(S_{ul}):=1
  \end{aligned}
  \end{equation}
  and the change of coordinates $c_{S_{ul}}$ as a translation and rescaling (so $x_1$ is the exit direction, $x_2$ entry and $y$ is the central one),
  then the covered h-set is precisely the h-set $X_{S_{ul},\text{ls}}$.
  
  Since $B_{ul}$ is an isolating block for $\theta \in [\theta_l, \theta_r]$, 
  the segment $S_{ul}$ will form an isolating segment
  for any $\epsilon \in (0,\bar{\epsilon}_0]$, $\bar{\epsilon}_0$ small enough and all $\theta \in [\theta_l, \theta_r]$, 
  and the covering~\eqref{eq:zthetacov} will persist.
\end{proof}

\begin{proof}[Proof of Theorem~\ref{thm:heuristic2}]
  The proof is analogous to the proof of Theorem~\ref{thm:heuristic}.
  We use Lemma~\ref{lem:heuristic2} to perform the following steps: 
  \begin{enumerate}
    \item First we construct two isolating segments $S_{ur}$ and $S_{dr}$ and the isolating block $B$
      (for $\epsilon>0$ small enough, $|\theta|$ small enough), as specified in the statement of the theorem. 
      The diameters of these sets have to be small enough, 
      so the higher order terms of the vector field can be assumed to be negligible for checking the isolation inequalities 
      in convex hulls of 
      \begin{itemize}
        \item the segment $S_{dr}$ and the h-set $B$,
        \item the segment $S_{ur}$ 
          and any choice of a small enough segment $S_{ul}$ given by a union of almost square isolating blocks for the fast subsystem, as in Remark~\ref{rem:ratio}.
      \end{itemize}
    \item Then, we fix the range $[\theta_l,\theta_r]$ such that 
      \begin{itemize}
        \item the segment $S_d$ connecting $S_{dr}$ and $B$ and constructed as a union of isolating blocks, like in the proof of Theorem~\ref{thm:heuristic} 
          is an isolating segment for $\theta \in [\theta_l,\theta_r]$ and $\epsilon>0$ small enough;
        \item  for all choices of the isolating segment $S_{ul}$ small enough in diameter, and given by a union of almost square isolating blocks of the fast subsystem, 
        the segment $S_u$ constructed as a union of isolating blocks (as in the proof of Theorem~\ref{thm:heuristic}), connecting $S_{ul}$ and $S_{ur}$ 
        would satisfy the isolation inequalities (S2b) and (S3b) for $\theta \in [\theta_l,\theta_r]$ and $\epsilon =0$
        -- see Remark~\ref{rem:ratio}.
    \end{itemize}
      \item By possibly further decreasing the range $[\theta_l,\theta_r]$, we construct $S_{ul}$ as specified above
        such that $S_{ul}$ forms an isolating segment for $\epsilon>0$ small, $\theta \in [\theta_l, \theta_r]$, 
        and the covering relation~\eqref{eq:Zcover} holds.
      \item We connect $S_{ul}$ and $S_{ur}$ by $S_{u}$, as in the proof Theorem~\ref{thm:heuristic}; by
        previous considerations $S_u$ will form an isolating segment for $\theta \in [\theta_l,\theta_r]$ and $\epsilon > 0$ small enough.
      \item Finally, we take the minimum of all upper bounds $\bar{\epsilon}_0$ on $\epsilon$ to get the desired range $\epsilon \in (0,\epsilon_0]$.
  \end{enumerate}

\end{proof}

\section{Descriptions of the computer assisted proofs}\label{sec:implementation}

Most of the numerical values in this section  are given as approximations with 8 significant digits.
An exception to that are the equation parameters, which are exact.
Therefore, computations that are described below are not actually rigorous, but the programs
execute rigorous computations for values close to the ones provided.
Actual values in the program used for rigorous computations are intervals with double precision endpoints -- we decided
that writing their binary representations would obscure the exposition.
If needed, exact values can always be retrieved by the reader from the programs.
If interval is very narrow and used to represent only one particular value,
such as a coordinate of a point, we just write a single value instead.

Rigorous and nonrigorous integration, computation of enclosures of Poincar\'e maps defined between affine sections
and their derivatives, linear interval algebra and interval arithmetics is handled
by routines from the CAPD library~\cite{CAPD} and we do not discuss it here.
For rigorous integration we used the Taylor integrator provided in CAPD.

The source code executing the proofs is available at the author's webpage~\cite{Czechowski}.
Our exposition loosely follows what is performed by our programs.
The best way to examine the proof in detail is to look into the source code files.  
For most objects we use the same notation in the description as in the source code,
however occasionally these two differ. 
In such cases identifying the appropriate variables should be easy from the context and from the comments
left in the source code files.

For a given vector object $\texttt{x}$, by $\texttt{x[i-1]}$ we denote the i-th coordinate of $\texttt{x}$.
We will denote the right-hand side of~\eqref{FhnOde} by $F$.

We recall that the fast subsystem of~\eqref{FhnOde} is given by: 
\begin{equation}\label{eq:fastsub}
     \begin{aligned}
       u'&=v, \\
       v'&=0.2(\theta v - u(u-0.1)(1-u) + w).
     \end{aligned}
\end{equation}

Unless otherwise specified, the half-open parameter intervals $\epsilon \in (0,\epsilon_0]$ is treated 
in computations by enclosing it in a closed interval $[0,\epsilon_0]$.
The assumption $\epsilon \neq 0$ is utilized only in verification of condition (S1a) for isolating segments (see Subsection~\ref{subsubsec:segments})
and in the verification of existence of isolating blocks satisfying the cone conditions around the zero equilibrium (see Section~\ref{sec:cc}).

\subsection{General remarks}

\subsubsection{H-sets and covering relations}\label{subsec:covrel}

  Almost every h-set $X$ appearing in our program 
  is two-dimensional with $u(X)=s(X):=1$ and can be identified with a parallelogram lying 
  within some affine section. 
  The only exceptions to this rule are three isolating blocks~\texttt{BU}, \texttt{BUext}, \texttt{BS} and the parameter h-set~\texttt{theta}, all 
  defined in Subsection~\ref{subsec:thm4}.

  Verification of covering relations is performed exclusively by means of Lemmas~\ref{covlemma}, \ref{backcovlemma}
  (see also Remark~\ref{rem:covlemma}). 
  The procedure is relatively straightforward and has been described in detail in several papers, see for example~\cite{WilczakZgliczynski2},
  therefore we do not repeat it here. We only mention that, if needed, the procedure may include subdivision of h-sets.
  This reduces the wrapping effect, but greatly increases runtimes
  (note that wrapping is already significantly reduced by use of the Lohner algorithm within the CAPD integration routines).
  Given an h-set $X$ we want to integrate with subdivision, we introduce an integer parameter $\texttt{div}$. It indicates into how many  
  equal intervals we divide the set in each direction. For example, setting $\texttt{div}=20$ means that we 
  integrate 20 pieces of $X^{-,l}, X^{+,r}$ and 400 pieces of $|X|$ to evaluate the image of the Poincar\'e map.
  In the outlines of our proofs we will indicate the values of $\texttt{div}$ to emphasize which parts of the proof involved time-consuming computations.

  \subsubsection{Segments}\label{subsubsec:segments}

  Our segments are cuboids placed along the slow manifold $C_{0}$ so that a part of it belonging to the singular orbit is enclosed by them. 
  For each segment $S$ we have $u(S)=s(S)=1$.
  All of the segments have the property~\eqref{eq:centralform}, with 
  the slow variable $w$ serving as the central variable. Therefore, to establish (S1a) it is enough to show~\eqref{eq:sgna},
  which is equivalent to verifying either $u>w$ or $u<w$ for all points of the segment. 
  This in particular allows us to handle half-open ranges $\epsilon \in (0,\epsilon_0]$ computationally,
  as at this point we effectively factor out the small parameter.
  
  Confirming (S2b) and (S3b) is simple, as all of the faces lie in affine subspaces.
  As the exit/entry directions we take the approximate directions of the unstable/stable bundles of $C_{0}$.
  Similarly as for verification of covering relations, we subdivide the sets $S^{-}$, $S^{+}$ before evaluating isolation inequalities. 
  The normals are constant within a face, the actual benefit is in reduction of wrapping in evaluation of the right-hand side of the vector field 
  over a face.

  Our segments are rigid and the stable and unstable bundles of $C_{0}$ actually slightly revolve as we travel along the manifold branches.
  By using a single segment to cover a long piece of the branch we could not expect conditions (S2b), (S3b) to hold anymore.
  Therefore we use sequences of short segments, the position of each is well-aligned with the unstable/stable bundles of $C_{0}$ - we call them \emph{chains of segments}.
  They are simply sequences of short segments placed one after another, so a longer piece of the slow manifold can be covered. 
  We require that each segment $S_{i}$ from a chain is an isolating segment and that
  for each two consecutive segments $S_{i}$, $S_{i+1}$ in the chain the transversal section $\Sigma_{i,\text{out}}$ containing the face $S_{i,\text{out}}$
  coincides with the section $\Sigma_{i+1,\text{in}}$ containing $S_{i+1,\text{in}}$ and there is a covering relation by the identity map
  \begin{equation}\label{eq:chain}
    X_{S_{i},\text{out}} \longlongcover{\id|_{\Sigma_{i+1,\text{in}}}}  X_{S_{i+1},\text{in}}.
  \end{equation}  
  In other words, the covering relation is realized purely by the change of coordinates $c_{X_{S_{i+1},\text{in}}}\circ c_{X_{S_{i},\text{out}}}^{-1}$.
  For purposes of checking the assumptions of Theorem~\ref{thm:2}, we treat the identity map as a special case of a Poincar\'e map, see Subsection~\ref{subsec:poinc}.

  A topic we think is worth exploring, is whether chains of segments are a viable alternative to numerical integration
  in computer assisted proofs for differential inclusions
  arising from evolution PDEs; or of stiff systems where one has a good guess for the orbit
  from a nonrigorous stiff integrator. In future we plan to conduct numerical simulations
  to get more insight on that matter.
  
  \paragraph{Representation of segments}\label{par:segments}
  
  Each segment \texttt{S} in our programs can be represented by 
  \begin{itemize}
    \item two points $\texttt{Front},\texttt{Rear} \in \rr^{3}$, serving as approximations of points on $C_{0}$,
    \item a 2x2 real matrix $\texttt{P}$ representing the rotation of the segment around the slow manifold (this does not need to be a rotation matrix) --
      it will contain approximate eigenvectors of the linearization of the fast subsystem~\eqref{eq:fastsub} at a selected point from $C_{0} \cap \texttt{S}$,
    \item four positive numbers $\texttt{a},\texttt{b},\texttt{c},\texttt{d}>0$ -- the pair $(\texttt{a},\texttt{b})$ describes how to stretch or narrow the exit and the entry widths of 
      the front face of the segment, respectively, and the pair $(\texttt{c},\texttt{d})$ does the same for the rear face.
  \end{itemize}

  For a pair of points $(a,b)$ and a 2x2 matrix $A$ we define an auxiliary linear map $\Pi_{a,b,A}: \rr^{2} \to \rr^{3}$ by
  \begin{equation}
    \Pi_{a,b,A}(x_{u},x_{s}) = \left[ {\begin{array}{c} A \\ 0 \end{array} } \right] \left[ {\begin{array}{c} ax_{u} \\ bx_{s} \end{array} } \right] .
  \end{equation}
  Our segment is then defined by
  \begin{equation}
    \begin{aligned}
      c_{\texttt{S}}^{-1}(x_{u},x_{s},x_{\mu})  &= (1-x_{\mu})  ( \texttt{Front} + \Pi_{\texttt{a},\texttt{b},\texttt{P}}(x_{u},x_{s}) )
      \\ &+ x_{\mu}  ( \texttt{Rear} + \Pi_{\texttt{c},\texttt{d},\texttt{P}}(x_{u},x_{s}) ).
   \end{aligned}
  \end{equation}
  
  For such segments one can define their front \& rear faces and the left/right entrance/exit faces $X_{\texttt{S},\text{in}}, X_{\texttt{S},\text{out}},
  X_{\texttt{S},\text{ls}}, X_{\texttt{S},\text{rs}}, X_{\texttt{S},\text{lu}}, X_{\texttt{S},\text{ru}}$ as in Section~\ref{sec:segments}.

  \paragraph{Construction of chains of segments}\label{subsec:chains}
  Our recipe for creating a chain of segments  \({\bf S} \)=$\{\texttt{S}_{\texttt{i}}\}_{\texttt{i} \in \{\texttt{1}, \dots, \texttt{N} \} }$ 
  along a branch of the slow manifold is as follows.
  We assume we are given two disjoint segments $\texttt{S}_{\texttt{0}}$, $\texttt{S}_{\texttt{N+1}}$ 
  positioned along the slow manifold $C_{0}$ that we would like to connect by a chain.
  Without loss of generality we may assume that we are on the upper branch of $C_{0}$, so
  $|\texttt{S}_{\texttt{0}}|$ is to the left of $|\texttt{S}_{\texttt{N+1}}|$ in terms of variable $w$.
  
  For each segment $\texttt{S}_{\texttt{i}}$ we will use 
  its representation 
  \begin{equation}
    (\texttt{Front}_{\texttt{i}}, \texttt{Rear}_{\texttt{i}}, \texttt{P}_{\texttt{i}}, \texttt{a}_{\texttt{i}}, \texttt{b}_{\texttt{i}}, 
    \texttt{c}_{\texttt{i}}, \texttt{d}_{\texttt{i}})
  \end{equation}
  given in Paragraph~\ref{par:segments}. Wherever we mention an identity map $\id$
  between two h-sets, we mean the identity map restricted to the common transversal section.

  Our chain will connect the segments $\texttt{S}_{\texttt{0}}$, $\texttt{S}_{\texttt{N+1}}$ 
  in the sense that
  \begin{align}
    X_{\texttt{S}_{\texttt{0}}} &\cover{\id} X_{\texttt{S}_{\texttt{1}},\text{in}},\label{eq:firstcover} \\
    X_{\texttt{S}_{\texttt{N}},\text{out}} &= X_{\texttt{S}_{\texttt{N+1}},\text{in}}.\label{eq:lastcover}
  \end{align}
  We remark that we connect the faces of two segments as this is what we later do in the proof of Theorem~\ref{thm:main1}, but with little changes these 
  could as well be any two parallelogram h-sets placed on sections crossing $C_{0}$.

  Creating a chain is a sequential process akin to rigorous integration with a fixed time step; to construct the segment $\texttt{S}_{\texttt{i}}$ we
  need to know the representation of the segment $\texttt{S}_{\texttt{i-1}}$,
  If $1 \leq i<N$ we define the segment $\texttt{S}_{\texttt{i}}$ as follows
  \begin{itemize}
    \item we set $\texttt{Front}_{\texttt{i}}:= \texttt{Rear}_{\texttt{i-1}}$.
    \item The point $\texttt{Rear}_{\texttt{i}}$ is constructed by locating an (approximate) equilibrium of the fast subsystem~\eqref{eq:fastsub} with Newton's method
      for 
      \begin{equation}
        w := \texttt{Front}_{\texttt{i}}\texttt{[2]} + \texttt{1}/\texttt{N},
      \end{equation}
      and then embedding it into the 3D space by adding the value of $w$ as the third coordinate.
    \item Columns of the matrix $\texttt{P}_{\texttt{i}}$ are set as approximate eigenvectors of linearization of~\eqref{eq:fastsub} at $\texttt{Rear}_{\texttt{i}}$.
    \item For $(\texttt{a}_{\texttt{i}},\texttt{b}_{\texttt{i}})$ we put
      \begin{equation}\label{eq:shrinkAndExpand}
        \begin{aligned}
          \texttt{a}_{\texttt{i}} &:= \texttt{c}_{\texttt{i-1}}/\texttt{factor},\\
          \texttt{b}_{\texttt{i}} &:= \texttt{factor} \times \texttt{d}_{\texttt{i-1}}
        \end{aligned}
      \end{equation}
      where $\texttt{factor}$ is a real number greater than 1. In our case hardcoding $\texttt{factor}:=1.05$ gave good results.
    \item For $(\texttt{c}_{\texttt{i}},\texttt{d}_{\texttt{i}})$ we put
      \begin{equation}
        \begin{aligned}
          \texttt{c}_{\texttt{i}} &:= \frac{\texttt{i}}{\texttt{N}} \texttt{a}_{\texttt{N+1}} + \frac{\texttt{N-i}}{\texttt{N}} \texttt{c}_{\texttt{0}}, \\
          \texttt{d}_{\texttt{i}} &:= \frac{\texttt{i}}{\texttt{N}} \texttt{b}_{\texttt{N+1}} + \frac{\texttt{N-i}}{\texttt{N}} \texttt{d}_{\texttt{0}}.
        \end{aligned}
      \end{equation}
  \end{itemize}
  For the segment $\texttt{S}_{\texttt{N}}$ we proceed by the same rules with the exception that
  we set 
  \begin{equation}
    \begin{aligned}
    \texttt{Rear}_{\texttt{N}}&:= \texttt{Front}_{\texttt{N+1}}, \\
    \texttt{P}_{\texttt{N}}&:=\texttt{P}_{\texttt{N+1}},
  \end{aligned}
  \end{equation}
  to comply with~\eqref{eq:lastcover}.
  
  For such $\texttt{S}_{\texttt{i}}$ we check the conditions (S1a), (S2b), (S3b) and the covering relation
  $X_{\texttt{S}_{\texttt{i-1}},\text{out}} \cover{\id} X_{\texttt{S}_{\texttt{i+1}},\text{in}}$.
  Then, we proceed to the next segment.

  For $\texttt{N}$ large it is easy to satisfy (S2b), (S3b) for each short segment $\texttt{S}_{\texttt{i}}$,
  as each $\texttt{P}_{\texttt{i}}$ approximates the directions of the unstable and stable bundle of $C_{0}$.
  Moreover, because $\texttt{Rear}_{\texttt{i-1}}$, $\texttt{Rear}_{\texttt{i}}$ are close, for each $\texttt{i} \in \{1,\dots,\texttt{N}\}$ we have
  \begin{equation}
    \texttt{P}_{\texttt{i}} \circ \texttt{P}_{\texttt{i-1}} \approx \id .
  \end{equation}
  Thus, for the identity map in the h-sets variables we get
  \begin{equation}
    \id_{s} = c_{X_{\texttt{S}_{\texttt{i}}, \text{in}} }  \circ c_{X_{\texttt{S}_{\texttt{i-1}}, \text{out}} }^{-1}
    \approx \left[ {\begin{array}{cc} \texttt{factor} & 0 \\ 0 & \frac{1}{\texttt{factor}} \end{array} } \right],
  \end{equation}
  and there are good odds that by use of Lemma~\ref{covlemma} we can succeed in satisfying the conditions~\eqref{eq:firstcover} and~\eqref{eq:chain}.

\subsection{Proof of Theorem~\ref{thm:main1}}\label{subsec:thm11}
To deduce the existence of a periodic orbit we check the assumptions of Theorem~\ref{thm:2}.
Our strategy resembles the one given for the model example in Subsection~\ref{sec:covslowfast}, which was portrayed in Figure~\ref{schemeFig}.
The main modifications are due to numerical reasons:
\begin{itemize}
  \item we introduce two additional sections on the trajectories of the fast subsystem heteroclinics, in some distance from the corner segments,
  \item instead of the ``long'' segments $S_{u}$, $S_{d}$ we place two chains of segments along the slow manifold connecting the corner segments
    -- see Paragraph~\ref{subsec:chains}.
\end{itemize}

We divide the parameter range $\epsilon \in (0,1.5 \times 10^{-4}]$ into 
two subranges $(0,10^{-4}]$ and $[10^{-4}, 1.5 \times 10^{-4}]$. The procedure is virtually the same for both ranges
and the only reason for subdivision is that the proof would not succeed for the whole range $\epsilon \in (0,1.5 \times 10^{-4}]$ in one go, due to an accumulation of overestimates.
Following steps are executed by the program for both ranges:

\begin{enumerate}[leftmargin=*]
  \item  First, we compute four ``corner points'' 
    \begin{equation}
      \begin{aligned}
        \texttt{GammaDL} &= (-0.10841296,0,0.025044220) \approx (\Lambda_d(w_{*}),w_{*}), \\
        \texttt{GammaUL} &= (0.97034558,0,0.025044220) \approx (\Lambda_u(w_{*}),w_{*}), \\
        \texttt{GammaUR} &= (0.84174629,0,0.098807631) \approx (\Lambda_u(w^{*}),w^{*}), \\
        \texttt{GammaDR} &= (-0.23701225,0,0.098807631) \approx (\Lambda_d(w^{*}),w^{*}).
      \end{aligned}
    \end{equation}
    This computation is nonrigorous; in short we perform a shooting with $w$ procedure for the fast subsystem~\eqref{eq:fastsub}
    from first-order approximations of stable and unstable manifolds of the equilibria to an intermediate section;
    this is an approach like in~\cite{GuckenheimerKuehn}.
    The matrices given by the approximate eigenvectors of the linearization of the fast subsystem at points $\texttt{GammaDL}$,
    $\texttt{GammaUR}$, $\texttt{GammaUL}$, $\texttt{GammaDR}$ are 
    \begin{equation}
      \begin{aligned}
        \texttt{PDL} &= \texttt{PUR} = \left[ {\begin{array}{cc} 1 & 1 \\ 0.34113340 & -0.21913340 \end{array} } \right], \\
        \texttt{PUL} &= \texttt{PDR} = \left[ {\begin{array}{cc} 1 & 1 \\ 0.46313340 & -0.34113340 \end{array} } \right],
      \end{aligned}
    \end{equation}
    respectively.
  \item  We initialize four ``corner segments'' \texttt{DLSegment}, \texttt{ULSegment}, \texttt{URSegment} and \texttt{DRSegment} with data
    from Table~\ref{table:corseg} as described in Paragraph~\ref{par:segments} and check that they are isolating segments. 
    For checking the isolation formulas (S2b), (S3b) we subdivide enclosures of each of the respective faces of the exit and the entrance set into $150^{2}$ equal pieces. 
    \begin{table}[ht]
      \begin{center}
      \begin{tabular}{ | l | l | l | l |}
        \hline
        Segment &  \texttt{Front}, \texttt{Rear} & \texttt{P} & $(\texttt{a},\texttt{b}) = (\texttt{c},\texttt{d})$  \\ \hline
        \texttt{DLSegment} & $\texttt{GammaDL} \pm (0,0,0.005)$   & \texttt{PDL} & $(0.015, 0.012)$ \\ \hline
        \texttt{ULSegment} & $\texttt{GammaUL} \mp (0,0,0.005)$  & \texttt{PUL} & $(0.01, 0.015)$ \\ \hline
        \texttt{URSegment} & $\texttt{GammaUR} \mp (0,0,0.005)$   & \texttt{PUR} & $(0.029, 0.019)$ \\ \hline
        \texttt{DRSegment} & $\texttt{GammaDR} \pm (0,0,0.005)$  & \texttt{PDR} & $(0.007, 0.03)$ \\ \hline
      \end{tabular}\caption{Initialization data for the four corner segments. The pair $(\texttt{a},\texttt{b})$
      determines the exit/entry direction widths of the segments and the difference $|\texttt{Front[2]} - \texttt{Rear[2]}|$ the central
        direction width.}\label{table:corseg}
      \end{center}
    \end{table}
  \item Unlike in the model example -- Lemma~\ref{lem:heuristic1}, we do not
    place the transversal sections we would integrate to as supersets of the left/right exit faces of the corner segments.
    Instead, two sections  $\texttt{leftSection}$ and $\texttt{rightSection}$ are positioned in some distance from the corner segments.
    We move away from the segments because rigorous integration too close to slow manifolds poses a numerical problem - 
    the vector field slows too much and the routines for verifying transversality fail.

    The section $\texttt{leftSection}$ is placed on the integration path between the segments \texttt{DLSegment}, \texttt{ULSegment}
    and the section $\texttt{rightSection}$ on the path between \texttt{URSegment} and \texttt{DRSegment}. 
    We define the following Poincar\'e maps:
    \begin{itemize}
      \item $\texttt{pmDL}$ is the Poincar\'e map from $X_{\texttt{DLSegment}, \text{ru}}$ to $\texttt{leftSection}$,
      \item $\texttt{pmUL}$ is the Poincar\'e map from a subset of $\texttt{leftSection}$ to the affine section containing $X_{\texttt{ULSegment}, \text{ls}}$,
      \item $\texttt{pmUR}$ is the Poincar\'e map from $X_{\texttt{URSegment},\text{lu}}$ to $\texttt{rightSection}$,
      \item $\texttt{pmDR}$ is the Poincar\'e map from a subset of $\texttt{rightSection}$ to the affine section containing $X_{\texttt{DRSegment}, \text{rs}}$.
    \end{itemize}

    Let now us briefly describe what covering relations we verify.

    We integrate the h-set $X_{\texttt{DLSegment}, \text{ru}}$ to $\texttt{leftSection}$ and create an h-set
    $\texttt{midLeftSet} \subset \texttt{leftSection}$ so that it is $\texttt{pmUL}$-covered by a small margin by $X_{\texttt{DLSegment}, \text{ru}}$,
    see Lemma \ref{covlemma}. 
    Then, we integrate the h-set $X_{\texttt{ULSegment}, \text{ls}}$ backward in time to $\texttt{leftSection}$ and 
    verify that $\texttt{midLeftSet}$ $\texttt{pmUL}$-backcovers $X_{\texttt{ULSegment}, \text{ls}}$.

    The h-set $X_{\texttt{URSegment}, \text{lu}}$ is integrated to $\texttt{rightSection}$, and, as in the previous case, we define
    an h-set $\texttt{midRightSet} \subset \texttt{rightSection}$, such that it is $\texttt{pmUR}$-covered 
    by $X_{\texttt{URSegment}, \text{lu}}$.
    Then, we integrate the h-set $X_{\texttt{DRSegment}, \text{rs}}$ backward in time to $\texttt{rightSection}$ and 
    verify that $\texttt{midRightSet}$ $\texttt{pmDR}$-backcovers $X_{\texttt{DRSegment}, \text{rs}}$.

    Altogether, we have the following covering relations:
    \begin{equation}
      \begin{aligned}
        X_{\texttt{DLSegment},\text{ru}} &\cover{\texttt{pmDL}} \texttt{midLeftSet} \backcover{\texttt{pmUL}} X_{\texttt{ULSegment}, \text{ls}}, \\
        X_{\texttt{URSegment},\text{lu}} &\cover{\texttt{pmUR}} \texttt{midRightSet} \backcover{\texttt{pmDR}} X_{\texttt{DRSegment}, \text{rs}}.
      \end{aligned}
    \end{equation}
    Parameter $\texttt{div}$ describing partitioning of h-sets for the rigorous integration was set to 20.

  \item To close the loop, we connect the h-sets $X_{\texttt{ULSegment},\text{out}}$ and $X_{\texttt{URSegment},\text{in}}$ by a chain of segments
    \({\bf UpSegment} \) and $X_{\texttt{DRSegment},\text{out}}$ and $X_{\texttt{DLSegment},\text{in}}$ by a chain of segments \({\bf DownSegment} \)
    as described in Paragraph~\ref{subsec:chains}. 
    The number of isolating segments in each chain $\texttt{N}$ is set to 80.  
    For verification of the isolation conditions (S2b), (S3b) in each chain we partition the enclosures of each of the
    faces of their exit and entrance sets into $110^{2}$ equal pieces.
\end{enumerate}

Many choices of program parameters were arbitrary; of most importance are the exit/entry/central direction widths of the corner segments
given in Table~\ref{table:corseg}. 
For very small $\epsilon$ ranges (such as $\epsilon \in (0,10^{-8}]$, $\epsilon \in (0,10^{-7}]$)
various reasonable guesses would yield successful proofs, due to the eminent fast-slow structure of the equations (cf. Section~\ref{sec:covslowfast}). 
However,  the range of possibilities would diminish as the upper bound on $\epsilon$ was increased,
and finding values for our final $\epsilon$ ranges was a long trial-and-error process. 
This can be explained as follows.
For large $\epsilon$'s the periodic orbit moves away from the singular orbit,
around which we position our sequence of segments and h-sets. Moreover, the hyperbolicity of the slow manifold, which plays a vital role in the creation
of the periodic orbit near the singular limit , decreases as $\epsilon$ increases.
Each time a value of a program parameter was adjusted in an attempt to succeed with a particular part of the proof,
it was possible that another part would fail. For example, increasing the central direction
widths of the corner segments facilitated the verification of covering relations for the Poincar\'e maps; but too much of an increase 
made isolation checks for the corner segments fail; increasing the exit direction widths of $\texttt{ULSegment}$, $\texttt{DRSegment}$
made the exit direction isolation checks (S2b) in segments of \({\bf UpSegment} \), \({\bf DownSegment} \)
easier to satisfy but had a negative effect on the covering relations; etc. 
It was particularly difficult to simultaneously obtain both isolation for the corner segments and covering relations in the fast regime.

By repeating the process of
\begin{itemize}
  \item trying to slightly increase the $\epsilon$ range,
  \item executing the program with given parameters,
  \item should the proof fail, changing the parameters in favor of the inequalities which were not fulfilled,
    at the cost of the ones where we still had some freedom,
\end{itemize}
we obtained a relatively large range of $\epsilon \in (0,1.5 \times 10^{-4}]$, for which the inequalities needed in our assumptions
hold by a very small margin. In particular, the right bound $1.5 \times 10^{-4}$ was large enough 
to include it in a continuation-type proof of Theorem~\ref{thm:main2}, performed in reasonable time and without using multiple precision.

It would certainly be helpful to have that procedure automated.
As one can see, we are effectively dealing with a~\emph{constraint satisfaction problem} (see~\cite{constraint})
where variables, given by the program parameters have to be chosen to satisfy constraints given by inequalities coming from covering relations
and isolation conditions.
In addition, verification of whether constraints are satisfied requires execution of the program and is fairly expensive computationally.
A suitable algorithm for adjustment of parameters to satisfy the constraints would allow to extend the range
of the small parameter $\epsilon \in (0,\epsilon_0]$ even further.
We remark that obtaining a large value of $\epsilon_0$ in this proof is crucial for achieving this parameter value 
with further validated continuation algorithms (like the one in Theorem~\ref{thm:main2})
This is due to the fact that the period of this unstable orbit is roughly proportional to $\frac{1}{\epsilon}$ 
(see Table~\ref{table:continuation}) which makes it virtually impossible to track the orbit by numerical integration methods
for very small $\epsilon$.

\subsection{Proof of Theorem~\ref{thm:main2}}\label{subsec:continuation}
Our strategy is to check the assumptions of Theorem~\ref{cov:sequence}
for a sequence of h-sets placed along a numerical approximation of an actual periodic orbit (not the singular orbit).
This can succeed for a very small range of $\epsilon$, then we need to recompute our approximation,
ending up with a continuation procedure.

We start by generating a numerical approximation vector of 212 points from the periodic orbit for $\epsilon = 0.001$ 
obtained from a nonrigorous continuation with MATCONT~\cite{MATCONT}.
From there we perform two continuation procedures, down to $\epsilon = 1.5 \times 10^{-4}$ and up to $\epsilon = 0.0015$.
Each step of the continuation consists of a routine $\texttt{proveExistenceOfOrbit}$ performed on equation~\eqref{FhnOde}
with an interval $\texttt{currentEpsRange}$ of width $\texttt{incrementSize}$ substituted for $\epsilon$.
It can be described by the following steps.

\begin{enumerate}[leftmargin=*]
  \item Given an approximation vector \( \bf initialGuess \) of $\texttt{pm\_count}$ points of the periodic orbit
    obtained from the previous continuation step (in the first step this is the MATCONT-precomputed approximation),
    we initialize a Poincar\'e section $\texttt{section}_{\texttt{i}}$ for each of the points $\texttt{initialGuess}_{\texttt{i}}$ by setting
    the origin of the section as the given point and its normal vector as the vector as the difference between
    the current and the next point of the approximation.
    Then, we refine the approximation by a nonrigorous $C^{1}$
    computation of Poincar\'e maps and their derivatives and application of Newton's method to the system of the form~\eqref{eq:problemform}.
    Note that we set the normal vector to be the difference between the current and the next point on the orbit rather than the direction
    of the vector field, as the latter can be misleading close to the strongly hyperbolic slow manifold.
    Let us denote by \( \bf correctedGuess \) the Newton-corrected approximation.
  \item Each $\texttt{section}_{\texttt{i}}$ is equipped with a coordinate system used for
    the purposes of covering by h-sets as described in Subsection~\ref{subsec:covrel}. The first column corresponding
    to the exit direction is obtained by a nonrigorous $C^{1}$ integration of any non-zero normalized vector by the variational equation of~\eqref{FhnOde}
    along the approximated orbit until it stabilizes; and then propagating it for each $\texttt{i}$ by one additional integration loop.
    Similarly, the second column (corresponding to the entry direction) is computed by backward integration of any non-zero normalized vector until it stabilizes and
    further propagation by inverse Poincar\'e maps for each $\texttt{i}$. 
    Then, we project these columns onto the orthogonal complement of $\texttt{normal}_{\texttt{i}}$.
  \item Let $\texttt{pm}_{\texttt{i}}$ be the Poincar\'e map between (a subset of) 
    $\texttt{section}_{\texttt{i}}$ and $\texttt{section}_{\texttt{i+1}\bmod \texttt{pm\_count}}$.
    We initialize a sequence of h-sets $X_{i}$ on sections $\texttt{section}_{\texttt{i}}$
    by specifying $X_{0}$ and sequentially generating the sets $X_{1},\dots,X_{\texttt{pm\_count}-1}$,
    so the covering relations $X_{i} \cover{\texttt{pm}_{\texttt{i}}} X_{i+1}$, $\texttt{i} \in \{0, \dots, \texttt{pm\_count} - 2\}$ hold by a small margin.     
    The periodic orbit is strongly hyperbolic and the h-sets will quickly grow in the exit direction.
    Therefore we put an additional upper bound on the growth of h-sets in that direction
    to prevent overestimates coming from integrating too large h-sets.
    For rigorous integration of h-sets the parameter $\texttt{div}$ was set to 5.
  \item We check that the following covering relation holds
    \begin{equation}
      X_{\texttt{pm\_count} - 1} \longlongcover{\texttt{pm}_{\texttt{pm\_count} - 1}} X_{0}.
    \end{equation}
    This implies the existence of the periodic orbit of for $\epsilon \in \texttt{currentEpsRange}$, by Theorem~\ref{cov:sequence}.
  \item We produce a new \( \bf initialGuess \) for the next step of continuation by removing the points from the approximate orbit
    where the integration time between the respective sections is too short and adding them where it is too long.
    This way we can adapt the number of sections to the period of the orbit.
  \item We move the interval $\texttt{currentEpsRange}$ and proceed to the next step of the continuation.
\end{enumerate}

The continuation starts with $\texttt{incrementSize}=10^{-6}$ and the size (diameter) of the h-set $X_{0}$ of order $10^{-6}$ and both of these parameters vary throughout the proof.
If any step of $\texttt{proveExistenceOfOrbit}$ fails - for example Newton's method does not converge or there is no covering
between the h-sets, the algorithm will try to redo all the steps for a decreased $\texttt{incrementSize}$ and proportionally decrease the size of the initial h-set.
If the algorithm keeps succeeding, the program will try to increase $\texttt{incrementSize}$ and the diameter to speed up the continuation procedure.
The theorem is proved when bounds of $\texttt{currentEpsRange}$ pass the bounds of $\epsilon$ we intended to reach.
Values of $\texttt{incrementSize}$ for several different $\texttt{currentEpsRange}$ can be found in Table~\ref{table:continuation} along with
periods of the periodic orbit and amounts of sections given by $\texttt{pm\_count}$.

\begin{table}[ht]
  \begin{center}
  \scalebox{0.90}{
      \begin{tabular}{ | l | l | l | l |}
        \hline
        \texttt{currentEpsRange} & \texttt{incrementSize} &  \texttt{period} & $\texttt{pm\_count}$  \\ \hline
        $[0.0014933550, 0.001499146]$ & $5.7918161 \times 10^{-6}$ &    [201.35884, 207.17313] & 179 \\ \hline
        $[0.001, 0.001001]$ & $10^{-6}$ &  [283.37351, 292.02862] & 212 \\ \hline
        $[4.9947443, 5.0200138] \times 10^{-4}$ & $2.5269501 \times 10^{-6}$ &  [521.07987, 557.55718] & 301 \\ \hline
        $[1.5057754, 1.5132376] \times 10^{-4}$ & $7.4621539 \times 10^{-7}$ &    [1593.3303, 1846.4787] & 671  \\ \hline
      \end{tabular}
    }\caption{Sample values from the validated continuation proof of Theorem~\ref{thm:main2}. As one can see,
    the period increases significantly as $\epsilon \to 0$, making it necessary to introduce more sections and lengthening the computations. }
    \label{table:continuation}
  \end{center}
\end{table}

\subsubsection{Further continuation}\label{subsec:further}

We have decided to stop the validated continuation at $\epsilon=0.0015$.
Above that value our continuation algorithm encountered
difficulties in its nonrigorous part, and needed many manual readjustments of the continuation parameters.
As we later checked with MATCONT, this seemed not to have been caused by any bifurcation,
so, most likely, it was just a defect of our ad-hoc method of continuing approximations of the periodic orbit
by computation of Poincar\'e maps between sections.
Nonrigorous continuation methods implemented in continuation packages such as MATCONT
are based on approximation of the orbit curve by Legendre polynomials and seem more reliable than our approach.
Such a good nonrigorous approximation with a large number of collocation points
would be enough to have a rigorous part of the continuation based on Poincar\'e maps succeed,
making further continuation only a matter of computation time.
We did not implement it though, as we have decided that we are satisfied with
how wide our $\epsilon$ range is. By Theorem~\ref{thm:main3} we have already reached
the value where the standard interval Newton-Moore method applied to a sequence of Poincar\'e maps succeeds, and we think it is clear
that a proof for higher values of $\epsilon$ will pose no significant theoretical or computational challenges.

\subsection{Proof of Theorem~\ref{thm:main3}}\label{subsec:thm13}
Recall the interval Newton-Moore method for finding zeroes of a smooth map $\mathcal{F}: \rr^{n} \to \rr^{n}$:
\begin{thm}[The interval Newton-Moore method~\cite{Alefeld, Neumaier, Moore}]\label{thm:intervalNewton}
  Let $X = \Pi_{i=1}^{n} [a_{i}, b_{i}]$, $\mathcal{F}: \rr^{n} \to \rr^{n}$ be of class $C^{1}$ and let $x_{0} \in X$.
  Assume the interval enclosure of $D\mathcal{F}(X)$, denoted by $[D \mathcal{F} (X)]$ is invertible. We denote by
  \begin{equation}
    \mathcal{N}(x_{0},X):=-[D \mathcal{F} (X)]^{-1} \mathcal{F} (x_{0}) + x_{0}
  \end{equation}
  the interval Newton operator. Then
  \begin{itemize}
    \item if $\mathcal{N}(x_{0},X) \subset \inter X$, then the map $\mathcal{F}$ has a unique zero $x_{*} \in X$.
      Moreover $x_{*} \in \mathcal{N}(x_{0},X)$.
    \item If $\mathcal{N}(x_{0},X) \cap X = \emptyset$, then $\mathcal{F}(x) \neq 0$ for all $x \in X$.
  \end{itemize}
\end{thm}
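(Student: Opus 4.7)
My plan is to base both assertions on a single mean value identity connecting $\mathcal{F}(x)$ to $\mathcal{F}(x_0)$ through a matrix belonging to $[D\mathcal{F}(X)]$. First I would invoke the componentwise mean value theorem on the segment from $x_0$ to $x$, which stays inside $X$ by convexity of the box: for each coordinate $i$ there is a point $\xi_i$ on this segment with $\mathcal{F}_i(x) - \mathcal{F}_i(x_0) = \nabla \mathcal{F}_i(\xi_i) \cdot (x - x_0)$, and stacking these gradients as rows produces a matrix $M \in [D\mathcal{F}(X)]$ (entry by entry, since every $\xi_i \in X$) with $\mathcal{F}(x) - \mathcal{F}(x_0) = M(x - x_0)$. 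If in addition $\mathcal{F}(x) = 0$, then the invertibility of $M$ forces $x = x_0 - M^{-1}\mathcal{F}(x_0) \in \mathcal{N}(x_0, X)$. This immediately proves the second assertion. Under the hypothesis $\mathcal{N}(x_0, X) \subset \inter X$ of the first assertion, the same identity pins every zero of $\mathcal{F}$ in $X$ to $\mathcal{N}(x_0, X)$, and uniqueness drops out by applying the identity to a pair of zeros: one obtains $M(x_1 - x_2) = 0$ for some invertible $M$, forcing $x_1 = x_2$.

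Existence is the heart of the proof, and I would obtain it by applying Brouwer's fixed point theorem to a continuous Newton-type self-map of $X$. The main obstacle is continuity: the matrix $M$ furnished by the componentwise MVT need not depend continuously on $x$. The fix is to replace it by the averaged derivative $M(x) := \int_0^1 D\mathcal{F}(x_0 + t(x - x_0))\,dt$, which is continuous on $X$ by the $C^1$ hypothesis and still lies in $[D\mathcal{F}(X)]$ entry by entry, since the integration path is contained in the convex set $X$ and each defining interval is convex. Every $M(x)$ is then invertible by assumption, so the map $g(x) := x - M(x)^{-1}\mathcal{F}(x)$ is continuous on $X$. The fundamental theorem of calculus gives $\mathcal{F}(x) = \mathcal{F}(x_0) + M(x)(x - x_0)$, and substituting this into the definition of $g$ collapses it to $g(x) = x_0 - M(x)^{-1}\mathcal{F}(x_0) \in \mathcal{N}(x_0, X)$.

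The assumption $\mathcal{N}(x_0, X) \subset \inter X$ therefore upgrades to $g(X) \subset \inter X \subset X$, and Brouwer's theorem supplies a fixed point $x^{*} \in X$ of $g$, which is a zero of $\mathcal{F}$. Uniqueness and the inclusion $x^{*} \in \mathcal{N}(x_0, X)$ have already been established in the first step, so the proof is complete. The only genuine subtlety is the construction of a continuous, $X$-valued Newton map, and it is resolved by working with the averaged derivative $M(x)$ instead of the pointwise MVT matrix.
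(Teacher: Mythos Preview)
Your proof is correct and follows the standard argument for the interval Newton method: the componentwise mean value theorem pins every zero into $\mathcal{N}(x_0,X)$ and gives uniqueness, while existence comes from Brouwer applied to the averaged-derivative Newton map $g(x)=x_0-M(x)^{-1}\mathcal{F}(x_0)$. The key observation that $M(x)=\int_0^1 D\mathcal{F}(x_0+t(x-x_0))\,dt$ is continuous and lies entrywise in $[D\mathcal{F}(X)]$ is exactly what is needed to make $g$ a continuous self-map of $X$.

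As for comparison: the paper does not prove this theorem at all. It is stated as a cited result from the literature (Alefeld, Neumaier, Moore) and then simply applied to the sequence of Poincar\'e maps in the proof of Theorem~\ref{thm:main3}. So there is nothing to compare against; you have supplied a complete and standard proof where the paper only quotes the statement.
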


We applied the interval Newton-Moore method to a
problem of the form~\eqref{eq:problemform} given by the
sequence of 179 Poincar\'e maps obtained from the last step of the continuation procedure described in Subsection~\ref{subsec:continuation},
i.e. the step, where $\texttt{currentEpsRange}$ contains 0.0015.
Let $B_{\max}(0,r)$ denote an open ball of radius $r$ centered at 0 in maximum norm.
We obtained the following inclusion
\begin{equation}
  \mathcal{N}\left( 0, \overline{B_{\max}\left(0, 10^{-6}\right)} \right) \subset B_{\max}\left(0, 4.7926638 \times 10^{-14}\right),
\end{equation}
which, by Theorem~\ref{thm:intervalNewton}, implies the existence and local uniqueness of the periodic orbit.

\begin{rem}
 We report that we have succeeded with a verified continuation based on the interval Newton-Moore method
 for the whole parameter range of Theorem~\ref{thm:main2}, that is $\epsilon \in [1.5 \times 10^{-4}, 0.0015]$.
 Although we got a little extra information on the local uniqueness of the solution of the problem~\eqref{eq:problemform}, 
 we have decided to discard this result,
 as it was vastly outperformed in terms of computation time by the method of covering relations\footnote{Substituting
 the interval Krawczyk operator for the interval Newton operator did not resolve this issue, i.e. did not allow for greater widths
 in the parameter steps.}.
 It seems that the sequential covering process in the method of covering relations
 benefits more from the strong hyperbolicity than the interval Newton operator, hence allowing to make wider steps in the parameter range for such type of problems.
 However, for ranges of higher values of $\epsilon$ the interval Newton-Moore method
 was only several times slower than the one of covering relations (e.g. $\approx 7$ times in the range $[0.001, 0.0015]$),
 so we decided to state Theorem~\ref{thm:main3} in its current form to show that
 we have achieved a parameter value where the more widespread tool is already adequate to the task.
\end{rem}

\subsection{Proof of Theorem~\ref{thm:main4}}\label{subsec:thm4}

Our main tool in deducing the existence of the homoclinic orbit will be Theorem~\ref{thm:hom2}, together with 
Lemmas~\ref{lem:block},~\ref{lem:con} and~\ref{lem:sidedisk}.
The strategy loosely resembles the one given for the model example in Subsection~\ref{sec:covslowfast2}, which was portrayed in Figure~\ref{schemeFig2}.
However, the involved methods of shooting to diagonals, 
employed in the construction of the segment $S_{ul}$ in Lemma~\ref{lem:heuristic2} are not necessary,
and not used in practice. 

Similarly, as in the proof of Theorem~\ref{thm:main1} we introduce additional sections crossing the fast subsystem heteroclinics
and we use chains of segments instead of ``long'' segments $S_u$, $S_d$.
An additional modification we need to apply, is to construct a set alike an isolating segment, which allows to propagate
the unstable manifold of the zero equilibrium, starting from one of the faces of the isolating block containing the equilibrium. 
This is because rigorous integration was not accurate enough 
to propagate this manifold in the close proximity of the equilibrium.

We now proceed to describe the proof in details.
Contrary to the case of the periodic orbit, 
we consider the whole range  $\epsilon \in (0, 5 \times 10^{-5}]$ without subdivision, 
as attempting subdivisions did not allow us to significantly increase its upper bound.
The following steps are executed after fixing the $\epsilon$ range, to verify the assumptions of Theorem~\ref{thm:hom2}:

  \begin{enumerate}[leftmargin=*]
    \item  first, we perform a nonrigorous shooting with $\theta$ procedure in the fast subsystem~\eqref{eq:fastsub},
      alike the one in Subsection~\ref{subsec:thm11}, for $w:=0$,
      to find the value of the parameter $\theta$ equal to 
      \begin{equation}
        \texttt{thetaC} = 1.26491106 \approx \theta_{*} ,
      \end{equation} 
      near which there exists a suitable connecting orbit.
      The two ``corner points'' on the left can be computed directly and are given by  
      \begin{equation}
        \begin{aligned}
          \texttt{GammaDL} &= (0,0,0) = ( \Lambda_d(0),0), \\
          \texttt{GammaUL} &= (1,0,0) = ( \Lambda_u(0),0).
        \end{aligned}
      \end{equation}
      The point $\texttt{GammaDL}$ is the stationary point for the homoclinic orbit of the full system.
      
      After having set $\theta:=\texttt{thetaC}$
      we compute the other two points by shooting with $w$, as in Subsection~\ref{subsec:thm11}: 
      \begin{equation}
        \begin{aligned}
           \texttt{GammaUR} &= (0.73333334,0,0.12385185) \approx ( \Lambda_u(w^{*}),w^{*}), \\
          \texttt{GammaDR} &= (-0.26666667,0,0.12385185) \approx ( \Lambda_d(w^{*}),w^{*}).
         \end{aligned}
      \end{equation}
    
      The matrices given by the approximate eigenvectors of the linearization of the fast subsystem at points  
      $\texttt{GammaUR}$, $\texttt{GammaUL}$, $\texttt{GammaDR}$ are given by
      \begin{equation}
        \begin{aligned}
          \texttt{PUR} &= \left[ {\begin{array}{cc} 1 & 1 \\ 0.31622777 & -0.063245553 \end{array} } \right], \\
          \texttt{PUL} = \texttt{PDR} &= \left[ {\begin{array}{cc} 1 & 1 \\ 0.56920998 & -0.31622777 \end{array} } \right],
        \end{aligned}
      \end{equation}
      respectively.
    \item We set the range of parameter $\theta$ to
      \begin{equation}
        \begin{aligned}
        \texttt{theta}&:= \texttt{thetaC} + [-0.0025,  0.0025]\\ &=[1.26241106400572, 1.26741106400572].
      \end{aligned}
      \end{equation}
      The interval $\texttt{theta}$ forms a one-dimensional h-set with one exit direction and the change of coordinates
      given by a translation and rescaling to $[-1,1]$. It serves as the h-set $Z$ in Theorem~\ref{thm:hom2}.
    \item  We initialize three ``corner segments'' \texttt{ULSegment}, \texttt{URSegment} and \texttt{DRSegment} with data
      from Table~\ref{table:corseg2} as described in Paragraph~\ref{par:segments} and check that they are isolating segments for all $\theta \in \texttt{theta}$. 
      For checking the isolation formulas (S2b), (S3b) we subdivide enclosures of each of the respective faces of the exit and the entrance set into $150^{2}$ equal pieces. 
      \begin{table}[ht]
        \begin{center}
        \begin{tabular}{ | l | l | l | l |}
          \hline
          Segment &  \texttt{Front}, \texttt{Rear} & \texttt{P} & $(\texttt{a},\texttt{b}) = (\texttt{c},\texttt{d})$  \\ \hline
          \texttt{ULSegment} & $\texttt{GammaUL} \mp (0,0,0.001)$  & \texttt{PUL} & $( 1.8 \times 10^{-4}, 0.0021)$ \\ \hline
          \texttt{URSegment} & $\texttt{GammaUR} \mp (0,0,7 \times 10^{-4})$   & \texttt{PUR} & $(0.003, 0.005)$ \\ \hline
          \texttt{DRSegment} & $\texttt{GammaDR} \pm (0,0,0.002)$  & \texttt{PDR} & $(8 \times 10^{-4}, 0.013)$ \\ \hline
        \end{tabular}\caption{Initialization data for the three corner segments. The pair $(\texttt{a},\texttt{b})$
        determines the exit/entry direction widths of the segments and the difference $|\texttt{Front[2]} - \texttt{Rear[2]}|$ the central
          direction width.}\label{table:corseg2}
        \end{center}
      \end{table}
    \item We initialize two h-sets $\texttt{BU}$ and $\texttt{BS}$, which will form isolating blocks for $\texttt{F}$, by setting
      \begin{equation}
      \begin{aligned}
        u(\texttt{BU}) &:= u(\texttt{BS}) := 1,\\
        s(\texttt{BU}) &:= s(\texttt{BS}) := 2,\\
        c_{\texttt{BU}}^{-1}&:= \left[ {\begin{array}{ccc} 
             2.4 \times 10^{-5} & 8 \times 10^{-6} &-10^{-4} \\ 
             7.5794685 \times 10^{-6} & -5.0663182 \times 10^{-7} & 0 \\
            0 & 0 & 10^{-5}
          \end{array} } \right], \\
        c_{\texttt{BS}}^{-1}&:= \left[ {\begin{array}{ccc} 
             2 \times 10^{-4} & 2 \times 10^{-4} & -0.0013 \\ 
            6.3162238 \times 10^{-5} & -1.2665795 \times 10^{-5} & 0 \\
             0 & 0 & 1.3 \times 10^{-4}
          \end{array} } \right].
         \end{aligned}
      \end{equation}
      The first two columns of each of the matrices are formed by suitably rescaled eigenvectors of the fast subsystem,
      and the last one is the suitably rescaled tangent vector to the slow manifold -- see Remark~\ref{slowrem}.
      The norm of each column gives an indication of the size of the block in each direction,
      and is one of the program parameters that can be adjusted in order to complete the proof.

      We verify assumptions of Lemmas~\ref{lem:block},~\ref{lem:con} for $\theta \in \texttt{theta}$ to
      conclude that $\texttt{BU}$ and $\texttt{BS}$ are isolating blocks satisfying the cone condition, and in particular
      $W^u_{\texttt{BU}} (\texttt{GammaDL})$ and $W^s_{\texttt{BS}} (\texttt{GammaDL})$
      are, respectively, a horizontal and a vertical disk satisfying the cone condition, for all $\theta \in \texttt{theta}$. 
      In view of Theorem~\ref{szczelirPar} we can assert that these manifolds vary continuously with the parameter $\theta$.

      From Lemma~\ref{lem:sidedisk} we obtain that the intersection $W^s_{\texttt{BS}} (\texttt{GammaDL}) \cap |X_{\texttt{BS},3}|$
      forms a vertical disk in the boundary h-set $X_{\texttt{BS},3}$ (see Definition~\ref{defn:bdset}), which varies continuously with $\theta$.

      From the definition of the horizontal disk it follows that $W^u_{\texttt{BU}}(\texttt{GammaDL})$ has an intersection
      point with the first boundary h-set $|X_{\texttt{BU},1}|$, which we will denote by $W^u_1(\theta)$. 
      The point $W^u_1(\theta)$ is still too close to the equilibrium to reliably compute its trajectory by rigorous integration.
      To patch this numerical problem, we perform a simple phase space analysis to propagate it further from $\texttt{GammaDL}$.
      Namely, we construct an h-set $\texttt{BUext}$ such that
      \begin{equation}
      \begin{aligned}
        u(\texttt{BUext}) &:= 1,\\
        s(\texttt{BUext}) &:=2,\\
        c_{\texttt{BUext}}&:= \left[ {\begin{array}{ccc} 
             0.3 & 0 & 0 \\ 
            0 & 1 & 0 \\
             0 & 0 & 1 
           \end{array} } \right] \cdot c_{\texttt{BU}},
         \end{aligned}
      \end{equation}
      and verify that it is an isolating block. The block $\texttt{BUext}$ is simply an extension of the block $\texttt{BU}$ in the exit direction
      by a factor of $10/3$. Then, we verify the following Lyapunov-like condition:
      \begin{equation}\label{eq:patch}
         \langle \pi_u (c_{\texttt{BUext}}(x)), F(x) \rangle > 0 
        \quad \text{ for } x \in c_{\texttt{BUext}}^{-1}( [0.3,1] \times [-1,1]^2 ),
      \end{equation}
      where $\varphi$ is the local flow induced by $\dot{x} = F(x)$ and $\pi_u$ is the projection onto the first (exit) variable.
      We observe that the left-hand side of the inequality~\eqref{eq:patch} is equal to $\frac{d}{dt} (\pi_u \circ c_{\texttt{BUext}}) ( \varphi(t,x) )$.

      The condition~\eqref{eq:patch} implies that all trajectories starting in $c_{\texttt{BUext}}^{-1}( [0.3,1] \times [-1,1]^2 )$
      leave it in finite time. In particular, since $\texttt{BUext}$ is an isolating block, they can only leave the aforementioned set via the following 
      face of the boundary of $\texttt{BUext}$:
      \begin{equation}
        |X_{\texttt{BUext},1}|=c_{\texttt{BUext}}^{-1}( \{1\} \times [-1,1]^2 ).
      \end{equation}
      This in turn implies, that the forward trajectory of the point $W^u_1(\theta) \in |X_{\texttt{BU},1}| = c_{\texttt{BUext}}^{-1}( \{0.3\} \times [-1,1]^2 )$
      has to intersect $|X_{\texttt{BUext},1}|$. Consequently, the unstable manifold $W^u(\texttt{GammaDL})$ 
      has an intersection point with $|X_{\texttt{BUext},1}|$, that varies continuously with $\theta$, and which we will denote by $W^u_2(\theta)$.

      The reasoning above could be replaced by construction of a suitable isolating segment with support given by $c_{\texttt{BUext}}^{-1}( [0.3,1] \times [-1,1]^2 )$,
      two entry directions given by the entry directions of $\texttt{BUext}$ and the central direction given by the exit direction of $\texttt{BUext}$.
      In theory we could avoid this analysis by verifying that $\texttt{BUext}$ satisfies the cone condition, but in practice the set was too large
      and this evaluation failed.

    \item As in the case of the periodic orbit, 
      two intermediate sections  $\texttt{leftSection}$ and $\texttt{rightSection}$ are positioned in some distance from the slow manifolds.
     
      The section $\texttt{leftSection}$ is placed on the integration path between the block \texttt{BUext} and the segment \texttt{ULSegment}
      and the section $\texttt{rightSection}$ on the path between \texttt{URSegment} and \texttt{DRSegment}. 
      We define the Poincar\'e maps $\texttt{pmUL}$, $\texttt{pmUR}$ and $\texttt{pmDR}$ as in the proof for the periodic orbit:
      \begin{itemize}
        \item $\texttt{pmUL}$ is the Poincar\'e map from a subset of $\texttt{leftSection}$ to the affine section containing $X_{\texttt{ULSegment}, \text{ls}}$,
        \item $\texttt{pmUR}$ is the Poincar\'e map from $X_{\texttt{URSegment},\text{lu}}$ to $\texttt{rightSection}$,
        \item $\texttt{pmDR}$ is the Poincar\'e map from a subset of $\texttt{rightSection}$ to the affine section containing $X_{\texttt{DRSegment}, \text{rs}}$.
      \end{itemize}
      We will now briefly describe which covering relations are verified.

      We integrate the face $|X_{\texttt{BUext},1}|$ to $\texttt{leftSection}$ to enclose the image of $\texttt{WuL}$, defined as the map 
      that assigns to the value of the parameter $\theta$ the first intersection point of $W^u(\texttt{GammaDL})$ with $\texttt{leftSection}$.
      In particular, such map is continuous as a composition of a Poincar\'e map with the map $W^u_1 = W^u_1(\theta)$.
      The integration is performed three times:
      \begin{itemize}
        \item for $\theta$ set to the whole interval $\texttt{theta}$,
        \item for $\theta$ set to the left bound of interval \texttt{theta} (that is $ \theta:= \texttt{thetaC} - 0.0025$),
        \item for $\theta$ set to the right bound of interval \texttt{theta} (that is $ \theta := \texttt{thetaC} + 0.0025$).
      \end{itemize}
      
      Based on this calculation, 
      we create an h-set $\texttt{midLeftSet} \subset \texttt{leftSection}$ so that it is $\texttt{WuL}$-covered by a small margin by the h-set $\texttt{theta}$
      (see Definition~\ref{defn:altcover} and Lemma~\ref{covlemma}). 
      We reset the parameter $\theta$ to the whole interval $\texttt{theta}$ and proceed.

      We integrate the h-set $X_{\texttt{ULSegment}, \text{ls}}$ backward in time to $\texttt{leftSection}$ and 
      verify that $\texttt{midLeftSet}$ $\texttt{pmUL}$-backcovers $X_{\texttt{ULSegment}, \text{ls}}$.
 
      The h-set $X_{\texttt{URSegment}, \text{lu}}$ is integrated to $\texttt{rightSection}$, and, as in the case of the periodic orbit, we define
      an h-set $\texttt{midRightSet} \subset \texttt{rightSection}$, such that it is $\texttt{pmUR}$-covered 
      by $X_{\texttt{URSegment}, \text{lu}}$.
      Then, we integrate the h-set $X_{\texttt{DRSegment}, \text{rs}}$ backward in time to $\texttt{rightSection}$ and 
      verify that $\texttt{midRightSet}$ $\texttt{pmDR}$-backcovers $X_{\texttt{DRSegment}, \text{rs}}$.

      Altogether, we have the following covering relations:
      \begin{equation}
        \begin{aligned}
          \texttt{theta} &\cover{\texttt{WuL}} \texttt{midLeftSet} \backcover{\texttt{pmUL}} X_{\texttt{ULSegment}, \text{ls}}, \\
          X_{\texttt{URSegment},\text{lu}} &\cover{\texttt{pmUR}} \texttt{midRightSet} \backcover{\texttt{pmDR}} X_{\texttt{DRSegment}, \text{rs}},
        \end{aligned}
      \end{equation}
      where the coverings by \texttt{pmUL}, \texttt{pmUR}, \texttt{pmDR} hold for all $\theta \in \texttt{theta}$.

      Parameter $\texttt{div}$ describing partitioning of h-sets for the rigorous integration was set to 25,
      except for the h-set \texttt{theta}, for which no subdivision was needed.

    \item To close the loop, we connect the h-sets $X_{\texttt{ULSegment},\text{out}}$ and $X_{\texttt{URSegment},\text{in}}$ by a chain of segments
      \({\bf UpSegment} \) and $X_{\texttt{DRSegment},\text{out}}$ and $X_{\texttt{BS},3}$ by a chain of segments \({\bf DownSegment} \)
      as described in Subsection~\ref{subsec:chains}. 
      The number of isolating segments in \({\bf UpSegment} \) is set to 200 and in \({\bf DownSegment} \) to 400.  
      For verification of the isolation conditions (S2b), (S3b) in each chain we partition the enclosures of each of the
      faces of their exit and entrance sets into $110^{2}$ equal pieces.
      As with blocks and corner segments, this verification is performed for all $\theta \in \texttt{theta}$.
  \end{enumerate}

Same remarks about the choices of set sizes hold as for the computer assisted proof of Theorem~\ref{thm:main1}.
The wider the $\epsilon$ range is, the harder it is to verify the assumptions, 
since they follow from properties of the singular limit equation at $\epsilon=0$ (see Subsection~\ref{sec:covslowfast2}).
In addition $\texttt{theta}$ has to be chosen wide enough to generate the covering relation of $\texttt{WuL}$,
but narrow enough so all the other covering relations and all of the isolation conditions persist.

Our upper bound on $\epsilon$, equal to $5 \times 10^{-5}$, is three times smaller than the upper bound for existence of the periodic orbit
in Theorem~\ref{thm:main1}, mainly due to the fact, that we have to include a small range of $\theta$'s in most computations.
We did not attempt further continuation of the homoclinic orbit alike the one for the periodic orbit in Theorem~\ref{thm:main2}.
We think that a ``brute force'' continuation using covering relations only (i.e. by means of Theorem~\ref{thm:conmap}), 
would still be possible, perhaps with use of multiple precision interval arithmetic. 
However, we would like to find a more elegant continuation method, that would bypass the numerical instability of the problem.

\subsection{Technical data and computation times}

All computations were performed on a laptop equipped with Intel Core i7 CPU, 1.80 GHz processor, 4GB RAM
and a Linux operating system with gcc-5.2.0. We used the 568th Subversion revision of the CAPD library.
The programs were not parallelized.

Verification of assumptions of Theorem~\ref{thm:main1} took 236 seconds. Over 95\% of the processor time
was taken by verification of isolation for the chains of isolating segments.

Proofs of Theorems~\ref{thm:main2} and~\ref{thm:main3} were executed by the same program.
The validated continuation in Theorem~\ref{thm:main2} was the most time consuming part -- it took 4153 seconds.
Theorem~\ref{thm:main3} is formulated for a single parameter value; the proof here was instantaneous -- it finished within 2 seconds.

Verification of assumptions of Theorem~\ref{thm:main4} took 443 seconds. This time, over 98\% of the processor time
was taken by verification of isolation for the chains of isolating segments, mainly because
it was necessary to include a high number of 400 segments in the lower chain to obtain isolation -- see Subsection~\ref{subsec:thm4}.

We remark that the successful attempt to check the assumptions of Theorem~\ref{thm:main1} also for the range $\epsilon \in [10^{-4}, 1.5 \times 10^{-4}]$ (119 seconds)
saved us a lot of computation time. In theory we could have tried to use a validated continuation approach like in Theorem~\ref{thm:main2}
for this range. We tried it later for a subrange \nopagebreak $\epsilon \in [1.1 \times 10^{-4}, 1.5 \times 10^{-4}]$ (for the whole range 
execution of Newton's method
for the problem~\eqref{eq:problemform} within the nonrigorous part of the continuation algorithm failed due to enormous sizes of matrices to invert) 
-- it took 2571 seconds, that is over 20 times longer.
This indicates that construction of isolating segments around slow manifolds can be a valuable tool for proofs for
``regular'' parameter ranges (i.e. not including the singular perturbation parameter value) in systems with a very large separation of time scales.

\chapter{Concluding remarks}\label{chap:conclusions}

In this thesis we proved the existence of a periodic orbit and a homoclinic orbit in a fast-slow system
for an explicit range of the small parameter of the form $\epsilon \in (0,\epsilon_0]$,
where previous results in literature were given only for $\epsilon_0$ ``small enough''.
For the periodic orbit we also showed that the range is wide enough to succeed
with a validated continuation based on topological or $C^{1}$ methods at its upper bound.
Even though we restricted ourselves to analysis of the FitzHugh-Nagumo equations, 
our methods are general and should be applicable to other fast-slow systems of similar structure.

An intrinsic advantage of our techniques is that, once the topological theory is in place, it relies on
simple quantitative assumptions, such as enclosures of the vector field
or estimates on solutions over some compact sets, which can be verified on a computer without much effort.
Contrary to ours, the methods of 
GSPT require qualitative, sometimes very involved arguments for tracking suitable invariant manifolds
and their transversal intersections.
We admit that the analytical methods of GSPT give more insight into the nature of a given problem,
however it may be easier to deal with unproved hypotheses using a quantitative approach.

Results presented in this thesis can also be of interest to
researchers working in rigorous numerics, 
as by successfully adapting isolating segments into a computational framework
we managed to deal with a stiff, structurally unstable problem.
We have some hopes that isolating segments 
can replace rigorous integration for certain systems with strong expansion of error bounds,
such as ill-posed PDEs (see Subsection~\ref{subsec:Boussinesq}).

Below we propose several other problems 
from multiple time scale dynamics, that perhaps could be possible to tackle
in such explicit ranges of the small parameter, by extensions of methods developed in this thesis.
\begin{enumerate}[label=\emph{\arabic*.}] 
  \item \emph{Higher-dimensional slow manifolds.} So far we have dealt with the case of one-dimensional slow manifolds.
      By a suitable generalization of the concept of an isolating segment one could attempt to find closed orbits in
      equations with several slow variables such as the fast-slow predator-prey system~\cite{Mischaikow2} or the Koper model~\cite{Koper}.
      This would involve rigorous shadowing of orbits of the slow subsystem computed with a nonrigorous integrator.
    \item \emph{Loss of normal hyperbolicity.} We suspect that it would be possible to give
      topological arguments for a transition between the slow and the fast dynamics at fold points,
      where the slow manifold loses its normal hyperbolicity.  
      For such setting, blow-up techniques were used to prove that FitzHugh-Nagumo system exhibits pulses with oscillatory tails 
      for $\epsilon>0$ small~\cite{Carter}.
    \item \emph{Chaotic dynamics.} Another open question is to show the existence of horseshoes
      for explicit ranges of the small parameter. For $\epsilon>0$ small
      such dynamics was demonstrated in e.g. 
      the periodically forced van der Pol system~\cite{Haiduc} and in the fast-slow predator-prey system~\cite{Mischaikow3}.
      We expect that inclusion of isolating segments in proofs of existence of horseshoes based on covering relations 
      (like the one given in~\cite{GaliasZgliczynski}) should be relatively straightforward.
    \item \emph{Uniqueness and stability.} The questions of uniqueness and stability remain,
      both as orbits of ODEs, and as waves in the respective PDEs. Local
      uniqueness, stability and some bounds on stable and unstable manifolds of orbits
      can probably be achieved by the $C^1$ method of cone conditions~\cite{Capinski2} adapted to the fast-slow structure of the system.
      For stability of waves a computational version of the Evans function approach at the singular limit would most likely be necessary 
      (cf. Subsection~\ref{subsec:ArioliKoch}).
\end{enumerate}
Let us add to this list the numerical issues of further validated continuation
to some macroscopic values of $\epsilon$; in this thesis
we have performed it only for the periodic orbit.
We think that our upper bound on $\epsilon$ for the homoclinic orbit should also be achievable in near future.
Since computer assisted methods for ``regular'' ODEs are already well developed,
such continuation problems do not pose new theoretical challenges;
the main concern is how to design suitable continuation algorithms efficiently. 
To this end, one could try do adapt algorithms from nonrigorous continuation packages (see Subsection~\ref{subsec:further}),
which usually reduce the dynamical question to one or several two-point boundary value problems and solve them on a suitable mesh.
Such methods seem more reliable for tracking and nonrigorous continuation of unstable orbits, 
than an approach based on integration.

}

% ---------------------------------------- bibliography ---------------------------------------------------------------
\clearpage
\phantomsection

\addcontentsline{toc}{chapter}{Bibliography}

\bibliography{thesis_bib}

\def\cprime{$'$}
\begin{thebibliography}{10}

\bibitem{CAPD}
{CAPD}: {C}omputer {A}ssisted {P}roofs in {D}ynamics, a {P}ackage for
  {R}igorous {N}umerics.
\newblock \url{http://capd.ii.uj.edu.pl}.

\bibitem{Alefeld}
G.~Alefeld.
\newblock Inclusion methods for systems of nonlinear equations---the interval
  {N}ewton method and modifications.
\newblock In {\em Topics in validated computations ({O}ldenburg, 1993)},
  volume~5 of {\em Stud. Comput. Math.}, pages 7--26. North-Holland, Amsterdam,
  1994.

\bibitem{Ambrosi}
D.~Ambrosi, G.~Arioli, and H.~Koch.
\newblock A homoclinic solution for excitation waves on a contractile
  substratum.
\newblock {\em SIAM J. Appl. Dyn. Syst.}, 11(4):1533--1542, 2012.

\bibitem{ArioliKoch2}
G.~Arioli and H.~Koch.
\newblock Integration of dissipative partial differential equations: a case
  study.
\newblock {\em SIAM J. Appl. Dyn. Syst.}, 9(3):1119--1133, 2010.

\bibitem{ArioliKoch}
G.~Arioli and H.~Koch.
\newblock Existence and stability of traveling pulse solutions of the
  {F}itz{H}ugh-{N}agumo equation.
\newblock {\em Nonlinear Anal.}, 113:51--70, 2015.

\bibitem{Arnold}
V.~I. Arnol{\cprime}d, V.~S. Afrajmovich, Y.~S. Il{\cprime}yashenko, and L.~P.
  Shil{\cprime}nikov.
\newblock Bifurcation theory.
\newblock In {\em Current problems in mathematics. {F}undamental directions,
  {V}ol.\ 5 ({R}ussian)}, Itogi Nauki i Tekhniki, pages 5--218, i. Akad. Nauk
  SSSR, Vsesoyuz. Inst. Nauchn. i Tekhn. Inform., Moscow, 1986.

\bibitem{Boussinesq}
J.~Boussinesq.
\newblock Théorie des ondes et des remous qui se propagent le long d'un canal
  rectangulaire horizontal, en communiquant au liquide contenu dans ce canal
  des vitesses sensiblement pareilles de la surface au fond.
\newblock {\em J. Math. Pure Appl.}, 17(2):55--108, 1872.

\bibitem{Capinski2}
M.~J. Capi{\'n}ski and P.~Zgliczy{\'n}ski.
\newblock Geometric proof for normally hyperbolic invariant manifolds.
\newblock {\em J. Differential Equations}, 259(11):6215--6286, 2015.

\bibitem{Carpenter}
G.~A. Carpenter.
\newblock A geometric approach to singular perturbation problems with
  applications to nerve impulse equations.
\newblock {\em J. Differential Equations}, 23(3):335--367, 1977.

\bibitem{Carter}
P.~Carter and B.~Sandstede.
\newblock Fast pulses with oscillatory tails in the {F}itz{H}ugh-{N}agumo
  system.
\newblock {\em SIAM J. Math. Anal.}, 47(5):3393--3441, 2015.

\bibitem{Champneys}
A.~R. Champneys, V.~Kirk, E.~Knobloch, B.~E. Oldeman, and J.~Sneyd.
\newblock When {S}hilnikov meets {H}opf in excitable systems.
\newblock {\em SIAM J. Appl. Dyn. Syst.}, 6(4):663--693, 2007.

\bibitem{Conley2}
C.~Conley.
\newblock {\em Isolated invariant sets and the {M}orse index}, volume~38 of
  {\em CBMS Regional Conference Series in Mathematics}.
\newblock American Mathematical Society, Providence, R.I., 1978.

\bibitem{Easton}
C.~Conley and R.~Easton.
\newblock Isolated invariant sets and isolating blocks.
\newblock {\em Trans. Amer. Math. Soc.}, 158:35--61, 1971.

\bibitem{Conley}
C.~C. Conley.
\newblock On traveling wave solutions of nonlinear diffusion equations.
\newblock In {\em Dynamical systems, theory and applications ({R}encontres,
  {B}attelle {R}es. {I}nst., {S}eattle, {W}ash., 1974)}, pages 498--510.
  Lecture Notes in Phys., Vol. 38. Springer, Berlin, 1975.

\bibitem{CyrankaZgliczynski}
J.~Cyranka and P.~Zgliczy{\'n}ski.
\newblock Existence of {G}lobally {A}ttracting {S}olutions for
  {O}ne-{D}imensional {V}iscous {B}urgers {E}quation with {N}onautonomous
  {F}orcing---{A} {C}omputer {A}ssisted {P}roof.
\newblock {\em SIAM J. Appl. Dyn. Syst.}, 14(2):787--821, 2015.

\bibitem{Czechowski}
A.~Czechowski.
\newblock personal home page.
\newblock \url{http://ww2.ii.uj.edu.pl/~czechows}.

\bibitem{CzechowskiZgliczynski}
A.~Czechowski and P.~Zgliczy\'nski.
\newblock Existence of periodic solutions of the {F}itz{H}ugh-{N}agumo
  equations for an explicit range of the small parameter.
\newblock preprint, in review, ar{X}iv:1502.02451, 2015.

\bibitem{CzechowskiZgliczynski2}
A.~Czechowski and P.~Zgliczy\'nski.
\newblock Rigorous numerics for {P}{D}{E}s with indefinite tail: existence of a
  periodic solution of the {B}oussinesq equation with time-dependent forcing.
\newblock ar{X}iv:1504.04535, to appear in \emph{Schedae Informaticae}, 24,
  2015.

\bibitem{Day}
S.~Day, J.-P. Lessard, and K.~Mischaikow.
\newblock Validated continuation for equilibria of {PDE}s.
\newblock {\em SIAM J. Numer. Anal.}, 45(4):1398--1424, 2007.

\bibitem{Deng}
B.~Deng.
\newblock The existence of infinitely many traveling front and back waves in
  the {F}itz{H}ugh-{N}agumo equations.
\newblock {\em SIAM J. Math. Anal.}, 22(6):1631--1650, 1991.

\bibitem{EvansIII}
J.~W. Evans.
\newblock Nerve axon equations. {III}. {S}tability of the nerve impulse.
\newblock {\em Indiana Univ. Math. J.}, 22:577--593, 1972/73.

\bibitem{EvansIV}
J.~W. Evans.
\newblock Nerve axon equations. {IV}. {T}he stable and the unstable impulse.
\newblock {\em Indiana Univ. Math. J.}, 24(12):1169--1190, 1974/75.

\bibitem{Fenichel}
N.~Fenichel.
\newblock Geometric singular perturbation theory for ordinary differential
  equations.
\newblock {\em J. Differential Equations}, 31(1):53--98, 1979.

\bibitem{FitzHugh}
R.~FitzHugh.
\newblock Mathematical models of threshold phenomena in the nerve membrane.
\newblock {\em Bull. Math. Biophysics}, 17:257--269, 1955.

\bibitem{Flores}
G.~Flores.
\newblock Stability analysis for the slow travelling pulse of the
  {F}itz{H}ugh-{N}agumo system.
\newblock {\em SIAM J. Math. Anal.}, 22(2):392--399, 1991.

\bibitem{Galias2}
Z.~Galias.
\newblock Counting low-period cycles for flows.
\newblock {\em Internat. J. Bifur. Chaos Appl. Sci. Engrg.}, 16(10):2873--2886,
  2006.

\bibitem{Galias}
Z.~Galias and W.~Tucker.
\newblock Numerical study of coexisting attractors for the {H}\'enon map.
\newblock {\em Internat. J. Bifur. Chaos Appl. Sci. Engrg.}, 23(7):1330025, 18
  pp., 2013.

\bibitem{GaliasZgliczynski}
Z.~Galias and P.~Zgliczy{\'n}ski.
\newblock Computer assisted proof of chaos in the {L}orenz equations.
\newblock {\em Phys. D}, 115(3-4):165--188, 1998.

\bibitem{Mischaikow3}
M.~Gameiro, T.~Gedeon, W.~Kalies, H.~Kokubu, K.~Mischaikow, and H.~Oka.
\newblock Topological horseshoes of traveling waves for a fast-slow
  predator-prey system.
\newblock {\em J. Dynam. Differential Equations}, 19(3):623--654, 2007.

\bibitem{Mischaikow2}
T.~Gedeon, H.~Kokubu, K.~Mischaikow, and H.~Oka.
\newblock The {C}onley index for fast--slow systems. {II}. {M}ultidimensional
  slow variable.
\newblock {\em J. Differential Equations}, 225(1):242--307, 2006.

\bibitem{Mischaikow}
T.~Gedeon, H.~Kokubu, K.~Mischaikow, H.~Oka, and J.~F. Reineck.
\newblock The {C}onley index for fast-slow systems. {I}. {O}ne-dimensional slow
  variable.
\newblock {\em J. Dynam. Differential Equations}, 11(3):427--470, 1999.

\bibitem{MATCONT}
W.~Govaerts and Y.~A. Kuznetsov.
\newblock M{A}{T}{C}{O}{N}{T}.
\newblock \url{http://sourceforge.net/projects/matcont}.

\bibitem{sweby}
D.~F. Griffiths, P.~K. Sweby, and H.~C. Yee.
\newblock On spurious asymptotic numerical solutions of explicit
  {R}unge-{K}utta methods.
\newblock {\em IMA J. Numer. Anal.}, 12(3):319--338, 1992.
\newblock IMA Conference on Dynamics of Numerics and Numerics of Dynamics
  (Bristol, 1990).

\bibitem{GuckenheimerJohnson}
J.~Guckenheimer, T.~Johnson, and P.~Meerkamp.
\newblock Rigorous enclosures of a slow manifold.
\newblock {\em SIAM J. Appl. Dyn. Syst.}, 11(3):831--863, 2012.

\bibitem{GuckenheimerKuehn}
J.~Guckenheimer and C.~Kuehn.
\newblock Homoclinic orbits of the {F}itz{H}ugh-{N}agumo equation: the
  singular-limit.
\newblock {\em Discrete Contin. Dyn. Syst. Ser. S}, 2(4):851--872, 2009.

\bibitem{Haiduc}
R.~Haiduc.
\newblock Horseshoes in the forced van der {P}ol system.
\newblock {\em Nonlinearity}, 22(1):213--237, 2009.

\bibitem{Hastings2}
S.~Hastings.
\newblock The existence of periodic solutions to {N}agumo's equation.
\newblock {\em Quart. J. Math. Oxford Ser. (2)}, 25:369--378, 1974.

\bibitem{Hastings}
S.~P. Hastings.
\newblock Some mathematical problems from neurobiology.
\newblock {\em Amer. Math. Monthly}, 82(9):881--895, 1975.

\bibitem{Hastings3}
S.~P. Hastings.
\newblock On the existence of homoclinic and periodic orbits for the
  {F}itzhugh-{N}agumo equations.
\newblock {\em Quart. J. Math. Oxford Ser. (2)}, 27(105):123--134, 1976.

\bibitem{Hirsch}
M.~W. Hirsch, C.~C. Pugh, and M.~Shub.
\newblock {\em Invariant manifolds}.
\newblock Lecture Notes in Mathematics, Vol. 583. Springer-Verlag, Berlin-New
  York, 1977.

\bibitem{humphries}
A.~R. Humphries.
\newblock Spurious solutions of numerical methods for initial value problems.
\newblock {\em IMA J. Numer. Anal.}, 13(2):263--290, 1993.

\bibitem{Kopell}
C.~Jones, N.~Kopell, and R.~Langer.
\newblock Construction of the {F}itz{H}ugh-{N}agumo pulse using differential
  forms.
\newblock In {\em Patterns and dynamics in reactive media ({M}inneapolis, {MN},
  1989)}, volume~37 of {\em IMA Vol. Math. Appl.}, pages 101--115. Springer,
  New York, 1991.

\bibitem{Jones}
C.~K. R.~T. Jones.
\newblock Stability of the travelling wave solution of the
  {F}itz{H}ugh-{N}agumo system.
\newblock {\em Trans. Amer. Math. Soc.}, 286(2):431--469, 1984.

\bibitem{JonesBook}
C.~K. R.~T. Jones.
\newblock Geometric singular perturbation theory.
\newblock In {\em Dynamical systems ({M}ontecatini {T}erme, 1994)}, volume 1609
  of {\em Lecture Notes in Math.}, pages 44--118. Springer, Berlin, 1995.

\bibitem{Kapela}
T.~Kapela, D.~Wilczak, and P.~Zgliczy\'nski.
\newblock Rigorous computation of {P}oincar\'e maps.
\newblock preprint, 2015.

\bibitem{Kokubu}
H.~Kokubu, D.~Wilczak, and P.~Zgliczy{\'n}ski.
\newblock Rigorous verification of cocoon bifurcations in the {M}ichelson
  system.
\newblock {\em Nonlinearity}, 20(9):2147--2174, 2007.

\bibitem{Koper}
M.~T.~M. Koper.
\newblock Bifurcations of mixed-mode oscillations in a three-variable
  autonomous van der {P}ol-{D}uffing model with a cross-shaped phase diagram.
\newblock {\em Phys. D}, 80(1-2):72--94, 1995.

\bibitem{Krupa}
M.~Krupa, B.~Sandstede, and P.~Szmolyan.
\newblock Fast and slow waves in the {F}itz{H}ugh-{N}agumo equation.
\newblock {\em J. Differential Equations}, 133(1):49--97, 1997.

\bibitem{KuehnBook}
C.~Kuehn.
\newblock {\em Multiple time scale dynamics}, volume 191 of {\em Applied
  Mathematical Sciences}.
\newblock Springer, Cham, 2015.

\bibitem{Lessard}
J.-P. Lessard.
\newblock Recent advances about the uniqueness of the slowly oscillating
  periodic solutions of {W}right's equation.
\newblock {\em J. Differential Equations}, 248(5):992--1016, 2010.

\bibitem{Reinhardt}
J.-P. Lessard and C.~Reinhardt.
\newblock Rigorous numerics for nonlinear differential equations using
  {C}hebyshev series.
\newblock {\em SIAM J. Numer. Anal.}, 52(1):1--22, 2014.

\bibitem{Lohner}
R.~J. Lohner.
\newblock Computation of guaranteed enclosures for the solutions of ordinary
  initial and boundary value problems.
\newblock In {\em Computational ordinary differential equations ({L}ondon,
  1989)}, volume~39 of {\em Inst. Math. Appl. Conf. Ser. New Ser.}, pages
  425--435. Oxford Univ. Press, New York, 1992.

\bibitem{Maginu}
K.~Maginu.
\newblock Existence and stability of periodic travelling wave solutions to
  {N}agumo's nerve equation.
\newblock {\em J. Math. Biol.}, 10(2):133--153, 1980.

\bibitem{Matsue}
K.~Matsue.
\newblock Rigorous numerics for fast-slow systems with one-dimensional slow
  variable: topological shadowing approach.
\newblock preprint, ar{X}iv:1507.01462, 2015.

\bibitem{MischaikowMrozek}
K.~Mischaikow and M.~Mrozek.
\newblock Chaos in the {L}orenz equations: a computer-assisted proof.
\newblock {\em Bull. Amer. Math. Soc. (N.S.)}, 32(1):66--72, 1995.

\bibitem{Moore}
R.~E. Moore.
\newblock {\em Interval analysis}.
\newblock Prentice-Hall Inc., Englewood Cliffs, N.J., 1966.

\bibitem{SrzednickiMrozek}
M.~Mrozek and R.~Srzednicki.
\newblock Topological approach to rigorous numerics of chaotic dynamical
  systems with strong expansion of error bounds.
\newblock {\em Found. Comput. Math.}, 10(2):191--220, 2010.

\bibitem{SrzednickiWeilandt}
M.~Mrozek, R.~Srzednicki, and F.~Weilandt.
\newblock A topological approach to the algorithmic computation of the {C}onley
  index for {P}oincar\'e maps.
\newblock {\em SIAM J. Appl. Dyn. Syst.}, 14(3):1348--1386, 2015.

\bibitem{Nagumo}
J.~Nagumo, S.~Arimoto, and S.~Yoshizawa.
\newblock An active pulse transmission line simulating nerve axon.
\newblock {\em Proc. {IRE}.}, 50:2061--2070, 1962.

\bibitem{Neumaier}
A.~Neumaier.
\newblock {\em Interval methods for systems of equations}, volume~37 of {\em
  Encyclopedia of Mathematics and its Applications}.
\newblock Cambridge University Press, Cambridge, 1990.

\bibitem{RallBook}
L.~Rall.
\newblock {\em Automatic {D}ifferentiation: {T}echniques and {A}pplications},
  volume 120 of {\em Lecture Notes in Computer Science}.
\newblock Springer Verlag, Berlin, 1981.

\bibitem{Schwartz}
J.~T. Schwartz.
\newblock {\em Nonlinear functional analysis}.
\newblock Gordon and Breach Science Publishers, New York-London-Paris, 1969.

\bibitem{Srzednicki}
R.~Srzednicki.
\newblock Periodic and constant solutions via topological principle of
  {W}a\.zewski.
\newblock {\em Univ. Iagel. Acta Math.}, (26):183--190, 1987.

\bibitem{SrzednickiWojcik}
R.~Srzednicki and K.~W{\'o}jcik.
\newblock A geometric method for detecting chaotic dynamics.
\newblock {\em J. Differential Equations}, 135(1):66--82, 1997.

\bibitem{Wanner}
T.~Stephens and T.~Wanner.
\newblock Rigorous validation of isolating blocks for flows and their {C}onley
  indices.
\newblock {\em SIAM J. Appl. Dyn. Syst.}, 13(4):1847--1878, 2014.

\bibitem{szczelir}
R.~Szczelina and P.~Zgliczy{\'n}ski.
\newblock A homoclinic orbit in a planar singular {ODE}---a computer assisted
  proof.
\newblock {\em SIAM J. Appl. Dyn. Syst.}, 12(3):1541--1565, 2013.

\bibitem{constraint}
E.~Tsang.
\newblock {\em Foundations of {C}onstraint {S}atisfaction}.
\newblock Computation in Cognitive Science Series. Academic Press, 1993.

\bibitem{Tucker}
W.~Tucker.
\newblock A rigorous {ODE} solver and {S}male's 14th problem.
\newblock {\em Found. Comput. Math.}, 2(1):53--117, 2002.

\bibitem{WilczakShilnikov}
D.~Wilczak.
\newblock The existence of {S}hilnikov homoclinic orbits in the {M}ichelson
  system: a computer assisted proof.
\newblock {\em Found. Comput. Math.}, 6(4):495--535, 2006.

\bibitem{WilczakZgliczynski2}
D.~Wilczak and P.~Zgliczy{\'n}ski.
\newblock Heteroclinic connections between periodic orbits in planar restricted
  circular three-body problem---a computer assisted proof.
\newblock {\em Comm. Math. Phys.}, 234(1):37--75, 2003.

\bibitem{WZsymmetry}
D.~Wilczak and P.~Zgliczy{\'n}ski.
\newblock Topological method for symmetric periodic orbits for maps with a
  reversing symmetry.
\newblock {\em Discrete Contin. Dyn. Syst.}, 17(3):629--652 (electronic), 2007.

\bibitem{WilczakZgliczynski}
D.~Wilczak and P.~Zgliczy{\'n}ski.
\newblock Period doubling in the {R}\"ossler system---a computer assisted
  proof.
\newblock {\em Found. Comput. Math.}, 9(5):611--649, 2009.

\bibitem{CrLohner}
D.~Wilczak and P.~Zgliczy\'nski.
\newblock {$C\sp r$} {L}ohner algorithm.
\newblock {\em Schedae Informaticae}, 20:9--42, 2012.

\bibitem{Yanagida}
E.~Yanagida.
\newblock Stability of fast travelling pulse solutions of the
  {F}itz{H}ugh-{N}agumo equations.
\newblock {\em J. Math. Biol.}, 22(1):81--104, 1985.

\bibitem{Zgliczynski}
P.~Zgliczy\'nski.
\newblock {$C\sp 1$} {L}ohner algorithm.
\newblock {\em Found. Comput. Math.}, 2(4):429--465, 2002.

\bibitem{ZgliczynskiKS2}
P.~Zgliczy{\'n}ski.
\newblock Rigorous numerics for dissipative partial differential equations.
  {II}. {P}eriodic orbit for the {K}uramoto-{S}ivashinsky {PDE}---a
  computer-assisted proof.
\newblock {\em Found. Comput. Math.}, 4(2):157--185, 2004.

\bibitem{ZgliczynskiMan}
P.~Zgliczy{\'n}ski.
\newblock Covering relations, cone conditions and the stable manifold theorem.
\newblock {\em J. Differential Equations}, 246(5):1774--1819, 2009.

\bibitem{ZgliczynskiKS3}
P.~Zgliczy{\'n}ski.
\newblock Rigorous numerics for dissipative {PDE}s {III}. {A}n effective
  algorithm for rigorous integration of dissipative {PDE}s.
\newblock {\em Topol. Methods Nonlinear Anal.}, 36(2):197--262, 2010.

\bibitem{ZgliczynskiUnpublished}
P.~Zgliczy\'nski.
\newblock Possible integration scheme for {P}{D}{E}s with indefinite tail.
\newblock unpublished notes, 2013.

\bibitem{GideaZgliczynski}
P.~Zgliczy{\'n}ski and M.~Gidea.
\newblock Covering relations for multidimensional dynamical systems.
\newblock {\em J. Differential Equations}, 202(1):32--58, 2004.

\bibitem{ZgliczynskiMischaikow}
P.~Zgliczy{\'n}ski and K.~Mischaikow.
\newblock Rigorous numerics for partial differential equations: the
  {K}uramoto-{S}ivashinsky equation.
\newblock {\em Found. Comput. Math.}, 1(3):255--288, 2001.

\end{thebibliography}
\bibliographystyle{abbrv}

% ----------------------------------------------------------------------------------------------------------------------

\end{document}